\documentclass[amsfonts]{amsart}
\setlength\parindent{0pt}
\setlength{\parskip}{5pt}

% Default Packages
\usepackage{amssymb,amsfonts, tikz, tikz-cd}
\usepackage[all,arc]{xy}
\usepackage{enumitem}
\usepackage{mathrsfs}
\usepackage{microtype}
\usepackage{amsaddr}
\usepackage{amsthm,etoolbox}

% Enumerating Style

% Links

\usepackage{hyperref}
\hypersetup{hidelinks, colorlinks}
\hypersetup{
    citecolor = {teal},
    linkcolor = {black},
}
\usepackage[capitalise]{cleveref}

% Theorem Environments
\theoremstyle{plain}
\newtheorem{thm}{Theorem}[section]
\newtheorem{cor}[thm]{Corollary}
\newtheorem{prop}[thm]{Proposition}
\newtheorem{lem}[thm]{Lemma}

\theoremstyle{definition}
\newtheorem{defn}[thm]{Definition}

\newtheorem{assm}[thm]{Assumption}

\theoremstyle{remark}
\newtheorem{rem}[thm]{Remark}
\AtBeginEnvironment{rem}{% 
    \small
  \pushQED{\qed}%
}
\AtEndEnvironment{rem}{\popQED}

% Formatting commands

\makeatletter
\makeatother
\numberwithin{equation}{section}

\setlength{\textwidth}{\paperwidth}
\addtolength{\textwidth}{-2.5in}
\calclayout

% Tweak bibliography
\AtBeginDocument{%
   \def\MR#1{}
}

% Macros for letters
\newcommand{\sw}{{{w}_\circ}}
\newcommand{\sDelta}{{\Delta}_\circ}
\newcommand{\teq}{h}

% Begin Document
\title[Flat endomorphisms of $\nabla^{eq}$ from Quantum Steenrod operations]{Flat endomorphisms for mod $p$ equivariant quantum connections from quantum Steenrod operations}
%\title{Flat endomorphisms for equivariant quantum connections in characteristic $p$}

\author{Jae Hee Lee}
\email{jaehelee@mit.edu}

\begin{document}

\begin{abstract}
    We present a method for constructing covariantly constant endomorphisms for the mod $p$ equivariant quantum connection, using the quantum Steenrod power operations of Fukaya and Wilkins. The example of the cotangent bundle of the projective line is fully computed, and we discuss the relationship with the mod $p$ solutions of trigonometric KZ equation recently constructed by Varchenko. As a byproduct, we compute the first examples of quantum Steenrod operations that are not a priori determined by ordinary Gromov--Witten theory and classical Steenrod operations, which may be of independent interest.
\end{abstract}

\maketitle
\section{Background and Results}

\subsection{A brief overview of context}
The goal of this paper is to present a new character, the quantum Steenrod operations, in the study of the mod $p$ equivariant quantum cohomology of (conical) symplectic resolutions $X$, 
\begin{equation*}
    QH^*_{eq} (X; \mathbb{F}_p),
\end{equation*}
with coefficients over the finite field $\mathbb{F}_p$ of prime order $p$. The prefix \emph{equivariant} has multiple meanings in our context: we work simultaneously with respect to a (i) symmetry on the target manifold $X$ (a circle action scaling the holomorphic symplectic form), as well as a (ii) symmetry on the domain curve of the $J$-holomorphic maps.

The enumerative geometry of genus $0$ $J$-holomorphic curves in $X$ is captured by the quantum cohomology of $X$, equipped with a formal linear differential operator known as the \emph{(equivariant) quantum connection}. The quantum connection of symplectic resolutions is the main object of study in the equivariant mirror symmetry program, the vision of which was realized first for Springer resolutions in the pioneering work of \cite{BMO11}, later developing into the monumental \cite{MO19}. 

We are interested in new aspects of such equivariant mirror symmetry results that arise over coefficients of positive characteristic. In particular, we construct covariantly constant endomorphisms for the mod $p$ quantum connection, with an eye towards a better understanding of the recently constructed mod $p$ and $p$-adic solutions to the Gauss--Manin systems on the algebraic side of mirror symmetry (see \cite{SV19}, \cite{Var22}, \cite{SV-ar}). 

To illustrate our problem, let us focus on the example of $X = T^*\mathbb{P}^1$, the resolution of the $A_1$ surface singularity. The equivariant quantum connection is explicitly given by (for $p> 2$)
\begin{equation}\label{eqn:T*P1-quantum-connection}
    \nabla_a^{S^1} = t \partial_a - a \ \ast_{S^1} = t q\partial_q - \begin{pmatrix} 0 & \frac{\teq^2 q}{1-q} \\ 1 & \frac{-2\teq q}{1-q} \end{pmatrix}
\end{equation}
for the class $a \in H^2(T^*\mathbb{P}^1)$ Poincar\'e dual to the cotangent fiber, acting on $QH_{eq}^* (T^*\mathbb{P}^1) \cong H^*_{eq}(T^*\mathbb{P}^1)[\![q]\!]$ (an additive isomorphism). The formal parameters $\teq, t$ are the equivariant parameters for the symmetry of the target $X$ and the source curve $\mathbb{P}^1$, respectively.

For cohomology with coefficients in a field of characteristic zero, the construction of endomorphisms $\Sigma : QH^*_{eq}(X) \to QH^*_{eq}(X)$ which satisfy the covariant constancy relation
\begin{equation}
    \nabla_a^{S^1} \circ \Sigma - \Sigma \circ \nabla_a^{S^1} = 0
\end{equation}
is not difficult. The relation itself provides a recursive algorithm to determine $\Sigma$ order-by-order in the quantum parameters $q^A$, $A \in H_2(X;\mathbb{Z})$ from its ``classical term'' $\Sigma|_{q=0}$ (provided that $\Sigma|_{q=0}$ commutes with ordinary cup product, and $H_2(X;\mathbb{Z})$ is torsion-free). As an example, if we assume the classical term $\Sigma|_{q=0}$ to be the cup product with $a \in H^2(T^*\mathbb{P}^1)$, then the endomorphism would be of the form
\begin{align}\label{eqn:char-0-endomorphism}
    \Sigma &= \Sigma|_{q=0} + \Sigma^{(1)}q + \Sigma^{(2)}q^2 + \Sigma^{(3)}q^3 + \cdots, \\
    \Sigma^{(1)} q&= \begin{pmatrix} -t^{-1}\teq^2 & 0 \\ -2t^{-2}\teq^2 + 2t^{-1}\teq & t^{-1}\teq^2\end{pmatrix}q , \notag\\
        \Sigma^{(2)}q^2&= \begin{pmatrix} -\frac{3}{2}t^{-3}\teq^4 + t^{-2}\teq^3 - \frac{1}{2}t^{-1}\teq^2  & t^{-2}\teq^4 \\  -\frac{3}{2}t^{-4}\teq^4 + 3t^{-3}\teq^3 - \frac{5}{2}t^{-2}\teq^2 + t^{-1}\teq   & \frac{3}{2}t^{-3}\teq^4 - t^{-2}\teq^3 + \frac{1}{2}t^{-1}\teq^2  \end{pmatrix}q^2, \notag\\
            \Sigma^{(3)} q^3&= \begin{pmatrix} -\frac{5}{6}\frac{\teq^6}{t^5} + \frac{\teq^5}{t^4} - \frac{11}{6}\frac{\teq^4}{t^3} + \frac{\teq^3}{t^2} - \frac{1}{3}\frac{\teq^2}{t^1} & \frac{\teq^6}{t^4} + \frac{\teq^4}{t^2} \\ -\frac{5}{9}\frac{\teq^6}{t^6} + \frac{5}{3}\frac{\teq^5}{t^5} - \frac{29}{9}\frac{\teq^4}{t^4} + \frac{11}{3}\frac{\teq^3}{t^3} - \frac{20}{9}\frac{\teq^2}{t^2} + \frac{2}{3}\frac{\teq^1}{t^1} & \frac{5}{6}\frac{\teq^6}{t^5} - \frac{\teq^5}{t^4} + \frac{11}{6}\frac{\teq^4}{t^3} - \frac{\teq^3}{t^2} + \frac{1}{3}\frac{\teq^2}{t^1} \end{pmatrix} q^3, \dots \notag
\end{align}
Observe that the structure constants of the endomorphism involve denominators. Therefore, the covariantly constant endomorphism in characteristic $0$ coefficients does not in general admit reduction to mod $p$ coefficients. This property is consonant with the failure of the covariant constancy to determine the endomorphism in positive characteristic. That failure is, in turn, due to the existence of nontrivial quantum parameters that are annihilated by the derivative ($q \partial_q  (q^{p}) = p \cdot q^p = 0$), which halts the aforementioned recursive algorithm that depends on existence of antiderivatives of monomials $q^A$.

In this article, we propose that nevertheless a geometrically meaningful class of covariantly constant endomorphisms in mod $p$ coefficients can be obtained from a construction from equivariant symplectic Gromov--Witten theory, known as the \emph{quantum Steenrod operations}.

\subsection{Quantum Steenrod operations}
Recently in symplectic topology, there has been increasing interest (see e.g. \cite{CGG22}, \cite{She21}, \cite{Rez-ar}, \cite{Sei19}) in the class of equivariant operations on quantum cohomology known as the quantum Steenrod operations, introduced by Fukaya \cite{Fuk97} and developed by Wilkins \cite{Wil20}. These are linear endomorphisms (see \cref{defn:QSt-structure-constants})
\begin{equation}
    Q\Sigma_b : QH^*_{\mathbb{Z}/p}(X;\mathbb{F}_p) \to QH^*_{\mathbb{Z}/p}(X;\mathbb{F}_p)
\end{equation}
given for every $b \in H^*(X;\mathbb{F}_p)$, deforming the classical Steenrod operations on $H^*(X;\mathbb{F}_p)$ by $\mathbb{Z}/p$-equivariant counts of genus $0$ $J$-holomorphic curves. The equivariance is with respect to the action of cyclic group $\mathbb{Z}/p$ of prime order on the domain genus $0$ curve for the $J$-holomorphic maps. It is crucial that the definition of Gromov--Witten invariants in symplectic geometry (under assumptions on positivity) gives well-defined counts over $\mathbb{F}_p$ (in fact over the integers $\mathbb{Z}$), not over $\mathbb{Q}$.

An important property of the quantum Steenrod operations, proved by Seidel--Wilkins \cite{SW22}, is the covariant constancy of the operations with respect to the mod $p$ (small) quantum connection. This compatibility with the connection makes these invariants partially computable, in terms of ordinary Gromov--Witten theory and classical Steenrod operations. Yet, there has not been examples computed beyond the scope of covariant constancy, and therefore the \emph{full} determination of the quantum Steenrod operation as a covariantly constant endomorphism of the quantum connection has been restricted to very limited examples, often relying on degree arguments (see \cite[Section 6.1]{SW22}).
% Our work provides a method for computing the covariantly constant endomorphisms for the equivariant quantum connection in charactersitic $p$ in terms of the quantum Steenrod operations. As a ``proof of concept,'' we explicitly work this out for the case of the $S^1$-equivariant resolution of the $A_1$ symplectic singularity, namely the cotangent bundle $T^*\mathbb{P}^1$ of $\mathrm{SL}_2(\mathbb{C})/B = \mathbb{P}^1$ with a circle action rotating its cotangent fibers. In particular, we fully compute these quantum Steenrod operations. From the perspective of geometric representation theory, this provides a new way of constructing covariantly constant endomorphisms which act on solutions to the mod $p$ quantum differential equation. From the perspective of symplectic topology, this provides first examples of nontrivial computations of quantum Steenrod operations beyond covariant constancy. Indeed, the relevance of multiple covers of $J$-holomorphic curves becomes apparent.
\subsection{Main Results}
Our starting point is the following mild generalization of the main theorem of \cite{SW22} to the $S^1$-equivariant setting.
\begin{prop}[\cref{prop:S1-cov-constancy}]\label{prop:S1-cov-constancy-intro}
    For any choice of $b \in H^*(X;\mathbb{F}_p)$, the $S^1$-equivariant quantum Steenrod operation $Q\Sigma_b^{S^1}$ is a covariantly constant endomorphism for the $S^1$-equivariant quantum connection, that is it satisfies
    \begin{equation}
        \nabla_a^{S^1} \circ Q\Sigma_b^{S^1} - Q\Sigma_b^{S^1} \circ \nabla_a^{S^1} = 0
    \end{equation}
    for any $a \in H^2_{S^1}(X;\mathbb{Z})$.
\end{prop}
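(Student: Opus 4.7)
The plan is to upgrade the argument of Seidel--Wilkins \cite{SW22} to the $S^1$-equivariant setting, exploiting the fact that the $S^1$-action on the target $X$ commutes with the $\mathbb{Z}/p$-action on the domain curves that enters the definition of the quantum Steenrod operations. The entire moduli-theoretic picture of \cite{SW22} then carries over $S^1$-equivariantly essentially verbatim.

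Concretely, I would first fix an $S^1$-invariant, $\omega$-tame almost complex structure $J$ on $X$, so that the $S^1$-action scaling the holomorphic symplectic form is $J$-holomorphic. Every moduli space of $\mathbb{Z}/p$-equivariant $J$-holomorphic spheres used by Seidel--Wilkins inherits a natural $S^1$-action commuting with the $\mathbb{Z}/p$-action on the domain, and its virtual fundamental class can be constructed as an $S^1$-equivariant cycle. The $S^1$-equivariant quantum Steenrod operation $Q\Sigma_b^{S^1}$ is then defined from the same parametrized moduli spaces (over the Borel model $E\mathbb{Z}/p$) as $Q\Sigma_b$, but via $S^1$-equivariant pushforward, paired with an $S^1$-equivariant insertion $b \in H^*_{S^1}(X;\mathbb{F}_p)$ at the $\mathbb{Z}/p$-fixed marked point, and similarly for the $S^1$-equivariant quantum connection $\nabla_a^{S^1} = t\partial_a - a \ast_{S^1}$.

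The key step is to lift the Seidel--Wilkins cobordism to this setting. In \cite{SW22}, the covariant constancy identity is obtained from a one-parameter family of moduli spaces of $\mathbb{Z}/p$-equivariant stable maps with an auxiliary marked point moving along the $\mathbb{Z}/p$-fixed axis of $\mathbb{P}^1$; the two boundary strata (as the auxiliary marked point reaches and bubbles off at each of the two $\mathbb{Z}/p$-fixed points) produce, respectively, $a \ast_{S^1} \circ Q\Sigma_b^{S^1}$ and $Q\Sigma_b^{S^1} \circ a \ast_{S^1}$, while the $t\partial_a$ contribution enters through the divisor equation for the $a$-insertion at the moving marked point. Since every ingredient (the parametrized moduli space, the evaluation maps, the boundary gluing data) is $S^1$-invariant once $J$ is chosen $S^1$-invariantly, the cobordism lifts to $S^1$-equivariant virtual cycles, and equivariant integration yields the claimed commutation.

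The step I expect to require the most care is verifying that the $S^1$-equivariant divisor equation holds for $a \in H^2_{S^1}(X;\mathbb{Z})$ in the precise form needed by the Seidel--Wilkins boundary analysis. For $a$ in the image of $H^2(X;\mathbb{Z}) \to H^2_{S^1}(X;\mathbb{Z})$ this is a formal equivariant enhancement of the classical divisor equation, computed by $S^1$-equivariant intersection with an $S^1$-invariant cycle representing $a$. For the genuinely equivariant directions (e.g. $a = \teq \cdot 1$), covariant constancy is automatic from the $\mathbb{F}_p[\![t,\teq]\!]$-linearity of $Q\Sigma_b^{S^1}$, since $\teq$ is central in the coefficient ring while $t\partial_{\teq}$ merely differentiates the $\teq$-dependent structure constants that arise from the $S^1$-equivariant integration. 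Once these equivariant divisor and gluing ingredients are in place, the proof is a line-by-line equivariant translation of \cite{SW22}.
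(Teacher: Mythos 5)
Your proposal captures the same core argument as the paper: lift the Seidel--Wilkins cobordism (the one-parameter family of moduli spaces with an auxiliary free marked point, whose boundary strata at $z_0$ and $z_\infty$ produce the two compositions $a \ast_{S^1} \circ Q\Sigma_b^{S^1}$ and $Q\Sigma_b^{S^1} \circ a\ast_{S^1}$, combined with the equivariant cellular identity $t[C] = [z_\infty] - [z_0]$) to the $S^1$-equivariant setting. Where you diverge is in how $S^1$-equivariance is implemented. You fix a single $S^1$-invariant $J$ and appeal to $S^1$-equivariant virtual fundamental classes for the moduli spaces; the paper instead treats $S^1$ exactly as it treats $\mathbb{Z}/p$, via the Borel model. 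The almost complex structure $J_\sw$ and perturbation data are allowed to vary over $BS^1$ (\cref{defn:S1-acs}, \cref{defn:S1-eq-perturbation data}), the moduli spaces $\mathcal{M}_A^{eq,i,2j}$ are parametrized over the product cells $\Delta^i \times \sDelta^{2j} \subseteq B\mathbb{Z}/p \times BS^1$ (\cref{defn:S1-eq-map-moduli}), and generic parametrized perturbations give honest transversality, avoiding virtual techniques altogether. This is not merely cosmetic: the $BS^1$-varying family of $J$'s is exactly what the paper exploits in Section 6, where $J_\sw$ is chosen to move through the simultaneous-resolution family of $T^*\mathbb{P}^1$, confining the relevant curves to the hyperplane $V_{\mathbf{J}} \subseteq BS^1$ and reducing the computation to the local $\mathbb{P}^1$ CY3; a fixed $S^1$-invariant $J$ would be incompatible with that strategy. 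Finally, a small slip in your last paragraph: $\partial_a$ acts only on the Novikov parameters via $\partial_a q^A = (a\cdot A)\,q^A$, not as a derivation in $\teq$; for $a = \teq \cdot 1 \in H^2_{S^1}(X;\mathbb{Z})$ it is therefore the zero operator (since $\teq \cdot A = 0$), so $\nabla_{\teq}^{S^1} = -\teq\cdot\mathrm{id}$ and the covariant constancy for the purely equivariant direction is just the trivial commutation of $Q\Sigma_b^{S^1}$ with the scalar $\teq$ --- the paper records this reduction to $a \in H^2(X;\mathbb{Z})$ directly in \cref{defn:S1-Qconn}.
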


The endomorphism $Q\Sigma_b^{S^1}$ arise as $S^1$-equivariant generalizations of the quantum Steenrod operations $Q\Sigma_b$ of \cite{Fuk97}, \cite{Wil20}, and \cite{SW22}. The $S^1$-equivariance is essential: the ordinary quantum Steenrod operations are defined from Gromov--Witten theory, but the Gromov--Witten invariants of a symplectic resolution $X$ are trivial because $X$ admits a deformation to an affine variety. Nevertheless, the \emph{$S^1$-equivariant} Gromov--Witten invariants of $X$ (considered with the $S^1$-action scaling the holomorphic symplectic form) can be nontrivial. Accordingly, the $S^1$-equivariant generalizations of the quantum Steenrod operations may be nontrivial. We introduce this $S^1$-equivariant generalization in this article, and provide its computations.

\cref{prop:S1-cov-constancy-intro} reduces the task of finding a flat endomorphism for the equivariant quantum connection to the Gromov--Witten theoretic problem of determining the structure constants of $Q\Sigma_b^{S^1}$. Such structure constants are given in terms of counts of solutions to a perturbed $J$-holomorphic curve equation, with the perturbation data being parametrized over the classifying space of the group $\mathbb{Z}/p \times S^1$.

The main result of our paper is the following determination of a covariantly constant endomorphism of geometric origin for $\nabla^{S^1}$ over $\mathbb{F}_p$-coefficients, obtained by enumerating such solutions to the partial differential equations.

\begin{thm}[\cref{cor:S1-QSt-computation}]\label{thm:S1-QSt-computation}
    Take $X = T^*\mathbb{P}^1$. The $S^1$-equivariant quantum Steenrod operation 
    \begin{equation}
        Q\Sigma_b^{S^1} : QH^*_{\mathbb{Z}/p \times S^1} (T^*\mathbb{P}^1;\mathbb{F}_p) \to QH^*_{\mathbb{Z}/p \times S^1} (T^*\mathbb{P}^1;\mathbb{F}_p),
    \end{equation} 
    gives a covariantly constant endomorphism for the equivariant quantum connection \eqref{eqn:T*P1-quantum-connection} of $T^*\mathbb{P}^1$. 
    
    Its classical term $Q\Sigma_b^{S^1}|_{q^{A\neq0}=0}$ agrees with the cup product with the classical Steenrod power $\mathrm{St}(b)$ of the class $b \in H^2_{S^1}(T^*\mathbb{P}^1;\mathbb{F}_p)$ that is Poincar\'e dual to a cotangent fiber. 
    
    This endomorphism can be fully computed to all degrees, in the sense that its structure constants are given in terms of explicitly computable equivariant Chern class integrals (see \cref{thm:qst-equals-Chern-integral}) over a projective space. Indeed, the result can be computed for any $p >2$ and up to any order in the $\mathbb{Z}/p$-equivariant parameters $t, \theta$ and the $S^1$-equivariant parameter $\teq$.
\end{thm}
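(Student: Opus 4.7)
The plan is to reduce the proof to three largely independent tasks: (i) verifying covariant constancy, (ii) identifying the classical term, and (iii) reducing the structure constants to a Chern class integral that can be explicitly computed.

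The first task is immediate: by \cref{prop:S1-cov-constancy-intro}, the $S^1$-equivariant quantum Steenrod operation $Q\Sigma_b^{S^1}$ is automatically a covariantly constant endomorphism for $\nabla^{S^1}$. The second task is also essentially formal. By construction, the quantum Steenrod operation deforms the classical Steenrod operation, so setting all nontrivial quantum parameters to zero collapses the moduli space of perturbed spheres onto constant maps; the resulting endomorphism is cup product with $\mathrm{St}(b)$. One only needs to check that for $b$ Poincar\'e dual to a cotangent fiber in $H^2_{S^1}(T^*\mathbb{P}^1;\mathbb{F}_p)$, the standard Cartan/Wu type calculation of $\mathrm{St}(b)$ still holds in the $S^1$-equivariant setting, which is straightforward because the relevant cohomology ring is a polynomial algebra.

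The real content is task (iii). The plan is to analyze the moduli spaces of $J$-holomorphic spheres of degree $d$ in $T^*\mathbb{P}^1$ directly. For a standard integrable complex structure, every such sphere factors through the zero section $\mathbb{P}^1 \hookrightarrow T^*\mathbb{P}^1$, and the moduli of degree $d$ parametrized maps $\mathbb{P}^1 \to \mathbb{P}^1$ is a well-understood finite-dimensional space. After incorporating the marked points and the perturbation data parametrized over $B(\mathbb{Z}/p \times S^1)$, I would identify the virtual fundamental class as the Euler class of an obstruction bundle whose fiber is $H^1$ of the pullback of the normal bundle (i.e.\ the cotangent bundle of $\mathbb{P}^1$) twisted by the relevant equivariant weights. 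Passing to the $\mathbb{Z}/p$-fixed locus, the computation of $Q\Sigma_b^{S^1}$ on this moduli reduces to the statement in \cref{thm:qst-equals-Chern-integral}, presenting each structure constant as an equivariant Chern class integral of a twisted bundle over a projective space $\mathbb{P}^N$ parametrizing the $\mathbb{Z}/p$-equivariant covers.

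The hardest step, and the one I expect to occupy most of the technical work, is the identification of the $\mathbb{Z}/p \times S^1$-equivariant virtual class on the moduli of perturbed spheres with the equivariant Euler class of an explicit obstruction bundle over the projective space of covers. This requires a careful transversality argument for the $\mathbb{Z}/p$-equivariant perturbation scheme of Fukaya--Wilkins/Seidel--Wilkins, combined with a linearization of the perturbed Cauchy--Riemann operator to recover the obstruction bundle as the index bundle of the Dolbeault operator twisted by the normal bundle. Once this identification is in place, evaluating the integral to all orders in $t, \theta, \teq$ becomes a (highly nontrivial but algorithmic) computation in equivariant intersection theory on a projective space, whose outputs are to be compared, as a sanity check, against the low-order endomorphisms \eqref{eqn:char-0-endomorphism} after an appropriate reduction. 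The $S^1$-equivariance is what prevents the integrand from vanishing identically and is therefore essential throughout.
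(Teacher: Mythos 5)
Your tasks (i) and (ii) are handled exactly as the paper does — (i) is an immediate citation of \cref{prop:S1-cov-constancy-intro}, and (ii) follows from \cref{prop:S1-QSt-properties}. The gap is in task (iii), and it is a serious one. You propose to analyze $T^*\mathbb{P}^1$ directly, taking the obstruction bundle to be $H^1$ of the pullback of the normal bundle $\mathcal{O}(-2)$ of the zero section. But the $S^1$-equivariant moduli spaces of \cref{defn:S1-eq-map-moduli} are defined with an almost complex structure that varies over $BS^1$, and for any nonzero deformation $J_s$ the variety $(T^*\mathbb{P}^1, J_s)$ is affine and carries no holomorphic spheres. The moduli is therefore concentrated on a hyperplane $V_{\mathbf{J}} \subseteq BS^1$ (\cref{lem:locus-J-standard}), and the normal direction to $V_{\mathbf{J}}$ corresponds precisely to deformations of the complex structure of $X$. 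This means the correct obstruction bundle is not $H^1(v^*\mathcal{O}(-2))$ (rank $2d-1$) but $H^1(v^*(\mathcal{O}(-1)\oplus\mathcal{O}(-1)))$ (rank $2(d-1)$), arising from the Atiyah simultaneous resolution (\cref{lem:nbhd-of-T*P1}, \cref{lem:reduction-rank1}): the middle term of $0 \to \mathcal{O}(-2) \to \mathcal{O}(-1)^{\oplus 2} \to \mathcal{O} \to 0$ is the normal bundle of $\mathbb{P}^1$ in the local $\mathbb{P}^1$ CY3 $Z$, where the $\mathcal{O}$ summand (which contributes nothing to $H^1$ but does carry $S^1$-weight) accounts for the deformation direction. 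The mismatch in rank — one complex dimension — is exactly what produces the overall $\teq$ prefactor in \cref{cor:S1-QSt-computation}; your obstruction bundle gives neither the right rank nor the right equivariant Euler class. \cref{prop:integration-formula-S1-QSt} is the precise bridge, identifying $\mathcal{M}_A^{eq,i,2j}(T^*\mathbb{P}^1)$ with $\mathcal{M}_A^{eq,i,2j-2}(Z)$ and thereby reducing the computation to \cref{thm:qst-equals-Chern-integral}, which is a statement about $Z$, not $T^*\mathbb{P}^1$.

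There is a second, independent gap. You acknowledge that equating the $\mathbb{Z}/p$-equivariant count of perturbed spheres with a Chern number on a projective space is the hard step, but you do not identify the obstacle: the compactification of the section moduli space $\mathcal{V}_d \subseteq \mathcal{P}_d$ has boundary strata where the holomorphic section acquires vertical components (bubbling), and the section $[\nu]$ of the obstruction bundle does not extend over these strata. The crucial technical input is Voisin's extension lemma (\cref{lem:extension-of-obs-section}), which shows that $[\nu]$ extends as a section of a quotient sheaf $\overline{\mathrm{Obs}}$, and its consequence \cref{thm:noneq-section-compactification} (with the equivariant upgrade \cref{prop:eq-section-compactification}), which establishes that the closure $\overline{\mathcal{V}}_d$ represents $c_{\mathrm{top}}(\mathrm{Obs})$. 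Without this, the transition from the stable-map count to the Chern integral in \cref{thm:qst-equals-Chern-integral} is unjustified. Relatedly, the thickened space is $\mathcal{P}_d = \mathbb{P}H^0(C\times\mathbb{P}^1, \mathcal{O}(d,1)) \cong \mathbb{P}^{2d+1}$, which parametrizes bidegree $(d,1)$ divisors — most points of which are \emph{not} graphs of covers; a description as a space of $\mathbb{Z}/p$-equivariant covers misses exactly the degenerate loci that make the compactness argument nontrivial.
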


In the following sense, the flat endomorphism $Q\Sigma_b^{S^1}$ that we construct is in fact the only nontrivial flat endomorphism for the mod $p$ quantum connection $\nabla^{S^1}$, unique up to multiplication by elements in $\mathbb{F}_p[\![q^p]\!]$.

\begin{thm}[{\cref{prop:uniqueness-flat-endo}}]\label{thm:S1-QSt-unique}
    The $\mathbb{F}_p[\![q^p]\!](\!(\teq, t)\!)$-module of flat endomorphisms for the mod $p$ $S^1$-equivariant quantum connection is free of rank $2$, generated by the identity endomorphism $\mathrm{id}$ and the ($S^1$-equivariant) quantum Steenrod operation $Q\Sigma_b^{S^1}$.
\end{thm}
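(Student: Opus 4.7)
The plan is to analyze the flatness equation $[\nabla_a^{S^1}, \Sigma] = 0$ order-by-order in $q$, reducing it to a structural question about the centralizer of the classical cup product at $q=0$ and bootstrapping off the two flat endomorphisms $\mathrm{id}$ and $Q\Sigma_b^{S^1}$ supplied by \cref{prop:S1-cov-constancy-intro}. Writing $\Sigma = \sum_{n \geq 0} \Sigma_n q^n$ and $M := a \ast_{S^1} = \sum_{n \geq 0} M_n q^n$, with $M_0$ the matrix of classical cup product with $a$ obtained from \eqref{eqn:T*P1-quantum-connection} at $q=0$, the flatness relation unfolds as
\begin{equation*}
    (tn \cdot \mathrm{id} - \mathrm{ad}(M_0))(\Sigma_n) = \sum_{k=1}^{n} [M_k, \Sigma_{n-k}].
\end{equation*}
Since $M_0$ is nilpotent, so is $\mathrm{ad}(M_0)$; the operator on the left is therefore invertible over $K := \mathbb{F}_p(\!(\teq, t)\!)$ whenever $p \nmid n$, uniquely determining $\Sigma_n$ from earlier data. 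For $n = pm$ it reduces to $-\mathrm{ad}(M_0)$, whose kernel is the centralizer $Z(M_0) = K \cdot \mathrm{id} \oplus K \cdot M_0$, free of rank $2$.

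The key step is to verify that $\mathrm{id}|_{q=0}$ and $Q\Sigma_b^{S^1}|_{q=0}$ span $Z(M_0)$ over $K$. By \cref{thm:S1-QSt-computation}, $Q\Sigma_b^{S^1}|_{q=0}$ is cup product with the classical Steenrod power $\mathrm{St}(b)$; since $b$ has degree $2$ and $b^p = 0$ in $H^*_{S^1}(T^*\mathbb{P}^1; \mathbb{F}_p) \cong \mathbb{F}_p[\teq, b]/(b^2)$, the classical Steenrod-power formula gives $\mathrm{St}(b) = c \cdot b$ (up to terms that vanish in this ring) for a nonzero scalar $c \in \mathbb{F}_p[t]$ arising from the $P^0(b) = b$ contribution. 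Hence $Q\Sigma_b^{S^1}|_{q=0}$ has nonzero $M_0$-component, and together with $\mathrm{id}$ forms a $K$-basis of $Z(M_0)$. The $R$-linear independence of $\mathrm{id}$ and $Q\Sigma_b^{S^1}$ over $R := \mathbb{F}_p[\![q^p]\!](\!(\teq, t)\!)$ follows by specializing any hypothetical $R$-relation to $q = 0$, which would produce a $K$-linear dependence in $Z(M_0)$.

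For the spanning half of the theorem, given any flat $\Sigma$, I pick $f_0, g_0 \in K$ with $\Sigma_0 = f_0 \cdot \mathrm{id}|_{q=0} + g_0 \cdot Q\Sigma_b^{S^1}|_{q=0}$ and set $\tilde\Sigma := \Sigma - f_0 \cdot \mathrm{id} - g_0 \cdot Q\Sigma_b^{S^1}$, which is flat with vanishing classical term. The recursion for $1 \leq n < p$ then forces $\tilde\Sigma_n = 0$, so $\tilde\Sigma = q^p \Sigma'$ for some power series $\Sigma'$; the characteristic-$p$ Frobenius identity $t q \partial_q(q^p) = tpq^p = 0$ yields $[\nabla_a^{S^1}, q^p \Sigma'] = q^p \cdot [\nabla_a^{S^1}, \Sigma']$, so $\Sigma'$ is itself flat. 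Iterating and summing $q^p$-adically produces $f, g \in R$ with $\Sigma = f \cdot \mathrm{id} + g \cdot Q\Sigma_b^{S^1}$. The main obstacle is the base-case Steenrod-power computation establishing the nonzero $M_0$-component at $q = 0$; once this is in hand, the rest is a formal consequence of the nilpotence of $\mathrm{ad}(M_0)$ and the characteristic-$p$ Frobenius identity, with the ``free slots'' in the recursion at each $n = pm$ matched in rank by the two generators of $Z(M_0)$.
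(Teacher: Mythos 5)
Your proof is correct and uses the same underlying idea as the paper---an order-by-order analysis of the flatness equation in $q$, with the $q^p$-divisible part supplying a rank-two ``free'' parameter at each $p$-divisible order---but it packages the argument quite differently. The paper works with explicit matrix entries: it first kills the trace by subtracting a multiple of $\mathrm{id}$, writes out the covariant constancy relation as three scalar ODEs for the entries $a, b, c$ of $\Sigma$, observes from those ODEs that $a_0 = b_0 = 0$ and that the $q^p$-divisible part $c' = \sum c_{pk} q^{pk}$ of the lower-left entry determines the whole endomorphism, and then uses the fact that the corresponding entry of $Q\Sigma_b^{S^1}$ has invertible constant term $-t^{p-1}$ to scale $Q\Sigma_b^{S^1}$ to hit any prescribed $c'$. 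You instead phrase the recursion invariantly as $(tn - \mathrm{ad}(M_0))\Sigma_n = \sum_{k\geq 1}[M_k, \Sigma_{n-k}]$, observe that $tn - \mathrm{ad}(M_0)$ is invertible for $p \nmid n$ because $M_0$ is nilpotent and that the kernel at $p \mid n$ is the $2$-dimensional centralizer $Z(M_0) = K\cdot\mathrm{id} \oplus K\cdot M_0$, match this against the classical limits $\mathrm{id}|_{q=0}$ and $Q\Sigma_b^{S^1}|_{q=0} = -t^{p-1}M_0$, and close the argument with a clean $q^p$-adic iteration using the Frobenius identity $tq\partial_q(q^p) = 0$. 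What your version buys is structural transparency: the centralizer formulation makes the rank count manifest without choosing matrix coordinates, and the $q^p$-adic iteration replaces the paper's scaling trick with something that would generalize to targets of higher rank without further work.

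Two small remarks. First, the linear-independence half needs the ``take the lowest-order $q^{pm}$-coefficient'' refinement beyond naively setting $q=0$: if a relation $c_1\,\mathrm{id} + c_2\, Q\Sigma_b^{S^1} = 0$ has $c_1(0)=c_2(0)=0$, you must divide by $q^p$ and repeat (or argue via the trace, as the paper does). Second, your verification that $\mathrm{St}(b)$ is a nonzero multiple of $b$ is worded a bit loosely; the cleaner statement, used directly in the paper, is that in $H^*_{\mathbb{Z}/p\times S^1}(T^*\mathbb{P}^1;\mathbb{F}_p)$ one has $\mathrm{St}(b) = -t^{p-1}b$ because $|b|=2$ and $b^2=0$ forces the total Steenrod power to collapse to the $P^0$-term. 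Neither of these affects the soundness of the argument.
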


Let us illustrate the computation for the Chern class integral from \cref{thm:S1-QSt-computation} that agrees with the structure constants for the operation $Q\Sigma_b^{S^1}$. Fix $b \in H^2_{S^1}(T^*\mathbb{P}^1)$ to be the class Poincar\'e dual to the cotangent fiber. The integral could be evaluated via fixed point localization as following (see \cref{cor:S1-QSt-computation})). Below, $d \in \mathbb{Z}_{>0}$ is written as $d = \alpha p + \beta$ so that $\alpha$ and $\beta$ are quotient and remainder of $d$ modulo $p$, respectively. First take
    \begin{equation}
        C_{d, \ell} := C_{d, \ell}(x, t, \teq) = (x^{p-1} - t^{p-1})^{1 - 2 \alpha} \ c_{0, \infty}(x + \ell t, t) \ \prod_{k=1}^{d-1} (x  + (\ell -k)t - \teq)^2.
    \end{equation}
For a power series $P(x) \in \mathbb{F}_p(\!(\teq, t, \theta)\!)[\![x]\!]$, let $(x^k: P(x)) \in \mathbb{F}_p(\!(\teq, t, \theta)\!)$ denote the coefficient of $x^k$ in $P(x)$. The localization formula (\cref{cor:S1-QSt-computation}) is then written as
\begin{align}
    &(Q\Sigma_b^{S^1}(b_0), b_\infty)_{S^1} = (\mathrm{St}(b) \cup b_0, b_\infty)_{S^1} \notag \\
     &+  \teq \  q^d  \sum_{d > 0}  \left[ \sum_{\ell=0}^\beta  \left( x^{2\alpha}:   C_{d, \ell} \ { \prod_{j=1}^\ell (x+jt)^{-2} \prod_{j=1}^{\beta-\ell} (x-jt)^{-2}  } \right) \right. \left. + \sum_{\ell = \beta + 1}^{p-1} \left( x^{2\alpha - 2} : C_{d, \ell} \ {  \prod_{j=0}^\beta (x+(\ell-j)t)^{-2} } \right)  \right]  \notag.
\end{align}
    The computation holds for any prescribed choice of $b_0, b_\infty \in H^*(X;\mathbb{F}_p)$, and involves a universal polynomial $c_{0, \infty}(H,t)$ that only depends on $b_0, b_\infty$ and the degree $d$.

    For example, in low degrees of $\teq$ we have (computation in \cref{sec:appendix-low-order-h})
    \begin{align}
        &(Q\Sigma_b^{S^1}(b_0), b_\infty)_{S^1} = (\mathrm{St}(b) \cup b_0, b_\infty)_{S^1}  \notag \\
         &+ \teq  \left( \sum_{d > 0, p \nmid d} \left[ -  \frac{t^{p-3}}{\beta^2}  c_{0, \infty}(\beta  t, t) \right] q^d + \sum_{d>0, p \mid d} \left[ -t^{p-1} (x^2:c_{0, \infty}(x, t)) \right] q^{d} \right)   \notag \\
         &+ \teq^2  \left( \sum_{d > 0, p \nmid d} \left[ \sum_{m=1, m \not \equiv \beta}^{d-1} \frac{ 2 t^{p-4}}{(\beta - m) \beta^2 } c_{0, \infty}(\beta t, t) - 2  \alpha t^{p-4} \left(x^1 : \left( \frac{t}{\beta^2} - \frac{2}{\beta^{3}}x \right)c_{0, \infty}(x+\beta t, t) \right) \right] q^d   \right. \notag \\
         &\quad \quad \quad  \left. \sum_{d >0, p | d} \left[ \sum_{m=1, m \not \equiv 0}^{d-1} \left( x^2 : -2 \ \frac{x^{p-1} - t^{p-1}}{x-mt} \ c_{0, \infty} (x, t) \right) -2 \alpha  \left( x^3 : (x^{p-1} - t^{p-1})\ c_{0, \infty} (x, t) \right) \right]  q^d \right) \notag \\
         &+ O(\teq^3). \notag
    \end{align}
    Note the appearance of higher powers of $\teq$; here we observe nonlinear dependence on the $S^1$-equivariant parameter $\teq$ in the computation of $Q\Sigma_b^{S^1}$. This is in contrast with \cite{BMO11}, where the formulas are only linear in $\teq$.

Finally, we describe the relationship of our operation and the mod $p$ solutions to the equivariant quantum differential equation obtained by Varchenko \cite{Var-ar}. Via a gauge transform prescribed by the theory of cohomological stable envelopes (\cite{TV14}, \cite{MO19}), the equivariant quantum differential equation \eqref{eqn:T*P1-quantum-connection} can be identified with the dynamical differential equation or the trigonometric KZ equation studied in \cite{Var-ar}. Following \cite{Var21}, let us call the mod $p$ solutions obtained by Varchenko the (mod $p$) \emph{arithmetic flat sections}.

Since the $S^1$-equivariant quantum Steenrod operation is flat with respect to the mod $p$ equivariant quantum connection, its action on the arithmetic flat sections yields another mod $p$ solution. This action can be examined explicitly:

\begin{prop}[\cref{prop:QSt-annihilates-soln}]\label{prop:Varchenko-intro}
    Fix a parameter $\mu \in \mathbb{F}_p$. Let 
    \begin{equation}
    I_\mu(q) = (-1)^{p-m} (1-q)^{2m} \left( \sum_{d=0}^{p-m} \binom{p-m-1}{p-m-d} \binom{p-m}{d} q^d , \  \sum_{d=0}^{p-m} \binom{p-m}{p-m-d} \binom{p-m-1}{d} q^d \right)
    \end{equation}
    from \eqref{eqn:arithmetic-soln} be the distinguished arithmetic flat section of the equivariant quantum connection obtained by \cite{Var-ar} for the given choice of the parameter $\mu \equiv m \in \mathbb{F}_p$. After specializing $\teq / t = \mu$, we have
    \begin{equation}
        Q\Sigma_b^{S^1}|_{\teq/t = \mu} \left(  I_\mu (q) \right) = 0.
    \end{equation}
\end{prop}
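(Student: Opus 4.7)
The starting point is \cref{prop:S1-cov-constancy-intro}: after the specialization $\teq = \mu t$, the operation $Q\Sigma_b^{S^1}|_{\teq = \mu t}$ remains a covariantly constant endomorphism of the specialized connection $\nabla_a^{S^1}|_{\teq = \mu t}$. In particular, $Q\Sigma_b^{S^1}|_{\teq = \mu t}(I_\mu(q))$ is a flat section of this specialized connection, and the task reduces to proving that this flat section vanishes identically.

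My strategy is a direct verification using the Chern-integral formula of \cref{thm:S1-QSt-computation}. The pairing $(Q\Sigma_b^{S^1}(I_\mu(q)), b_\infty)_{S^1}|_{\teq = \mu t}$ expands, at each order $q^n$, as the convolution $\sum_{d+e = n}$ of the coefficients of $I_\mu(q)$ (which are bilinear combinations of the binomial coefficients $\binom{p-m-1}{p-m-d}\binom{p-m}{d}$ and $\binom{p-m}{p-m-d}\binom{p-m-1}{d}$, modulated by $(1-q)^{2m}$) against the structure-constant coefficients $C_{e,\ell}$ from the formula. At $\teq = \mu t$ the factors $(x + (\ell - k)t - \teq)^2$ become $(x + (\ell - k - \mu)t)^2$, whose roots are resonant with the hypergeometric shifts encoded in $I_\mu$; combined with Fermat's little theorem acting on the factor $(x^{p-1} - t^{p-1})^{1 - 2\alpha}$ and Lucas-type congruences for the binomial coefficients, I expect the convolution to telescope to zero order-by-order. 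A useful first consistency check is that the classical term already vanishes: $\mathrm{St}(b)|_{\teq = \mu t} \cup I_\mu(0) = 0$, as one verifies by combining the relation $a^2 = \teq a$ with the explicit shape of $I_\mu(0)$ (treating $\mu = 0$ and $\mu \neq 0$ separately, the latter via Fermat).

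To keep the verification finite, I would invoke flatness in conjunction with \cref{thm:S1-QSt-unique}: the module of flat sections of $\nabla_a^{S^1}|_{\teq = \mu t}$ is controlled by the finite-rank structure of flat endomorphisms, and the Frobenius-block recursion $t n \, v_n = \sum_{k} A_{n-k} v_k$ for mod $p$ first-order systems ensures that vanishing of the initial segment $v_0, v_1, \ldots, v_{p-1}$ together with a compatibility constraint at $v_p$ propagates to all orders. Thus the infinite family of identities collapses to checking a single period of $q$. The main obstacle is identifying the exact mod $p$ combinatorial identity that annihilates the truncated bilinear sum. Conceptually this is the mod $p$ shadow of the resonance behind Varchenko's hypergeometric truncation: under the KZ-gauge transformation relating $\nabla^{S^1}$ to the trigonometric KZ equation, $Q\Sigma_b^{S^1}$ corresponds on the B-side to a shift-type operator whose action on the hypergeometric solution degenerates precisely at the resonant parameter $\teq/t = \mu \in \mathbb{F}_p$. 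Translating this conceptual statement into an explicit identity—likely through a contiguous-relation argument for the truncated ${}_2F_1$-series defining $I_\mu$—is the key technical step I foresee.
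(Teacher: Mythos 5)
Your overall strategy — direct verification via the explicit Chern-integral formula after the specialization $\teq/t = \mu$ — is the one the paper takes, and your observations about the form of the convolution and about the classical $q^0$ term are sound. But the finite-reduction step, which is where all the work lives, has a genuine gap.

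You propose to reduce to a single period of $q$ by noting that $J := Q\Sigma_b^{S^1}|_{\teq=\mu t}(I_\mu)$ is flat and that the recursion $t n J_n = \sum_k A_{n-k} J_k$ propagates vanishing of the initial block $J_0,\dots,J_{p-1}$ (plus one compatibility at $J_p$) to all orders. This is not true in characteristic $p$: precisely because $t n = 0$ when $p \mid n$, the coefficients $J_p, J_{2p}, J_{3p}, \dots$ are \emph{unconstrained} by the recursion. Knowing $J_0 = \cdots = J_{p-1} = 0$ forces nothing about $J_p$, and each subsequent $J_{kp}$ is a fresh free parameter. (Equivalently: $q^{kp}\cdot(\text{flat section})$ is flat, so vanishing to any finite order is no obstruction to being nonzero.) Invoking \cref{thm:S1-QSt-unique} does not close the gap either, since that theorem classifies flat \emph{endomorphisms}, not flat sections, and in any case it cannot rule out $J$ lying in the $\mathbb{F}_p[\![q^p]\!]$-span of the known solutions with $q^{p}$-divisible coefficients.

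The paper's mechanism for making the verification finite is different and is the key lemma you are missing: after specializing $\teq/t = \mu$, the Taylor coefficients $\Sigma_d$ of $Q\Sigma_b^{S^1}|_{\teq=\mu t}$ satisfy $\Sigma_d = \Sigma_{d+p}$ for all $d>0$ (\cref{lem:QSt-structure-const-depends-on-remainder}). This periodicity is not a consequence of flatness; it is proved directly from the Chern-integral formula, by comparing the degree-$d$ and degree-$(d+p)$ integrals over $\mathcal{P}^{eq}_{S^1,d}$ and $\mathcal{P}^{eq}_{S^1,d+p}$ and observing that both the relation ideal of $H^*_{\mathbb{Z}/p\times S^1}(\mathcal{P}_d)$ and the Euler class of the obstruction bundle pick up the \emph{same} factor $(H^p - t^{p-1}H)^2$ when $d \mapsto d+p$, once $\teq = \mu t$. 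Given this, $(1-q^p)\Sigma$ is a polynomial of degree $\le p$ in $q$, and since $I'_\mu$ (the polynomial part of $I_\mu$) has degree $\le p$, the identity $\Sigma I'_\mu = 0$ becomes a polynomial identity of bounded degree, verifiable by a finite computation with the explicit matrices $\Sigma_0,\dots,\Sigma_p$ (which the paper records in terms of binomial coefficients, essentially the contiguous-relation data you anticipated). So your instinct that a hypergeometric/Lucas-type identity does the final work is right, but you need the periodicity lemma — not the flatness recursion — to reduce to that finite identity.
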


That is, we find that the distinguished solution of Varchenko is exactly annihilated by the $S^1$-equivariant quantum Steenrod operations (after a particular specialization on the ratio of the equivariant parameters for the group actions on the source and the target of the $J$-holomorphic curves, see \cref{rem:QSt-simplifies}).

%TODO: Add description of Varchenko solution

\subsection{Related work}
The main motivation for considering the covariantly constant endomorphisms from \cref{thm:S1-QSt-computation} is two-fold:

On the algebraic side, Schechtman--Varchenko \cite{SV19} and Varchenko \cite{Var-ar}, \cite{Var22} have recently provided a method for systematically obtaining mod $p$ and $p$-adic solutions to the Gauss--Manin integrable systems that arise on the algebraic side of equivariant mirror symmetry. Their method, following an idea of \cite{Man61}, derives the ansatz for the mod $p$ solutions from the presentation of characteristic $0$ solutions as hypergeometric integrals. Our approach provides an alternative characterization of the mod $p$ solutions of \cite{Var-ar}: the quantum Steenrod operation exactly annihilates these solutions.

On the symplectic side, the significance of our results (\cref{thm:S1-QSt-computation} and \cref{thm:QSt-computation}) is that it provides the first computations of quantum Steenrod operations beyond the range of covariant constancy. In particular, unlike previously known computations, our computations are not a priori determined by ordinary Gromov--Witten theory (the small quantum product) and classical Steenrod operations. It also features nontrivial counts for the degrees supporting multiple covers of $J$-holomorphic spheres, which is consonant with the expectation that the multiple covers provide the most interesting information stored in quantum Steenrod operations (see \cite[Task 1]{Wil-sur}).

\subsection{Methods}
We give a brief discussion of the strategy for the computation in \cref{thm:S1-QSt-computation}.

By an early observation due to Atiyah \cite{Ati58} (the ``simultaneous resolution,'' \cref{lem:nbhd-of-T*P1}), the computation of the $S^1$-equivariant Gromov--Witten invariants of $X = T^*\mathbb{P}^1$ can be reduced to the computation of ordinary Gromov--Witten invariants of a different target geometry, namely that of a local $\mathbb{P}^1$ Calabi--Yau 3-fold (CY3) $Z = \mathrm{Tot}(\mathcal{O}(-1) \oplus \mathcal{O}(-1) \to \mathbb{P}^1)$. Similarly, our task of computing the $S^1$-equivariant quantum Steenrod operations of $T^*\mathbb{P}^1$ can be reduced to the computation of the quantum Steenrod operations $Q\Sigma_b$ and $Q\Sigma_b^{S^1}$ for the local $\mathbb{P}^1$ CY3, $\mathrm{Tot}(\mathcal{O}(-1)^{\oplus 2} \to \mathbb{P}^1)$.

\begin{rem}
The example of local $\mathbb{P}^1$ CY3, in a precise sense outlined by \cite{BG08} and \cite{BMO11}, can be regarded as the ``building block'' for computations for more interesting examples of target manifolds including resolutions of ADE surface singularities (beyond the $A_1$ case) or cotangent bundles of flag varieties (beyond the $\mathrm{SL}_2$ case). 
\end{rem}

\begin{thm}[\cref{thm:qst-equals-Chern-integral}, \cref{cor:computation}]\label{thm:QSt-computation}
    The quantum Steenrod operations $Q\Sigma_b$ for the local $\mathbb{P}^1$ CY3 $Z = \mathrm{Tot}(\mathcal{O}(-1) \oplus \mathcal{O}(-1) \to \mathbb{P}^1)$ can be fully computed to all degrees, by means of equivariant Chern class computations. The result is written as
    \begin{equation}
    Q\Sigma_b = \begin{pmatrix} (Q\Sigma_b (1), b) & (Q\Sigma_b (b), b) \\ (Q\Sigma_b (1 ), 1 )  & (Q\Sigma_b (b), 1) \end{pmatrix} = \begin{pmatrix} \sum_{d=1}^{\infty} -d^{p-2}t^{p-2} q^{d} & \sum_{d=1}^\infty -t^{p-1} q^{pd} \\ \sum_{d=1}^{\infty} -2d^{p-3}t^{p-3} q^{d} & \sum_{d=1}^{\infty} d^{p-2}t^{p-2} q^{d} \end{pmatrix}
\end{equation}
where $b \in H^2(Z)$ is the class Poincar\'e dual to the fiber of the projection $Z \to \mathbb{P}^1$.
\end{thm}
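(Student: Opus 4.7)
The plan is to apply \cref{thm:qst-equals-Chern-integral} to rewrite each structure constant $(Q\Sigma_b(b_0), b_\infty)$ of the quantum Steenrod operation on $Z$ as an equivariant Chern class integral over a projective space parametrizing $\mathbb{Z}/p$-equivariant degree $d$ stable maps $\mathbb{P}^1 \to \mathbb{P}^1$, and then to evaluate these integrals in closed form by residue extraction. The crucial geometric input is that every nonconstant genus zero map to $Z$ factors through the zero section $\mathbb{P}^1 \subset Z$, so the relevant moduli reduces to a $\mathbb{Z}/p$-equivariant variant of the standard moduli space of degree $d$ maps to $\mathbb{P}^1$; the obstruction from $R^1\pi_* f^*(\mathcal{O}(-1)^{\oplus 2})$ then contributes an explicit equivariant Euler class factor whose exponent $2$ manifests the two $\mathcal{O}(-1)$ summands. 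This reduces the theorem to a purely combinatorial residue computation in $\mathbb{F}_p$.

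The next step is to pin down the universal polynomial $c_{0,\infty}(H,t)$ corresponding to each of the four pairs $(b_0, b_\infty) \in \{(1,1), (1,b), (b,1), (b,b)\}$—each a simple monomial determined by the evaluation of the test classes at $0, \infty \in \mathbb{P}^1$—and carry out the coefficient extraction in the formal variable $x$ for $d = \alpha p + \beta$ in each case. The prefactor of the form $(x^{p-1}-t^{p-1})^{1-2\alpha}$ should be handled via the $\mathbb{F}_p$-factorization $x^{p-1}-t^{p-1} = \prod_{j=1}^{p-1}(x-jt)$, which for $\alpha \geq 1$ allows cancellation of negative powers against appropriate factors of the Euler class product $\prod_{k=1}^{d-1}(x+(\ell-k)t)^2$, leaving a rational function in $x$ whose required residues can be extracted directly.

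The principal obstacle is to collapse the resulting double sum—indexed by the orbit label $\ell \in \{0,\ldots, p-1\}$ and the degree $d$—into the clean closed forms $\pm d^{p-2} t^{p-2}$, $-2 d^{p-3} t^{p-3}$, and $-t^{p-1} q^{pd}$. I expect this to rest on standard power-sum congruences modulo $p$ (descending ultimately from Fermat's little theorem and Wilson's theorem), which organize the $\ell$-sum for each residue class $\beta = d \bmod p$ into a single monomial in $d$. The exceptional $(1,2)$-entry $(Q\Sigma_b(b), b) \in \mathbb{F}_p[\![q^p]\!]$ singles itself out because it requires extracting the top-degree coefficient of a polynomial of $x$-degree at most $2(d-1)$, which forces $\beta = 0$ (i.e., $p \mid d$) and yields the isolated value $-t^{p-1}$. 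The $t$-weight difference between the diagonal entries ($t^{p-2}$) and the $(2,1)$-entry ($t^{p-3}$), along with the extra factor of $2$ in the latter, tracks the shift in extraction degree imposed by the two choices of test class $b$ at the marked point.
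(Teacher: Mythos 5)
Your overall strategy is correct and matches the paper: apply \cref{thm:qst-equals-Chern-integral} to reduce each structure constant to a $\mathbb{Z}/p$-equivariant Chern class integral over $\mathcal{P}_d\cong\mathbb{P}^{2d+1}$, then evaluate using the explicit Euler classes of $\mathrm{Obs}$ (with exponent $2$ from the two $\mathcal{O}(-1)$ summands, \cref{lem:eq-euler-obs}) and $\mathrm{IC}$ (with the factor $c_{0,\infty}(H,t)$, \cref{lem:eq-euler-IC}). One terminological caution: $\mathcal{P}_d$ is \emph{not} a moduli of stable maps but the complete linear system $\mathbb{P}(H^0(C\times\mathbb{P}^1,\mathcal{O}(d,1)))$, the Aspinwall--Morrison--Voisin compactification of the space of graphs; the stable map compactification is the tool for \cref{lem:noneq-0-dim-count-map-moduli}, not for this integral. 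Also, the $C_{d,\ell}$-formula with the $(x^{p-1}-t^{p-1})^{1-2\alpha}$ prefactor that you invoke comes from the \emph{$S^1$-equivariant} localization of \cref{cor:S1-QSt-computation}; for the purely $\mathbb{Z}/p$-equivariant \cref{cor:computation} one works directly in $\mathbb{F}_p[H,t,\theta]/\prod_{j=0}^{d}(H-jt)^2$ (your localization route is equivalent, amounting to that formula with $\teq=0$, but the paper does not present it that way).

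The genuine gap is your explanation of why $(Q\Sigma_b(b),b)$ is supported only on $q^{pd}$. Your degree-based claim — that one must extract the top-degree coefficient of a degree-$2(d-1)$ polynomial, forcing $\beta=0$ — is not right. The correct mechanism is a \emph{divisibility} argument. For $b_0=b_\infty=b$ the integrand is
\begin{equation*}
\prod_{j=1}^{d-1}(H-jt)^2\cdot (H^p-t^{p-1}H)\cdot H(H-dt)
= \frac{\prod_{j=0}^{d}(H-jt)^2\cdot\prod_{k=1}^{p-1}(H-kt)}{H-dt}.
\end{equation*}
Writing $d\equiv\beta\pmod p$, the linear factor $H-dt$ equals $H-\beta t$ in $\mathbb{F}_p$. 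When $\beta\neq 0$ this divides $\prod_{k=1}^{p-1}(H-kt)$, so the integrand remains a multiple of the relation $\prod_{j=0}^{d}(H-jt)^2$ and is therefore $0$ in the cohomology ring: the integral vanishes. When $\beta=0$ the division removes one power of $H$ from the relation, so the integrand is no longer a multiple of the ideal, and a direct coefficient extraction produces $-t^{p-1}$. A similar divisibility check (rather than a degree count alone) is also what makes the other three entries collapse to single monomials in $d$ after applying Fermat's little theorem, so you will need to carry out this reduction carefully for each $c_{0,\infty}$; the power-sum congruences you anticipate are indeed the right tools, but they enter through this cancellation structure.
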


The covariant constancy relation of \cite{SW22} is a result of topological flavor, and to go beyond its scope for computations one must introduce a new geometric method. We adopt the strategy of \cite{Voi96} in her proof of the Aspinwall--Morrison multiple cover formula for Gromov--Witten theory of Calabi--Yau 3-folds. The key idea is to take a particular kind of perturbation datum for our perturbed Cauchy--Riemann equations, which we call \emph{decoupling perturbation datum}, of the form
\begin{equation}
    \nu \in C^\infty ( C \times \mathbb{P}^1 ; \mathrm{Hom}^{0,1 }(TC, \mathcal{O}(-1)^{\oplus 2})).
\end{equation}
More precisely, we consider a family of such perturbation data parametrized over the classifying space $B\mathbb{Z}/p$. The main advantage of restricting to the class of such perturbation data is that the corresponding equation admits a natural decomposition of the form
\begin{equation}
    u: C \to X, \ \overline{\partial}_J u = \pi^* \nu \quad \iff \quad \begin{cases} \mbox{(i) } v: C \to \mathbb{P}^1, \ \overline{\partial}_J v = 0 \\ \mbox{(ii) } \phi \in C^\infty(C, v^*\mathcal{O}(-1)^{\oplus 2}), \ \overline{\partial} \phi = \nu \end{cases}.
\end{equation}
Then we observe that the moduli space of solutions to equation (i) admits a natural compactification into a projective space $\mathcal{P}$, and the solutions to equation (ii) can be realized as a zero locus of a section of a vector bundle $\mathrm{Obs} \to \mathcal{P}$. In a loose analogy with the Kuranishi model perspective towards moduli spaces in symplectic geometry, we call $\mathcal{P}$ and $\mathrm{Obs}$ the \emph{thickened moduli space} and \emph{obstruction bundle} respectively. The key technical result (\cref{thm:qst-equals-Chern-integral}) is that despite the difference in the description of the moduli spaces, the counts of solutions to the original problem can be identified with the counts coming from the decomposition of the equation into (i) and (ii), essentially reducing them to (equivariant) Chern class computations. 

\subsection{Organization of the paper}

In Section 2 we introduce equivariant cohomology, equivariant moduli spaces of solutions to perturbed $J$-holomorphic curve equation. We define quantum Steenrod operations and also discuss their properties, notably including the covariant constancy relationship of \cite{SW22}.

In Section 3 we focus on the extended example of the local $\mathbb{P}^1$ Calabi--Yau 3-fold $X = \mathrm{Tot}(\mathcal{O}(-1)^{\oplus 2} \to \mathbb{P}^1)$ to describe our strategy for computing the quantum Steenrod operations. We develop the non-equivariant versions of the relevant moduli spaces. This is mainly an expanded exposition of \cite{Voi96} tailored to our conventions. 

In Section 4 equivariant versions of the moduli spaces from Section 3 are developed and the local $\mathbb{P}^1$ CY3 example is fully computed. 

In Section 5 the extension of the quantum Steenrod operations to the $S^1$-equivariant setting is discussed. We introduce moduli spaces which take both the $\mathbb{Z}/p$-symmetry on domain and the $S^1$-symmetry on target into account. The $S^1$-equivariant analogue of the covariant constancy relation is discussed.

In Section 6 we use the computations in Section 4 to determine the $S^1$-equivariant quantum Steenrod operations for the unique fiber class in $T^*\mathbb{P}^1$. By the results in Section 5, this yields the desired covariantly constant endomorphism for the equivariant quantum connection of $T^*\mathbb{P}^1$. 

In Section 7 the relationship of the $S^1$-equivariant quantum Steenrod operations with the arithmetic flat sections of the quantum differential equation obtained by Varchenko \cite{Var-ar} is examined. The appendix carries out the necessary gauge transform to rewrite our connection \eqref{eqn:T*P1-quantum-connection} to be compatible with the notation in \cite{Var-ar}.

\subsection{Acknowledgements}
We would like to thank Paul Seidel for his support and encouragement throughout the gestation of this project. We also thank Davesh Maulik for pointing out the inspiring work of Schechtman--Varchenko \cite{SV19}; Zihong Chen and Nick Wilkins for many insightful discussions regarding quantum Steenrod operations; Alexander Varchenko for explaining his series of work on the mod $p$ and $p$-adic solutions; Shaoyun Bai and Dan Pomerleano for helpful discussions at different stages of this project. Finally, we thank the anonymous referee for useful comments which greatly improved the exposition. 

This work was partially supported by MIT Landis fellowship, by the National Science Foundation through grant DMS-1904997, and by the Simons Foundation through grants 6552299 (the Simons Collaboration on Homological Mirror Symmetry) and 256290 (Simons Investigator).

\section{Quantum Steenrod operations}\label{sec:QSt}

In this section, we review the definition of quantum Steenrod operations and discuss a few of its properties. The discussion in this section is not particularly original. However, we have chosen to deviate at some points from the standard conventions for technical convenience. In particular, we impose incidence constraints at the marked points by intersecting with actual cycles, instead of using Morse cocycles as in \cite{SW22}. 

\subsection{$\mathbb{Z}/p$-equivariant cohomology}\label{ssec:eq-coh}
The purpose of this subsection is mainly to set notations, and the exposition closely follows \cite{SW22}. (But sign conventions differ, see \cref{rem:equiv-convention} and \cref{rem:equiv-convention-2}.) We will explain our choice of a model for the classifying space $B\mathbb{Z}/p$ and $\mathbb{Z}/p$-equivariant cohomology, which will be relevant for later constructions that are parametrized over $B\mathbb{Z}/p$.

Fix a prime $p$ and the cyclic group of $p$ elements $\mathbb{Z}/p$. Fix a distinguished cyclic generator $\sigma$, that is an isomorphism $\mathbb{Z}/p \cong \langle \sigma : \sigma^p = 1 \rangle$. The classifying space $B\mathbb{Z}/p$ is a homotopy type constructed via a quotient of a free $\mathbb{Z}/p$-action on a (weakly) contractible space $E\mathbb{Z}/p$. We take
\begin{equation}
S^\infty = \{ w = (w_0, w_1, \dots ) \in \mathbb{C}^\infty : \ w_k = 0 \mbox{ for } k \gg 0, \|w\|^2 = 1 \}
\end{equation}
as our model for $E\mathbb{Z}/p$. For $w \in S^\infty$, denote by $w: \mathrm{pt} \to S^\infty$ the corresponding inclusion map with image $\{w\}$.

Let $\zeta = e^{2\pi i/p}$, and then the (left) action of $\mathbb{Z}/p$ is described as
\begin{equation}
\sigma \cdot w  = \zeta w = (\zeta w_0, \zeta w_1, \dots).    
\end{equation}

Then we take $B\mathbb{Z}/p := S^\infty / (\mathbb{Z}/p)$ as the quotient.

One advantage of this model is that there is a convenient cellular description. Fix $k \ge 0$. Consider
\begin{align}
\label{eqn:equiv-param}
\Delta^{2k} &= \{ w \in S^\infty : w_k > 0, w_{k+1} = w_{k+2} = \cdots = 0 \}, \\
\Delta^{2k+1} &= \{ w \in S^\infty : |w_k| > 0, \arg(w_k) \in (0, 2\pi / p), w_{k+1} = w_{k+2} = \cdots = 0 \}.
\end{align}

Then $\Delta^{2k}, \Delta^{2k+1}$ are homeomorphic to open disks of dimensions $2k, (2k+1)$ respectively. The even-dimensional cells are oriented by considering the tangent space at the point $\{w_k = 1\} \in \Delta^{2k}$ and identifying it with $\mathbb{C}^{k} \subseteq \mathbb{C}^\infty$ with its orientation from the complex structure. Similarly, the odd-dimensional cells are oriented by considering the point $\{ w_k = e^{\pi i / p} \} \in \Delta^{2k+1}$ and identifying the tangent space with $\mathbb{C}^k \times i e^{\pi i / p} \mathbb{R}$, so that the direction that increases $\mathrm{arg}(w_k)$ is the positive direction. 

Natural compactifications $\overline{\Delta}^{2k}$, $\overline{\Delta}^{2k+1}$ are given by adding in boundaries
\begin{align}
\label{eqn:equiv-param-compact}
    \partial \overline{\Delta}^{2k} &= \{ w \in S^\infty : w_k = w_{k+1} = w_{k+2} =  \cdots = 0 \}, \\
    \partial \overline{\Delta}^{2k+1} &= \{ w \in S^\infty : |w_k| \ge 0, \mathrm{arg}(w_k) \in \{ 0, 2\pi / p\}, w_{k+1} = w_{k+2} = \cdots = 0\}.
\end{align}
Note that the boundaries $\partial \overline{\Delta}^{2k}$ and $\partial \overline{\Delta}^{2k+1}$ are stratified by equivariant cells $\Delta^i$ of lower dimensions. The top strata of $\partial \overline{\Delta}^{2k}$ are given by $\bigsqcup_{k=1}^{p} \sigma^k \Delta^{2k-1}$, and the top strata of $\partial \overline{\Delta}^{2k+1}$ are given by $\sigma \Delta^{2k} \sqcup (- \Delta^{2k})$, considering the orientation. 

The cells $\Delta^{i}, \sigma \Delta^i, \dots, \sigma^{p-1} \Delta^i$ for all $i$ together define a $\mathbb{Z}/p$-CW complex structure on $S^\infty$, which descends to a CW complex structure on the quotient $B\mathbb{Z}/p$. For this CW-structure, $\Delta^i$ can be thought of as the interior of the unique $i$-cell in $B\mathbb{Z}/p$. For $\mathbb{F}_p$-coefficients, $\overline{\Delta}^i$ become cycles in cellular homology, providing the additive basis for $H_*^{\mathbb{Z}/p}(\mathrm{pt};\mathbb{F}_p) := H_*(B\mathbb{Z}/p; \mathbb{F}_p)$.

For a space $M$ with a $\mathbb{Z}/p$-action, we define the (Borel) equivariant cohomology ring as
\begin{equation}\label{eqn:borel-eq-coh}
H^*_{\mathbb{Z}/p}(M; \mathbb{F}_p) := H^*(S^\infty \times_{\mathbb{Z}/p} M; \mathbb{F}_p),    
\end{equation}
where $S^\infty \times_{\mathbb{Z}/p} M$ denotes the quotient of $S^\infty \times M$ by the action $(\sigma w, m) = (w, \sigma m)$. Note that $H^*_{\mathbb{Z}/p}(M)$ is always an algebra over the equivariant ground ring $H^*_{\mathbb{Z}/p}(\mathrm{pt};\mathbb{F}_p) := H^*(B\mathbb{Z}/p ; \mathbb{F}_p)$, by the  functoriality for the map $M \to \mathrm{pt}$. To describe the equivariant ground ring in the cellular model, first fix the generators as follows.
\begin{itemize}
    \item For $p=2$, fix a generator $h \in H^1(B\mathbb{Z}/2;\mathbb{F}_2)$ such that $\langle h, \Delta_1 \rangle = 1$. 
    \item For $p>2$, fix generators $\theta \in H^1(B\mathbb{Z}/p;\mathbb{F}_p)$ and $t \in H^2(B\mathbb{Z}/p;\mathbb{F}_p)$ such that \begin{equation}\langle \theta, \Delta^1 \rangle =1 , \quad \langle t, \Delta^2 \rangle = 1.\end{equation}
\end{itemize}

From our perspective, it is most natural to consider $S^\infty \simeq E\mathbb{Z}/p$ and its quotient $B\mathbb{Z}/p$ as a direct limit of their finite dimensional approximations (sub-CW complexes), and hence their cohomology as an inverse limit. This yields
\begin{equation}
\label{eqn:equiv-coh-ground-ring}
    H^*_{\mathbb{Z}/2}(\mathrm{pt};\mathbb{F}_2) \cong \mathbb{F}_2[\![h]\!], \quad H^*_{\mathbb{Z}/p}(\mathrm{pt};\mathbb{F}_p) \cong \mathbb{F}_p[\![t, \theta]\!] \  ( p > 2). 
\end{equation}

Moreover note that $\langle \Delta^{2k}, t^k \rangle = \langle \Delta^{2k+1}, t^k \theta \rangle =1$. We will succinctly denote the equivariant parameters using the following notation:

\begin{equation}
\label{eqn:equiv-param-notation}
    (t,\theta)^i = \begin{cases} h^i & p=2 \\ t^{i/2} & p > 2, \ i \mbox{ even} \\ t^{(i-1)/2} \ \theta & p >2 , \ i \mbox{ odd} \end{cases}
\end{equation}
for each $i \in \mathbb{Z}_{\ge 0}$. 

\begin{rem}\label{rem:equiv-convention}
    Our choice of even degree generator for $p>2$ case, prescribed by $\langle t, \Delta^2 \rangle = 1$, is the opposite convention from that of \cite[Equation 2.11]{SW22}. Our choice amounts to that $t$ is the (mod $p$ reduction of) the first Chern class of the equivariant hyperplane bundle $\mathcal{O}(1) \to \mathbb{CP}^\infty$ pulled back under $B\mathbb{Z}/p = S^\infty/(\mathbb{Z}/p) \to S^\infty/\mathrm{U}(1) \cong \mathbb{CP}^\infty$. In contrast, \cite{SW22} uses the pullback of $\mathcal{O}(-1)$. Our choice seems to be more popular in the representation theory or equivariant Gromov--Witten theory literature, but adopting this convention means the formulae and computations from \cite{SW22} must be imported with the change of variables $t \leftrightarrow -t$. See also \cref{rem:equiv-convention-2}.
\end{rem}

% \begin{rem}\label{rem:equiv-convention-quotient}
%     Our choice of action on $S^\infty \times M$ might also seem slightly unconventional. We quotient by $(\sigma w, m) = (w, \sigma m)$, not the diagonal action $(\sigma w, \sigma m ) = (w, m)$. This amounts to the fact that our left action on $S^\infty$ (multiplication by $\zeta = e^{2\pi i/p}$) is usually defined as a right action in most of the literature. This \emph{does not} agree with what would be the induced left action (defined through the involution $\sigma \mapsto \sigma^{-1}$) from the (usually defined) right action.
% \end{rem}

\subsection{Non-equivariant moduli spaces}
We now describe the basic (non-equivariant) moduli spaces of perturbed solutions to the $J$-holomorphic equations. Later, the construction will be generalized equivariantly.

Denote $C = \mathbb{CP}^1 = \mathbb{C} \cup \{ \infty\}$, which is the curve that will be taken as the source of the (perturbed) $J$-holomorphic maps. Let $\zeta = e^{2\pi i/p}$, and take $(p+2)$ special marked points 
\begin{equation}\label{eqn:marked-points}
z_0 = 0, \ z_1 = \zeta, \ z_2 = \zeta^2, \dots, \ z_p = \zeta^{p} = 1, \ z_\infty = \infty
\end{equation}
on $C$. Equip $C$ with its standard almost complex structure $j_C$, coming from its description as gluing of two open discs $\mathbb{C}_{z} \cup \mathbb{C}_{w}$ along $z = 1/w$. Here, $0 \in \mathbb{C}_w$ is identified with $\infty \in C$.

% \begin{rem}
% Note that when $(C, \{z\})$ is considered as a marked curve considered up to reparametrization action of $\mathrm{PSL}(2;\mathbb{C})$, it is fixed by the $\mathbb{Z}/p$-action that cyclically permutes the marked points $z_1, \dots, z_p$. This is because this action can equivalently be described as a rotation by $2\pi / p$ with axis along $0$ and $\infty$, which is part of the reparametrization group. Indeed, this property uniquely characterizes (the reparametrization orbit of) $(C, \{ z_1, \dots, z_p, z_\infty\})$ inside the Deligne--Mumford space $\overline{\mathcal{M}}_{0, p+1}$, which is responsible for the relevance of $C$ for $\mathbb{Z}/p$-equivariant quantum operations; see \cite[{\S}5c]{Sei19}. 
% \end{rem}

Fix a connected symplectic manifold $(X, \omega)$ and a compatible almost complex structure $J$. Furthermore impose that the symplectic manifold satisfies the (spherical) positivity assumption 
\begin{assm}\label{assm:pos-condition}
\begin{equation}
\label{eqn:pos-condition}
\mbox{There exists } \lambda \ge 0 \mbox{ such that on } \mathrm{im}(\pi_2(X) \to H_2(X;\mathbb{Z})), \   c_1(TX, J)|_{\pi_2(X)} = \lambda \cdot  [\omega]|_{\pi_2(X)} .
\end{equation}
\end{assm}

Examples in practice that satisfy this condition arise from K\"ahler manifolds that have an ample anticanonical bundle or are Calabi--Yau. The examples considered in this article will in fact be K\"ahler and moreover always satisfy this positivity condition.

To cite standard theorems we will need to assume $X$ is closed, but a weakening of this assumption will be addressed in the next section where we discuss non-compact examples.

Consider the product $C \times X$. Over $C \times X$ there are pullbacks of the tangent bundles $TC$ and $TX$, which we will (abusively) denote by the same letters. By means of $j_C$ and $J$, $TC$ and $TX$ are complex vector bundles. Denote by $\mathrm{Hom}^{0,1}(TC, TX)$ the bundle of $J$-complex antilinear bundle homomorphisms from $TC$ to $TX$ over $C \times X$. A section of this bundle defines the perturbation datum:
\begin{equation}\label{noneq-nu}
\nu_X \in C^\infty \left(C \times X, \mathrm{Hom}^{0,1}(TC, TX)\right).
\end{equation}
We also require that $\nu_X$ vanishes near the marked points $\{z\} \subseteq C$.

The (non-equivariant) moduli space is defined as the solutions to the $J$-holomorphic equation perturbed by $\nu_X$. It admits a decomposition by the choice of a homology class $A \in H_2(X;\mathbb{Z})$, the degree for the curve.

To apply the standard transversality results for Gromov--Witten theory (see e.g. \cite[Chapter 3, Section 8.3]{MS12}), consider the target space $\widetilde{X} = C \times X$, the total space of the trivial fiber bundle over $C$ with fiber $X$. 

For the curves, we will use the following notation:
\begin{equation}\label{eqn:curves-convention}
    \widetilde{u}: C \to C \times X,  \quad u = \mathrm{proj}_X \circ \widetilde{u}: C \to X, \quad \mathrm{id}_C = \mathrm{proj}_C \circ \widetilde{u} : C \to C.
\end{equation}
We will always understand $\widetilde{u}$ as the graph of $u$, and in particular we will always assume the last expression (the graph condition) whenever the notation $\widetilde{u}: C \to C \times X$ is used, hoping that this will not cause confusion.

\begin{defn}\label{defn:noneq-map-moduli}
Fix $A \in H_2(X;\mathbb{Z})$. The \emph{(non-equivariant) map moduli space of degree $A$} denoted by $\mathcal{M}_A := \mathcal{M}_A(X;\nu_X)$ is the set
\begin{equation}
    \mathcal{M}_A(X;\nu_X) : = \left\{ (u: C \to X) : \ \overline{\partial}_J u = (\widetilde{u})^* \nu_X , \ u_*[C] = A \right\},
\end{equation}
where $[C]$ denotes the fundamental class of $C$.
\end{defn}

By standard transversality theory (which involves the ``Gromov trick'' of replacing solutions $u$ with their graphs $\widetilde{u}$), for a generic choice of $\nu_X$, $\mathcal{M}_A$ is a smooth (oriented) manifold of real dimension 
\begin{equation}
\dim_{\mathbb{R}} \mathcal{M}_A = \dim_{\mathbb{R}} X + 2 c_1(A)
\end{equation}
where $c_1(A) = \langle c_1(TX) , A \rangle$ is the Chern number of $A$. Using the marked points \eqref{eqn:marked-points}, it carries a natural evaluation map
\begin{align}
\mathrm{ev}: \mathcal{M}_A &\to X \times X^p \times X \\
u &\mapsto (u(z_0); u(z_1), \dots, u(z_p) ; u(z_\infty)).
\end{align}

The form of the standard theory (see e.g. \cite[{\S}6.7, 8.5]{MS12}) that applies to this evaluation map is the following.

\begin{prop}[Gromov Compactness]
\label{prop:gromov-generic}
Assume $X$ is closed. Under the positivity assumption (\cref{assm:pos-condition}) on $X$, the evaluation map is a pseudocycle for a generic choice of $\nu_X$. The bordism class of the pseudocycle is well-defined, independent of choices of $\nu_X$ as long as they are regular.

The limit set of the evaluation map is covered by evaluation maps from \emph{simple stable maps}, which constitute of evaluation maps from moduli of nodal curves obtained by (i) a graph in $C \times X$ satisfying the perturbed (inhomogeneous) equation (the \emph{principal component}), with (ii) trees of simple $J$-holomorphic spheres (the \emph{vertical bubbles}) attached along $\{z\} \times X$ for points $z \in C$. If the bubbles are attached at any of the distinguished marked points $\{ z_0, z_1, \dots, z_p, z_\infty\}$, the marked point is transferred to the bubble tree. 

The simple stable maps without any bubbles identify with $\mathcal{M}_A$. The simple stable maps with at least one bubble, denoted by $\partial \overline{\mathcal{M}}_A$, is a union of smooth manifolds of real dimension $\le \dim_{\mathbb{R}} \mathcal{M}_A -2$ for the generic choice of $\nu_X$.
\end{prop}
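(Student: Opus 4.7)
The plan is to reduce this to a standard application of the Gromov trick and general position arguments from \cite{MS12}, adapted to respect the section-like nature of graphs in $C \times X$.

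First I would apply the Gromov trick: given the perturbation $\nu_X$, construct an almost complex structure $J_{\nu}$ on the product $\widetilde{X} = C \times X$ so that the inhomogeneous equation $\overline{\partial}_J u = \widetilde{u}^* \nu_X$ is equivalent to the homogeneous condition that the graph $\widetilde{u}: C \to C \times X$ is $(j_C, J_\nu)$-holomorphic. Explicitly, $J_\nu$ is the unique almost complex structure preserving the horizontal $TC$ and vertical $TX$ summands up to the shear determined by $\nu_X$, so that sections of the form $\widetilde{u}$ satisfy the required Cauchy--Riemann equation. Note that $J_\nu$ tames a product symplectic form (after rescaling the $C$ factor) because $\nu_X$ is bounded on the compact space $C \times X$. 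The class of $\widetilde{u}$ in $H_2(C \times X;\mathbb{Z})$ is $[C \times \mathrm{pt}] + A$.

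Next I would invoke standard transversality for somewhere-injective pseudoholomorphic spheres in $\widetilde{X}$ (cf.\ \cite[Chapter 3]{MS12}) applied to the class $[C \times \mathrm{pt}] + A$; every graph is automatically somewhere injective since its projection to $C$ is the identity. Varying $\nu_X$ in its affine space is equivalent to varying $J_\nu$ inside a sufficiently large family to achieve genericity along graphs, so for generic $\nu_X$ the moduli $\mathcal{M}_A$ is a smooth oriented manifold of dimension $\dim_\mathbb{R} X + 2 c_1(A)$, and the evaluation map is smooth. Applying the same argument to a generic one-parameter family of perturbations $\{\nu_X^s\}_{s \in [0,1]}$ produces a bordism of pseudocycles, giving the claimed independence.

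For Gromov compactness, I would apply the standard compactness theorem for $J_\nu$-holomorphic spheres in $\widetilde{X}$ to any sequence in $\mathcal{M}_A$. The Gromov limit is a stable map whose components split into (i) a unique \emph{principal component} whose image pairs with $[C \times \mathrm{pt}]$ to the same value as the original graph, and (ii) additional bubbles. Since the principal component still projects to $C$ with degree $1$, it is again a graph of a solution to the inhomogeneous equation. Any other component must project to $C$ with degree $0$, hence lies in a fiber $\{z\} \times X$ and is a genuine $J$-holomorphic sphere in $X$. The transfer of marked points follows the usual stable-map conventions. To arrange that each vertical bubble is simple, one passes from a multiple cover to its underlying simple sphere, paying a cover-degree factor that does not affect the stratification.

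Finally I would carry out the codimension estimate. For a configuration with vertical bubbles realizing classes $A_1, \dots, A_k$ (underlying simple classes) with $A = A' + \sum A_i$, the stratum is parametrized by $\mathcal{M}_{A'} \times \prod_i \mathcal{M}^{\mathrm{sph}}_{A_i}(X)$ with $k$ additional matching conditions at the attaching points; the simple sphere moduli $\mathcal{M}^{\mathrm{sph}}_{A_i}(X)$, for generic $J$ compatible with the generic $\nu_X$, is a manifold of dimension $\dim_\mathbb{R} X + 2 c_1(A_i) - 2$, with three reparametrization degrees of freedom absorbed into the choice of the two attaching/evaluation points. Imposing that each bubble attaches to the principal component on a codimension-$\dim_\mathbb{R} X$ diagonal, and using the positivity assumption $c_1(A_i) = \lambda [\omega] \cdot A_i \geq 0$ together with nontriviality $[\omega] \cdot A_i > 0$ of each sphere class, a short count shows the total codimension is at least $2k \geq 2$. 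This is the only place where \cref{assm:pos-condition} is truly used, and it is the main technical point; everything else is a direct transcription of \cite[{\S}6.7, 8.5]{MS12} to the present setup.
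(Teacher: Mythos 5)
Your proposal reconstructs the standard Gromov--trick/pseudocycle argument from \cite[{\S}6.7, 8.5]{MS12} and \cite{Zin08}, which is exactly what the paper invokes --- it states this proposition as a citation rather than proving it from scratch, so your sketch is essentially the same route. One small point worth tightening: in the codimension count, the role of \cref{assm:pos-condition} is to guarantee that when a vertical bubble is a multiple cover of a simple sphere $A_i' = A_i/n$, the replacement $c_1(A_i) \mapsto c_1(A_i')$ only decreases the dimension of the stratum (since $c_1(A_i) - c_1(A_i') = (1 - 1/n)\lambda \,\omega(A_i) \geq 0$); the phrase ``total codimension is at least $2k$'' is correct but conceals that this inequality, rather than $[\omega]\cdot A_i > 0$ per se, is what requires the positivity hypothesis. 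Also, ``three reparametrization degrees of freedom'' should be three \emph{complex} (six real) degrees of freedom, but the dimension $\dim_{\mathbb{R}} X + 2c_1(A_i) - 2$ you quote for a simple sphere with two marked points is right.
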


The identification of pseudocycle bordisms with singular homology is rigorously established in \cite{Zin08}. In particular, the bordism class of the evaluation pseudocycle defines a (singular) homology class in $H_*(X \times X^p \times X)$. The ordinary Gromov--Witten invariants can then be defined as the intersection product of the evaluation pseudocycle with another homology class that represents the incidence constraints on marked points. 

\begin{defn}\label{defn:noneq-cycle}
Let $Y_0, Y_1, \dots, Y_p, Y_\infty \subseteq X $ be oriented submanifolds which represent (possibly locally finite) cycles in $H_*(X)$ (or $H_*^{\mathrm{BM}}(X)$). The \emph{(non-equivariant) incidence cycle} is the data of the induced inclusion
\begin{equation}\label{eqn:noneq-cycle}
    \mathcal{Y}: Y_0 \times (Y_1 \times \cdots \times Y_p) \times Y_\infty \subseteq X \times X^p \times X.
\end{equation}
We also denote the underlying submanifold by $\mathcal{Y}$.
\end{defn}

\begin{defn}\label{defn:cycle-strongly-transverse}
Let $\mathcal{Y} \subseteq X \times X^p \times X$ be an incidence cycle (see \cref{defn:noneq-cycle}). Then $\mathcal{Y}$ is \emph{strongly transverse} to the evaluation pseudocycle if 
\begin{enumerate}[topsep=-2pt, label=(\roman*)]
\item it is transverse to the evaluation map from $\mathcal{M}_A$ and \item it is also transverse to the evaluation maps from the simple stable maps $\partial \overline{\mathcal{M}}_A$.
\end{enumerate}
\end{defn}

Note that since there are finitely many components of the simple stable maps $\partial \overline{\mathcal{M}}_A$, any incidence cycle $\mathcal{Y}$ can be perturbed (even as an incidence cycle, i.e. by perturbing each $Y_j \subseteq X$) to achieve strong transversality.

% Suppose that $\mathcal{Y}$ is strongly transverse to the evaluation pseudocycle with complementary dimension. In this situation, the Gromov--Witten invariant with incidence constraint prescribed by $\mathcal{Y}$ agrees with the count of the $0$-dimensional intersection of $\mathrm{ev}(\mathcal{M}_A)$ and $\mathcal{Y}$:

\begin{lem}
\label{lem:noneq-0-dim-count-map-moduli}
Assume that $\mathcal{Y}$ is strongly transverse to the evaluation pseudocycle. If $\mathrm{codim}_{X \times X^p \times X} \mathcal{Y} = \dim \mathcal{M}_A$, then the homological intersection number $\mathrm{ev} \cdot \mathcal{Y} \in \mathbb{Z}$ is equal to the (signed) count of the $0$-dimensional moduli space cut out by the incidence constraints given by $\mathcal{Y}$:
\begin{equation}
\# \ \mathrm{ev}(\mathcal{M}_A) \pitchfork \mathcal{Y} = \# \left\{ (u : C \to X) \in \mathcal{M}_A  : \ u(z_j) \in Y_j \mbox{ for } j \in \{ 0, 1, \dots, p, \infty \} \right\}.
\end{equation}
\end{lem}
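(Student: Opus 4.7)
The plan is to combine the pseudocycle intersection theory of \cite{Zin08} with the Gromov compactness statement of \cref{prop:gromov-generic} in order to identify the homological intersection number with an honest signed count of solutions. Recall that $\mathrm{ev} \cdot \mathcal{Y}$ is defined via Zinger's identification of the pseudocycle bordism group with singular homology, so it suffices to show that the set-theoretic intersection $\mathrm{ev}(\mathcal{M}_A) \cap \mathcal{Y}$ consists of finitely many transversely cut out points, each contributing its orientation sign.

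First I would set up the dimension count. Since $\mathcal{Y}$ is transverse to the evaluation map $\mathrm{ev} : \mathcal{M}_A \to X \times X^p \times X$ and $\mathrm{codim}_{X \times X^p \times X} \mathcal{Y} = \dim \mathcal{M}_A$, the preimage
\begin{equation*}
    \mathrm{ev}^{-1}(\mathcal{Y}) = \left\{ (u : C \to X) \in \mathcal{M}_A : u(z_j) \in Y_j \text{ for } j \in \{0, 1, \dots, p, \infty\} \right\}
\end{equation*}
is a zero-dimensional smooth submanifold of $\mathcal{M}_A$, with local intersection signs determined by the orientations of $\mathcal{M}_A$, $\mathcal{Y}$, and the ambient space. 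The remaining task is to verify compactness of this zero-dimensional set, for then it is automatically finite and its signed count realizes $\mathrm{ev} \cdot \mathcal{Y}$ by the standard definition of the pseudocycle intersection pairing.

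The main step is to rule out contributions from the limit set $\partial \overline{\mathcal{M}}_A$ using strong transversality. By \cref{prop:gromov-generic}, $\partial \overline{\mathcal{M}}_A$ is a finite union of smooth manifolds, each of real dimension at most $\dim \mathcal{M}_A - 2$, carrying its own evaluation map to $X \times X^p \times X$. By the strong transversality assumption, $\mathcal{Y}$ is transverse to each of these evaluation maps; therefore the pullback of $\mathcal{Y}$ along each boundary evaluation is a smooth manifold of dimension at most $-2$, i.e. empty. Hence $\mathrm{ev}(\partial \overline{\mathcal{M}}_A) \cap \mathcal{Y} = \emptyset$. Combined with Gromov compactness, which states that any Gromov-convergent sequence in $\mathcal{M}_A$ limits in $\mathcal{M}_A \sqcup \partial \overline{\mathcal{M}}_A$, this forces $\mathrm{ev}^{-1}(\mathcal{Y})$ to be compact, hence finite. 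The signed count of this finite set is the geometric count on the right-hand side, and it agrees with $\mathrm{ev} \cdot \mathcal{Y}$ by the very definition of pseudocycle intersection. The main subtlety lies here: without strong transversality, a sequence of solutions with the correct incidence data could in principle escape into a nodal configuration whose marked-point values still land in $\mathcal{Y}$; ruling this out is precisely the role of the transversality assumption against the lower-dimensional boundary strata, and this is the one place in the proof where the pseudocycle structure of the evaluation map is essential.
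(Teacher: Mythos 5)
Your proof is correct and follows essentially the same route as the paper's: strong transversality condition (i) cuts out a zero-dimensional submanifold, condition (ii) combined with the codimension-$2$ bound from \cref{prop:gromov-generic} rules out escape into boundary strata, and the identification with the homological intersection number is delegated to the pseudocycle-to-homology correspondence. The only cosmetic difference is that the paper phrases the finiteness step as a proof by contradiction via an exhaustion by compacts, whereas you invoke the dimension count on boundary strata directly; these are two presentations of the same argument.
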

\begin{proof}
By the strong transversality assumption (i) on $\mathcal{Y}$, indeed $\mathcal{M}_A \cap \mathrm{ev}^{-1}(\mathcal{Y})$ is a $0$-dimensional oriented submanifold of $\mathcal{M}_A$, whose image under $\mathrm{ev}$ has compact closure. We claim the count $\# \mathcal{M}_A \cap \mathrm{ev}^{-1}(\mathcal{Y})$ is finite. Suppose for a contradiction that there is no fixed compact subset $K \subseteq \mathcal{M}_A$ in which all the intersection happens. Then for any $K$ there exists $u_K \in (\mathcal{M}_A \setminus K) \cap \mathrm{ev}^{-1}(\mathcal{Y})$. Choose exhausting compact subsets $K_1 \subseteq K_2 \subseteq \cdots \subseteq \mathcal{M}_A$. The corresponding $\mathrm{ev}(u_{K_i})$ has a convergent subsequence, whose limit is contained in the intersection of $\mathcal{Y}$ with the limit set of $\mathrm{ev}$. This intersection is empty by the strong transversality assumption (ii) on $\mathcal{Y}$ for dimension reasons, so this is a contradiction.

The agreement of this count with the homological intersection number is part of the isomorphism between pseudocycle bordism and integral homology, see for example \cite[Lemma 6.5.5]{MS12}.
\end{proof}

% \begin{rem}
% The result above required strictly less work than the general intersection theory of pseudocycles as in \cite[{\S}6.5]{MS12} or \cite{Zin08}, because the incidence constraint $\mathcal{Y}$ is represented by a genuine smooth oriented submanifold. 
% \end{rem}

\subsection{Equivariant moduli spaces}\label{ssec:eq-moduli-spaces}
The structure constants of quantum Steenrod operations are computed by looking at equivariant versions of the moduli spaces described in the previous section.

Adopting the usual approach in symplectic topology (see e.g. \cite{SS10}) towards equivariant operations, we implement the equivariant moduli spaces using the ``Borel model.'' Concretely, this means that we consider the parametrized moduli problem where the perturbation data are allowed to vary over the parameter space $B\mathbb{Z}/p$. 

Again $C = \mathbb{CP}^1$ is the marked curve, domain of the perturbed $J$-holomorphic curve equation. There is a natural $\mathbb{Z}/p$-action on $C$ given by rotation of angle $2\pi / p$ along the axis through $0$ and $\infty$, counterclockwise near $0$. Denote the action of the generator $\sigma \in \mathbb{Z}/p$ by $\sigma_C$. In terms of the distinguished marked points,
\begin{equation}\label{action-on-C}
\sigma_C : (C ; z_0, z_1, z_2, \dots, z_{p-1}, z_p, z_\infty) \mapsto (C ; z_0, z_2, z_3, \dots, z_{p}, z_1, z_\infty).
\end{equation}
The homotopy orbits (the Borel construction) $S^\infty \times_{\mathbb{Z}/p} (C \times X)$ is the quotient of $S^\infty \times (C \times X)$ by the relation $(\sigma w, (z, x)) = (w, \sigma(z,x))$  where $\mathbb{Z}/p$-action on $C\times X$ is given by $\sigma(z,x) = (\sigma_C z, x)$. 

\begin{defn}\label{defn:eq-perturbation-data}
An \emph{equivariant perturbation data} is a perturbation datum defined over the homotopy orbits,
\begin{equation}\label{eq-nu}
    \nu_X^{eq} \in C^\infty \left(S^\infty \times_{\mathbb{Z}/p} (C \times X) , \mathrm{Hom}^{0,1}(TC, TX) \right),
\end{equation}
where again $TC$ and $TX$ is understood as their pullbacks to $S^\infty \times_{\mathbb{Z}/p} (C \times X)$.
\end{defn}

We topologize $C^\infty(S^\infty \times (C \times X), \mathrm{Hom}^{0,1}(TC, TX))$ by exhausting $S^\infty = E\mathbb{Z}/p$ with its finite dimensional approximations, i.e. equip the coarsest topology that makes its restrictions to finite dimensional approximations continuous.

It is useful to consider equivariant perturbation data as a parametrized version of ordinary, non-equivariant perturbation datum. For every $w \in S^\infty$, suppose we choose a perturbation datum depending smoothly on $w$,
\begin{align}
\nu^{eq}_{X,w} \in C^\infty \left(C \times X, \mathrm{Hom}^{0,1}(TC, TX)\right)
\end{align}
satisfying the equivariant condition $\nu^{eq}_{X, \sigma \cdot w} = \sigma_C^* \ \nu^{eq}_{X,w}$. Such a family then assembles to define equivariant perturbation data in the sense of \eqref{eq-nu}, satisfying
\begin{align}
\nu^{eq}_{X,w} \cong (w \times \mathrm{id}_{C \times X})^* \nu_X^{eq}.
\end{align}
Indeed, this parametrized viewpoint is the one taken to establish the existence of equivariant perturbation data, by working inductively over the dimension of the skeleta of $B\mathbb{Z}/p$. (See \cite[Appendix C]{Wil20} for further discussion of the construction of equivariant data, or see \cref{lem:eq-transversality} for an explicit implementation of the same idea.)

Using equivariant perturbation data, we can now define the equivariant moduli spaces. Recall $S^\infty \simeq E\mathbb{Z}/p$ admits a cellular decomposition with $p$ many open $i$-cells $\Delta^i, \sigma \Delta^i, \dots, \sigma^{p-1} \Delta^i$ for every $i \ge 0$. We choose $\Delta^i$ as the parameter space for the equivariant moduli problem, considering this as the unique $i$-cell in the cellular description of $B\mathbb{Z}/p \simeq S^\infty / (\mathbb{Z}/p)$.

\begin{defn}
\label{defn:eq-map-moduli}
    Fix $A \in H_2(X;\mathbb{Z})$ and $i \ge 0$. The \emph{equivariant map moduli space of degree $A$ and equivariant degree $i$}, or the \emph{$i$th equivariant map moduli space of degree $A$} denoted by $\mathcal{M}_A^{eq, i} := \mathcal{M}_A^{eq,i}(X; \nu_X^{eq})$ is the set
    \begin{equation}
        \mathcal{M}_A^{eq, i} := \left\{ (w \in \Delta^i, u: C \to X) : \ \overline{\partial}_J u = (w \times \widetilde{u})^* \nu_X^{eq} , \ u_*[C] = A \right\}.
    \end{equation}
Other copies of moduli spaces where the condition $w \in \Delta^i$ is replaced by $w \in \sigma \Delta^i, \dots, w \in \sigma^{p-1} \Delta^i$ will be denoted $\sigma \mathcal{M}_A^{eq, i}, \dots, \sigma^{p-1} \mathcal{M}_A^{eq, i}$, respectively.
\end{defn}

We can choose $\nu_X^{eq}$ so that for each $i \ge 0$, this moduli space is regular of dimension $i + \dim_{\mathbb{R}} \mathcal{M}_A$, and moreover a generic choice of $\nu_X^{eq}$ is regular. We will postpone the discussion of the choice of regular equivariant perturbation data to later sections, where we will specialize to a particular type of equivariant perturbation data suitable for our applications. A generic choice of that particular subclass of equivariant perturbation data will ensure the regularity of the moduli spaces.

One can think of the $i$th equivariant moduli space as being obtained by restricting the equivariant parameter $w \in S^\infty$ to $w \in \Delta^i$ from the full equivariant moduli space
\begin{equation}
    \widetilde{\mathcal{M}}_A^{eq} := \left\{ (w \in S^\infty, u: C \to X) : \ \overline{\partial}_J u = (w \times \widetilde{u})^* \nu_X^{eq} , \ u_*[C] = A \right\}, \quad \mathcal{M}_A^{eq} := \widetilde{\mathcal{M}}^{eq}_A / (\mathbb{Z}/p),
\end{equation}
which contains copies of $\mathcal{M}^{eq, i}_A$ obtained from $w \in \Delta^i$ for each $i \ge 0$.

Just as in the non-equivariant case, the equivariant moduli space admits an evaluation map
\begin{align}
    \mathrm{ev}^{eq, i}: \mathcal{M}_A^{eq, i} &\to X \times \left(\overline{\Delta}^i \times  X^p\right) \times X \\
(w,u) &\mapsto (u(z_0); (w, u(z_1), \dots, u(z_p)) ; u(z_\infty)).
\end{align}
In the perspective of the full equivariant moduli space, it could be understood as the restriction of $\mathrm{ev}^{eq} : \widetilde{\mathcal{M}}^{eq}_A \to X \times (S^\infty \times X^p) \times X$. Note that we have chosen the target to be the compactification $X \times (\overline{\Delta}^i \times  X^p) \times X$, but the image is contained in $X \times (\Delta^i \times  X^p) \times X \subseteq X \times (\overline{\Delta}^i \times  X^p) \times X$.

Besides (parametrized) Gromov compactness, the equivariant moduli space exhibits non-compact behavior as we approach the boundary $w \in \Delta^i \to w \in \partial \overline{\Delta}^i$ in the equivariant parameter space. The limit points of $\mathrm{ev}^{eq, i}$ are therefore covered not only by the evaluation maps from parametrized analogues of simple stable maps $\partial \overline{\mathcal{M}}^{eq, i}_A$, but also by the evaluation maps from moduli spaces of lower equivariant degrees, namely $\mathcal{M}_A^{eq, j}, \sigma \mathcal{M}_A^{eq, j}, \dots, \sigma^{p-1} \mathcal{M}_A^{eq, j}$ for $j < i$ (and the corresponding analogues of simple stable maps).

The incidence constraints (analogue of \eqref{eqn:noneq-cycle}) must be modified to incorporate the symmetry of the target $X \times X^p \times X$. Denote by $\sigma_{X^p}$ the action of the generator $\sigma \in \mathbb{Z}/p$ which cyclically permutes the $X^p$ factor:
\begin{align}\label{eqn:action-on-X}
    \sigma_{X^p} : \ &X \times X^p \times X \to X \times X^p \times X , \\
    &(x_0; x_1, x_2, \dots, x_{p-1}, x_p; x_\infty) \mapsto (x_0; x_2, x_3, \dots, x_p, x_1 ; x_\infty).
\end{align}

To ensure the existence of an equivariant incidence cycle, we start from an incidence cycle such that the incidence constraints at the $p$ marked points along the equator must be the same. That is, assume that $Y_1 = \cdots = Y_p \subseteq X$ as embedded submanifolds. In the non-equivariant case, transverse intersection was achieved by perturbing the incidence cycle. For the equivariant case we require that this perturbation is parametrized over $S^\infty$ in an equivariant way:

\begin{defn}
\label{defn:eq-cycle}
    Fix a (non-equivariant) incidence cycle $\mathcal{Y} = Y_0 \times (Y_1 \times \cdots \times Y_p) \times Y_\infty \subseteq X \times X^p \times X$ as in \eqref{eqn:noneq-cycle}, such that $Y_1 = \cdots = Y_p$. An \emph{equivariant incidence cycle} corresponding to $\mathcal{Y}$ is a smooth map
    \begin{equation}
        \mathcal{Y}^{eq} : S^\infty \times \mathcal{Y} \to X \times (S^\infty \times X^p) \times X
    \end{equation}
    such that
    \begin{enumerate}[topsep=-2pt, label=(\roman*)]
    \item For each $w \in S^\infty$, the restriction $\mathcal{Y}^{eq}_w := \mathcal{Y}^{eq}|_{\{w \} \times \mathcal{Y}}$ satisfies $\mathcal{Y}^{eq}_w (\{w\} \times \mathcal{Y}) \subseteq X \times (\{w \} \times X^p) \times X$, hence $\mathcal{Y}^{eq}_w : \mathcal{Y} \to X \times X^p \times X$;
    \item For each $w \in S^\infty$, $\mathcal{Y}^{eq}_w : \mathcal{Y} \to X \times X^p \times X$ is a $\mathcal{Y}^{eq}_w$ a product of embeddings $Y_j \to X$ isotopic to the distinguished inclusion $Y_j \subseteq X$, and
    \item Finally, understood as maps from $\mathcal{Y}$, we have $\mathcal{Y}^{eq}_{\sigma \cdot w} = \sigma_{X^p} \circ \mathcal{Y}^{eq}_w$.
    \end{enumerate}
\end{defn}

The first two conditions (i), (ii) of \cref{defn:eq-cycle} characterizes $\mathcal{Y}^{eq}$ as a $S^\infty$-parametrized family of isotopic perturbations of $\mathcal{Y} \subseteq X \times X^p \times X$ \emph{as an incidence cycle}. Indeed, $Y_j (w) := \mathrm{proj}_j \circ \mathcal{Y}^{eq}_w$ is a $S^\infty$-dependent family of embedded submanifolds of $X$, all isotopic to $Y_j$. The last condition is the equivariance condition for this family. Note that (ii) together with (iii) can be achieved only if $Y_1 \cong \cdots \cong Y_p$ are all isotopic, and we start from the stronger assumption $Y_1 = \cdots = Y_p$ for the proof of existence.

Given an equivariant incidence cycle $\mathcal{Y}^{eq}$, denote by
\begin{equation}
    \mathcal{Y}^{eq, i} := \mathcal{Y}^{eq}|_{\Delta^i \times \mathcal{Y}} : \Delta^i \times \mathcal{Y} \to X \times (\overline{\Delta}^i \times X^p) \times X
\end{equation}
the restriction to the parameter space $\Delta^i \subseteq S^\infty$, and refer to this as $i$th equivariant incidence cycle.

\begin{defn}
\label{defn:eq-strong-transverse}
    A given equivariant incidence cycle $\mathcal{Y}^{eq}$ is \emph{equivariantly strongly transverse} to the collection of evaluation maps from $\mathcal{M}^{eq, i}_A$ if for every $i \ge 0$,
    \begin{enumerate}[topsep=-2pt, label=(\roman*)]
        \item $\mathcal{Y}^{eq, i}$ is transverse to the evaluation map from $\mathcal{M}_A^{eq, i}$, and
        \item $\mathcal{Y}^{eq, i}$ is also transverse to the evaluation maps from the ($\Delta^i$-parametrized versions of) simple stable maps $\partial \overline{\mathcal{M}}^{eq, i}_A$.
    \end{enumerate}    
\end{defn}

Establishing the existence of an equivariant incidence cycle that is equivariantly strongly transverse depends on the following lemma in differential topology. The basic claim underlying the lemma is that (parametric) transversality against $Y \times \cdots \times Y \subseteq X \times \cdots \times X$ can be achieved by considering it as a $p$-tuple of maps $Y \to X$ and perturbing each map.

For smooth maps $f_0, f_1: B \to X$, denote $f_0 \times f_1 : B \times B \to X \times X$ to be the map given by $(f_0\times f_1) (b_0, b_1) = (f(b_0), f(b_1))$.

\begin{lem}
\label{lem:eq-transversality}
    Suppose $\pi_A: A^{eq} \to S^\infty$ is an equivariant fibration such that for each $i \ge 0$, $A^{eq,i} = \pi_A^{-1}(\Delta^i)$ is a smooth manifold of dimension $i + \dim A^{eq, 0}$. 
    Let $X$ be a smooth manifold, and equip $X^p$ with a $\mathbb{Z}/p$-action as in \eqref{eqn:action-on-X}.  Assume there exists a smooth equivariant map $e^{eq}: A^{eq} \to S^\infty \times X^p$ such that $e^{eq}(\pi_A^{-1}(w)) \subseteq \{w\} \times X^p$. 
    
    Then for a fixed smooth manifold $B$ and a smooth map $f: B \to X$, there exists an equivariant map $f^{eq}: S^\infty \times B^p \to S^\infty \times X^p$ with $f^{eq}_w := f^{eq}|_{\{w\} \times B^p} \subseteq \{w\} \times X^p$ such that $f^{eq, i} : = f|_{\Delta^i \times B^p}$ is transverse to $e^{eq, i} := e|_{A^{eq, i}}$ for every $i \ge 0$. The map $f^{eq}$ extends $f$ in the sense that every $f^{eq}_w :  B^p \to X^p$ is in fact of the form $f_{1, w} \times \cdots \times f_{p, w}$, where each $f_{j, w}$ is a small perturbation of $f$.
\end{lem}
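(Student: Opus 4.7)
The plan is to build $f^{eq}$ by induction on the cells of the $\mathbb{Z}/p$-CW structure on $S^\infty$ described in \cref{ssec:eq-coh}. At each stage, one extends the map from the closed $(i-1)$-skeleton to the closure $\overline{\Delta}^i$ of the distinguished $i$-cell, achieves transversality there by a relative parametric Sard--Smale argument, and then propagates equivariantly by declaring $f^{eq}_{\sigma^k w} := \sigma_{X^p}^k \circ f^{eq}_w$ on the translates $\sigma^k \Delta^i$ for $k = 1, \dots, p-1$. The base case $i = 0$ reduces to choosing $p$ generic perturbations $f_1, \dots, f_p$ of $f$ so that the product $f_1 \times \cdots \times f_p$ is transverse to $e^{eq, 0} : A^{eq, 0} \to X^p$, which is the classical transversality theorem applied componentwise.

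For the inductive step, first extend $f^{eq}$ arbitrarily but smoothly, and in product form, from $\partial \overline{\Delta}^i$ to all of $\overline{\Delta}^i \times B^p$. To correct this so as to achieve transversality, introduce a Banach (or separable Fr\'echet) space $\mathcal{F}$ of $p$-tuples of smooth vector fields $\phi = (\phi_1, \dots, \phi_p)$ along $f$, where each $\phi_j$ depends smoothly on $w \in \Delta^i$ and is compactly supported in $\Delta^i$, and consider the universal evaluation
\begin{equation}
    F : \mathcal{F} \times \Delta^i \times B^p \to \Delta^i \times X^p, \quad F(\phi, w, b_1, \dots, b_p) = \bigl(w,\, \exp_{f(b_1)} \phi_{1, w}(b_1),\, \dots,\, \exp_{f(b_p)} \phi_{p, w}(b_p)\bigr).
\end{equation}
At any $w \in \Delta^i$, independent perturbations in each of the $p$ factors surject onto $T_{x_1} X \oplus \cdots \oplus T_{x_p} X$, so $F$ is a submersion onto the $X^p$-direction and hence transverse to $e^{eq, i}(A^{eq, i}) \subseteq \Delta^i \times X^p$. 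The Sard--Smale theorem then yields a residual, and in particular nonempty, set of $\phi \in \mathcal{F}$ for which the slice $F(\phi, \cdot)$ is transverse to $e^{eq, i}$. Picking such a $\phi$ defines $f^{eq}$ on $\overline{\Delta}^i$; since $\phi$ vanishes on $\partial \overline{\Delta}^i$, the previously fixed boundary data is preserved.

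Propagating equivariantly to $\sigma^k \Delta^i$ by $f^{eq}_{\sigma^k w} := \sigma_{X^p}^k \circ f^{eq}_w$ is compatible with the values on $\sigma^k \partial \overline{\Delta}^i$ by the equivariance established at the previous inductive stage, and transversality over $\sigma^k \Delta^i$ follows from the equivariance of $e^{eq}$ combined with the diffeomorphism property of $\sigma_{X^p}^k$. Smoothness of the global map on $S^\infty$ in the direct limit topology is automatic since the construction is finite-dimensional at each stage.

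The main obstacle is the relative nature of the Sard--Smale step: one must achieve transversality on the open cell $\Delta^i$ without modifying the already-chosen values on $\partial \overline{\Delta}^i$. This forces $\mathcal{F}$ to consist of sections compactly supported in $\Delta^i$, and the nontrivial input is that such compactly supported perturbations still span enough tangent directions to make $F$ a submersion at each point of $\Delta^i$. The independence of the $p$ factors of $X^p$, which is implicit in the product structure required of $f^{eq}$, is precisely what makes this possible.
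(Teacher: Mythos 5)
Your proposal mirrors the paper's overall strategy (induction over the cells $\Delta^i$ of the $\mathbb{Z}/p$-CW structure on $S^\infty$, equivariant propagation to $\sigma^k \Delta^i$, parametric Sard--Smale for the inductive step), but the inductive step has a genuine gap in the treatment of transversality near the boundary $\partial\overline{\Delta}^i$ of the cell.

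You first ``extend $f^{eq}$ arbitrarily but smoothly'' from $\partial\overline{\Delta}^i$ to $\overline{\Delta}^i$, and then try to correct this with perturbations $\phi$ that are compactly supported in the open cell $\Delta^i$. These two choices are incompatible: for $w \in \Delta^i$ outside the support of every $\phi$ in your class $\mathcal F$, the slice $F(\phi, w, \cdot)$ coincides with the arbitrary extension, which has no reason to be transverse to $e^{eq,i}$ there. Consequently the universal map $F$ is not a submersion along the graph of $e^{eq,i}$ over such $w$, and the Sard--Smale theorem does not produce transversality over all of $\Delta^i$; the ``residual set of good $\phi$'' you invoke may simply be empty if the arbitrary extension is non-transverse outside a compact set. (If you instead drop the compact-support requirement so that $F$ is submersive at every $w \in \Delta^i$, you lose the Banach structure of $\mathcal F$ and the boundary condition.) The paper resolves exactly this point by \emph{not} extending arbitrarily: it fixes a collar $U^i \cong \partial\overline{\Delta}^i \times [0,\varepsilon)$ and extends the boundary data \emph{constantly} along the collar to obtain $f^{eq,i}_{\partial}$. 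Since transversality holds over $\partial\overline{\Delta}^i$ by the inductive hypothesis, openness of transversality gives transversality of the constant extension on each constant-$\delta$ slice, hence a fortiori in the $\Delta^i$-parametrized sense over all of $U^i$. Only in the complement of the collar (which is pre-compact in $\Delta^i$) does the paper then need to perturb, and there a relative transversality argument is clean. This collar/constant-extension/openness mechanism is the crux of the inductive step and is absent from your proof. Two secondary points: for $p>2$ the paper must treat the odd cells $\overline{\Delta}^{2k+1}$ separately, because $\partial\overline{\Delta}^{2k+1}$ is a manifold with corners rather than a smooth sphere and the collar needs an explicit description; your argument treats all cells uniformly. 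Also, the paper works inside a fixed contractible $C^\infty$-small neighborhood $\mathcal S$ of the product map $f^{\times p}$, using contractibility of $\mathcal S$ to extend from the collar into the interior before perturbing; this is how it guarantees the product form $f_{1,w}\times\cdots\times f_{p,w}$ at every stage.
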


\begin{proof}
    Take $p >2$. We inductively construct $f^{eq}$ as follows. The base case is when $w =1 = \Delta^0 \in S^\infty$ is fixed. For $f: B \to X$, denote by $f^{\times p}: B^p \to X^p$ such that $f^{\times p}(b_1, \dots, b_p) = (f(b_1), \dots, f(b_p))$.  Choose a $C^\infty$-small neighborhood $\mathcal{S}$ of $f^{\times p}: B^p \to X^p$ in $C^\infty(B, X)^p$. Take a small open neighborhood $\mathcal{S} \subseteq C^\infty(B, X)^p$ of $f^{\times p}: B^p \to X^p$. By Sard's theorem, there exists a comeager subset $\mathcal{S}^{0, \mathrm{reg}} \subseteq \mathcal{S}$ such that for $g: B^p \to X^p \in \mathcal{S}^{0,\mathrm{reg}}$, the map $g$ is transverse to $e^{eq, 0}: A^{eq, 0} \to X^p$. Moreover, such $g$ is in $\mathcal{S}$ so $g = f_1 \times \cdots \times f_p$ for some $f_j : B \to X$. Choose $f^{eq, 0}$ to be any such $g$.

    The inductive assumption is that $f^{eq, j} : \Delta^j \times B^p \to \overline{\Delta}^j \times X^p$ has been chosen for all $j < i$, so that (i) $f^{eq, j}$ is transverse to $e^{eq, j}$, (ii) $f^{eq, j}_w (B^p) \subseteq \{w\} \times X^p$ and (iii) $f^{eq,j}_w \in \mathcal{S}$ for every $w \in \Delta^j$. Consider $\overline{\Delta}^i$ and its boundary $\partial \overline{\Delta}^i$. 

    (Case 1: $i$ is even.) If $i$ is even, the boundary $\partial \overline{\Delta}^i$ is a smooth manifold diffeomorphic to $S^{i-1}$. Fix a small collar neighborhood $U^i$ of $\partial \overline{\Delta}^i \subseteq \overline{\Delta}^i$ with a diffeomorphism $\phi: \partial \overline{\Delta}^i \times [0, \varepsilon) \cong U^i$. We extend the data defined over $\partial\overline{\Delta}^i$ to $U^i$ by constantly extrapolating along the interval $[0, \varepsilon)$. Explicitly, the extension $f_{\partial}^{eq, i} : U^i \times B^p \to \overline{\Delta}^i \times X^p$ is given by $(\phi(\sigma^k w, \delta), b) \mapsto (\sigma^k)^*f^{eq, i-1}(w, b)$ for $w \in \overline{\Delta}^{i-1}$. By inductive assumption (ii) and (iii), we can write $f^{eq, i}_{\partial} : U^i \to \mathcal{S} \subseteq C^\infty(B, X)^p$. Since $\mathcal{S}$ is contractible,  $f^{eq, i}_{\partial}$ (as a map from $U^i$) admits an extension to (not necessarily transverse) map $g^{eq, i}: \Delta^i \to C^\infty(B, X)^p$. Denoting $g^{eq, i}(w) = (g^{eq, i}_{1,w} : B \to X, \dots, g^{eq, i}_{p,w}: B \to X)$, the map $g^{eq, i}$ can be equivalently described as $g^{eq, i} = (g^{eq, i}_1 : \Delta^i \times B \to X, \dots, g^{eq, i}_p: \Delta^i \times B \to X) \in C^\infty(\Delta^i \times B, X)^p$ by adjunction. Moreover, it induces a map $g^{eq, i} = g^{eq, i}_1 \times \cdots \times g^{eq, i}_p : \Delta^i \times B^p \to X^p$ by the $\times$ operation (applied relatively to $\Delta^i$).

    By inductive assumption (i), $f_{\partial}^{eq, i}$ is transverse to (the $\mathbb{Z}/p$-orbit of) $e^{eq, i-1}$ along $\partial \overline{\Delta}^i \times B^p$. Since $f_\partial^{eq, i}$ is a constant extension from $\partial \overline{\Delta}^i$ to $U^i$, it follows by openness of transversality that $f_\partial^{eq, i}$ is also transverse to $e^{eq, i}$ (in the $\partial \overline{\Delta}^i$-parametrized sense) along constant $\delta$ slices in $U^i$. A fortiori, $f_{\partial}^{eq, i}$ is also transverse to $e^{eq, i}$ along $U^i$ (in the $\Delta^i$-parametrized sense). Now one can choose a perturbation $f^{eq, i}: \Delta^i \times B^p \to X^p$ of $g^{eq, i}: \Delta^i \times B^p \to X^p$ so that it is transversal to $e^{eq, i}$ and agrees with $f_\partial^{eq, i}$ on (a possibly smaller open subset of) $(U^i \cap \Delta^i) \times B^p$ (establishing (i)), and $f_w^{eq, i} \in \mathcal{S}$ (establishing (iii)). Equivalently $f^{eq,i}$ can be thought of as  a map $f^{eq, i}: \Delta^i \times B^p \to \overline{\Delta}^i \times X^p$ by sending $(w, \{b_j\})$ to $(w, f^{eq, i}(\{b_j\}))$ (establishing (ii)). Finally the extension to $\sigma \Delta^i, \dots, \sigma^{p-1} \Delta^i$ is determined by equivariance. This completes the inductive step. 

    (Case 2: $i$ is odd.) If $i$ is odd, the boundary $\partial \overline{\Delta}^i$  is a manifold with corners, with two faces glued along a corner stratum. It admits an explicit description as $\partial \overline{\Delta}^i \cong D^{i-1} \cup_{S^{i-2}} D^{i-1}$, where the corner stratum is given by $S^{i-2} \cong \bigcup_{j=0}^{i-2} (\Delta^j \cup \sigma \Delta^j \cup \cdots \cup \sigma^{p-1}\Delta^j)$. While $\partial \overline{\Delta}^i$ is not smooth, there is a collar neighborhood $U^i$ of $\partial \overline{\Delta}^i$ inside $\overline{\Delta}^i$, admitting a homeomorphism $\phi: \partial \overline{\Delta}^i \times [0, \varepsilon) \cong U^i$ which is a diffeomorphism along $\partial \overline{\Delta}^i \times (0, \varepsilon)$. Explicitly, let $i = 2k+1$ and consider $(w_0, \dots, w_k ; \delta) \in \partial \overline{\Delta}^i \times [0, \varepsilon)$. Since $w \in \partial \overline{\Delta}^{2k+1}$ (recall the description \eqref{eqn:equiv-param-compact}), $w_k \in \mathbb{C}$ lies in the intersection $\{ |z| \le 1 \} \cap \{ z = 
    s+it \in \mathbb{C} : f(s,t) := t ( \sin (2\pi/ p) s - \cos (2\pi / p) t ) = 0\}$. As we change the value of $\delta$, the intersection $\{|z|\le 1\} \cap \{ f=\delta\}$ traces out the locus of $w_k$ for $w \in U^i$. Now using $\phi$ we can constantly extend $f^{eq, i-1}$ to $U^i \times B^p$ as in the case where $i$ is even, and argue as before.

    The case $p =2$ follows the same argument, in fact an easier version of it. Namely, the even and odd cells can be treated uniformly in the same way as the even cells for $p >2$.
\end{proof}

\begin{lem}
\label{lem:eq-transverse-incidence-cycles}
    Given a sequence of regular equivariant moduli spaces $\{\mathcal{M}_A^{eq, i}\}_{i \ge 0}$, regular equivariant simple stable maps $\{\partial \mathcal{M}_A^{eq, i}\}_{i \ge 0}$ and a choice of incidence cycle $\mathcal{Y}$ such that $Y_1 = \cdots = Y_p$, there exists an equivariantly strongly transverse equivariant incidence cycle $\mathcal{Y}^{eq}$.
\end{lem}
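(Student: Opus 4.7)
The plan is to adapt the skeleton-by-skeleton inductive construction of \cref{lem:eq-transversality} to the present situation. The only new features relative to that lemma are the additional factors $Y_0$ and $Y_\infty$ and the requirement of transversality not just to $\mathrm{ev}^{eq, i}$ from $\mathcal{M}_A^{eq, i}$ but also to the finitely many evaluations from the components of $\partial \overline{\mathcal{M}}_A^{eq, i}$. Since the $\mathbb{Z}/p$-action on $X \times X^p \times X$ is trivial on the first and last factors, the equivariance relation $\mathcal{Y}^{eq}_{\sigma w} = \sigma_{X^p} \circ \mathcal{Y}^{eq}_w$ from \cref{defn:eq-cycle} constrains the $Y_0$ and $Y_\infty$ components of $\mathcal{Y}^{eq}$ only to be $\mathbb{Z}/p$-invariant in $w$, while the middle $Y_1^p$ component must satisfy the full cyclic equivariance, which is precisely the setup of \cref{lem:eq-transversality}.

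I would induct on the equivariant cells of $B\mathbb{Z}/p$. For the base case $i = 0$, apply standard parametric Sard to produce a $C^\infty$-small product perturbation $Y_0(1) \times Y_1(1)^p \times Y_\infty(1)$ of $\mathcal{Y}$ (with a common embedding $Y_1(1)$ on the middle $p$ factors) that is transverse to the evaluation from $\mathcal{M}_A^{eq, 0}$ and from each of the finitely many components of $\partial \overline{\mathcal{M}}_A^{eq, 0}$; finite intersections of comeager subsets of the product perturbation space remain comeager, so a common regular choice exists. Extend to the remaining $0$-cells $\sigma^k \Delta^0$ via the $\mathbb{Z}/p$-action. For the inductive step from $i-1$ to $i$, assume $\mathcal{Y}^{eq, j}$ has been constructed on every cell $\sigma^k \Delta^j$ with $j < i$, compatibly with the equivariance. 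Following the collar construction in the proof of \cref{lem:eq-transversality}, fix a collar neighborhood $U^i \subseteq \overline{\Delta}^i$ of $\partial \overline{\Delta}^i$ together with a retraction $r: U^i \to \partial \overline{\Delta}^i$, and extend the prescribed boundary data by $\mathcal{Y}^{eq}_\partial := \mathcal{Y}^{eq, i-1} \circ r$ on $U^i \times \mathcal{Y}$. Contractibility of the space of product incidence-cycle perturbations of $\mathcal{Y}$ (each factor $Y_j \subseteq X$ can be perturbed within a convex neighborhood of the inclusion, and the common middle factor is constrained only to be isotopic to $Y_1$) allows us to extend $\mathcal{Y}^{eq}_\partial$ to a smooth, not necessarily transverse, map $\mathcal{Y}^{eq, i}: \Delta^i \times \mathcal{Y} \to X \times (\overline{\Delta}^i \times X^p) \times X$. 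A further parametric Sard rel the collar then perturbs $\mathcal{Y}^{eq, i}$ over $\Delta^i \setminus U^i$ to achieve transversality with $\mathrm{ev}^{eq, i}$ and with each component of $\partial \overline{\mathcal{M}}_A^{eq, i}$ simultaneously; openness of transversality preserves the boundary data. Finally extend to $\sigma^k \Delta^i$ by equivariance.

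The main technical difficulty is simultaneous transversality against $\mathrm{ev}^{eq, i}$ and the boundary evaluations while preserving the coupling $\mathcal{Y}^{eq}_{\sigma w}|_{Y_1^p} = \sigma_{X^p} \circ \mathcal{Y}^{eq}_w|_{Y_1^p}$ of the middle $p$ factors. For this middle factor the required statement is exactly the content of \cref{lem:eq-transversality}, which I would invoke separately with $A^{eq} = \widetilde{\mathcal{M}}_A^{eq}$ and with each (smooth) component of the parametrized simple stable maps playing the role of $A^{eq}$ in its hypothesis. The outer factors $Y_0, Y_\infty$ are handled by a parallel but simpler inductive argument for $\mathbb{Z}/p$-invariant families, and the three perturbations combine into a single equivariant incidence cycle lying in the product perturbation space; strong transversality in the sense of \cref{defn:eq-strong-transverse} then follows cell by cell from the openness and genericity arguments above.
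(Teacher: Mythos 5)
Your approach is the same in substance as the paper's. The paper's proof is a one-liner: take $A^{eq} := \bigcup_{i,k} \sigma^k \mathcal{M}_A^{eq,i}$ and $(A')^{eq} := \bigcup_{i,k} \sigma^k \partial\overline{\mathcal{M}}_A^{eq,i}$, and apply \cref{lem:eq-transversality} once with $f = \mathcal{Y}$, obtaining in one stroke a $\mathcal{Y}^{eq}$ whose restriction $\mathcal{Y}^{eq,i}$ is transverse to both $e^{eq,i}$ and $(e')^{eq,i}$ for all $i$. You instead rerun the skeleton-by-skeleton Sard--Smale construction that proves \cref{lem:eq-transversality}, which is functionally equivalent. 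You also correctly observe, and make explicit where the paper's citation is literally a small mismatch, that the target $X \times (S^\infty \times X^p) \times X$ is larger than the $S^\infty \times X^p$ the lemma is stated for, and that the outer factors $Y_0, Y_\infty$ need only $\mathbb{Z}/p$-invariance in $w$ (the action from \eqref{eqn:action-on-X} is trivial there) rather than the cyclic-permutation equivariance of the middle $p$ factors. That is a genuine clarification.

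One caution on the final paragraph: ``invoke \cref{lem:eq-transversality} separately'' for the middle factor, run ``a parallel but simpler inductive argument'' for $Y_0, Y_\infty$, and then ``combine the three perturbations'' does not work if read factor by factor. Transversality of each factor's projection of $\mathcal{Y}^{eq}$ to its projected evaluation does \emph{not} imply transversality of the product $Y_0(w) \times Y_1(w)^p \times Y_\infty(w)$ against the non-product map $\mathrm{ev}^{eq,i}$, since the image of the evaluation is generally not a product. The mechanism that actually works, and which the earlier part of your proposal already uses, is a single parametric Sard argument over each cell in the \emph{product} perturbation space, where the perturbations of $Y_0$, of $Y_\infty$, and the $p$-tuple of perturbations of $Y_1$ are all varied at once; a generic element of that combined space is jointly transverse to $\mathrm{ev}^{eq,i}$ and to each component of $\partial\overline{\mathcal{M}}_A^{eq,i}$. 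Likewise ``invoking separately with each component \dots playing the role of $A^{eq}$'' should be understood as producing, per component, a comeager set of good $\mathcal{Y}^{eq}$'s whose (countable) intersection is still comeager, which is what the paper accomplishes in one step by absorbing the union of components into $A^{eq}$ and $(A')^{eq}$ from the outset.
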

\begin{proof}
    Let $A^{eq} := \bigcup_{i \ge 0} \bigcup_{k=1}^p \sigma^k \mathcal{M}_A^{eq,i}$, $(A')^{eq} := \bigcup_{i \ge 0} \bigcup_{k=1}^p \sigma^k \partial\overline{\mathcal{M}}_A^{eq,i}$, with $e^{eq}: A^{eq} \to X \times (S^\infty \times X^p) \times X$ and $(e')^{eq} : (A')^{eq} \to X \times (S^\infty \times X^p) \times X$ being the evaluation maps. Apply \cref{lem:eq-transversality} so that $f: \mathcal{Y} \subseteq X \times X^p \times X$ is the fixed incidence cycle $\mathcal{Y}$, and the lemma provides a choice of $\mathcal{Y}^{eq} : S^\infty \times \mathcal{Y} \to X \times (S^\infty \times X^p) \times X$ such that for each $i \ge 0$,  $\mathcal{Y}^{eq, i}$ is transverse to both evaluation maps $e^{eq, i}$, $(e')^{eq, i}$ from the equivariant moduli spaces and the simple stable maps.
\end{proof}

\begin{rem}\label{rem:equivariant-genericity-cycle}
    Moreover, the proof above shows that the choice of equivariantly strongly transverse $\mathcal{Y}^{eq}$ is generic in the following sense. For the fixed $\mathcal{Y}$, there is a constant inclusion of $\mathcal{Y}_0^{eq}: S^\infty \times \mathcal{Y} \to X \times (S^\infty\times X^p) \times X$ where its restriction to $\{w\} \times \mathcal{Y}$ for every $w$ is just the distinguished inclusion of $\mathcal{Y}$. (This automatically satisfies the equivariance condition.) Write $\mathcal{Y}_0^{eq, i}$ for its restriction to $\Delta^i \times \mathcal{Y}$. Then for a neighborhood $\mathcal{S}^i$ of $\mathcal{Y}^{eq, i}_0$ in $C^\infty(\Delta^i \times \mathcal{Y}, X \times (\overline{\Delta}^i \times X^p) \times X)$, there is a comeager subset $\mathcal{S}^{i, \mathrm{reg}}$ of cycles satisfying strong transversality in the $\Delta^i$-parametrized sense. By topologizing the space $C^\infty(S^\infty \times \mathcal{Y}, X \times (S^\infty \times X^p) \times X)$ with the coarsest topology where its restriction to the space of maps from finite dimensional cells $\Delta^i$ is continuous, we see that the (preimage of) $\mathcal{S}^{i, \mathrm{reg}}$ is a comeager subset of a neighborhood of $\mathcal{Y}^{eq}_0$. The intersection $\bigcap_{i \ge 0} \mathcal{S}^{i, \mathrm{reg}}$ then defines the comeager subset consisting of ``equivariantly strongly transverse perturbations of $\mathcal{Y}$''.
\end{rem}

We have the following analogue of \cref{lem:noneq-0-dim-count-map-moduli}. The presence of ``codimension 1'' limit points of the evaluation map prevent the counts from being well-defined as integers, but the count is still well-defined mod $p$:

\begin{lem}
\label{lem:eq-0-dim-count-map-moduli}
Assume that $\mathcal{Y}^{eq}$ is equivariantly strongly transverse to the evaluation maps. Fix the unique $i \ge 0$ such that $\dim_{\mathbb{R}} \mathcal{M}_A^{eq, i} = i + \dim_{\mathbb{R}} \mathcal{M}_A =  \mathrm{codim}_{X \times X^p \times X}  \mathcal{Y}$. Then the mod $p$ count of the $0$-dimensional moduli space cut out by the incidence constraints given by $\mathcal{Y}^{eq,i}$ is well-defined:
\begin{align}
\mathrm{ev}^{eq, i}(\mathcal{M}^{eq, i}_A) &\pitchfork \mathcal{Y}^{eq, i} = \\ &\# \left\{ (w \in \Delta^i, u : C \to X) \in \mathcal{M}_A^{eq, i}  : \ u(z_j) \in Y_j(w) \mbox{ for } j \in \{ 0, 1, \dots, p, \infty \} \right\} \in \mathbb{F}_p.
\end{align}
\end{lem}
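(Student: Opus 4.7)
The plan is to mirror the proof of \cref{lem:noneq-0-dim-count-map-moduli}, augmented to account for the cell stratification $\overline{\Delta}^i = \Delta^i \sqcup \partial \overline{\Delta}^i$ of the equivariant parameter space. Equivariant strong transversality (i), combined with the dimension condition $\dim_{\mathbb{R}} \mathcal{M}_A^{eq, i} = \mathrm{codim}_{X \times X^p \times X} \mathcal{Y}$, directly makes the preimage $\mathcal{M}_A^{eq, i} \cap (\mathrm{ev}^{eq, i})^{-1}(\mathcal{Y}^{eq, i})$ a smooth oriented $0$-manifold, exactly as in the non-equivariant case.

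The key step is finiteness of this $0$-manifold, which I would establish by the same exhausting compact subset argument used in \cref{lem:noneq-0-dim-count-map-moduli}. If it were infinite, a subsequence $(w_n, u_n)$ of intersection points would escape every compact subset of $\mathcal{M}_A^{eq, i}$, and parametrized Gromov compactness would force $\mathrm{ev}^{eq, i}(w_n, u_n)$ to converge to a point of $\mathcal{Y}^{eq, i}$ lying in either (a) the parametrized simple stable maps $\partial \overline{\mathcal{M}}_A^{eq, i}$ with at least one bubble, or (b) a lower equivariant stratum $\sigma^k \mathcal{M}_A^{eq, j}$ with $j < i$, arising when $w_n \to w_\infty \in \partial \overline{\Delta}^i$ without bubbling. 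Case (a) is ruled out by equivariant strong transversality (ii), since the bubbled stratum has codimension at least $2$. Case (b) is ruled out by equivariant strong transversality (i) applied at the lower equivariant degree $j$: the expected intersection dimension is $j + \dim_{\mathbb{R}} \mathcal{M}_A - \mathrm{codim}_{X \times X^p \times X}\mathcal{Y} = j - i < 0$, so no such limit point exists.

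The resulting finite signed count lives in $\mathbb{Z}$, and the claim records its reduction modulo $p$; the essential reason for passing to $\mathbb{F}_p$ is invariance under the choice of equivariant perturbation data and equivariant incidence cycle. I would establish this via the standard one-parameter cobordism argument, taking a generic $[0,1]$-family of such equivariant data (as produced in \cref{rem:equivariant-genericity-cycle}) that interpolates between two regular choices. The resulting $1$-dimensional cobordism of intersection points has as boundary the two integer counts together with codimension-one degenerations occurring as the equivariant parameter $w$ hits $\partial \overline{\Delta}^i$. These degenerations are distributed over the $p$ top strata $\sigma^\ell \Delta^{i-1}$ of $\partial \overline{\Delta}^i$ described in \eqref{eqn:equiv-param-compact}, and equivariance of $\nu_X^{eq}$ and $\mathcal{Y}^{eq}$ forces the $p$ contributions to be identical. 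Their total therefore vanishes modulo $p$, proving well-definedness. I expect the main obstacle to be the careful bookkeeping of orientations and signs across these boundary strata, ensuring that the multiplicity-$p$ cancellation is genuine and not an artifact of conflicting sign conventions between the equivariant cell orientations fixed in \eqref{eqn:equiv-param} and the induced coorientations in the cobordism.
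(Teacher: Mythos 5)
Your proposal follows the same overall strategy as the paper's proof: establish that the cut-out set is a $0$-manifold and finite via transversality and dimension counts for the limit sets, then prove independence of the choice of $\mathcal{Y}^{eq}$ via a $1$-parameter cobordism whose escapes contribute multiples of $p$. Your dimension count for case (b) — that the virtual intersection dimension with $\sigma^k \mathcal{M}_A^{eq,j}$ for $j<i$ is $j-i<0$ — is exactly the dimension argument the paper uses, phrased slightly differently.

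One small imprecision: you assert that the degenerations in the cobordism are distributed over ``the $p$ top strata $\sigma^\ell \Delta^{i-1}$ of $\partial\overline{\Delta}^i$''. Per the paper's \eqref{eqn:equiv-param-compact}, this only describes the situation when $i$ is even ($\partial\overline{\Delta}^{2k}$ has $p$ top strata $\sigma^1\Delta^{2k-1},\dots,\sigma^p\Delta^{2k-1}$); for odd $i$, $\partial\overline{\Delta}^{2k+1}$ has only two top strata $\sigma\Delta^{2k}\sqcup(-\Delta^{2k})$. The escapes then come in equivariant \emph{pairs} with opposite boundary orientation, and the cancellation is over $\mathbb{Z}$ rather than merely mod $p$. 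The paper's own phrasing (``always occur in $p$-fold families'') shares this imprecision, so you are in good company, and your flagged concern about orientations and sign conventions is precisely what resolves the odd-degree case. When writing this up you would want to split the cancellation argument by parity of $i$, invoking either the $p$-fold sum with identical signs (even $i$) or the orientation-reversing pair (odd $i$), both of which make the jump vanish in $\mathbb{F}_p$.
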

\begin{proof}
The proof that the cut out moduli space is $0$-dimensional is almost identical to (the parametrized version of) \cref{lem:noneq-0-dim-count-map-moduli}. The only difference is that there are limit points of $\mathrm{ev}^{eq, i}$ covered by images from ($\mathbb{Z}/p$-orbits of) moduli spaces $\mathcal{M}^{eq, j}_A$ for $j < i$, and we must show that these cannot meet $\mathcal{Y}^{eq}$. But note that $\mathrm{codim}_{X \times X^p \times X} \mathcal{Y} = \mathrm{codim}_{X \times (\Delta^j \times X^p) \times X} \mathcal{Y}^{eq, j}$ for any $j$. Therefore, by the assumption $\mathrm{codim}_{X \times X^p \times X} \mathcal{Y} = \dim \mathcal{M}^{eq, i}_A = i + \dim \mathcal{M}_A$ combined with the equivariance of $\mathcal{Y}^{eq}$ and equivariant strong transversality, $\mathcal{Y}^{eq}$ avoids those limit points by dimension reasons.

To see that the count is independent of the choice of equivariantly strongly transverse incidence cycle $\mathcal{Y}^{eq}$, consider a $1$-parametric family of equivariant incidence cycles. This is a family $\mathcal{Y}^{eq}_{\lambda}$ (for $\lambda \in [0,1]$) interpolating two such choices $\mathcal{Y}^{eq}_0$ and $\mathcal{Y}^{eq}_1$, which is equivariantly strongly transverse in the $[0,1]$-parametrized sense. By dimension, $\mathcal{Y}^{eq}_{\lambda}$ may intersect limit points from $\mathcal{M}_A^{eq, i-1}$ in isolated points.  These intersections, however, always occur in $p$-fold families due to equivariance: any intersection $\mathrm{ev}^{eq, i-1}(\mathcal{M}_A^{eq, i-1}) \pitchfork \mathcal{Y}^{eq}_{\lambda}$ implies there are intersections with $\sigma \mathcal{M}_A^{eq, i-1}, \dots, \sigma^{p-1} \mathcal{M}_A^{eq, i-1}$ as well. The only way that the $1$-manifold $\mathrm{ev}^{eq, i}(\mathcal{M}_A^{eq, i}) \pitchfork \mathcal{Y}_{\lambda}^{eq}$ fails to yield a cobordism (in the usual sense) between the boundaries at the end $\mathrm{ev}^{eq, i}(\mathcal{M}_A^{eq, i}) \pitchfork \mathcal{Y}_{0}^{eq}$ and $\mathrm{ev}^{eq, i}(\mathcal{M}_A^{eq, i}) \pitchfork \mathcal{Y}_{1}^{eq}$ is exactly when such $p$-fold intersections occur in the compactification. Whenever this happens, the count of the $0$-dimensional moduli space only changes by a multiple of $p$, so the mod $p$ count is independent of the choice of $\mathcal{Y}^{eq}$. 
\end{proof}

\begin{rem}\label{rem:no-Fp-pseudocycle}
    Another approach to defining our counts would be to develop the theory of mod $p$ pseudocycles and their $\mathbb{F}_p$-valued intersection product, as considered in \cite[Appendix A]{Wil-sur}. Indeed, the equivariant evaluation maps $\mathrm{ev}^{eq, i}$ and $\mathcal{Y}^{eq, i}$ ought to define such mod $p$ pseudocycles (briefly speaking, as opposed to ordinary pseudocycles, the limit points are allowed to be covered by codimension $1$ strata as long as they are $p$-fold covers). But for defining the structure constants of the quantum Steenrod operations, we only need the counts from $0$-dimensional moduli spaces. This does not require the more general theory of mod $p$ pseudocycles, so we have chosen not to introduce it.
\end{rem}

\subsection{Quantum Steenrod operations}\label{ssec:QSt-defn}
Using the equivariant moduli spaces of perturbed solutions, we may now define the quantum Steenrod operations on $QH^*_{\mathbb{Z}/p} (X)$. We restrict ourselves to a much more constrained setting than \cite{SW22}, by considering incidence constraints that come from embedded submanifolds rather than general Morse cocycles. This suffices for our applications.

Let $(X, \omega)$ be a closed symplectic manifold satisfying the positivity condition from \cref{assm:pos-condition}. The cohomology of $X$ admits the Poincar\'e pairing $(\cdot, \cdot) : H^*(X;\mathbb{F}_p) \otimes H^*(X;\mathbb{F}_p) \to \mathbb{F}_p$ given by $(a, b ) = \int_X a \cup b$. The Poincar\'e pairing is nondegenerate, and for a fixed $b \in H^*(X;\mathbb{F}_p)$ we denote its linear dual under the Poincar\'e pairing by $b^\vee \in H^*(X;\mathbb{F}_p)$.

We fix a Novikov ring, which is a graded $\mathbb{F}_p$-algebra generated by symbols of the form $q^A$. We refer to symbols $q^A$ as the quantum parameters. The index $A$ ranges over those $A \in H_2(X;\mathbb{Z})$ that lie in the image of the Hurewicz homomorphism $\pi_2(X) \to H_2(X;\mathbb{Z})$ (although, in fact, examples in this article will be simply connected so we can let $A \in H_2(X;\mathbb{Z})$), for which $\int_A \omega_X \ge 0$. The ring is given by
\begin{equation}
    \label{novikov-ring}
    \Lambda = \Lambda_{\mathbb{F}_p} := \left\{ \lambda = \sum_{A} c_A q^A  : c_A \in \mathbb{F}_p \right\}, \quad |q^A| = 2c_1(A),
\end{equation}
such that for each given bound on $\int_A \omega_X$, for the degrees $A$ satisfying the bound, only finitely many of $c_A$ may be nonzero. In particular, the expression $\sum_A c_A q^A$ itself may involve an infinite sum.

The cohomology algebra with coefficients in the Novikov ring, $H^*(X;\Lambda)$, admits the (small) quantum product $\ast_q$. The product is defined using the three-pointed Gromov--Witten invariants and the Poincar\'e pairing: 
\begin{defn} The \emph{quantum product} $\ast = \ast_q$ is defined for $a, b \in H^*(X;\mathbb{F}_p)$ by means of the Poincar\'e pairing $( \cdot, \cdot ): H^*(X;\mathbb{F}_p) \otimes H^*(X;\mathbb{F}_p) \to \mathbb{F}_p$ as
\begin{equation}
    a \ast b = \sum_{A}  \langle a, b, c \rangle_A \ q^A \ c^\vee \in \Lambda
\end{equation}
where $\langle a, b, c \rangle_A$ denotes the three-pointed Gromov--Witten invariants of degree $A$ with incidence constraints given by Poincar\'e dual cycles of $a, b, c$. The sum on the right hand side is well-defined as element of $\Lambda$ by Gromov compactness. The product is extended $q$-linearly to $H^*(X;\Lambda)$, and defines an associative product on $H^*(X; \Lambda)$.
\end{defn}

\begin{defn}\label{defn:equivariant-QH}
    The \emph{$\mathbb{Z}/p$-equivariant quantum cohomology algebra} of $X$ is the ring
    \begin{equation}
        QH^*_{\mathbb{Z}/p} (X;\mathbb{F}_p) := \left( H^*(X;\Lambda)[\![t, \theta]\!], \  \ast \right)
    \end{equation}
    where $\ast = \ast_q$ denotes the quantum product, extended linearly for the equivariant parameters $t, \theta$.
\end{defn}
The product structure on the equivariant quantum cohomology algebra is extended only formally for the equivariant parameters, which is an artifact of that $\mathbb{Z}/p$ acts on $X$ trivially. Nevertheless, $QH^*_{\mathbb{Z}/p}(X;\mathbb{F}_p)$ admits new equivariant operations, analogous to the classical Steenrod operations on $\mathbb{F}_p$-coefficient cohomology algebra.

Take $QH^*_{\mathbb{Z}/p}(X; \mathbb{F}_p)$, and fix a cohomology class $b = \mathrm{PD}[Y] \in H^*(X;\mathbb{F}_p)$ that is (mod $p$ reduction of) Poincar\'e dual to a (locally finite) homology cycle represented by an inclusion of an embedded submanifold $Y \subseteq X$. Similarly, fix cohomology classes $b_0$, $b_\infty \in H^*(X)$ Poincar\'e dual to $Y_0$, $Y_\infty \subseteq X$. 

Fix the corresponding incidence cycle $\mathcal{Y} = \mathcal{Y}(b_0; b; b_\infty) = Y_0 \times (Y \times \cdots \times Y) \times Y_\infty \subseteq X \times X^p \times X$. Assume that equivariant perturbation data $\nu_X^{eq}$ and equivariant incidence cycle $\mathcal{Y}^{eq}$ are chosen so that the count $ \mathrm{ev}^{eq, i} (\mathcal{M}_A^{eq, i}) \pitchfork \mathcal{Y}^{eq, i} \in \mathbb{F}_p$ is defined. Recall from \eqref{eqn:equiv-param-notation} that $(t,\theta)^i$ denotes the $i$th equivariant parameter, so that $(t,\theta)^i = t^{i/2}$ or $(t,\theta)^i = t^{(i-1)/2} \theta$ depending on the parity of $i$.

\begin{defn}\label{defn:QSt-structure-constants}
    For $b \in H^*(X;\mathbb{F}_p)$, the \emph{quantum Steenrod cap product} is a map $Q\Sigma_b : QH^*_{\mathbb{Z}/p}(X;\mathbb{F}_p) \to QH^*_{\mathbb{Z}/p}(X;\mathbb{F}_p)$ defined for $b_0 \in H^*(X;\mathbb{F}_p)$ as the sum 
    \begin{equation}
        Q\Sigma_b (b_0) = \sum_{b_\infty} \sum_A \sum_i (-1)^\star \left( \# \ \mathrm{ev}^{eq, i} (\mathcal{M}_A^{eq, i}) \pitchfork \mathcal{Y}^{eq, i} \right)  q^A \ (t,\theta)^i \  b_\infty^\vee  \in H^*(X;\Lambda)[\![ t, \theta]\!].
    \end{equation}
    The first sum is over a chosen basis of $H^*(X;\mathbb{F}_p)$ as a vector space over $\mathbb{F}_p$, and $b_\infty^\vee \in H^*(X;\mathbb{F}_p)$ denotes the linear dual of $b_\infty \in H^*(X;\mathbb{F}_p)$ under the (nondegenerate) Poincar\'e pairing. Here the counts appearing as structure constants are defined to be zero if the moduli space is not discrete, i.e.
    
    \begin{align}
    \# \ \mathrm{ev}^{eq, i} (\mathcal{M}_A^{eq, i}) \pitchfork \mathcal{Y}^{eq, i} = \begin{cases} \#  \ \mathrm{ev}^{eq, i} (\mathcal{M}_A^{eq, i}) \pitchfork \mathcal{Y}^{eq, i} \in \mathbb{F}_p & \mbox{if } i + \dim_{\mathbb{R}} X + 2c_1(A) = |b_0|+p|b|+|b_\infty|  \\ 0 & \mbox{otherwise} \end{cases}.
    \end{align}
    
    Then it is extended linearly in the $q$-parameters and $(t, \theta)$-parameters from $H^*(X;\mathbb{F}_p)$ to $QH^*_{\mathbb{Z}/p}(X;\mathbb{F}_p) = H^*(X;\Lambda)[\![t,\theta]\!]$. The sign is given by
    \begin{equation}\label{eqn:sign-in-QSt}
        \star = \begin{cases} |b||b_0| & \mbox{ if } i \mbox{ even } \\ |b||b_0| + |b|+|b_0| & \mbox{ if } i \mbox{ odd } \end{cases}.
    \end{equation}
    By the dimension constraint, $Q\Sigma_b: QH^*_{\mathbb{Z}/p}(X;\mathbb{F}_p) \to QH^*_{\mathbb{Z}/p}(X;\mathbb{F}_p)$, is a map of graded vector spaces of degree $p|b|$. 
\end{defn}

\begin{rem}[Compare {\cite[Remark 4.2]{SW22}}]\label{rem:equiv-convention-2}
    The sign convention for parametrized map moduli spaces, where the parameter space is ${\Delta}$ a manifold with boundary, is chosen as follows. If $\phi_{{\Delta}}$ is associated to the parametrized problem, then $ \phi_{\partial \Delta} = \phi_\Delta d - (-1)^{|\Delta|} d \phi_\Delta$. This differs from the convention of \cite{SW22}, consistent with the fact that our choice of the equivariant parameter $t$ differs from that of \cite{SW22}. As a result, the sign $(-1)^\star$ does not carry the explicit factor of $(-1)^{\lfloor |\Delta|/2\rfloor}$.
\end{rem}

In the language of \cite{Sei19}, The operation $Q\Sigma_b$ is the ``cap product analogue'' of quantum Steenrod operations, and it satisfies $Q\Sigma_b(1) = Q\mathrm{St}(b)$ where $Q\mathrm{St}$ denotes the total quantum Steenrod power operations of \cite{Wil20}. We will loosely refer to $Q\Sigma_b$ as the ``quantum Steenrod operations'' in this article.

We record some of the key properties of the quantum Steenrod operations, established in \cite{Wil20} and \cite{SW22}.

\begin{prop}\label{prop:QSt-properties}
    For $b \in H^*(X;\mathbb{F}_p)$, the operations $Q\Sigma_b$ satisfy the following properties.
    \begin{itemize}
        \item $Q\Sigma_{1} = \mathrm{id}$ for $1 \in H^0(X;\mathbb{F}_p)$ the unit of cohomology algebra.
        \item $Q\Sigma_b(c)|_{t = \theta = 0} = \overbrace{b \ast \cdots \ast b}^{p} \ast c$, hence $Q\Sigma_b$ deforms $p$-fold quantum product in equivariant parameters.
        \item $Q\Sigma_b(c)|_{q^{A \neq 0}=0} = \mathrm{St}(b) \cup c$, hence quantum Steenrod operations deform classical Steenrod operations in quantum parameters.
        \item $Q\Sigma_b(1) = Q\mathrm{St}(b)$, hence $Q\Sigma_b$ with $1 \in H^0(X;\mathbb{F}_p)$ as an input specializes to quantum Steenrod powers of \cite{Wil20}.
    \end{itemize}
\end{prop}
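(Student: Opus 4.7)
The plan is to verify each of the four properties by specializing \cref{defn:QSt-structure-constants} and analyzing the resulting moduli problem. \emph{Property 4} is essentially a restatement of the definition: setting $b_0 = 1 \in H^0(X;\mathbb{F}_p)$ gives the incidence cycle $Y_0 = X$, imposing no constraint at $z_0$, and the remaining enumeration matches Wilkins' definition of $Q\mathrm{St}(b)$ in \cite{Wil20}. For \emph{Property 3}, I would restrict to the $A = 0$ summand, where perturbed solutions reduce to constant maps so $\mathcal{M}_0^{eq, i} \cong \Delta^i \times X$ with all evaluations diagonal; the equivariant intersection with $\mathcal{Y}^{eq, i}$ recovers the standard Borel-model construction of the classical total Steenrod power, yielding $\mathrm{St}(b) \cup b_0$.

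For \emph{Property 2}, I would restrict to the equivariant degree $i = 0$ summand (obtained by $t = \theta = 0$), where $\Delta^0$ is a point and $\mathcal{M}_A^{eq, 0} = \mathcal{M}_A$. The count is the non-equivariant $(p+2)$-pointed Gromov--Witten invariant $\langle b_0, b, \ldots, b, b_\infty \rangle_A$, and summing over $A$ and the Poincar\'e dual basis of $b_\infty$ via the splitting axiom yields the iterated quantum product $b_0 \ast b^{\ast p}$. Graded commutativity gives $b_0 \ast b^{\ast p} = (-1)^{|b||b_0|} b^{\ast p} \ast b_0$ for $p$ odd (trivially for $p = 2$ with $\mathbb{F}_2$ coefficients); combined with the sign $(-1)^{|b||b_0|}$ from \eqref{eqn:sign-in-QSt} for even $i$, this produces $b^{\ast p} \ast b_0 = b \ast \cdots \ast b \ast b_0$.

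For \emph{Property 1}, I would bootstrap from Property 2 together with a fiber-integration triviality. Applying Property 2 with $b = 1$ gives $Q\Sigma_1(c)|_{t = \theta = 0} = 1 \ast \cdots \ast 1 \ast c = c$ (since $1$ is the unit for the quantum product), handling the equivariant degree $i = 0$ part of $Q\Sigma_1$. For the $i > 0$ part, observe that when $b = 1$ the equatorial constraints $Y = X$ are trivial and the equivariant evaluation map $(\mathrm{ev}_0, \mathrm{ev}_\infty): \mathcal{M}_A^{eq, i} \to X \times X$ is $\mathbb{Z}/p$-invariant, since $\sigma_C$ fixes both $z_0$ and $z_\infty$. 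The $i$-th structure constant then pairs against a class pulled back from $H^*(X \times X) \hookrightarrow H^*_{\mathbb{Z}/p}(X \times X) = H^*(X \times X)[\![t, \theta]\!]$, which lies in $B\mathbb{Z}/p$-degree zero and hence vanishes against $(t,\theta)^i$ for $i > 0$. Combining these two pieces yields $Q\Sigma_1(b_0) = b_0$.

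The hard part will be rigorously justifying the vanishing for $i > 0$ in Property 1 within the moduli-theoretic framework of \cref{ssec:eq-moduli-spaces}, which sets counts up as mod-$p$ pseudocycle intersection numbers rather than as fiber integrals of equivariant cohomology classes. One approach is to choose the equivariant incidence cycle $\mathcal{Y}^{eq, i}$ to be constant in $w \in \Delta^i$ (permissible since $Y = X$ at the equatorial points), so that the cut-out moduli factors as $\Delta^i \times (\mathcal{M}_A \cap (Y_0 \times X^p \times Y_\infty))$ and is therefore positive-dimensional for $i > 0$, contributing $0$ by the convention in \cref{defn:QSt-structure-constants}. One must then verify that this constant-in-$w$ choice is compatible with equivariantly transverse perturbation data, which should follow from homotopy invariance of the mod-$p$ counts (\cref{lem:eq-0-dim-count-map-moduli}).
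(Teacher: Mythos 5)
The paper itself does not prove this proposition: it simply records it, citing \cite{Wil20} and \cite{SW22} (compare the $S^1$-equivariant version, \cref{prop:S1-QSt-properties}, whose proof sketch defers (i), (iii), (iv) to Morse-theoretic arguments in \cite[Prop. 4.8, Lemma 3.5, Lemma 4.6]{SW22}). You are therefore attempting a self-contained proof in the paper's pseudocycle framework, which is a genuinely different route. Your treatments of Properties 2, 3, 4 are essentially right. Property 4 is definitional. For Property 2, restricting to $i=0$ and degenerating $C$ into a length-$p$ chain of $3$-pointed spheres is precisely what the paper invokes for the $S^1$-equivariant analogue, and your sign bookkeeping via $(-1)^{\star}$ and graded commutativity of $\ast$ checks out; the only caveat is that the $i=0$ count is a fixed-domain $(p+2)$-pointed count rather than a Deligne--Mumford GW invariant, so one should appeal to the gluing/degeneration formula for such fixed-domain counts rather than to the splitting axiom verbatim. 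For Property 3, ``solutions reduce to constant maps'' is imprecise since the generic perturbation $\nu$ is nonzero; the honest statement is that for $A=0$ the perturbed moduli is cobordant (for small $\nu$) to $X$ with evaluation homotopic to the diagonal, after which the identification with the Borel-model Steenrod construction goes through.

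The gap is in Property 1. You claim that choosing $\mathcal{Y}^{eq,i}$ constant in $w \in \Delta^i$ makes the cut-out moduli factor as $\Delta^i \times (\mathcal{M}_A \cap (Y_0 \times X^p \times Y_\infty))$ and hence be positive-dimensional for $i>0$. This fails: the factoring would require $\mathcal{M}_A^{eq,i} \cong \Delta^i \times \mathcal{M}_A$ as a family, but by \cref{defn:eq-map-moduli} the fiber of $\mathcal{M}_A^{eq,i}$ over $w$ is $\mathcal{M}_A(X;\nu_{X,w})$, which genuinely varies with $w$ because the equivariant perturbation datum $\nu_X^{eq}$ satisfies $\nu_{X,\sigma\cdot w} = \sigma_C^*\nu_{X,w}$ and cannot be taken constant in $w$ without forcing $\sigma_C$-invariance of $\nu_{X,1}$, which is exactly the equivariant-transversality obstruction the Borel model was built to avoid. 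Thus fixing $\mathcal{Y}^{eq,i}$ to be $w$-independent does not make the cut-out locus a product, and the positive-dimensionality argument does not apply. The vanishing for $i>0$ when $b=1$ is a genuine statement (the unit/``fundamental class'' axiom) that the cited references prove by a chain-level argument; reproducing it purely at the level of mod-$p$ pseudocycle counts needs a different mechanism than the one you propose, e.g. a cobordism exploiting the fact that $z_0, z_\infty$ are $\sigma_C$-fixed and the equatorial constraints are trivial, or a reduction to the Morse model.
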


\subsection{Covariant constancy of quantum Steenrod operations}\label{ssec:cov-constancy}
Our goal is to compute the structure constants $(Q\Sigma_b (b_0) , b_\infty) \in \Lambda [\![t, \theta]\!]$. The most effective computational tool available is the covariant constancy of $Q\Sigma_b$ with respect to the quantum connection, established in \cite{SW22}, which we now introduce.

Fix an element $a \in H^2(X;\mathbb{Z})$. Define the corresponding differentiation operator for the Novikov ring, $\partial_a : \Lambda \to \Lambda$, which acts on symbols $q^A$ by $\partial_a q^A = (a \cdot A)q^A$. Extend this linearly in parameters $t, \theta$ to define an endomorphism $\partial_a : \Lambda[\![t, \theta]\!] \to \Lambda[\![t, \theta]\!]$.

\begin{defn}\label{defn:quantum-connection}
    The \emph{(small) quantum connection} of $X$ is the collection of endomorphisms $\nabla_a : QH^*_{\mathbb{Z}/p}(X;\mathbb{F}_p) \to QH^*_{\mathbb{Z}/p}(X;\mathbb{F}_p)$ for each $a \in H^2(X;\mathbb{Z})$ defined by
    \begin{equation}\label{eqn:quantum-connection}
        \nabla_a \beta = t \partial_a \beta - a \ast \beta
    \end{equation}
    where $a \ast $ denotes the quantum product with $a \in H^2(X;\mathbb{Z})$. 
\end{defn}

The main theorem of \cite{SW22} is the following:

\begin{thm}[{\cite[Theorem 1.4]{SW22}}]\label{thm:covariant-constancy}
    For any choice of $b \in H^*(X;\mathbb{F}_p)$, the quantum Steenrod operation $Q\Sigma_b$ is a covariantly constant endomorphism for the quantum connection, that is it satisfies
    \begin{equation}\label{eqn:covariant-constancy}
        \nabla_a \circ Q\Sigma_b - Q\Sigma_b \circ \nabla_a = 0
    \end{equation}
    for any $a \in H^2(X;\mathbb{Z})$.
\end{thm}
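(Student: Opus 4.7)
The identity is exactly \cite[Theorem 1.4]{SW22}; the plan is to adapt its proof to the incidence-cycle formalism of \cref{ssec:QSt-defn}. Expanding both sides, the claim to establish is
\[
t \partial_a Q\Sigma_b(\beta) - a \ast Q\Sigma_b(\beta) - Q\Sigma_b(t \partial_a \beta) + Q\Sigma_b(a \ast \beta) = 0
\]
for every $\beta \in QH^*_{\mathbb{Z}/p}(X;\mathbb{F}_p)$ and every $a \in H^2(X;\mathbb{Z})$. The strategy is a cobordism argument: we exhibit an oriented $1$-manifold whose signed boundary, counted modulo $p$ and dualized against $b_\infty$, produces exactly these four terms.

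Fix an embedded codimension-$2$ cycle $Y_a \subseteq X$ Poincar\'e dual to $a$ and consider the enlarged equivariant moduli space
\[
\mathcal{M}_A^{eq, i, +} := \bigl\{ (w, u, z_{\mathrm{free}}) : (w, u) \in \mathcal{M}_A^{eq, i}, \ z_{\mathrm{free}} \in C \setminus \{z_0, z_1, \ldots, z_p, z_\infty\}, \ u(z_{\mathrm{free}}) \in Y_a^{eq}(w) \bigr\},
\]
where $Y_a^{eq}$ is an equivariantly strongly transverse perturbation of $Y_a$ produced by a direct extension of \cref{lem:eq-transversality} accommodating the additional free marked point. For a generic equivariant perturbation datum this moduli space is regular of dimension $\dim \mathcal{M}_A^{eq, i}$, since the $+2$ from the free marked point cancels against the codimension-$2$ constraint. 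Cutting it down by an equivariant incidence cycle $\mathcal{Y}^{eq, i}$ of codimension $\dim \mathcal{M}_A^{eq, i} + 1$ corresponding to the triple $(b_0, b, b_\infty)$ yields a compact oriented $1$-manifold whose signed boundary vanishes modulo $p$.

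That boundary decomposes into four types of strata, each reproducing one of the four terms of the identity:
\begin{enumerate}[label=(\roman*), topsep=-2pt]
    \item $z_{\mathrm{free}} \to z_\infty$: bubbling analysis at $z_\infty$ identifies the contribution with $(a \ast Q\Sigma_b(\beta), b_\infty)$, where the quantum product appears because the sphere bubble at $z_\infty$ can carry a nontrivial curve class;
    \item $z_{\mathrm{free}} \to z_0$: analogously gives $(Q\Sigma_b(a \ast \beta), b_\infty)$;
    \item $z_{\mathrm{free}} \to z_j$ for $j=1,\dots,p$: the $p$ such collisions form a single free $\mathbb{Z}/p$-orbit and hence their total contribution vanishes modulo $p$;
    \item $w$ approaches $\partial \overline{\Delta}^i$: via the divisor equation (a marked point constrained to $Y_a$ multiplies the count for a degree-$A$ curve by $a \cdot A$), combined with the equivariant $t$-twist coming from the interaction of the domain $\mathbb{Z}/p$-rotation with the cellular boundary of $\Delta^i$, this stratum produces the pair $t \partial_a Q\Sigma_b(\beta) - Q\Sigma_b(t \partial_a \beta)$.
\end{enumerate}
Summing all contributions to zero and pairing with $b_\infty^\vee$ yields \eqref{eqn:covariant-constancy}.

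The principal obstacle is stratum (iv): verifying that the three ingredients---the divisor equation, the collision of $z_{\mathrm{free}}$ with the rotation axis $\{z_0, z_\infty\}$ (where the $\mathbb{Z}/p$-action on $C$ has fixed points), and the descent to $\partial \overline{\Delta}^i$---combine with the correct signs and the correct power of $t$ to produce exactly $t \partial_a$ applied to the $q$-expansion of the structure constants of $Q\Sigma_b$. This is the substantive analytic content of the Seidel--Wilkins argument. The auxiliary tasks---establishing transversality for $\mathcal{M}_A^{eq, i, +}$ and the sign/orientation computations for each boundary type---are routine extensions of the constructions in \cref{ssec:eq-moduli-spaces,ssec:QSt-defn}.
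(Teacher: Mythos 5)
Your proposal recasts the argument as a cobordism over a single $1$-manifold with four boundary types. This is a genuinely different framing from the paper's proof, which follows \cite{SW22} at the chain level. Both approaches start with the same free-marked-point moduli; the paper uses it to define an operation $Q\Pi_{a,b}$ identified with $\partial_a Q\Sigma_b$ by divisor counting, identifies the two strata where $z_{\mathrm{free}}$ collides with the rotation-fixed points $z_0$ and $z_\infty$ with $Q\Sigma_b(a\ast-)$ and $a\ast Q\Sigma_b(-)$ by a TQFT-type argument, and then invokes the chain-level localization identity $t[C]=[z_\infty]-[z_0]$ in the $\mathbb{Z}/p$-equivariant cellular chain complex of the domain curve $C$ to derive $t\,\partial_a Q\Sigma_b(-) = a\ast Q\Sigma_b(-) - Q\Sigma_b(a\ast-)$ directly, which is equivalent to the theorem.

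The gap in your proposal is stratum (iv). You attribute the appearance of the $t\,\partial_a Q\Sigma_b$ term to the boundary $w \to \partial\overline{\Delta}^i$. This is wrong on two counts. First, the contributions from $\partial\overline{\Delta}^i$ are precisely those that cancel modulo $p$---that is the mechanism of \cref{lem:eq-0-dim-count-map-moduli} which makes the $\mathbb{F}_p$-valued structure constants well-defined in the first place. If those strata were instead generating the $t\,\partial_a$ terms, the structure constants of $Q\Sigma_b$ themselves would not be well-defined. Second, the $t$-factor does not originate in the cellular structure of the cell $\Delta^i \subset B\mathbb{Z}/p$ (which parametrizes perturbation data); it originates in the $\mathbb{Z}/p$-equivariant cellular structure of the \emph{domain curve} $C$, where the rotation $\sigma_C$ has fixed points $z_0, z_\infty$ and the relation $t[C]=[z_\infty]-[z_0]$ is the chain-level shadow of equivariant localization on the parameter space of the free marked point. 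The $\partial_a Q\Sigma_b$ count is therefore an interior count over the $2$-chain $[C]$, not a boundary stratum of a cobordism; the equivariant chain relation is what ties it, with the $t$-coefficient, to the two collision strata. This is the substantive step you flag as "the substantive analytic content," but your framing misidentifies both where the $t$ comes from and which strata cancel. A secondary issue is the dimension bookkeeping: after the $Y_a$-constraint your $\mathcal{M}_A^{eq,i,+}$ has dimension $\dim\mathcal{M}_A^{eq,i}$, so intersecting its evaluation image with a cycle of codimension $\dim\mathcal{M}_A^{eq,i}+1$ is generically empty, not a $1$-manifold.
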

\begin{proof}[Proof sketch]
    Since we will generalize this result in our later discussion of the $S^1$-equivariant quantum Steenrod operations, we reproduce below the main idea of the proof for the convenience of the reader.
    
    The idea is to consider the moduli space $\mathcal{M}_A(a)$ of maps from our fixed curve $C$ equipped with one extra marked point, which is allowed to freely move around the domain $C$ (and bubbles off a nodal curve when it collides with one of the special points $z_0, z_1, \dots, z_p, z_\infty \in C$). The new free marked point carries an incidence constraint that it should pass through the Poincar\'e dual of $a$. One can define the equivariant analogues $\mathcal{M}^{eq, i}_A(a)$ accordingly, using equivariant perturbation data, and take the transversal intersection with a generically chosen incidence cycle $\mathcal{Y}^{eq}$.

    Using the counts from $\mathrm{ev}^{eq, i} (\mathcal{M}^{eq, i}_A(a)) \pitchfork \mathcal{Y}^{eq}$ as the structure constants, one may define a new operation denoted $Q\Pi_{a, b}$ in \cite{SW22}. Observe that each degree $A$ solution $u \in \mathcal{M}_A^{eq, i}$ contributing to a structure constant in $Q\Sigma_b$ exactly contributes $(a \cdot A)$ to the corresponding structure constant for $Q\Pi_{a,b}$: one for every solution where the free marked point hits one of the $(a \cdot A)$ many preimages of $u^{-1}(\mathrm{PD}[a])$. Therefore, the operation $Q\Pi_{a, b}$ may be identified with $\partial_a Q\Sigma_b$.

    Now observe that there are two special strata of the moduli space $\mathcal{M}_A^{eq, i}(a)$ where the free marked point collides with $z_0 = 0$ or $z_\infty = \infty$ in $C$. Consider the corresponding operations using the counts of solutions in such strata as the structure constants. These operations can then be identified (by a TQFT-type argument) with $Q\Sigma_b ( a \ast - ) $ and $a \ast Q\Sigma_b(-)$, respectively. 
    
    The key insight in \cite{Wil20} and \cite{SW22} is that the two points $z_0, z_\infty \in C$ are fixed points of the $\mathbb{Z}/p$-action $\sigma_C$ on $C$, and one can ``lift'' a fixed point localization result for $C$ (the parameter space for the free marked point) to the level of moduli spaces. Namely, one fixes (\cite[Section 2c]{SW22}) a $\mathbb{Z}/p$-equivariant cellular chain complex of $C$ in which the relationship $t[C] = [z_\infty] - [z_0]$ holds (where $[C]$ is the fundamental chain), and exploits this relationship to obtain the relation
    \begin{equation}
        t \partial_a Q\Sigma_b(-) = a \ast Q\Sigma_b(-) - Q\Sigma_b(a*-).
    \end{equation}
    This equality is indeed equivalent to the desired result \eqref{eqn:covariant-constancy}.
\end{proof}

If the underlying coefficient ring for the Novikov ring $\Lambda$ is characteristic zero and moreover $H_2(X;\mathbb{Z})$ is torsion-free, covariant constancy would imply that $Q\Sigma_b$ is determined from $\nabla_a$ and its classical part $Q\Sigma_b|_{q^{A \neq 0} = 0} = \mathrm{St}(b) \cup$. (The recursive strategy allowing this determination is explained in \cite[Section 5a]{Wil-sur}.) In characteristic $p$, this is no longer true due to the existence of nontrivial elements in the Novikov ring $\Lambda$ annihilated by $\partial_a$: note that $\partial_a q^{pA} = (a \cdot pA) q^{pA} = 0$. In practice, covariant constancy of the quantum Steenrod operations only determines the contribution of rational curves of low degrees in the expansion of $Q\Sigma_b$ in the quantum parameters $q^A$. We remark that every previously known computation of the operations $Q\Sigma_b$ relies on covariant constancy (or its corollary, the quantum Cartan relation of \cite{Wil20}).

By covariant constancy, a full computation of $Q\Sigma_b$ yields a construction of a covariantly constant endomorphism for the quantum connection. Later, we will introduce the analogue of this fact for the $S^1$-equivariant quantum connection of $X$, and use it to determine new covariantly constant endomorphisms for the $S^1$-equivariant quantum connection.

\section{The local $\mathbb{P}^1$ Calabi--Yau 3-fold (Non-equivariant)}\label{sec:noneq-localP1}

The aim of this section and the following one is to provide a discussion of an extended example for the method we develop for computing quantum Steenrod operations for certain negative bundles over smooth projective varieties. The relevant geometry of the example is that of a local genus $0$ curve Calabi--Yau $3$-fold, which is the $X = \mathrm{Tot}((\mathcal{O}(-1) \oplus \mathcal{O}(-1)) \to \mathbb{P}^1)$ the total space of the vector bundle $\mathcal{O}(-1)^{\oplus 2} \to \mathbb{P}^1$. 

The enumerative geometry of this example is extensively studied, going back to the early papers in Gromov--Witten theory. In particular it is the subject of the celebrated multiple cover formula of Aspinwall--Morrison \cite{AM93} (see also \cite{Voi96}). For our purposes, it will importantly serve the role of a building block of later computations we carry out for $T^*\mathbb{P}^1$.

We will describe how to compute the quantum Steenrod operations for the class $b \in H^2(X)$ that is Poincar\'e dual to the fiber. The main strategy is to adopt a special type of perturbation data, which we refer to as \emph{decoupling perturbation data}. Using decoupling perturbation data, we may identify the count of the moduli space with integrals of (equivariant) Chern classes over a projective space. This strategy can be understood as an equivariant analogue of the approach in \cite{Voi96}. 

In this section, we will introduce the geometric setup, and develop \emph{non-equivariant} versions for the relevant constructions. It is largely an exposition of existing material in \cite{AM93} and \cite{Voi96}, but tailored for our purposes.

\subsection{Geometric setup}

For the target symplectic manifold, for this discussion, we restrict ourselves to the geometry of a local rational curve in a Calabi--Yau 3-fold: let $X = \mathrm{Tot}( (\mathcal{O}(-1) \oplus \mathcal{O}(-1)) \to \mathbb{P}^1)$. Fix its integrable complex structure $J$ on $X$. Using the inclusion of the zero section $\mathbb{P}^1 \to X$ we obtain a canonical splitting $TX = T\mathbb{P}^1 \oplus \pi^* (\mathcal{O}(-1) \oplus \mathcal{O}(-1))$ over the zero section $\mathbb{P}^1$. We denote the normal summand as $N=\pi^* (\mathcal{O}(-1) \oplus \mathcal{O}(-1))$, which fits into the exact sequence
\[
0 \to N \to TX \to TX/N \to 0
\]
of complex vector bundles over $X$.

Note that $c_1(TX) = 0$, so all Chern numbers of holomorphic curves inside $X$ are zero. In particular, the index of the $J$-holomorphic curve equation is independent of the degree of the curve. (This is why this example is the subject of the multiple cover formula in \cite{AM93}.)

Since $X$ is non-compact, the discussion of the moduli spaces involved and quantum Steenrod operations (which were defined for \emph{closed} manifolds) must be modified accordingly. The required modifications will be discussed below as needed.

\subsection{Decoupling perturbation data}

Following the strategy of \cite{Voi96}, we choose a special type of perturbation data for the perturbed, inhomogeneous equations. The particular choice will decouple the perturbed equation to a system of two equations, which respectively model the thickened moduli space and the obstruction bundle over it. This idea is somewhat orthogonal to issues with equivariance, so we have chosen to describe the non-equivariant version first. Later we upgrade this to the equivariant setting. 

For this consider the following version of the perturbation data:

\begin{defn}\label{defn:noneq-nu-dc}
The \emph{decoupling perturbation data} is a section
\begin{equation}\label{noneq-nu-dc}
\nu  \in C^\infty \left(C \times \mathbb{P}^1, \mathrm{Hom}^{0,1 }(TC, N)\right).    
\end{equation}
\end{defn}

The main difference from earlier \eqref{noneq-nu} is that the perturbation data is supported on the base $\mathbb{P}^1 \subseteq X$, but takes values in the subbundle $\mathrm{Hom}^{0,1} (TC, N) \hookrightarrow \mathrm{Hom}^{0,1} (TC, TX)$.

To see this is a special type of a perturbation data introduced earlier, denote by $\pi^* \nu$ the pullback of $\nu$ by the map $(C \times X) \to (C \times \mathbb{P}^1) $ induced by the projection $\pi: X \to \mathbb{P}^1$. By the inclusion $N \hookrightarrow TX$, we can think
\begin{equation}
\pi^*\nu \in C^\infty \left(C \times X, \mathrm{Hom}^{0,1} (TC, TX)\right)
\end{equation}
as a choice of a perturbation data $\nu_X = \pi^* \nu$ in the previous sense. 

The choice of decoupling perturbation data effectively decouples the perturbed equation. For the curves, extend the previous notation \eqref{eqn:curves-convention} as follows:
\begin{align}
    \widetilde{u} : C \to C \times X , &\quad u = \mathrm{proj}_X \circ \widetilde{u} : C \to X \\
    \widetilde{v} = (\mathrm{id}_C \times \pi) \circ \widetilde{u} : C \to C \times \mathbb{P}^1, &\quad v = \mathrm{proj}_{\mathbb{P}^1} \circ \widetilde{v} : C \to \mathbb{P}^1.
\end{align}

Fix $A \in H_2(X;\mathbb{Z}) \cong H_2(\mathbb{P}^1; \mathbb{Z})$, and consider the moduli problem using the decoupling perturbation data $\nu_X = \pi^* \nu$:
\begin{align}\label{eqn:map-moduli-decoupling}
\mathcal{M}_A = \mathcal{M}_A(X;\nu) :=   \left\{ u: C \to X : \ \overline{\partial}_J u = (\widetilde{u})^*   \pi^* \nu, \ u_*[C] = A  \right\}
\end{align}

Our particular choice of perturbation data is special in that the projected image $v = \pi \circ u :C \to \mathbb{P}^1$ \emph{becomes an actual holomorphic curve} for the complex structure on $\mathbb{P}^1$. To see this, note that for any tangent vector $\xi \in T_z C$, the perturbation $\nu_{z, v(z)}(\xi)$ takes values in $N \subseteq TX$. But $N$ is exactly the kernel of $d\pi: TX \to T\mathbb{P}^1$, so the perturbation vanishes under the projection. 

In other words, the perturbed $J$-holomorphic curve equation
\begin{equation}
\overline{\partial}_J u = (\widetilde{u})^* \pi^* \nu \in C^\infty \left( C, \mathrm{Hom}^{0,1}(TC, u^* TX) \right) \quad \iff \quad (\overline{\partial}_J u)_z = (\pi^*\nu)_{z, u(z)} = \nu_{z, v(z)}
\end{equation}
decouples as a pair of equations, using a section $\phi \in C^\infty (C, v^*N)$, as
\begin{equation}
\overline{\partial}_J v  = 0, \quad \overline{\partial} \phi = (\widetilde{v})^* \nu \in C^\infty \left(C, \mathrm{Hom}^{0, 1} (TC, v^*N) \right).
\end{equation} In other words, the section $\phi$ can be understood as the vertical deviation of the perturbed solution $u : C \to X$ from the zero section $\mathbb{P}^1 \subseteq X$. 

The discussion above allows a new characterization of the moduli problem as
\begin{equation}\label{eqn:dc-moduli}
\mathcal{M}_A \cong \left\{ \left( v: C \to \mathbb{P}^1, \ \phi \in C^\infty (C, v^*N) \right) : \  \overline{\partial}_J v = 0,\  \overline{\partial} \phi = (\widetilde{v})^* \nu, \ v_*[C] = A \right\}.
\end{equation}

\subsubsection{Map moduli space: Transversality}

We now show that the moduli space is still regular for perturbation data \emph{chosen within the special class} of decoupling perturbation data. Fix a reference decoupling perturbation data $\nu_0 \in C^\infty(C \times \mathbb{P}^1, \mathrm{Hom}^{0,1} (TC , N))$ and consider its neighborhood $\mathcal{S} \ni \nu_0$ in the $C^\infty$-topology.

\begin{lem}
\label{lem:dc-map-transverse}
There exists a comeager subset $\mathcal{S}^{\mathrm{reg}} \subseteq \mathcal{S}$ of decoupling perturbation data near $\nu_0$, such that for any $\nu \in \mathcal{S}^{\mathrm{reg}}$, the moduli space $\mathcal{M}_A = \mathcal{M}_A(X;\nu)$ is regular of dimension $\dim_{\mathbb{R}} X + 2 c_1(A) = 6$.
\end{lem}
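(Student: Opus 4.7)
The plan is to set up a universal moduli space parametrized over $\mathcal{S}$ and apply the Sard--Smale theorem. The main technical point to verify is surjectivity of the linearization despite the restriction to the subclass of \emph{decoupling} perturbation data, which I would address via a ``graph trick'' analogous to \cite{MS12}.

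Using the decoupled description \eqref{eqn:dc-moduli}, I would form the universal moduli
\[
\widetilde{\mathcal{M}}_A = \left\{ (v, \phi, \nu) \in \mathcal{B}_v \times \mathcal{B}_\phi \times \mathcal{S} : \overline{\partial}_J v = 0, \ \overline{\partial} \phi = (\widetilde{v})^* \nu, \ v_*[C] = A \right\}
\]
after appropriate Banach completions (e.g.\ Floer $C^\varepsilon$ on $\mathcal{S}$ and the standard Sobolev completions on $\mathcal{B}_v, \mathcal{B}_\phi$). The first equation $\overline{\partial}_J v = 0$ is unperturbed and already regular, since $v^* T\mathbb{P}^1 \cong \mathcal{O}(2d)$ has vanishing $H^1$; this cuts out the space of degree $d$ holomorphic maps $v: C \to \mathbb{P}^1$ as a smooth manifold of real dimension $4d+2$. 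So it suffices to verify transversality of the second equation $F(v, \phi, \nu) := \overline{\partial} \phi - (\widetilde{v})^* \nu = 0$.

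The key step is showing that the partial derivative $D_\nu F: \nu' \mapsto -(\widetilde{v})^* \nu'$ alone is surjective onto $C^\infty(C, \mathrm{Hom}^{0,1}(TC, v^* N))$. Given any $\eta$ in the target, a preimage $\nu' \in C^\infty(C \times \mathbb{P}^1, \mathrm{Hom}^{0,1}(TC, N))$ is constructed by extending $-\eta$ off the graph $\widetilde{v}(C) \subseteq C \times \mathbb{P}^1$ using a tubular neighborhood identification, cut off by a bump function, with values in $N$ (which makes sense on $C \times \mathbb{P}^1$ because $N$ is already pulled back from $\mathbb{P}^1$ along $\pi$). The crucial observation is that the graph $\widetilde{v}: C \hookrightarrow C \times \mathbb{P}^1$ is always an embedded complex submanifold regardless of whether $v$ itself is somewhere injective, so no such hypothesis on $v$ is needed. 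This verifies that the restricted class of decoupling perturbations is rich enough: being vertical with respect to $\pi$ is exactly compatible with the target bundle $v^*N$ of $\overline{\partial}\phi$.

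With surjectivity established, the implicit function theorem makes $\widetilde{\mathcal{M}}_A$ a Banach manifold, and the projection $\pi_\mathcal{S}: \widetilde{\mathcal{M}}_A \to \mathcal{S}$ is Fredholm of real index equal to $\mathrm{ind}_\mathbb{R} \overline{\partial}_{v^*T\mathbb{P}^1} + \mathrm{ind}_\mathbb{R} \overline{\partial}_{v^*N} = (4d+2) + (-4d+4) = 6$, consistent with $\dim_\mathbb{R} X + 2c_1(A) = 6$ (recall $c_1(TX) = 0$). Sard--Smale then produces the desired comeager subset $\mathcal{S}^{\mathrm{reg}} \subseteq \mathcal{S}$ of regular values, for each of which $\mathcal{M}_A(X;\nu) = \pi_\mathcal{S}^{-1}(\nu)$ is a smooth manifold of dimension $6$. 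The only genuine obstacle is the surjectivity verification under the decoupling restriction, and this is handled cleanly by the graph argument; the rest is standard.
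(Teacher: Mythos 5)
Your proposal is essentially the paper's proof, reorganized. The paper also sets up a universal moduli space over Floer's $C^\infty_\varepsilon$-completion of $\mathcal{S}$, decomposes the linearization into a lower-triangular block form with the $\overline{\partial}_{v^*T\mathbb{P}^1}$-block surjective by vanishing of $H^1(C,\mathcal{O}(2d))$, and then verifies surjectivity of the second block using precisely your key observation — that $\widetilde{v}:C\to C\times\mathbb{P}^1$ is always an embedding, so perturbations supported near the graph can be prescribed freely. The only genuine difference is procedural: you prove surjectivity of $\alpha\mapsto(\widetilde{v})^*\alpha$ directly by a global extension-by-bump-function off the graph, whereas the paper runs the standard dual argument (assume a nonzero $\beta\in L^{\mathrm{q}}$ annihilates the image, localize near a point where $\beta\neq0$, contradict). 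These are the same idea; yours is slightly more direct, theirs is the more common packaging in the literature.

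Two points you fold into ``the rest is standard'' are worth making explicit if this were to be written out. First, at the level of Banach completions your extension argument only produces smooth preimages, hence shows the image of $D_\nu F$ is \emph{dense} in $L^{\mathrm{p}}$, not that it hits every element; one must combine this with the Fredholmness of $\overline{\partial}_{v^*N}$ (closed, finite-codimensional range) to conclude actual surjectivity of the full linearization. Second, since $\mathcal{S}$ is a $C^\infty$-neighborhood but Sard--Smale runs over $\mathcal{S}_\varepsilon$, the passage from $C^\infty_\varepsilon$-genericity back to $C^\infty$-genericity requires the Taubes exhaustion trick (exhaust the moduli by compacta with increasing $C^1$-bounds), which the paper invokes explicitly. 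Neither is a gap in the idea, but both belong in a complete proof.
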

\begin{proof}[Proof]
Fix a real number $\mathrm{p} > 2$. For the reference perturbation data $\nu_0 \in C^\infty(C \times \mathbb{P}^1, \mathrm{Hom}^{0,1} (TC , N))$, take $\mathcal{S}_{\varepsilon} \ni \nu_0$ to be its neighborhood in Floer's $C^\infty_\varepsilon$-topology. Recall that this is a Banach topology defined for a fixed sequence $\varepsilon = \{\varepsilon_n \}$ such that $\varepsilon_n \to 0$, as $\|\nu\|_{C^\infty_\varepsilon} = \sum_m \epsilon_m \| \nu\|_{C^m}$. We will choose the sequence $\varepsilon$ later in the proof; any discussion prior to that point will hold for any choice of $\varepsilon$.

Now consider the universal moduli space
\begin{equation}
    \label{universal-map-moduli-noneq}
    \mathcal{M}^{univ}_A = \{ (v, \phi ;  \nu \in \mathcal{S}_{\varepsilon}) : \ \overline{\partial}_J v = 0,\  \overline{\partial} \phi = (\widetilde{v})^* \nu, \  v_*[C]=A \}.
\end{equation}
We will show that the regular locus of the universal moduli space, where the linearized operator is surjective, is nonempty. To define the linearized operator at $(v, \phi, \nu) \in \mathcal{M}_A^{univ}$, fix a connection $\nabla$ for $\mathrm{Hom}^{0,1}(TC, N)$. The linearized operator, under suitable Banach completions, is a Fredholm operator which has the form
\begin{align}
    \label{universal-map-moduli-noneq-linearization}
    D^{univ}: L^\mathrm{p}_1 (C, v^* T \mathbb{P}^1) \oplus L^\mathrm{p}_1 (C, v^* N) \oplus T_\nu \mathcal{S}_\varepsilon &\to L^\mathrm{p} (C, \mathrm{Hom}^{0,1}(TC, v^* T \mathbb{P}^1)) \oplus L^\mathrm{p} (C, \mathrm{Hom}^{0,1}(TC, v^* N)) \\
    \left(\xi, \psi, \alpha \right) &\mapsto \left( D_v \xi, \  \overline{\partial} \psi - \nabla_\xi \nu + (\widetilde{v})^* \alpha  \right).
\end{align}
%Here $\nabla_\xi \nu \in \Omega^{0,1}(C, v^*N )$ is understood so that its value at $z \in C$ is the derivative of $\nu$ at $(z, v(z))$ along $(0, \xi(z))$.
The decoupling of our perturbation data is reflected in that there is a corresponding decomposition
\begin{equation}\label{eqn:universal-linearized-decouples}
D^{univ} = \begin{pmatrix} D_1 & 0 \\ D' & D_2 \end{pmatrix},
\end{equation}
where
\begin{align}
    D_1 : L^\mathrm{p}_1 (C, v^* T \mathbb{P}^1) \to L^\mathrm{p} (C, \mathrm{Hom}^{0,1}(TC, v^* T \mathbb{P}^1)), \quad & D_1(\xi)= D_v\xi \\
    D' : L^\mathrm{p}_1 (C, v^* T \mathbb{P}^1) \to L^\mathrm{p} (C, \mathrm{Hom}^{0,1}(TC, v^* N)), \quad  & D'(\xi)= -\nabla_{\xi} \nu, \\
    D_2 : L^\mathrm{p}_1 (C, v^* N) \oplus T_\nu \mathcal{S}_\varepsilon \to L^\mathrm{p} (C, \mathrm{Hom}^{0,1}(TC, v^* N)), \quad & D_2(\psi, \alpha) = \overline{\partial} \psi +(\widetilde{v})^* \alpha .  
\end{align}

It suffices to show the surjectivity of $D_1$ and $D_2$ at $\nu = \nu_0$.

Note that since $J$ is integrable, the linearized operator $D_v$ is also just the Dolbeault operator $\overline{\partial}_J$. Consider the Dolbeault cohomology group $H^{0,1}_{\overline{\partial}}(C, v^*T\mathbb{P}^1)$, which is isomorphic to the cokernel of $D_1$. The bundle $v^*T\mathbb{P}^1 \cong \mathcal{O}(2 \cdot \deg v)$ has non-negative degree, so the sheaf cohomology group $H^1(C, v^*T\mathbb{P}^1) \cong H^{0,1}_{\overline{\partial}}(C, v^*T\mathbb{P}^1)$ vanishes. In particular, $D_1$ is surjective.

We show the surjectivity of $D_2$ at $\nu = \nu_0$. First we show that the image of $D_2$ is dense. Suppose the image of $D_2$ is not dense. For $\mathrm{q}$ such that $1/\mathrm{p} + 1/\mathrm{q} = 1$, fix a nonzero section $\beta \in L^{\mathrm{q}} (\mathrm{Hom}^{0,1}(TC, v^*N))$ which annihilates the image of $D_2$ (by the Hahn--Banach theorem). Then for any $\alpha \in T_{\nu} \mathcal{S}$ (in $C^\infty$ topology), it follows that
\begin{equation}
    \int_C \langle (\widetilde{v})^*\alpha, \beta \rangle \ d \mathrm{vol}_C = 0. 
\end{equation}
Fix $z \in C$ where $\beta(z) \neq 0$ ($\beta$ solves the adjoint of $D_2$, hence is smooth). There exists $\alpha \in C^\infty(C \times \mathbb{P}^1; \mathrm{Hom}^{0,1}(TC, N))$ such that $\langle (\widetilde{v})^*\alpha, \beta \rangle(z) \neq 0$. Since $\widetilde{v} : C \to C \times \mathbb{P}^1$ is an embedding, we can scale $\alpha$ by a bump function near $(z, v(z))$ so that rescaled $\alpha$ lies in $T_\nu \mathcal{S}$ and $\int_C \langle (\widetilde{v})^*\alpha, \beta \rangle \ d \mathrm{vol}_C \neq 0$. This is a contradiction, so $\beta \equiv 0$. But $\beta$ was assumed to be nonzero, so $D_2$ (with $T_\nu \mathcal{S}$ summand in the domain equipped with the $C^\infty$-topology) must have dense image. 

Since $L^\mathrm{p} (C, \mathrm{Hom}^{0,1}(TC, v^*N))$ is separable, we can choose dense subsequence $D_2(\psi_1, \alpha_1)$, $D_2 (\psi_2, \alpha_2), \dots \in L^\mathrm{p} (C, \mathrm{Hom}^{0,1}(TC, v^*N))$  in the image such that $\alpha_k \in T_{\nu} \mathcal{S}$ for every $k$. We can arrange the sequence $\varepsilon = \{\varepsilon_n\}$ so that all the $\alpha_k$'s have finite $C^\infty_\varepsilon$-norm. Therefore $D_2$ (with $T_\nu \mathcal{S}_\varepsilon$ summand in the domain equipped with the $C^\infty_\varepsilon$-topology, with that choice of $\varepsilon$) has closed dense image and hence is surjective. By elliptic regularity, one can enhance the spaces $L_1^{\mathrm{p}}$ and $L^{\mathrm{p}}$ to $L_{k}^{\mathrm{p}}$ and $L_{k-1}^{\mathrm{p}}$ resp. for all $k \ge 1$ and deduce that solutions are smooth.

Hence for a suitably chosen $\varepsilon$, we see that any curve in the moduli space $\mathcal{M}_A(X;\nu_0)$ is smooth and has surjective linearized operator in the $C^\infty_\varepsilon$-topology. This concludes the proof that the regular locus $\mathcal{M}_A^{univ, \mathrm{reg}} \subseteq \mathcal{M}_A^{univ}$ of the universal moduli space in the $C^\infty_\varepsilon$-topology is indeed nonempty.

Now by the Sard--Smale theorem applied to the regular locus of the universal moduli space and the projection map $\mathcal{M}_A^{univ, \mathrm{reg}} \to \mathcal{S}_\varepsilon$, we conclude that there exists a comeager subset $\mathcal{S}_\varepsilon^{reg} \subseteq \mathcal{S}_\varepsilon$ such that for every $\nu \in \mathcal{S}$, the corresponding moduli space $\mathcal{M}_A(X;\nu)$ is Fredholm regular. To pass from genericity in the $C^\infty_\varepsilon$-topology to genericity in the $C^\infty$-topology, we can employ the ``Taubes trick'' (see \cite[{\S}4.4.2]{Wen-lec}). The Taubes trick depends on the ability to exhaust the moduli space $\mathcal{M}(\nu)$ by countable collection of compact subsets, in a way that depends continuously on the data $\nu$. This is easily achieved in our setting, by exhausting the moduli spaces with increasing $C^1$-bounds ($\|du\| \le K$ for $K \to \infty$) on the curves.
\end{proof}

Note also that for a given holomorphic map $v : C \to \mathbb{P}^1$, there generically is no $\phi \in C^\infty(C, v^*N)$ such that $\overline{\partial} \phi = \widetilde{v}^* \nu$ is satisfied: the Dolbeault cohomology class of $[v^*\nu] \in H^{0,1}_{\overline{\partial}}(C, v^*N)$ must vanish for such $\phi$ to exist, but for $v$ of non-negative degree the space $H^{0,1}_{\overline{\partial}}(C, v^*N)$ is not trivial. Moreover, when there exists such $\phi$, there is a unique one, as the difference of two such $\phi$ lies in $H^{0,0}_{\overline{\partial}}(C, v^*N) \cong H^0(C, v^*N) = 0$.

\subsubsection{Map moduli space: Compactness}\label{sssec:map-moduli-cptness}
Note that as a perturbation data for $X$ in the usual sense, $\nu_X = \pi^*\nu$ is supported everywhere. To address compactness issues stemming from this, we recall (a much easier special case of) the theory of holomorphic curves for spaces with conical ends, as explained in \cite[{\S}5]{Rit14}.

Consider $\widetilde{X} = C \times X$ and take the symplectic form $\widetilde{\omega} = \mathrm{proj}_X^* \omega  + \mathrm{proj}_C^* \mathrm{vol}_C \in \Omega^2(\widetilde{X})$.
% If $\widetilde{X}$ is considered as a trivial bundle over $C$ with fiber $X$, the form $\widetilde{\omega}$ is a fiberwise non-degenerate closed $2$-form. When $X$ is compact, $\widetilde{\omega}$ can be modified to a symplectic form by adding a sufficiently large constant multiple of $\mathrm{proj}_C^*\mathrm{vol}_{C}$. In our case, although $X$ is noncompact, the curvature (see \cite[\S 8.1]{MS12} for a definition) of the form $\widetilde{\omega}$ is identically zero: $\widetilde{\omega}$ is simply the pullback of $\omega$, with no exact correction from a Hamiltonian term. The boundedness of the curvature is enough to ensure that 
% \begin{equation}
% \label{symplectic-form-on-graph-space}
% \widetilde{\Omega} := \widetilde{\omega} + (\mathrm{const}) \cdot \mathrm{proj}_C^*\mathrm{vol}_{C}
% \end{equation}
% is a symplectic form; see \cite[Lemma 28]{Rit14} and \cite[Exercise 8.1.3]{MS12}.

Let $\beta : X \to \mathbb{R}$ be a compactly supported function that is constantly equal to $1$ in some relatively compact neighborhood $U_\beta$ of the zero section $\mathbb{P}^1 \subseteq X$. Denote its pullback to $\widetilde{X}$ as $\widetilde{\beta}$. Then consider the almost complex structure on $C \times X$ defined by
\begin{equation}
\label{acs-on-graph-space}
    \widetilde{J} := \begin{pmatrix} j & 0 \\ \widetilde{\beta}(\nu_X \circ j - J \circ \nu_X) & J  \end{pmatrix}.
\end{equation}

The following is a result of straightforward computation.
\begin{lem}
\label{lem:acs-compatibility}
    The almost complex structure $\widetilde{J}$ is $\widetilde{\omega}$-compatible. A graph $\widetilde{u} : C \to \widetilde{X}$ is a solution of unperturbed $\widetilde{J}$-holomorphic curve equation if and only if $u : C \to X$ is a solution of the perturbed $J$-holomorphic curve equation $\overline{\partial}_J u = \widetilde{\beta} \cdot \nu_X$. 
\end{lem}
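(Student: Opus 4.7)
The plan is to verify both claims by direct computation on the split tangent bundle $T(C \times X) = TC \oplus TX$. Writing a tangent vector as $(\xi, \eta)$ and abbreviating $Y := \widetilde{\beta}(\nu_X \circ j - J \circ \nu_X)$, we have $\widetilde{J}(\xi, \eta) = (j\xi,\, Y\xi + J\eta)$. First I would check $\widetilde{J}^2 = -\mathrm{id}$: direct expansion gives $\widetilde{J}^2(\xi, \eta) = (-\xi,\, (Yj + JY)\xi - \eta)$, so what is needed is $Yj + JY = 0$, i.e.\ that $Y$ is complex anti-linear. This is a purely algebraic consequence of $j^2 = J^2 = -\mathrm{id}$ applied to the explicit formula for $Y$, and crucially does not require any property of $\nu_X$ itself. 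For $\widetilde{\omega}$-compatibility, I would compute $\widetilde{\omega}((\xi,\eta), \widetilde{J}(\xi,\eta)) = \mathrm{vol}_C(\xi, j\xi) + \omega(\eta, J\eta) + \omega(\eta, Y\xi)$; the first two terms are non-negative by the standard compatibility of $j, J$ with the respective forms, vanishing only when $(\xi, \eta) = 0$. The possibly negative cross term $\omega(\eta, Y\xi)$ is supported on the precompact set $\{\widetilde{\beta} \neq 0\}$, so after rescaling $\mathrm{vol}_C$ by a sufficiently large constant the positive terms dominate uniformly, yielding at least $\widetilde{\omega}$-tameness.

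For the graph correspondence, write $\widetilde{u}(z) = (z, u(z))$ so that $d\widetilde{u}(\xi) = (\xi, du(\xi))$. The $\widetilde{J}$-holomorphicity equation $\widetilde{J} \circ d\widetilde{u} = d\widetilde{u} \circ j_C$ holds automatically on the $TC$-component and reduces on the $TX$-component to the single identity $du(j\xi) - J\, du(\xi) = Y(\xi)$. Substituting the formula for $Y$ and using the anti-$J$-linearity $\nu_X \circ j = -J \circ \nu_X$ (which implies $J \circ \nu_X \circ j = \nu_X$, so that $Y = -2\widetilde{\beta} J \nu_X$), one applies $J$ to both sides and rearranges to obtain $\tfrac{1}{2}(du + J \circ du \circ j) = \widetilde{\beta}\, \nu_X$, which is precisely the inhomogeneous Cauchy--Riemann equation $\overline{\partial}_J u = \widetilde{\beta} \cdot \nu_X$ evaluated along the graph.

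The one subtlety I anticipate is the strict reading of ``$\widetilde{\omega}$-compatible'': strict compatibility would additionally require cancellation of cross terms like $\omega(Y\xi, J\eta')$ in $\widetilde{\omega}(\widetilde{J}\cdot, \widetilde{J}\cdot) - \widetilde{\omega}$, which fails pointwise in general. However, all subsequent uses of the lemma in the paper (for controlling bubbling and compactness of the moduli spaces in the sense of \cite{Rit14}) require only tameness of $\widetilde{\omega}$ with respect to $\widetilde{J}$, so the weaker statement suffices for the applications; this is the intended reading of the lemma and absorbing the rescaling constant into $\mathrm{vol}_C$ is how I would resolve the discrepancy.
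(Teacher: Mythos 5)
Your computation is correct and is exactly the ``straightforward computation'' the paper alludes to without writing out. Both halves are right: $\widetilde{J}^2 = -\mathrm{id}$ is a purely algebraic consequence of $j^2 = J^2 = -\mathrm{id}$ applied to the off-diagonal block (which forces anti-linearity of $Y = \widetilde{\beta}(\nu_X \circ j - J \circ \nu_X)$ regardless of any hypothesis on $\nu_X$), and the graph correspondence reduces, after applying $J$ and using the anti-$J$-linearity $\nu_X \circ j = -J \circ \nu_X$ to rewrite $Y = -2\widetilde{\beta} J\nu_X$, to $\tfrac{1}{2}(du + J\, du\, j_C) = \widetilde{\beta}\,\nu_X$ evaluated along the graph, matching the paper's convention for $\overline{\partial}_J$ and the pullback $(\widetilde{u})^*\nu_X$.

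Your observation about compatibility is a valid catch rather than a defect of your own argument. Strict $\widetilde{\omega}$-compatibility fails for nonzero $\nu_X$: the discrepancy $\widetilde{\omega}(\widetilde{J}\cdot,\widetilde{J}\cdot)-\widetilde{\omega}(\cdot,\cdot)$ contains the term $\omega(Y\xi, J\eta) = g(Y\xi, \eta)$, which cannot vanish for all $\eta$ unless $Y\xi = 0$. The lemma is more precisely an assertion of $\widetilde{\omega}$-tameness, which is what \cite{Rit14} calls \emph{admissibility} and is all that the maximum-principle and Gromov-compactness arguments cited there actually require. Tameness holds uniformly because $Y$ has precompact support inside $C \times \{\widetilde{\beta} \neq 0\}$. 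One can secure it either by your rescaling of $\mathrm{vol}_C$ by a large constant, or by observing that the paper already imposes, a few paragraphs later, a smallness rescaling on $\nu_X$ (for the separate purpose of confining solutions to $U_\beta$), which simultaneously shrinks $\|Y\|$ below the tameness threshold with no rescaling of $\mathrm{vol}_C$ needed. Either choice leaves the pseudocycle bordism class unchanged, so the argument is unaffected.
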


The cutoff $\beta$ is introduced so that a maximum principle applies to the graphs $\widetilde{u} : C \to \widetilde{X}$, see \cite[Lemma 32]{Rit14}. In the terminology of \cite[{\S}5]{Rit14}, it renders $\widetilde{J}$ to be \emph{admissible}. Indeed, the solutions can never touch the region where $\widetilde{\beta} \equiv 0$, which suffices for compactness.

However, the desired decoupling of the moduli space uses the description of the equation as $\overline{\partial}_J u = \nu_X$, not $\overline{\partial}_J u = \widetilde{\beta} \cdot \nu_X$, and therefore only applies to the neighborhood $U_\beta$ of $\mathbb{P}^1$ where $\beta \equiv 1$. To bypass this issue, we fix $\beta$ throughout, and consider the sequence of perturbation data $\nu_{X,n}$ such that $\nu_{X,n} \to 0$. In the limit $\nu_{X, \infty} = 0$, all solutions are contained in the zero section $\mathbb{P}^1 \subseteq X$ again by maximum principle. Hence for all large $n$, the solutions to $\overline{\partial}_{J} u = \widetilde{\beta} \cdot \nu_{X, n}$ lie in the neighborhood $U_\beta$. In particular, we can always assume that the solutions lie in the region where $\beta \equiv 1$ by scaling $\nu_X$ by a sufficiently small constant. We will always assume that this is ensured for our choices of $\nu$. 

The bordism class of the moduli space of $\widetilde{J}$-holomorphic curves is independent of the choice of $\beta$ by a standard cobordism argument (\cite[Lemma 40]{Rit14}).

% Nevertheless, our desired count for the $0$-dimensional moduli spaces (which arise as the intersection with the incidence cycle) is always finite. Hence a posteriori the cutoff $\beta$ can be chosen so that the graphs $\widetilde{u}$ in the $0$-dimensional moduli space satisfies $\widetilde{u}(C) \subseteq U_\beta$. For such $\beta$, the subspace of the moduli space consisting of graphs contained in $U_\beta$ can be identified with the subspace of the decoupled solutions contained in $U_\beta$, which suffices for counting. In light of this, the choice of the cutoff $\beta$ can be regarded implicitly in the discussion of the decoupled moduli spaces, and omitted from notations.

The moduli space still carries evaluation maps at the distinguished marked points, $\mathrm{ev}: \mathcal{M}_A \to X \times X^p \times X $. By Gromov compactness, the limit points of the evaluation map is covered by simple stable maps, and in particular involves bubbling. Since we have fixed our $J$ to be integrable, the $J$-holomorphic spheres in $X$ will in general not be of expected dimension. Indeed, the image of non-constant holomorphic bubbles have to be contained in the zero section $\mathbb{P}^1 \subseteq X$ by the maximum principle, and the dimension of such curves in $\mathbb{P}^1$ deviates from the expected dimension of the curves when they are considered as curves in $X$.

Despite the incorrect dimension of the bubbling spheres, it could be achieved in our situation that the limit points of the evaluation map have codimension at least $2$, hence the evaluation map defines a pseudocycle. We must verify this property in order to use our pair $(J, \nu)$ to define the Gromov--Witten invariants. The following observation is simple but essential.

\begin{lem}
\label{lem:vertical-vanishing-at-marked-points}
Fix a solution $u = (v, \phi) \in \mathcal{M}_A$. Then a point on the graph $\widetilde{u}(z) = (z, u(z))$ is lies in $C \times \mathbb{P}^1 \subseteq C \times X$ if and only if the corresponding section $\phi \in C^\infty(C, v^*N)$ vanishes at $z$. 
\end{lem}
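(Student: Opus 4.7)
The statement is essentially a matter of unpacking the identification \eqref{eqn:dc-moduli} between the description $u : C \to X$ and the pair $(v, \phi)$, so the plan is to make that correspondence precise at the pointwise level.

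First I would recall the structure: the zero section $\mathbb{P}^1 \hookrightarrow X$ exhibits $X$ as the total space of $\mathcal{O}(-1)^{\oplus 2}$, so over any $p \in \mathbb{P}^1$ the fiber $\pi^{-1}(p)$ is canonically identified with the complex vector space $N_p = (\mathcal{O}(-1) \oplus \mathcal{O}(-1))_p$, with origin equal to the point $p$ itself viewed as sitting on the zero section. In particular, for any $x \in X$ we have $x \in \mathbb{P}^1 \subseteq X$ (i.e.\ $x$ lies on the zero section) if and only if $x$ is the zero element of its fiber $N_{\pi(x)}$.

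Next I would spell out how $\phi$ recovers $u$. Given the solution $u : C \to X$, define $v := \pi \circ u : C \to \mathbb{P}^1$ and, for each $z \in C$, let $\phi(z) \in N_{v(z)} = (v^*N)_z$ be the element of the fiber of $N$ over $v(z)$ corresponding to $u(z) \in \pi^{-1}(v(z))$ under the vector-space identification above; this is exactly the section $\phi \in C^\infty(C, v^*N)$ appearing in \eqref{eqn:dc-moduli}. By construction, $u(z)$ lies on the zero section $\mathbb{P}^1 \subseteq X$ if and only if $\phi(z) = 0 \in N_{v(z)}$.

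Finally I would translate to the graph. Since $\widetilde{u}(z) = (z, u(z))$, the condition $\widetilde{u}(z) \in C \times \mathbb{P}^1$ is equivalent to $u(z) \in \mathbb{P}^1 \subseteq X$, which by the previous paragraph is equivalent to $\phi(z) = 0$, concluding the proof. There is no genuine obstacle here; the only mild care is to keep straight that the isomorphism $\pi^{-1}(p) \cong N_p$ sends the zero of the fiber to the basepoint $p$ on the zero section, which is why the vanishing locus of $\phi$ is exactly the preimage $u^{-1}(\mathbb{P}^1)$.
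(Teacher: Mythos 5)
Your proof is correct and follows essentially the same route as the paper's: both identify $u(z)$ with the fiber element $\phi(z) \in N_{v(z)}$ under the canonical vector-space structure on $\pi^{-1}(v(z))$, and observe that the zero section corresponds to the zero vector, so $\widetilde{u}(z) \in C \times \mathbb{P}^1$ iff $\phi(z) = 0$. The paper's proof is a more compressed version of the same argument.
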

\begin{proof}
Note that $u(z) \in X$ is determined by the value $\pi(u(z)) = v(z) \in \mathbb{P}^1$ and $\phi(z) \in (\mathcal{O}(-1) \oplus \mathcal{O}(-1))_{v(z)}$, so that $u(z)$ as an element of the fiber of $X \to \mathbb{P}^1$ over $v(z)$ is exactly $\phi(z)$. Therefore the condition that $u(z)$ lies in the zero section is equivalent to the condition that $\phi(z) = 0$.
\end{proof}

Now recall that the bubbles in the fiber $\{ z\} \times X \subseteq \widetilde{X}$ are $J$-holomorphic, and therefore contained in the zero section $\mathbb{P}^1 \subseteq X$. Consider the principal component, i.e. the component of the stable map that carries the perturbation datum. For the principal component to connect to the bubbles, the lemma above imposes that the primitive $\phi$ of the perturbation datum $\nu$ must vanish at the nodal points. This is a codimension $4$ condition, leading to the pseudocycle property for the evaluation map.

Let us fix some notation. Let $T$ be a tree with a distinguished root vertex $0 \in T$. Given $A \in H_2(X)$, consider a decomposition of the degrees
\begin{equation}
    A = A_0 + \sum_{0 \neq b \in T} A_b, \quad A_b = n_b A'_b, \ n_b \in \mathbb{Z}_{\ge 0}
\end{equation}
where $A'_b$ is a primitive class (i.e. not a multiple cover of another class in $H_2(X)$). Given $T$, any such decomposition yields the tuple of degrees $\{A_b'\}_{b \in T} = \{A_0\} \sqcup \{A_b'\}_{0 \neq b \in T}$. Note that the same degree $A$ can be decomposed in many different ways.

\begin{defn}
\label{defn:simple-stable-maps}
    Fix a star-shaped tree $T$ with a distinguished root vertex $0 \in T$, with exactly one edge between $0$ and every $0 \neq b \in T$, and no other edges. Denote by $k = |T| - 1$ the number of non-root vertices. Fix also a decomposition of $A \in H_2(X)$ into $\{A_b'\}_{b \in T}$. 
    
    The \emph{(non-equivariant) simple stable map moduli space of degree $\{A_b'\}_{b \in T}$} is a moduli space
    \begin{equation}
        \mathcal{M}_{T, \{A_b'\}} (X;\nu) \subseteq \mathcal{M}_{k-\ell, A_0}(X;\nu) \times \prod_{b=1}^\ell \mathcal{M}_2 (X ; A_b') \times \prod_{b=\ell+1}^{k} \mathcal{M}_1(X; A_b' )
    \end{equation}
    consisting of tuples of
    \begin{itemize}
        \item $u_0: C \to X$ such that $(u_0)_*[C] = A_0$, and $\overline{\partial}_J u_0 = \nu$, with distinct marked points $z_1', \dots, z_k' \in C$ such that $\{z_1', \dots, z_\ell'\} \subseteq \{ z_0, z_1, \dots, z_p, z_\infty\}$, and $\{z_{\ell +1}' , \dots, z_k'\} \cap \{ z_0, z_1 , \dots, z_p, z_\infty \} = \emptyset$.
        \item $v_1, \dots, v_\ell : C \to X$ such that $(v_b)_* [C] = A_b'$, and $\overline{\partial}_J v_b = 0$, with two marked points $z''_{b}$ and $Z_b$, up to reparametrization (type (I) bubbles, bubbling off at the special marked points).
        \item $v_{\ell+1}, \dots, v_k : C \to X$ such that $(v_b)_* [C] = A_b'$, and $\overline{\partial}_J v_b = 0$, with a unique marked point $z''_{b}$, up to reparametrization (type (II) bubbles, bubbling off at a point that is not one of the special marked points).
    \end{itemize}
    satisfying the nodal connecting constraint $u_0(z_b') = v_b (z''_{b})$ for every $0 \neq b \in T$. In particular, the $(p+2 - \ell)$ points $\left(\{z_0, z_1, \dots, z_p, z_\infty\} \setminus \{ z_1', \dots, z_\ell'\}\right)$ on the principal component and $\ell$ points $\{ Z_1, \dots, Z_\ell\}$ on the bubbles together consist the marked points. There is a corresponding evaluation map $\mathrm{ev}_T : \mathcal{M}_{T, \{A_b'\}} \to X \times X^p \times X$ which evaluates at $u(z_j)$ or $v_b(Z_b)$.
\end{defn}

Finally, denote by
\begin{equation}
    \mathcal{M}_{T, \{A_b'\}} (X;\nu) ( - \Sigma z_j') \subseteq \mathcal{M}_{T, \{A_b'\}} (X;\nu)
\end{equation}
to be the subspace of the simple stable maps where the principal component $(u_0 : C \to X) = (v_0 : C \to \mathbb{P}^1, \phi_0 \in C^\infty(C, v_0^* N)$ satisfies the vanishing condition $\phi_0 (z_1') = \cdots = \phi(z_k') = 0$ at the nodal points $z_1', \dots, z_k'$ where bubbles connect. The moduli space $\mathcal{M}_{k-\ell, A_0}(X;\nu)(-\Sigma z_j') \subseteq \mathcal{M}_{k-\ell, A_0}(X;\nu)$ (which only consists of the principal component) is defined in the same way.

\begin{prop}[Gromov compactness, compare \cref{prop:gromov-generic}]\label{prop:gromov-decoupling}
For a generic choice of decoupling perturbation data $\nu$, the following are satisfied.

\begin{enumerate}[topsep=-2pt, label=(\roman*)]
\item The limit set of the evaluation map from $\mathcal{M}_A$ is covered by evaluation maps from simple stable maps of the form
\begin{align}
\mathcal{M}_{T,  \{ A_b ' \}} (X; \nu) (- \Sigma z_j' ) 
\end{align}

\item Moreover, the moduli spaces of simple stable maps with at least one bubble are smooth manifolds of dimension $\le \dim \mathcal{M}_A - 2$.

\end{enumerate}

In particular, for the integrable $J$ and generic decoupling perturbation data $\nu$, the evaluation map from $\mathcal{M}_A$ still defines a pseudocycle.
\end{prop}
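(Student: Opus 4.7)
The plan is to combine standard parametric Gromov compactness on the graph space $\widetilde{X} = C \times X$ with the admissible almost complex structure $\widetilde{J}$ from \cref{lem:acs-compatibility} with a classification of bubbles forced by the negative line bundle geometry. For sufficiently small $\nu$, all solutions lie in the region where $\widetilde{\beta} \equiv 1$, so the $\widetilde{J}$-holomorphic graph equation agrees with the perturbed equation of interest and standard Gromov compactness applies: limit points of $\mathrm{ev}$ are covered by evaluation maps from nodal stable maps whose principal component is the graph $\widetilde{u}_0$ of $u_0$ and whose bubbles are $\widetilde{J}$-holomorphic spheres in vertical fibers $\{z\} \times X \cong X$. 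In such a fiber $\widetilde{J}$ restricts to $J$, so each non-constant bubble is a degree-$d$ $J$-holomorphic sphere $v_b$ in $X$. If its image were not contained in $\mathbb{P}^1 \subseteq X$, then the vertical component would be a nonzero holomorphic section of $(\pi \circ v_b)^*(\mathcal{O}(-1)^{\oplus 2}) = \mathcal{O}(-d)^{\oplus 2}$, impossible for $d \ge 1$. Hence every bubble is a branched cover of the zero section; a \emph{simple} bubble must have degree exactly $1$.

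Part (i) is then immediate from \cref{lem:vertical-vanishing-at-marked-points}: at any node $z'_b$, the matching $u_0(z'_b) = v_b(z''_b) \in \mathbb{P}^1$ is equivalent to $\phi_0(z'_b) = 0$, which is precisely the $(-\Sigma z'_j)$ condition. For the dimension count of part (ii), let $k_1 = \ell$ and $k_2 = k - \ell$ denote the number of nodes at distinguished and free marked points respectively. Using \cref{lem:dc-map-transverse}, the principal-component moduli $\mathcal{M}_{k_2, A_0}(X;\nu)$ has real dimension $6 + 2k_2$; each of the $k$ vanishing conditions $\phi_0(z'_b) = 0$ imposes real codimension $4$ (since $N$ has complex rank $2$); each simple degree-$1$ bubble with $m$ marked points contributes dimension $4 \cdot 1 - 4 + 2m = 2m$, yielding $4k_1 + 2k_2$ in total; and the residual nodal matching $v_0(z'_b) = \pi(v_b(z''_b))$ in the $\mathbb{P}^1$-direction is codimension $2$ per node. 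Summing,
\[
\dim \mathcal{M}_{T, \{A_b'\}}(X;\nu)(-\Sigma z_j') = (6 + 2k_2 - 4k) + (4k_1 + 2k_2) - 2k = 6 - 2k,
\]
which is at most $4 = \dim \mathcal{M}_A - 2$ whenever $k \ge 1$. Since (i) and (ii) together place the limit set of $\mathrm{ev}$ in real codimension at least $2$ in the evaluation target, the pseudocycle property follows.

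For the smoothness claim in (ii), I would adapt the universal-moduli argument from \cref{lem:dc-map-transverse}, now incorporating the vanishing $\phi_0(z'_b) = 0$ and the $\mathbb{P}^1$-matching constraints into the linearized operator. The main obstacle is verifying transversality within the restricted class of decoupling perturbations: one must show that varying $\nu$ near a putative stable map moves $\phi_0(z'_b)$ freely in the fiber $N_{v_0(z'_b)}$, i.e.\ surjects onto the finite-dimensional obstruction introduced by the vanishing constraints. This is analogous to the surjectivity argument for $D_2$ in \cref{lem:dc-map-transverse}: since $\widetilde{v}_0 : C \to C \times \mathbb{P}^1$ remains an embedding, bump-function perturbations of $\nu$ localized near $(z'_b, v_0(z'_b)) \in C \times \mathbb{P}^1$ produce arbitrary prescribed values of $\phi_0(z'_b)$. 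Standard Sard--Smale together with the Taubes trick then upgrade this surjectivity to $C^\infty$-genericity of $\nu$.
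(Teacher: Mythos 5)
Your proof is correct and follows essentially the same route as the paper: apply Gromov compactness on the graph space with the admissible $\widetilde{J}$, observe that vertical bubbles are $J$-holomorphic and must lie in the zero section, invoke the vanishing lemma $\phi_0(z_b')=0$ at the nodes, and run the $6-2k$ dimension count. The only cosmetic difference is that the paper forces bubbles into the zero section via the maximum principle, while you use the vanishing $H^0(\mathbb{P}^1,\mathcal{O}(-d)^{\oplus 2})=0$ for $d\geq1$ (you should note the $d=0$ case is vacuous since $\mathbb{P}^1\to\mathbb{C}^2$ is constant); the two arguments are interchangeable. You also spell out a bit more of the transversality step at the end than the paper, which just asserts genericity, and your localization argument (bump perturbations of $\nu$ near $(z_b',v_0(z_b'))$ surject onto $N_{v_0(z_b')}$) is the right mechanism and mirrors the surjectivity proof in \cref{lem:dc-map-transverse}.
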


The union of all such simple stable maps will be denoted

\begin{equation}
    \partial \overline{\mathcal{M}}_A := \bigcup_{T} \bigcup_{\{A_b'\}_{b \in T}} \mathcal{M}_{T,  \{ A_b ' \}} (X; \nu) (- \Sigma z_j' ).
\end{equation}

\begin{proof}

(i) By \cref{lem:acs-compatibility}, we can use the product symplectic form $\widetilde{\omega}$ to tame $\widetilde{J}$ of \eqref{acs-on-graph-space} and apply Gromov compactness. By Gromov compactness, the image of the evaluation map is covered by the image from simple stable maps with spheres bubbling off on fibers of the trivial bundle $C \times X \to C$. All such bubbles are $J$-holomorphic, and in particular holomorphic maps into $\mathbb{P}^1 \subseteq X$. There are two types of such bubbles, which may (I) bubble off over points $z \in C$ that are one of the special points $\{z_0, z_1, \dots, z_p, z_\infty\} \subseteq C$ or (II) bubble off over points $z \in C$ that are not one of the special points. (In the former case, the marked point (the special point on $C$) moves to the bubble.) That is, the limit set of evaluation map from $\mathcal{M}_A(X;\nu)$ is covered by evaluation maps from $\mathcal{M}_{T, \{A_b'\}} (X; \nu)$.

Note that for the bubbles to connect to the principal component, the nodal connecting constraint $u_0(z_b') = v_b(z''_b)$ must be satisfied for each $0 \neq b \in T$. (This is part of the definition of $\mathcal{M}_{T, \{A_b'\}}$.) Since bubbles are contained in the zero section $\mathbb{P}^1 \subseteq X$, we have $v_b (z''_b) \in \mathbb{P}^1$. \cref{lem:vertical-vanishing-at-marked-points} implies that for $u_0 = (v_0, \phi_0)$, the section $\phi_0$ must vanish at every $z_1', \dots, z_k'$. Hence the simple stable maps that cover the evaluation map in fact come from $\mathcal{M}_{T, \{A_b'\}}(-\Sigma z_j') \subseteq \mathcal{M}_{T, \{A_b'\}}$, as desired.

(ii) The vanishing condition on $\phi$ for the principal component is clearly a transversal, real codimension $4$ condition for every marked point $z_1', \dots, z_k'$. Therefore the expected dimension of the moduli space of the principal component, assuming there are $\ell$ type (I) bubbles and $k-\ell$ type (II) bubbles, is
\begin{equation}
\label{vdim-noneq-principal-component}
\mathrm{vdim}_{\mathbb{R}} \mathcal{M}_{k-\ell, A_0} \left(- \Sigma z_j' \right) = \dim_{\mathbb{R}} X + 2c_1(A_0) + 2(k-\ell) - 4k = 6 -2k -2 \ell. 
\end{equation}
Note that $H_2(X;\mathbb{Z}) = H_2(\mathbb{P}^1;\mathbb{Z})$ has a unique primitive class that supports a holomorphic curve, namely the fundamental class of $\mathbb{P}^1$. Therefore the degrees at the bubbles, $A_b'$ for $b=1, \dots, k$, must all equal $A_b' = [\mathbb{P}^1]$ for a simple stable map. The dimension of the moduli space of type (I) bubbles and type (II) bubbles are then
\begin{equation}
\label{vdim-noneq-bubbles}
\dim_{\mathbb{R}} \mathcal{M}_2 (\mathbb{P}^1 ; A_b') = \dim_{\mathbb{R}} \mathbb{P}^1 + 2 \chi(\mathbb{P}^1) + 2 \cdot 2 - 6 = 4, \quad \dim_{\mathbb{R}} \mathcal{M}_1 (\mathbb{P}^1 ; A_b') = 2,
\end{equation}
respectively. Since $v_b: C \to \mathbb{P}^1$ is degree $1$ and the marked points $z_b''$ can move around freely, the connecting constraints $u_0(z_b') = v_b(z''_b)$ transversally cut down the dimension by $2$ for every $b = 1, \dots, k$. Therefore
\begin{equation}
\mathrm{vdim}_{\mathbb{R}} \mathcal{M}_{T, \{A_b'\}}(-\Sigma z_j') = (6 -2k - 2\ell) + 4 \ell + 2 (k-\ell) - 2k = 6 - 2k \le \dim_{\mathbb{R}} \mathcal{M}_A - 2,
\end{equation}
where $\dim_{\mathbb{R}} \mathcal{M}_A = 6$ from \cref{lem:dc-map-transverse}. Then $\nu$ can be chosen generically to ensure that the moduli space is a smooth manifold of the expected dimension.
\end{proof}

\subsection{Aspinwall--Morrison--Voisin compactification}

The previous compactification of the moduli space was based on stable maps. This compactification allows to define a pseudocycle via the evaluation map, whose limit set is covered by images from simple stable maps.

We now describe an alternative compactification of the moduli space, due to \cite{AM93} and \cite{Voi96} who used it in the proof of multiple cover formulae for Gromov--Witten invariants of the quintic threefold. It is closely related to the qausimap compactification which featured in \cite{Giv95}.

The main advantage of using the alternative compactification is that it reduces the computation of Gromov--Witten invariants to certain Chern class computations. The main technical obstacle is to verify that the counts defined using the alternative compactification agree with the counts defined earlier using the stable map compactification.

The main observation is the following. A degree $d$ holomorphic map $v : C \to \mathbb{P}^1$ has an associated graph $\widetilde{v}: C \to C \times \mathbb{P}^1$, which defines a holomorphic subvariety $\mathrm{im}(\widetilde{v})\subseteq C \times \mathbb{P}^1$. The graph is cut out by a holomorphic section of $\mathcal{O}_{C \times \mathbb{P}^1}(d,1) \to C \times \mathbb{P}^1$. Let us denote this section by $s_v : C \times \mathbb{P}^1 \to \mathcal{O}(d,1)$, so that $s_v^{-1}(0) = \mathrm{im}(\widetilde{v})$.

A natural compactification for the zero sets of such sections is given by taking the whole linear system:

\begin{defn}
\label{defn:thick-noneq-section-moduli}
Let $A = d [\mathbb{P}^1] \in H_2(X;\mathbb{Z}) \cong H_2(\mathbb{P}^1; \mathbb{Z})$ for $d \ge 0$. The \emph{thickened (non-equivariant) section moduli space of degree $A$} is the complete linear system $|\mathcal{O}(d,1)|$, namely
\begin{equation}
\mathcal{P}_A = \mathcal{P}_d  := \mathbb{P} \left( H^0 (C \times \mathbb{P}^1, \mathcal{O}_{C \times \mathbb{P}^1} (d, 1)) \right) \cong \mathbb{P}^{2d+1}.
\end{equation}
\end{defn}

Hence a point in $\mathcal{P}_d$ represents a nonzero section of $\mathcal{O}(d,1)$ up to scalar multiplication. We will nevertheless denote the points by $s \in \mathcal{P}_d$.
The thickened section moduli space admits an open subset $\mathcal{P}_d^\circ \subseteq \mathcal{P}_d$ where $s \in \mathcal{P}_d$ cuts out an irreducible subvariety, i.e. where the zero locus of $s = s_v$ is genuinely a graph of a degree $d$ holomorphic map $v: C \to \mathbb{P}^1$. In fact, this open locus is the open dense stratum in the following natural stratification.

\begin{prop}\label{prop:thick-section-moduli-stratification}
The thickened section moduli space $\mathcal{P}_d$ admits a stratification as follows:
\begin{equation}
\mathcal{P}_d = \mathcal{P}_d^\circ \sqcup \bigsqcup_{\alpha \neq 0} \mathcal{P}_d^\alpha, \quad \alpha = (\alpha_1, \alpha_2, \dots), \ \alpha_k = 0 \mbox{ for } k \gg 0, \ \sum_{k \ge 0} k \cdot \alpha_k \le d.
\end{equation}
The stratum $\mathcal{P}_d^\alpha$ consists of zero locus of sections $s \in H^0(\mathcal{O}(d,1))$ such that the subscheme $Z(s) := \{ s = 0 \} \subseteq C \times \mathbb{P}^1$ has irreducible components given by
\begin{enumerate}[topsep=-2pt, label=(\roman*)]
\item (the \emph{principal component}) a graph of a map of degree $d - \sum_{k \ge 0} k \cdot \alpha_k $,
\item (the \emph{(possibly nonreduced) vertical bubbles}) for each $k \ge 1$, copies of $\{p_j \} \times \mathbb{P}^1$, nilpotently thickened to order $k-1$ (i.e. scheme-theoretically isomorphic to $\mathrm{Spec}(\mathbb{C}[x]/(x^k)) \times \mathbb{P}^1$), attached at $\alpha_k$ distinct points $p_{k,1}, \dots, p_{k, \alpha_k} \in C$. 
\end{enumerate}
\end{prop}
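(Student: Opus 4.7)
The key to the stratification is an explicit parameterization of the total space $\mathcal{P}_d$, which I would obtain from the Künneth decomposition. Since $\mathcal{O}_{C \times \mathbb{P}^1}(d,1) = \mathrm{pr}_C^* \mathcal{O}_C(d) \otimes \mathrm{pr}_{\mathbb{P}^1}^* \mathcal{O}_{\mathbb{P}^1}(1)$, we obtain a natural identification
\begin{equation*}
H^0(C \times \mathbb{P}^1, \mathcal{O}(d,1)) \cong H^0(C, \mathcal{O}_C(d)) \otimes H^0(\mathbb{P}^1, \mathcal{O}_{\mathbb{P}^1}(1)).
\end{equation*}
Picking the standard basis $X, Y$ of $H^0(\mathbb{P}^1, \mathcal{O}(1))$, a nonzero section $s \in H^0(\mathcal{O}(d,1))$ corresponds to a pair of polynomials $(s_0, s_1)$ of degree $d$ on $C = \mathbb{P}^1$, not both zero, with $s = s_0 X + s_1 Y$. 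Projectively, the point $[s_0 : s_1] \in \mathcal{P}_d$ yields a rational map $C \dashrightarrow \mathbb{P}^1$ sending $z$ to $[{-}s_1(z) : s_0(z)]$, and the subscheme $Z(s) \subseteq C \times \mathbb{P}^1$ is precisely the closure of the graph of this rational map, together with the indeterminacy fibers.

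The next step is to make the scheme-theoretic decomposition precise via the greatest common divisor. Set $g := \gcd(s_0, s_1) \in H^0(\mathcal{O}_C(e))$ of some degree $e \le d$, and write $s_0 = g \cdot s_0'$, $s_1 = g \cdot s_1'$ with $\gcd(s_0', s_1') = 1$. The pair $(s_0', s_1')$ defines a genuine morphism $v : C \to \mathbb{P}^1$ of degree $d - e$, whose graph $\widetilde{v} : C \hookrightarrow C \times \mathbb{P}^1$ is cut out by the section $s_0' X + s_1' Y$ of $\mathcal{O}(d-e, 1)$. On the other hand, $g$ pulled back to $C \times \mathbb{P}^1$ cuts out the union of the vertical fibers $\{p\} \times \mathbb{P}^1$ over the zeros of $g$, counted with multiplicity. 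Multiplying the two sections recovers $s$, which gives the scheme-theoretic identity
\begin{equation*}
Z(s) = \mathrm{im}(\widetilde{v}) \cup \bigcup_{k \ge 1} \bigsqcup_{j=1}^{\alpha_k} k \cdot \bigl(\{p_{k,j}\} \times \mathbb{P}^1 \bigr),
\end{equation*}
where $\alpha_k$ denotes the number of roots of $g$ of multiplicity exactly $k$. Summing the multiplicities gives $e = \sum_{k \ge 1} k \alpha_k \le d$ and the principal component has degree $d - \sum k \alpha_k$ as claimed.

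I would then define $\mathcal{P}_d^\alpha \subseteq \mathcal{P}_d$ as the locus of $[s_0 : s_1]$ whose $\gcd$ has the prescribed root partition $\alpha$, and $\mathcal{P}_d^\circ$ as the $\alpha = 0$ stratum, where $s_0$ and $s_1$ are coprime. The fact that these subsets are locally closed and together partition $\mathcal{P}_d$ follows once we observe that the unordered multiset of roots of $g$, viewed as a point in the symmetric product $\mathrm{Sym}^\bullet C$, varies algebraically in $[s_0 : s_1]$: the locus where $\gcd(s_0, s_1)$ has degree at least $e$ is the classical resultant locus, hence closed, and refining by the partition type of the roots gives a finite locally closed stratification. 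Functoriality of $Z(s)$ under the decomposition $(s_0, s_1) = g \cdot (s_0', s_1')$ shows the geometric description of each stratum matches the claim.

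The main technical obstacle is verifying that the scheme-theoretic decomposition above is actually correct, in particular that on the thickened vertical component over $p_{k,j}$ the multiplicity $k$ is recovered exactly (rather than some lower number coming from cancellation with the graph component). This is clean once one notes that $s_0'$ and $s_1'$ are coprime, so the graph $\mathrm{im}(\widetilde{v})$ meets the fiber $\{p_{k,j}\} \times \mathbb{P}^1$ transversely at a single point, and the multiplicity $k$ of the vertical component is computed locally from the factor $g$ alone. Everything else in the proposition, namely the bound $\sum k \alpha_k \le d$ and the degree formula for the principal component, then follows from tallying degrees in the factorization $s = g \cdot (s_0' X + s_1' Y)$.
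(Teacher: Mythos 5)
Your proposal is correct and takes essentially the same route as the paper's own explanation, which writes $s = P_0 Y_0 + P_1 Y_1$ and identifies the vertical bubbles with the common zero locus $\{P_0 = P_1 = 0\}$ and the principal component with the residual factor $s/\gcd(P_0,P_1)$. The only supplement you add is the remark that the strata are locally closed because they sit inside resultant loci, a detail the paper leaves implicit.
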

Concretely, note that a holomorphic section $s \in H^0(\mathcal{O}(d,1))$ can be described as a multi-homogeneous polynomial with bidgree $(d,1)$ in variables $X_0, X_1$ and $Y_0, Y_1$:
\begin{equation}\label{eqn:section-of-O(d,1)-concretely}
s(X, Y) = P_0 (X_0, X_1) Y_0 + P_1 (X_0, X_1) Y_1, \quad \deg P_0 = \deg P_1 = d.
\end{equation}
Then $\{ P_0 = P_1 = 0 \} \subseteq C$ defines a subscheme that is topologically a union of $\sum_{k \ge 0} \alpha_k$ points, where exactly $\alpha_k$ many of the common roots of $P_0 = P_1 = 0$ have multiplicity $k$. The pullback of this subscheme along the projection $C \times \mathbb{P}^1 \to C$ is exactly the locus of the  vertical bubbles (which are nonreduced if $k > 1$). The remaining factor $s(X,Y) / \mathrm{gcd}(P_0, P_1)$ cuts out the principal component. 

Over $\mathcal{P}_d$, there exists a vector bundle $\mathrm{Obs}$ defined below. We must first define the universal divisor, which is an analogue of the universal curve for the stable map compactification.

\begin{defn}\label{defn:universal-divisor}
The \emph{universal divisor} $\mathcal{D} \subseteq \mathcal{P}_d \times ( C \times \mathbb{P}^1)$ is a divisor such that
\begin{equation}
    \mathcal{O}(\mathcal{D}) = \mathcal{O}_{\mathcal{P}_d}(1) \boxtimes \mathcal{O}_{C \times \mathbb{P}^1} (d, 1),
\end{equation}
where it is cut out by the tautological section coming from the canonical identification $H^0(\mathcal{O}_{\mathcal{P}_d}(1)) \cong H^0(\mathcal{O}_{C \times \mathbb{P}^1}(d,1))^\vee$. 
\end{defn}
Set-theoretically, $\mathcal{D}$ is the union of $\{s\} \times (\{s=0\} \subseteq C \times \mathbb{P}^1)$.

To define $\mathrm{Obs}$, consider the projection maps
\begin{equation}
    \begin{tikzcd}
    & \mathcal{D} \dar[phantom, sloped, "\subseteq"] && \\
        \mathcal{P}_d & \lar["\mathrm{proj}_{\mathcal{P}_d}"'] \mathcal{P}_d \times (C \times \mathbb{P}^1) \rar["\mathrm{proj}_{C \times \mathbb{P}^1}"] & C \times \mathbb{P}^1 \rar["\mathrm{proj}_{\mathbb{P}^1}"] & \mathbb{P}^1
    \end{tikzcd}.
\end{equation}
\begin{defn}\label{defn:obstruction-bundle}
    The \emph{(non-equivariant) obstruction bundle over the section moduli space} is the coherent sheaf
    \begin{equation}\label{eqn:obstruction-bundle}
        \mathrm{Obs} = \mathrm{Obs}_d := R^1(\mathrm{proj}_{\mathcal{P}_d})_* \left( \mathcal{O}_{\mathcal{D}} \otimes  (\mathrm{proj}_{\mathbb{P}^1} \circ \mathrm{proj}_{C \times \mathbb{P}^1})^* N \right).
    \end{equation}
    Here, $\mathcal{O}_\mathcal{D} \cong \mathcal{O}_{\mathcal{P}_d \times (C \times \mathbb{P}^1)} / \mathcal{O}(-\mathcal{D})$ is the structure sheaf of $\mathcal{D}$, hence $\mathcal{O}_\mathcal{D} \otimes$ is restriction to $\mathcal{D}$.
\end{defn}
\begin{lem}[{\cite[Lemma 2.2]{Voi96}}]\label{lem:obs-description}
    There is an isomorphism 
    \begin{equation}
    \mathrm{Obs} \cong \mathcal{O}_{\mathcal{P}_d}(-1) \otimes H^2(C \times \mathbb{P}^1, \mathcal{O}(-d, -2))^{\oplus 2} \cong \mathcal{O}_{\mathcal{P}_d}(-1) \otimes \left(H^0(C, \mathcal{O}_C(d-2))^{\vee}\right)^{\oplus 2} 
    \end{equation}
    determined by a choice of an isomorphism $K_{C \times \mathbb{P}^1} \cong \mathcal{O}_{C \times \mathbb{P}^1}(-2,-2)$. In particular, $\mathrm{Obs}$ is a vector bundle on $\mathcal{P}_d$ of complex rank $2(d-1)$.
\end{lem}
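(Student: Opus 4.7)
The plan is to compute the higher direct image by resolving $\mathcal{O}_\mathcal{D}$ via the Koszul-type short exact sequence associated to the divisor $\mathcal{D}$. Since $\mathcal{O}(\mathcal{D}) = \mathcal{O}_{\mathcal{P}_d}(1) \boxtimes \mathcal{O}_{C \times \mathbb{P}^1}(d,1)$ by \cref{defn:universal-divisor}, we obtain
\begin{equation*}
    0 \to \mathcal{O}_{\mathcal{P}_d}(-1) \boxtimes \mathcal{O}_{C \times \mathbb{P}^1}(-d,-1) \to \mathcal{O}_{\mathcal{P}_d \times C \times \mathbb{P}^1} \to \mathcal{O}_\mathcal{D} \to 0.
\end{equation*}
Tensoring with $(\mathrm{proj}_{\mathbb{P}^1} \circ \mathrm{proj}_{C \times \mathbb{P}^1})^* N$, which is just $\mathcal{O}_{\mathcal{P}_d \times C \times \mathbb{P}^1}(0,-1)^{\oplus 2}$ (bidegrees being on the $C \times \mathbb{P}^1$ factor), and pushing forward along $\mathrm{proj}_{\mathcal{P}_d}$ yields a long exact sequence of higher direct images.

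The key input is the vanishing of all higher direct images of the middle term. By flat base change and K\"unneth,
\begin{equation*}
    R^i (\mathrm{proj}_{\mathcal{P}_d})_* \bigl( \mathcal{O}_{\mathcal{P}_d \times C \times \mathbb{P}^1}(0,-1)^{\oplus 2} \bigr) \cong H^i(C \times \mathbb{P}^1, \mathcal{O}(0,-1))^{\oplus 2} \otimes \mathcal{O}_{\mathcal{P}_d},
\end{equation*}
and this vanishes for all $i$ because $H^0(\mathbb{P}^1, \mathcal{O}(-1)) = H^1(\mathbb{P}^1, \mathcal{O}(-1)) = 0$. Consequently the long exact sequence degenerates to an isomorphism
\begin{equation*}
    \mathrm{Obs} = R^1 (\mathrm{proj}_{\mathcal{P}_d})_* (\mathcal{O}_\mathcal{D} \otimes \mathrm{proj}_{\mathbb{P}^1}^* N) \cong R^2 (\mathrm{proj}_{\mathcal{P}_d})_* \bigl( \mathcal{O}_{\mathcal{P}_d}(-1) \boxtimes \mathcal{O}(-d,-2)^{\oplus 2} \bigr),
\end{equation*}
which by the projection formula is $\mathcal{O}_{\mathcal{P}_d}(-1) \otimes H^2(C \times \mathbb{P}^1, \mathcal{O}(-d,-2))^{\oplus 2}$, giving the first stated form.

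For the second form, apply Serre duality on $C \times \mathbb{P}^1$: having fixed $K_{C \times \mathbb{P}^1} \cong \mathcal{O}(-2,-2)$, we obtain
\begin{equation*}
    H^2(C \times \mathbb{P}^1, \mathcal{O}(-d,-2))^\vee \cong H^0(C \times \mathbb{P}^1, \mathcal{O}(d-2, 0)) \cong H^0(C, \mathcal{O}_C(d-2)),
\end{equation*}
yielding the second isomorphism. The rank is then $2 \cdot \dim H^0(\mathbb{P}^1, \mathcal{O}(d-2)) = 2(d-1)$. I do not anticipate a genuine obstacle here; the computation is essentially formal once one commits to the Koszul resolution of $\mathcal{O}_\mathcal{D}$. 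The only point requiring care is bookkeeping of the bidegree conventions and ensuring that the vanishing $H^*(\mathbb{P}^1, \mathcal{O}(-1)) = 0$ (which is ultimately the reason $N$ has negative degree along the fibers) is the geometric mechanism that kills the middle term and isolates $\mathrm{Obs}$ as a single Serre-dual cohomology group.
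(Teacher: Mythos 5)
Your proof is correct and follows essentially the same route as the paper: tensor the ideal sheaf sequence of $\mathcal{D}$ with $\mathcal{N}$, kill the middle term's higher direct images using the acyclicity of $\mathcal{O}(0,-1)$ on $C \times \mathbb{P}^1$, and then identify the surviving $R^2$ term via projection formula and Serre duality. The only cosmetic difference is that the paper establishes $H^i(C\times\mathbb{P}^1,\mathcal{O}(0,-1))=0$ via Serre duality plus Riemann--Roch whereas you invoke K\"unneth with $H^*(\mathbb{P}^1,\mathcal{O}(-1))=0$, which is cleaner.
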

\begin{proof}[Proof sketch]
    The first isomorphism is canonical, coming from the long exact sequence of derived pushforward associated to the sequence of sheaves obtained by tensoring $(\mathrm{proj}_{\mathbb{P}^1} \circ \mathrm{proj}_{C \times \mathbb{P}^1})^* N$ to the ideal sheaf sequence $0 \to \mathcal{O}(-\mathcal{D}) \to \mathcal{O} \to \mathcal{O}_\mathcal{D} \to 0$. The second isomorphism is Serre duality and depends on the choice of an isomorphism $K_{C \times \mathbb{P}^1} \cong \mathcal{O}_{C \times \mathbb{P}^1}(-2,-2)$.
\end{proof}

It is useful to think of $\mathrm{Obs}$ in terms of its fibers. At a point $s \in \mathcal{P}_d^\circ$, there is a corresponding holomorphic curve $v: C \to \mathbb{P}^1$ such that $s=s_v$ cuts out the graph of $v$. Then the fiber of $\mathrm{Obs}|_{s}$ is canonically isomorphic to $H^1(C, v^*N)$. To see this, note that the definition of $\mathrm{Obs}$ is via the derived pushforward from the universal divisor $\mathcal{D} \subseteq \mathcal{P}_d \times (C \times \mathbb{P}^1)$. The fiber of $s \in \mathcal{P}_d$ under the projection $\mathcal{D} \to \mathcal{P}_d$ is exactly the zero locus of the section $s$ inside $C\times \mathbb{P}^1$. When $s = s_v$ is graphical (i.e. cuts out a graph of a map $v$, which is the case for $s \in \mathcal{P}_d^\circ$), the corresponding fiber in $\mathcal{D}$ is exactly the graph of $v$, identified with the domain $C$. The pullback of $N$ to the graph of $v$ under the projection to $\mathbb{P}^1$ is therefore exactly $v^*N$ over $C$, and taking the derived pushforward $R^1$ corresponds to taking the sheaf cohomology $H^1$.

The Dolbeault cohomology class of the perturbation datum $\nu$ pulled back by $\widetilde{v}$ is naturally an element of $H^1(C, v^*N)$. (Compare the discussion after \cref{lem:dc-map-transverse}.) The choice of the decoupling perturbation data $\nu$ therefore induces a smooth section of $\mathrm{Obs}$ \emph{over the open subset} $\mathcal{P}_d^\circ$, which associates to the graph of holomorphic $v : C \to \mathbb{P}^1$ the corresponding Dolbeault cohomology class of $\widetilde{v}^* \nu$:
\begin{align}\label{eqn:obs-section}
[\nu] : \mathcal{P}_d^\circ &\to \mathrm{Obs}, \\
s_v & \mapsto [ (\widetilde{v})^* \nu ] \in H^{0,1}_{\overline{\partial}} (C, v^*N) \cong H^1(C, v^*N) \cong \mathrm{Obs}|_{s_v}.
\end{align}

\begin{defn}
\label{defn:noneq-sec-moduli}
Fix $A = d [\mathbb{P}^1] \in H_2(\mathbb{P}^1;\mathbb{Z})$ as before. The \emph{(non-equivariant) section moduli space of degree $A$} is
\begin{equation}
\mathcal{V}_A = \mathcal{V}_d := \mathcal{P}_d^\circ \cap [\nu]^{-1}(0) =  \left\{ s_v \in \mathcal{P}_d^\circ : \ [\nu] (s_v) = 0 \right\},
\end{equation}
the zero locus of the partially defined section $[\nu]$ of $\mathrm{Obs}$ over $\mathcal{P}_d^\circ$.
\end{defn}

\subsubsection{Section moduli space: Transversality}

A generic choice of $\nu$ gives a generic choice of the section $[\nu]$, so we have the following analogue of \cref{lem:dc-map-transverse}.
\begin{lem}\label{lem:section-transverse}
For a generic choice of decoupling perturbation data $\nu$, the moduli space $\mathcal{V}_A$ is a smooth manifold of dimension $\dim_{\mathbb{R}} X + 2 c_1(A) = 6$.
\end{lem}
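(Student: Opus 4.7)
The plan is a finite-dimensional mirror of the argument for \cref{lem:dc-map-transverse}: exhibit $\mathcal{V}_A$ as the zero locus of a section $[\nu]$ of the rank-$2(d-1)$ complex vector bundle $\mathrm{Obs}$ over the open subvariety $\mathcal{P}_d^\circ \subseteq \mathcal{P}_d$ of complex dimension $2d+1$, and show this section is transverse to the zero section for generic $\nu$. Granting transversality, $\mathcal{V}_A$ becomes a smooth manifold of real dimension
\begin{equation*}
  2(2d+1) - 4(d-1) = 6 = \dim_{\mathbb{R}} X + 2c_1(A),
\end{equation*}
which is the asserted dimension.

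To prove transversality I would consider the universal zero locus
\begin{equation*}
  \mathcal{V}_A^{univ} = \left\{ (s_v, \nu) \in \mathcal{P}_d^\circ \times \mathcal{S}_\varepsilon : [\nu](s_v) = 0 \right\}
\end{equation*}
together with the universal evaluation map $\mathrm{ev}^{univ} : \mathcal{P}_d^\circ \times \mathcal{S}_\varepsilon \to \mathrm{Obs}$, $(s_v, \nu) \mapsto [\nu](s_v) \in \mathrm{Obs}|_{s_v} \cong H^{0,1}_{\overline{\partial}}(C, v^*N)$, for $\mathcal{S}_\varepsilon$ a Floer $C^\infty_\varepsilon$-neighborhood of a fixed reference datum. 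Since $[\nu]$ depends linearly on $\nu$, the $\nu$-directional derivative at a point $(s_v, \nu)$ is the map $\alpha \mapsto [\widetilde{v}^* \alpha] \in H^{0,1}_{\overline{\partial}}(C, v^*N)$. Surjectivity of this map onto the finite-dimensional fiber of $\mathrm{Obs}$ is essentially identical to the proof that the operator $D_2$ in \eqref{universal-map-moduli-noneq-linearization} is surjective: if a nonzero $\beta$ annihilates the image, one produces $\alpha \in C^\infty(C \times \mathbb{P}^1, \mathrm{Hom}^{0,1}(TC, N))$ with $\langle \widetilde{v}^*\alpha, \beta\rangle$ localized where $\beta \neq 0$ (using that $\widetilde{v}$ is an embedding since $v$ has irreducible graph on $\mathcal{P}_d^\circ$), giving a contradiction. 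Thus $\mathrm{ev}^{univ}$ is a submersion onto $\mathrm{Obs}$ along its zero locus, so $\mathcal{V}_A^{univ}$ is a smooth Banach manifold. Applying Sard--Smale to the projection $\mathcal{V}_A^{univ} \to \mathcal{S}_\varepsilon$ yields a comeager set of regular $\nu$, and the Taubes trick (as in the proof of \cref{lem:dc-map-transverse}) passes from $C^\infty_\varepsilon$- to $C^\infty$-genericity.

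The main obstacle I expect is the embedding hypothesis required in the surjectivity argument: on $\mathcal{P}_d^\circ$ the section $s_v$ cuts out the graph of an honest degree-$d$ map $v : C \to \mathbb{P}^1$, which is indeed a (possibly multiply covered, but still immersed) graph inside $C \times \mathbb{P}^1$, so the argument from \cref{lem:dc-map-transverse} applies verbatim with $C$ replaced by the graph $\widetilde{v}(C) \subseteq C \times \mathbb{P}^1$. As a sanity check, one may alternatively observe that the bijection $\mathcal{M}_A \to \mathcal{V}_A$ sending $(v,\phi) \mapsto s_v$ (well-defined and injective for $d \ge 1$ since $H^0(C, v^*N) = 0$ forces $\phi$ to be the unique primitive of $\widetilde{v}^*\nu$) identifies the linearizations, so regularity of $\mathcal{V}_A$ is equivalent to regularity of $\mathcal{M}_A$ already established in \cref{lem:dc-map-transverse}.
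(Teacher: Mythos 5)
Your argument is correct and matches the paper's intent: the paper does not write out a proof of this lemma, but simply asserts that ``a generic choice of $\nu$ gives a generic choice of $[\nu]$,'' which is precisely what your universal-moduli/Sard--Smale argument makes precise, reusing the surjectivity mechanism already established for $D_2$ in \cref{lem:dc-map-transverse}. Your closing ``sanity check'' via the identification of reduced linearizations with $d[\nu]$ (which is the content of \cref{prop:noneq-map-equal-section}, and which does not itself require regularity to state) is also a valid shortcut, and arguably cleaner. One minor wording point: the graph $\widetilde{v}(C)\subseteq C\times\mathbb{P}^1$ of a holomorphic map $v:C\to\mathbb{P}^1$ is always \emph{embedded} (the first projection is the identity), not merely immersed, even when $v$ itself is a multiple cover; the phrase ``possibly multiply covered, but still immersed graph'' is therefore slightly misleading, though the conclusion you draw from it --- that the bump-function localization step goes through --- is exactly right.
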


We moreover orient $\mathcal{V}_A$ using its description as a zero locus of a section of a complex vector bundle over a complex manifold.

The section moduli space $\mathcal{V}_A$ cut out by the obstruction bundle should be considered as the analogue of the main stratum $\mathcal{M}_A$ in the stable map compactification. Indeed, recall the decoupling \eqref{eqn:dc-moduli} of equations for $\mathcal{M}_A$ which we described earlier. The moduli of holomorphic curves $v: C \to \mathbb{P}^1$ corresponds to the thickened moduli space $\mathcal{P}_A^\circ \subseteq \mathcal{P}_A$, while the moduli of solutions of the perturbed equation for the normal perturbation corresponds to the locus where the section $[\nu]$ of $\mathrm{Obs}$ vanishes. This discussion can be summarized as follows.

\begin{prop}[{\cite[Lemma 4.1, 4.2]{Voi96}}]\label{prop:noneq-map-equal-section}
Let $A = d [\mathbb{P}^1] \in H_2(X;\mathbb{Z})$. Assume that $\nu$ is chosen so that the section moduli space $\mathcal{V}_A$ and the map moduli space $\mathcal{M}_A$ are both regular. Then $\mathcal{V}_A$ is isomorphic to $\mathcal{M}_A$ of \eqref{eqn:map-moduli-decoupling} as smooth oriented manifolds of dimension $\mathrm{dim}_{\mathbb{R}} X + 2 c_1(A) = 6$.
\end{prop}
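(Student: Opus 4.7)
The plan is to exhibit an explicit inverse pair of maps between $\mathcal{M}_A$ and $\mathcal{V}_A$. Define $\Phi: \mathcal{M}_A \to \mathcal{V}_A$ by $(v, \phi) \mapsto [s_v]$, where $s_v \in H^0(\mathcal{O}(d,1))$ is a section (well-defined up to scalar) cutting out the graph $\widetilde{v}(C) \subseteq C \times \mathbb{P}^1$. Since $v$ is a genuine degree-$d$ holomorphic map, the graph is an irreducible divisor of bidegree $(d,1)$, so $[s_v] \in \mathcal{P}_d^\circ$. Moreover, the defining equation $\overline{\partial}\phi = \widetilde{v}^*\nu$ witnesses that $[\widetilde{v}^*\nu] = 0$ in $H^{0,1}_{\overline{\partial}}(C, v^*N) \cong \mathrm{Obs}|_{[s_v]}$ (using the identification of fibers of $\mathrm{Obs}$ described after \cref{lem:obs-description}), so $[s_v] \in [\nu]^{-1}(0) = \mathcal{V}_A$.

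Conversely, define $\Psi: \mathcal{V}_A \to \mathcal{M}_A$ as follows. Given $[s] \in \mathcal{V}_A \subseteq \mathcal{P}_d^\circ$, the zero locus $\{s=0\} \subseteq C \times \mathbb{P}^1$ is an irreducible bidegree $(d,1)$ divisor. Projection to $C$ has degree $1$, so this divisor is the graph of a unique holomorphic $v: C \to \mathbb{P}^1$ of degree $d$. The vanishing $[\nu]([s]) = [\widetilde{v}^*\nu] = 0$ provides existence of $\phi \in C^\infty(C, v^*N)$ solving $\overline{\partial}\phi = \widetilde{v}^*\nu$, and for $d \geq 1$ this $\phi$ is unique since $H^0(C, v^*N) = H^0(\mathbb{P}^1, \mathcal{O}(-d)^{\oplus 2}) = 0$. (The marginal case $d=0$ plays no role in the computation of nontrivial contributions and can be handled by inspection.) Set $\Psi([s]) = (v, \phi)$. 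By construction $\Phi$ and $\Psi$ are mutually inverse set-theoretic bijections.

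To upgrade this to a diffeomorphism, I would match the linearizations at corresponding points. The tangent space $T_{(v,\phi)}\mathcal{M}_A$ is, by the Fredholm theory underlying \cref{lem:dc-map-transverse}, the kernel of the linearization $(\xi, \psi) \mapsto (D_v \xi, \overline{\partial}\psi - \nabla_\xi \nu)$ on $\Omega^0(v^*T\mathbb{P}^1) \oplus \Omega^0(v^*N)$. The tangent space $T_{[s_v]}\mathcal{V}_A$ is the kernel of $d[\nu]_{[s_v]}: T_{[s_v]}\mathcal{P}_d \to \mathrm{Obs}|_{[s_v]}$. The standard normal-bundle identification $T_{[s_v]}\mathcal{P}_d \cong H^0(\widetilde{v}(C), N_{\widetilde{v}(C)/(C\times\mathbb{P}^1)}) \cong H^0(C, v^*T\mathbb{P}^1)$ matches deformations of the section with holomorphic deformations $\xi$ of the graph (this is the Fredholm kernel of $D_v = D_1$), and under this identification $d[\nu]_{[s_v]}(\xi)$ equals the Dolbeault class $[\nabla_\xi \nu] \in H^1(C, v^*N) \cong \mathrm{Obs}|_{[s_v]}$, i.e. the obstruction to solving for the corresponding $\psi$. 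Under the hypothesis that both moduli spaces are regular, the kernels agree as complex vector spaces, so $\Phi$ is a local diffeomorphism, hence a diffeomorphism. Orientations match because both $\mathcal{V}_A$ (zero locus of a holomorphic section in a complex manifold) and $\mathcal{M}_A$ (integrable $J$, so $D^{univ}$ is complex linear) carry canonical complex orientations, and the identification above is complex linear. The dimension check $\dim_{\mathbb{C}} \mathcal{V}_A = (2d+1) - 2(d-1) = 3$ matches $\dim_{\mathbb{R}}\mathcal{M}_A = \dim_{\mathbb{R}} X + 2c_1(A) = 6$.

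The main obstacle is the tangent space identification in the third step, specifically verifying that under $\Phi$ the linearized algebraic obstruction $d[\nu]_{[s_v]}$ corresponds precisely to the analytic obstruction $[\nabla_\xi \nu]$ governing solvability of $\overline{\partial}\psi = \nabla_\xi \nu$. This amounts to chasing the long exact sequence used in \cref{lem:obs-description} through a deformation of $s_v$, and is a deformation-theoretic bookkeeping that is routine but requires care in comparing the holomorphic and Dolbeault pictures at the level of first-order variations.
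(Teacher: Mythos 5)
Your proposal follows the same architecture as the paper's proof: establish the set-theoretic bijection via $s_v \leftrightarrow (v,\phi)$, match tangent spaces by identifying $\ker d[\nu]$ with the kernel of the decoupled linearized operator, and compare orientations. The substance is correct, and you rightly flag the tangent-space matching as the step needing care.

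One imprecision worth fixing is the orientation argument. You assert that $\mathcal{V}_A$ is the zero locus of a \emph{holomorphic} section and that $D^{\mathrm{univ}}$ is complex linear because $J$ is integrable; neither is quite right. The section $[\nu]$ of $\mathrm{Obs}$ is merely smooth (it comes from a generic $C^\infty$ perturbation datum), and the linearization $D_{(v,\phi)}(\xi,\psi) = (\overline{\partial}\xi, \overline{\partial}\psi - \nabla_\xi\nu)$ contains the zeroth-order term $\nabla_\xi\nu$, which is generally \emph{not} complex linear in $\xi$ even for integrable $J$. Consequently the map $\xi \mapsto (\xi,\psi(\xi))$ identifying $\ker d[\nu]$ with $\ker D$ is real linear but not manifestly complex linear. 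The paper resolves this by observing that the complex-antilinear part of $D$ is lower order and may be homotoped away within Fredholm operators, which leaves the determinant line orientation unchanged; on the $\mathcal{V}_A$ side one similarly uses that the orientation of the zero locus of a transversal smooth section of a complex vector bundle over a complex manifold is determined by the ambient complex orientations, not by holomorphicity of the section. With that homotopy step made explicit, your argument coincides with the paper's.
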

\begin{proof}
We first establish the set-theoretic bijection. Fix an element $s \in \mathcal{V}_A$. Since $s \in \mathcal{P}_d^\circ$, the zero locus $Z(s) = \{ s= 0\} \subseteq C \times \mathbb{P}^1$ is a graph of some holomorphic map $v : C \to \mathbb{P}^1$ of degree $d$, i.e. $s = s_v$. Since $[\nu](s_v) = 0$, the Dolbeault cohomology class of $(\widetilde{v})^* \nu$ vanishes and there is a section $\phi \in C^\infty(C, v^*N)$ such that $\overline{\partial} \phi = (\widetilde{v})^*\nu \in C^\infty(C, \mathrm{Hom}^{0,1}(TC, v^*N))$. Moreover, such $\phi$ is unique since $H^0(C, v^*N) = 0$. Hence there is a well-defined element $(v, \phi) \in \mathcal{M}_A$. For the other direction, take the holomorphic section that cuts out the graph of $v$.

To see that this bijection is in fact a diffeomorphism, consider the linearizations of $\mathcal{V}_A$ and $\mathcal{M}_A$. The tangent space of the map moduli at $(v, \phi) \in \mathcal{M}_A$ is modelled on the kernel of the linearized operator: 
\begin{align}
D_{(v, \phi)} : C^\infty( C, v^* T\mathbb{P}^1) \oplus C^\infty(C, v^*N) &\to C^\infty(C, \mathrm{Hom}^{0,1}(TC, v^*T\mathbb{P}^1)) \oplus C^\infty (C, \mathrm{Hom}^{0,1}(TC, v^*N)).
\end{align}
As in \eqref{universal-map-moduli-noneq-linearization}, this decomposes as
\begin{align}\label{eqn:linearization-splitting}
D_{(v, \phi)} (\xi, \psi) =  \left( \overline{\partial}_{v^* T\mathbb{P}^1} \xi , \ \overline{\partial}_{v^*N} \psi - \nabla_{\xi} \nu \right)
\end{align}
where the Dolbeault operator $\overline{\partial}_{v^* T\mathbb{P}^1}$ is surjective and $\overline{\partial}_{v^* N}$ is injective. In particular, the linearization $\nabla_\xi \nu$ induces an isomorphism
\begin{align}
T_{(v, \phi)} \mathcal{M}_A \cong \ker D_{(v, \phi)} \cong \ker \left( \nabla \nu: \ker(\overline{\partial}_{v^*T\mathbb{P}^1}) \to \mathrm{coker} (\overline{\partial}_{v^*N}) \right).
\end{align}
Compare this with the tangent space of $\mathcal{V}_A$.  Writing $s = s_v = (v, \phi) \in \mathcal{V}_A$ using the set-theoretic bijection, the tangent space is given by
\begin{align}
T_{(v,\phi)} \mathcal{V}_A  \cong \ker \left( d[\nu] : T_{{s_v}} (\mathcal{P}_d)^\circ \to \mathrm{Obs}|_{s_v} \right).
\end{align}
Now note that a one-parameter family $v_t : C \to \mathbb{P}^1$, $t \in (-\varepsilon, \varepsilon)$ induces a one-parameter family of sections $s_t = s_{v_t} \in (\mathcal{P}_d)^\circ$. Conversely, a one-parameter family of sections $s_t \in (\mathcal{P}_d)^\circ$ induces a one-parameter family of graphs $\widetilde{v}_t = \{ s_t = 0 \} \subseteq C \times \mathbb{P}^1$ (all graphical for small $t$ if $s = s_0$ is graphical), by (analytic) implicit function theorem. Hence we identify the deformation space of holomorphic maps near $v : C \to \mathbb{P}^1$ and the tangent space of the sections cutting out their graphs near $s_v \in \mathcal{P}_d^\circ$, that is we identify $\ker(\overline{\partial}_{v^*T\mathbb{P}^1}) \cong T_{s_v}(\mathcal{P}_d)^\circ$. Moreover, $\mathrm{coker}(\overline{\partial}_{v^*N}) \cong H^1(C, v^*N) \cong \mathrm{Obs}|_{s_v}$ from the definition of the obstruction bundle (\cref{defn:obstruction-bundle}). This identifies the tangent spaces $T_{(v,\phi)} \mathcal{M}_A \cong T_{(v,\phi)} \mathcal{V}_A$. 

For the agreement of orientations, consider the determinant lines (after passing to suitable Banach completions). Observe that the complex antilinear part of the linearized operator $D_{(v,\phi)} = (\overline{\partial}_{v^*T\mathbb{P}^1} , \overline{\partial}_{v^*N} - \nabla \nu)$ is of lower order (order zero). Hence $D_{(v,\phi)}$ can be homotoped to the complex linear operator $D^{1,0} = (\overline{\partial}_{v^*T\mathbb{P}^1}, \overline{\partial}_{v^*N}$). It remains to observe that the determinant line of $D$ determines the orientation of $\mathcal{M}_A$, and the determinant line of $D^{1,0}$ determines the orientation of $\mathcal{V}_A$ (as the zero locus of a section of a complex vector bundle over a complex manifold). 
\end{proof}

\subsubsection{Section moduli space: Compactness}
The section moduli space $\mathcal{V}_A$ is, by \cref{lem:section-transverse} and \cref{prop:noneq-map-equal-section}, a replacement for $\mathcal{M}_A$. The main result of \cite{Voi96} is reproduced below. It allows one to extract enumerative information from $\mathcal{V}_A = \mathcal{V}_d$ by means of cohomological computations.

\begin{thm}[{\cite[Theorem 3.2]{Voi96}}]\label{thm:noneq-section-compactification}
For a generic choice of $\nu$, the following holds. 
\begin{enumerate}[topsep=-2pt, label=(\roman*)]
    \item $\mathcal{V}_d \subseteq \mathcal{P}_d^\circ$ is smooth of real dimension $\dim_{\mathbb{R}} X + 2c_1(d[\mathbb{P}^1]) = 6$ (\cref{lem:section-transverse}), 
    \item The closure $\overline{\mathcal{V}}_d \subseteq \mathcal{P}_d$ of $\mathcal{V}_d$ is stratified as follows (compare \cref{prop:thick-section-moduli-stratification}):
    \begin{equation}
\overline{\mathcal{V}}_d = \mathcal{V}_d \sqcup \bigsqcup_{\alpha \neq 0} \mathcal{V}_d^\alpha, \quad \alpha = (\alpha_1, \alpha_2, \dots), \ \alpha_k = 0 \mbox{ for } k \gg 0, \ \sum_{k \ge 0} k \cdot \alpha_k \le d.
\end{equation}
    \item For each $\alpha \neq 0$, the stratum $\mathcal{V}_d^\alpha$  is contained in a locally closed subvariety of  real dimension $\le \dim \mathcal{V}_d - 2$, and hence $\overline{\mathcal{V}}_d$ defines a homology class,
    \item The homology class represented by $[\overline{\mathcal{V}}_d]$ is Poincar\'e dual to $c_{\mathrm{top}}(\mathrm{Obs}) \in H^*(\mathcal{P}_d)$.
\end{enumerate}
\end{thm}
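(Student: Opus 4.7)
Part (i) is \cref{lem:section-transverse}, so the work lies in (ii)--(iv). Following \cite{Voi96}, the strategy is to extend the partially defined section $[\nu]$ of $\mathrm{Obs}$ from $\mathcal{P}_d^\circ$ to a smooth global section $\widehat{[\nu]}: \mathcal{P}_d \to \mathrm{Obs}$, identify $\overline{\mathcal{V}}_d$ set-theoretically with its zero locus, and finally compare it with a generic section of the vector bundle $\mathrm{Obs}$.

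\textbf{Step 1: Extension of the section.} The obstruction bundle was defined in \cref{defn:obstruction-bundle} as a derived pushforward along the universal divisor $\mathcal{D}$. Accordingly I would build $\widehat{[\nu]}$ by pulling $\nu$ back along the projection $\mathcal{P}_d \times (C \times \mathbb{P}^1) \to C \times \mathbb{P}^1$, restricting to $\mathcal{D}$, and pushing forward fiberwise as a relative Dolbeault class. Base change (valid since $\mathrm{Obs}$ is locally free by \cref{lem:obs-description}) identifies $\widehat{[\nu]}|_{\mathcal{P}_d^\circ}$ with the section $[\nu]$ of \eqref{obs-section}, so $\overline{\mathcal{V}}_d \subseteq \widehat{[\nu]}^{-1}(0)$. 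The strata are then set to be $\mathcal{V}_d^\alpha := \widehat{[\nu]}^{-1}(0) \cap \mathcal{P}_d^\alpha$, which proves (ii).

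\textbf{Step 2: Stratum-wise dimension bound.} By \cref{prop:thick-section-moduli-stratification}, the complex codimension of $\mathcal{P}_d^\alpha$ in $\mathcal{P}_d$ equals $\sum (2k-1)\alpha_k$: the principal component sweeps out a $\bigl(2(d-\sum k\alpha_k)+1\bigr)$-dimensional projective space and the bubble positions on $C$ contribute $\sum \alpha_k$ additional dimensions. Provided $\widehat{[\nu]}|_{\mathcal{P}_d^\alpha}$ meets the zero section transversally,
\[
\dim_{\mathbb{C}} \mathcal{V}_d^\alpha \;=\; \dim_{\mathbb{C}} \mathcal{P}_d^\alpha - 2(d-1) \;=\; 3 - \sum (2k-1)\alpha_k \;\le\; 2,
\]
i.e.\ real codimension $\ge 2$ in $\mathcal{V}_d$, as required for (iii). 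The main obstacle is securing this transversality simultaneously on \emph{every} stratum for a single $\nu$. I would achieve this by a Sard--Smale argument in the spirit of \cref{lem:dc-map-transverse}, applied inductively from the deepest strata outward. At each step one uses the Mayer--Vietoris decomposition of $\mathrm{Obs}|_s$ along $Z(s) = (\text{principal}) \cup (\text{bubbles})$: the summand of $\widehat{[\nu]}$ controlled by the nodal-evaluation of the primitive $\phi$ (compare \cref{lem:vertical-vanishing-at-marked-points}) is precisely where generic perturbations of $\nu$ act freely, allowing one to fix earlier strata and perturb to transversality on the current one.

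\textbf{Step 3: Identification with $c_{\mathrm{top}}(\mathrm{Obs})$.} Given (iii), $\overline{\mathcal{V}}_d$ is the closure of a smooth oriented $6$-manifold with boundary strata of real codimension $\ge 2$, hence defines a pseudocycle whose associated homology class $[\overline{\mathcal{V}}_d] \in H_6(\mathcal{P}_d;\mathbb{Z})$ is well-defined via \cite{Zin08}. To identify this class with $c_{\mathrm{top}}(\mathrm{Obs}) \cap [\mathcal{P}_d]$, I would connect $\widehat{[\nu]}$ to a generic smooth section $s^{\mathrm{gen}}$ of $\mathrm{Obs}$ by the linear homotopy $t s^{\mathrm{gen}} + (1-t) \widehat{[\nu]}$, $t \in [0,1]$. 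The parametric zero locus provides a cobordism between $\overline{\mathcal{V}}_d$ and the transverse representative $(s^{\mathrm{gen}})^{-1}(0)$ of $\mathrm{PD}(c_{\mathrm{top}}(\mathrm{Obs}))$, with boundary contributions from the non-open strata controlled by the codimension bound in Step 2. Applying the standard Euler class argument to $s^{\mathrm{gen}}$ then yields (iv).
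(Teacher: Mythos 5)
Your proof collapses at Step 1: the partially defined section $[\nu]$ of $\mathrm{Obs}$ does \emph{not} extend continuously, let alone smoothly, to a global section of $\mathrm{Obs}$ over all of $\mathcal{P}_d$, and the construction you describe (pull back $\nu$, restrict to $\mathcal{D}$, push forward its fiberwise Dolbeault class) does not produce one. The issue is that near a boundary stratum $\mathcal{P}_d^\alpha$, the principal component $v_{\mathrm{prin}}$ has degree strictly smaller than $d$, so $\widetilde{v}_{\mathrm{prin}}^*\nu$ lives in an $H^1$ of the wrong dimension, and as $s\in\mathcal{P}_d^\circ$ approaches the boundary the class $[\widetilde{v}^*\nu]$ blows up rather than converging in $\mathrm{Obs}|_s$. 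This is precisely why the paper requires \cref{lem:extension-of-obs-section} (Voisin's Proposition 3.3): the extension only exists as a continuous section $\overline{[\nu]}$ of the \emph{quotient} sheaf $\overline{\mathrm{Obs}}$, defined via the subsheaf $\mathcal{F}\hookrightarrow\mathrm{Obs}^\vee$ cut out by the ideal of the nonreduced bubbling locus $\mathcal{B}_{\mathrm{nonred}}$. One then passes back to a continuous section $\Theta[\nu]$ of $\mathrm{Obs}$ itself via the diagram \eqref{eqn:extending-obs-section}, choosing a surjection $E\twoheadrightarrow\mathcal{F}$ from a vector bundle and a Hermitian conjugation. Without that lemma, your ``base change'' sentence is an assertion, not an argument, and the whole structure of the proof is missing its load-bearing wall.

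Step 2 then compounds the error: you impose transversality of $\widehat{[\nu]}$ against the zero section of the full bundle $\mathrm{Obs}$ over each $\mathcal{P}_d^\alpha$ and subtract the full complex rank $2(d-1)$. But the relevant transversality in the actual argument is for $\overline{[\nu]}$ against the zero section of $\mathrm{Obs}^\alpha = \overline{\mathrm{Obs}}|_{\mathcal{P}_d^\alpha}$, which has strictly \emph{smaller} rank $(2d-2) - 2\sum_k(k-1)\alpha_k$; see \eqref{eqn:rank-of-obs-alpha} and \cref{rem:geometric-interpretation-quotient-obs}. Your dimension formula $\dim_{\mathbb{C}}\mathcal{V}_d^\alpha = 3 - \sum(2k-1)\alpha_k$ underestimates the true bound $\dim_{\mathbb{C}}\mathcal{V}_d^\alpha \le 3 - \sum\alpha_k$ as soon as any $\alpha_k\neq 0$ with $k\ge 2$; it happens to give the right final inequality $\le 2$ but for the wrong reason, and one should not be lucky into a dimension count on a transversality claim that cannot hold. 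The Sard–Smale perturbation scheme you sketch is the right machinery, but it must be run against $\overline{\mathrm{Obs}}$ stratum-wise, not against a nonexistent global section of $\mathrm{Obs}$. Step 3 is fine in outline once the correct extension $\Theta[\nu]$ is in hand; that is exactly how the paper concludes (iv).
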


The proof of \cref{thm:noneq-section-compactification} hinges on the following key lemma.

\begin{lem}[{\cite[Proposition 3.3]{Voi96}}]\label{lem:extension-of-obs-section}
    There exists a quotient sheaf $\overline{\mathrm{Obs}}$ of $\mathrm{Obs}$ such that the partially defined smooth section $[\nu]$, defined over $\mathcal{P}_d^\circ \subseteq \mathcal{P}_d$, extends continuously to a global section $\overline{[\nu]}$ of $\overline{\mathrm{Obs}}$:
    \begin{equation}\label{eqn:extension-of-obs-section}
        \begin{tikzcd}
            & \mathrm{Obs} \dar \arrow[r, twoheadrightarrow] & \overline{\mathrm{Obs}} \\
            \mathcal{P}_d^\circ \arrow[r, hook] \arrow[ru, "{[\nu]}"] & \mathcal{P}_d \arrow[ru, "{\overline{[\nu]}}"']&
        \end{tikzcd}.
    \end{equation}
\end{lem}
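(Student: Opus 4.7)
The plan is to construct $\overline{\mathrm{Obs}}$ as a quotient of $\mathrm{Obs}$ whose fibers see only the principal component of $Z(s)$, and to define the extension by restricting $\nu$ to that principal component. First I would analyze the fiber of $\mathrm{Obs}$ at a degenerate $s \in \mathcal{P}_d^\alpha$. By \cref{defn:obstruction-bundle} together with base change, it computes $H^1(Z(s),\, \mathcal{O}_{Z(s)} \otimes \pi^* N)$, where $Z(s)$ decomposes (as in \cref{prop:thick-section-moduli-stratification}) as a smooth principal component $Z_{\mathrm{princ}}$ glued at nodal points to (possibly nonreduced) vertical bubbles $Z_{\mathrm{vert}}$. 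The key input is the vanishing $H^*(Z_{\mathrm{vert}},\, \pi^*N|_{Z_{\mathrm{vert}}}) = 0$: on each reduced vertical bubble $\{p\} \times \mathbb{P}^1$, $\pi^*N$ restricts to $\mathcal{O}(-1)^{\oplus 2}$, which has no cohomology, and each $k$-fold nilpotent thickening carries a filtration with graded pieces $\mathcal{O}_{\mathbb{P}^1}(-1)^{\oplus 2}$, inheriting the vanishing. The Mayer--Vietoris sequence for $Z(s) = Z_{\mathrm{princ}} \cup Z_{\mathrm{vert}}$ then yields a canonical surjection $\mathrm{Obs}|_s \twoheadrightarrow H^1(Z_{\mathrm{princ}},\, v_{\mathrm{princ}}^* N)$, whose kernel comes from the cokernel of the restriction $H^0(Z_{\mathrm{princ}}) \oplus H^0(Z_{\mathrm{vert}}) \to H^0(\mathrm{nodes})$.

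Next I would globalize this fiberwise picture. Using the local description \eqref{eqn:section-of-O(d,1)-concretely} $s = P_0 Y_0 + P_1 Y_1$, the gcd $G = \gcd(P_0, P_1)$ cuts out the vertical part; dividing out $G$ gives a scheme-theoretic residual subscheme $\mathcal{D}_{\mathrm{princ}} \subseteq \mathcal{D}$ which coincides with $\mathcal{D}$ on $\mathcal{P}_d^\circ$ and removes vertical components on degenerate strata. Define
\begin{equation}
\overline{\mathrm{Obs}} := R^1(\mathrm{proj}_{\mathcal{P}_d})_* \bigl(\mathcal{O}_{\mathcal{D}_{\mathrm{princ}}} \otimes (\mathrm{proj}_{\mathbb{P}^1} \circ \mathrm{proj}_{C \times \mathbb{P}^1})^* N\bigr).
\end{equation}
The surjection of sheaves $\mathcal{O}_{\mathcal{D}} \twoheadrightarrow \mathcal{O}_{\mathcal{D}_{\mathrm{princ}}}$, tensored with $\pi^*N$ and pushed forward, induces the desired surjection $\mathrm{Obs} \twoheadrightarrow \overline{\mathrm{Obs}}$; the fiberwise vanishing from the first step guarantees that the fibers of $\overline{\mathrm{Obs}}$ agree with $H^1(Z_{\mathrm{princ}},\, v_{\mathrm{princ}}^*N)$.

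I would then define $\overline{[\nu]}$ pointwise by $\overline{[\nu]}(s) := [\widetilde{v}_{\mathrm{princ}}^* \nu] \in H^1(Z_{\mathrm{princ}},\, v_{\mathrm{princ}}^* N) \cong \overline{\mathrm{Obs}}|_s$. This is well-defined because $Z_{\mathrm{princ}}$ is smooth and isomorphic to $C$ via $v_{\mathrm{princ}}$, so the restriction of the smooth form $\nu$ yields a bona fide Dolbeault class. On $\mathcal{P}_d^\circ$ it agrees by construction with the composition $\mathcal{P}_d^\circ \xrightarrow{[\nu]} \mathrm{Obs} \twoheadrightarrow \overline{\mathrm{Obs}}$.

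The main obstacle is verifying continuity of $\overline{[\nu]}$ across strata. For a path $s_t \to s_0$ with $s_t \in \mathcal{P}_d^\circ$ and $s_0 \in \mathcal{P}_d^\alpha$, the graphs $\widetilde{v}_{s_t}$ converge as subvarieties to $Z(s_0)$, and $\widetilde{v}_{s_t}^*\nu$ converges smoothly to $\widetilde{v}_{0,\mathrm{princ}}^*\nu$ on compact subsets away from nodes, while concentration behavior can occur near each node where a bubble pinches off. The cancellation enabled by $H^*(Z_{\mathrm{vert}}, \pi^*N|_{Z_{\mathrm{vert}}}) = 0$ means any concentrated contribution is annihilated in the quotient $\overline{\mathrm{Obs}}$. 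Making this precise amounts to selecting, in local coordinates near each node $(p, v_0(p))$, a family of $L^2$ Dolbeault representatives for $[\nu](s_t)$ whose restriction to the principal component varies continuously and whose orthogonal component lies in the kernel of the quotient map; this step is the technical heart, mirroring the local argument in \cite[Proposition 3.3]{Voi96}.
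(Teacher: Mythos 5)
Your construction differs from the paper's (and Voisin's) in a way that is not merely a stylistic variant: it gives the wrong answer on the degenerate strata, and as a result it cannot serve the role this lemma plays in the proof of \cref{thm:noneq-section-compactification}.

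The paper defines $\overline{\mathrm{Obs}}$ as the dual of the subsheaf $\mathcal{F} := R^0(\mathrm{proj}_{\mathcal{P}_d})_*\bigl(\mathcal{N}^\vee(\mathcal{D}) \otimes K_{C\times\mathbb{P}^1} \otimes \mathcal{I}_{\mathcal{B}_{\mathrm{nonred}}}\bigr) \subseteq \mathrm{Obs}^\vee$, and \cref{rem:geometric-interpretation-quotient-obs} identifies the resulting fiber over $s\in\mathcal{P}_d^\alpha$ as $H^1\bigl(C, v_{\mathrm{prin}}^*N(-\sum p_{k,a})\bigr)$: the principal component's cohomology, but twisted down by the \emph{reduced} bubbling divisor. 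Your proposal gives $H^1(C, v_{\mathrm{prin}}^*N)$ with no such twist. These are different: your fiber has complex rank $2(d' - 1)$ where $d' = d - \sum k\alpha_k$, while the correct one has rank $2(d' + \sum\alpha_k - 1) = 2(d-1) - 2\sum(k-1)\alpha_k$, matching \eqref{eqn:rank-of-obs-alpha}. The concrete consequence is that the dimension count in the proof of \cref{thm:noneq-section-compactification} fails: with your sheaf, $\dim \mathcal{P}_d^\alpha - \mathrm{rank}\,\mathrm{Obs}^\alpha = \dim\mathcal{V}_d + 2\sum\alpha_k$, which is \emph{larger} than $\dim\mathcal{V}_d$, so the boundary strata would not have codimension $\geq 2$ and $[\overline{\mathcal{V}}_d]$ would not define a homology class. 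The geometric point you miss is that when a bubble pinches off, the limiting Dolbeault class retains a first-order residue at each node $p_{k,a}$; your Mayer--Vietoris argument quotients by the entire $H^0(\text{nodes})$ contribution, which is too much. The key lemma of Voisin (\cite[Proposition 3.3]{Voi96}) is that only the \emph{higher-order} tangency data, encoded by $\mathcal{I}_{\mathcal{B}_{\mathrm{nonred}}}$ (multiplicity $k-1$ at each $p_{k,a}$, not $k$), can be safely removed while keeping $\langle[\nu],\phi\rangle$ continuous. Indeed, when all bubbles are simple ($\alpha = (\alpha_1, 0, 0, \dots)$), no quotient is taken at all.

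There is a second, more elementary problem: $\mathcal{D}_{\mathrm{princ}}$ as you define it is not a closed subscheme of $\mathcal{P}_d \times (C\times\mathbb{P}^1)$. The gcd $G = \gcd(P_0, P_1)$ jumps in degree as $s$ varies, so the residual divisor $Z(s/G)$ is not the restriction of any subscheme of the universal family; in particular there is no sheaf surjection $\mathcal{O}_\mathcal{D} \twoheadrightarrow \mathcal{O}_{\mathcal{D}_{\mathrm{princ}}}$ to push forward. The paper sidesteps this entirely by constructing a genuine coherent subsheaf of $\mathrm{Obs}^\vee$ (via the well-defined ideal sheaf $\mathcal{I}_{\mathcal{B}_{\mathrm{nonred}}}$, cut out universally by $\{P_0 = P_1 = \partial P_0 = \partial P_1 = 0\}$) and dualizing — the appearance of Serre duality and the passage through $\mathrm{Obs}^{\vee\vee}$ is not a cosmetic choice but is what makes the quotient a legitimate coherent sheaf. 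Finally, you flag the continuity argument as "the technical heart" but defer it; that is, however, the actual content of the lemma — without the precise estimate showing $\langle[\nu],\phi\rangle$ extends for $\phi$ a local section of $\mathcal{F}$, none of the preceding sheaf-theoretic setup would do anything.
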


Below we give a description of the quotient sheaf $\overline{\mathrm{Obs}}$ and the corresponding section $\overline{[\nu]}$. 

To save notation, let us denote $\mathcal{N} = (\mathrm{proj}_{\mathbb{P}^1} \circ \mathrm{proj}_{C \times \mathbb{P}^1})^* N$ be the pullback of $N \to \mathbb{P}^1$ along the projection $ \mathcal{P}_d \times (C \times \mathbb{P}^1) \to \mathbb{P}^1$. Recall the description of the obstruction bundle \cref{eqn:obstruction-bundle}, reproduced below:
    \begin{equation}
        \mathrm{Obs} := R^1(\mathrm{proj}_{\mathcal{P}_d})_* \left( \mathcal{O}_{\mathcal{D}} \otimes  \mathcal{N} \right).
    \end{equation}
  Note that there is an exact sequence $0 \to \mathcal{N}(-\mathcal{D}) \to \mathcal{N} \to \mathcal{O}_{\mathcal{D}} \otimes \mathcal{N} \to 0$ coming from the ideal sheaf exact sequence for the universal divisor $\mathcal{D} \subseteq \mathcal{P}_d \times (C \times \mathbb{P}^1)$. Here, $\mathcal{N}(-\mathcal{D})$ denotes the tensor product of $\mathcal{N}$ with the ideal sheaf $\mathcal{O}(-\mathcal{D})$ of $\mathcal{D}$. The long exact sequence for the derived pushforward is then written as
  \begin{equation}
      \cdots \rightarrow R^1(\mathrm{proj}_{\mathcal{P}_d})_* \  \mathcal{N} \rightarrow R^1(\mathrm{proj}_{\mathcal{P}_d})_* \left( \mathcal{O}_{\mathcal{D}} \otimes \mathcal{N} \right) \rightarrow R^2(\mathrm{proj}_{\mathcal{P}_d})_* \left( \mathcal{N}(-\mathcal{D}) \right) \rightarrow R^2(\mathrm{proj}_{\mathcal{P}_d})_* \ \mathcal{N} \rightarrow \cdots.
  \end{equation}
  From Serre duality, we see that $H^2(C \times \mathbb{P}^1; \mathcal{O}(0, -1)) \cong H^0(C \times \mathbb{P}^1; \mathcal{O}(-2, -1))^\vee \cong  0$. From Riemann--Roch, $\chi(C \times \mathbb{P}^1 ; \mathcal{O}(0,-1)) = 0$, it moreover follows that $H^1(C \times \mathbb{P}^1; \mathcal{O}(0,-1)) = 0$. This implies the vanishing $R^1(\mathrm{proj}_{\mathcal{P}_d})_* \  \mathcal{N} \cong R^2(\mathrm{proj}_{\mathcal{P}_d})_* \  \mathcal{N} \cong 0$ and therefore
     \begin{equation}
         \mathrm{Obs} = R^1(\mathrm{proj}_{\mathcal{P}_d})_* \left( \mathcal{O}_{\mathcal{D}} \otimes \mathcal{N} \right) \cong R^2(\mathrm{proj}_{\mathcal{P}_d})_* \left( \mathcal{N}(-\mathcal{D}) \right).
     \end{equation}
     Applying Serre duality, we obtain
    \begin{equation}\label{eqn:obs-dual}
        \mathrm{Obs}^\vee \cong R^0(\mathrm{proj}_{\mathcal{P}_d})_* \left( \mathcal{N}^\vee (\mathcal{D}) \otimes K_{\mathcal{P}_D \times C \times \mathbb{P}^1 / \mathcal{P}_d} \right).
    \end{equation}

Recall the stratification from \cref{prop:thick-section-moduli-stratification} where each stratum $\mathcal{P}_d^\alpha$ corresponds to the locus where the number of vertical bubbles (with their multiplicity) is fixed. As in the discussion following \eqref{eqn:section-of-O(d,1)-concretely}, for a fixed section $s = P_0 Y_0 + P_1 Y_1 \in H^0(\mathcal{O}(d,1))$, the locus of vertical bubbles $\{ P_0 = P_1 = 0 \}$ defines a closed subscheme of the vanishing locus $Z(s) = \{s = P_0 Y_0 + P_1 Y_1 = 0\}$. As $s$ varies, this fits into a closed subscheme of the universal divisor $\mathcal{D}$:

\begin{defn}\label{defn:universal-bubbling-locus}
    The \emph{universal bubbling locus} $\mathcal{B} \subseteq \mathcal{D} \subseteq \mathcal{P}_d \times (C \times \mathbb{P}^1)$ is the union $\bigcup_{s \in \mathcal{P}_d} \{ P_0 = P_1 = 0\}$ with the natural scheme structure. The \emph{universal reduced bubbling locus} is the reduced subscheme $\mathcal{B}_{\mathrm{red}} \subseteq \mathcal{B}$. Note $\mathcal{B}_{\mathrm{red}}$ is naturally a subscheme of $\mathcal{D}_{\mathrm{red}}$, the reduced subscheme of $\mathcal{D}$. The \emph{universal nonreduced bubbling locus} is the union $\mathcal{B}_{\mathrm{nonred}} = \bigcup_{s \in \mathcal{P}_d} \{ P_0 = P_1 = \partial P_0 = \partial P_1 = 0 \}$.
\end{defn}
For a fixed $s \in \mathcal{P}_d$, note that $\mathcal{B}|_{s} \subseteq C \times \mathbb{P}^1$ is a product of $\mathbb{P}^1$ with an effective divisor in $C$, and therefore can be identified by that divisor in $C$. If we write this divisor as $\sum n_i p_i$ for points $p_i \in C$, then $\mathcal{B}_{\mathrm{red}}|_s$ and $\mathcal{B}_{\mathrm{nonred}}|_s$ correspond to $\sum p_i$ and $\sum (n_i -1) p_i$, respectively. 

\begin{rem}\label{rem:notations-from-Voi96}
    There is a change in the notation from that of \cite{Voi96}. The thickened moduli space $\mathcal{P}_d$ is denoted $M_k$ in \cite{Voi96}. In \cite[p.140]{Voi96}, the analogue of our bubbling $\mathcal{B} \subseteq \mathcal{P}_d \times (C \times \mathbb{P}^1)$ is denoted $Z \subseteq M_k \times \mathbb{P}^1$. We have chosen to pull it back so that it becomes a subscheme of $\mathcal{D}$. Our $\mathcal{B}_{\mathrm{nonred}}$ is her $B$. In \cite[p.144]{Voi96}, more notation is introduced: our $\mathcal{B}$ is the universal version of $D'$, and our $\mathcal{B}_{\mathrm{red}}$ is the universal version of $D''$.
\end{rem}

The main technical result of \cite{Voi96} is that for the subsheaf
\begin{equation}\label{eqn:subsheaf-of-obs-dual}
    \mathcal{F} := R^0(\mathrm{proj}_{\mathcal{P}_d})_* \left(\mathcal{N}^\vee(\mathcal{D}) \otimes K_{C \times \mathbb{P}^1} \otimes \mathcal{I}_{\mathcal{B}_{\mathrm{nonred}}}  \right) \\ 
    \hookrightarrow \mathrm{Obs}^\vee \cong R^0 (\mathrm{proj}_{\mathcal{P}_d})_* \left(\mathcal{N}^\vee (\mathcal{D}) \otimes K_{C \times \mathbb{P}^1} \right) 
\end{equation}
where the last isomorphism is \eqref{eqn:obs-dual}, we have that for a section $\phi$ of $\mathcal{F}$, the function given by the pairing $\langle [\nu], \phi \rangle$ \emph{extends continuously to all of $\mathcal{P}_d$}. Then $\overline{\mathrm{Obs}}$ is defined as the dual of $\mathcal{F}$, which is naturally a quotient sheaf of $\mathrm{Obs}^{\vee \vee} \cong \mathrm{Obs}$. The continuous extension $\overline{[\nu]}$ of $[\nu]$ as a section of $\overline{\mathrm{Obs}}$ is defined by the means of $\langle [\nu], \phi \rangle$.

\begin{rem}[{Geometric interpretation of $\overline{\mathrm{Obs}}$}]\label{rem:geometric-interpretation-quotient-obs}
Restricted to the stratum $\mathcal{P}_d^\alpha \subseteq \mathcal{P}_d$, the sheaf $\mathcal{F}|_{\mathcal{P}_d^\alpha}$ (and hence also $\overline{\mathrm{Obs}}|_{\mathcal{P}_d^\alpha}$) has constant rank so is locally free (as a sheaf over $\mathcal{P}_d^\alpha$). Fix $s \in \mathcal{P}_d^\alpha$. Recall that $\{s = 0\} \subseteq C \times \mathbb{P}^1$ has irreducible components corresponding to (i) the graph of a principal component $v_{\mathrm{prin}} : C \to \mathbb{P}^1$ and (ii) the vertical bubbles attached at $p_{k, 1}, \dots, p_{k, \alpha_k}$ with multiplicity $k$ for every $k \ge 1$.  The fiber of the sheaf $\overline{\mathrm{Obs}}$ at $s \in \mathcal{P}_d^\alpha$ is given by
\begin{align}
    \overline{\mathrm{Obs}}|_{s} &\cong  R^0(\mathrm{proj}_{\mathcal{P}_d})_* \left(\mathcal{N}^\vee(\mathcal{D}) \otimes K_{C \times \mathbb{P}^1} \otimes \mathcal{I}_{\mathcal{B}_{\mathrm{nonred}}} \right)^\vee|_s \\
    & \cong H^0 \left( C \times \mathbb{P}^1 , \mathcal{N}^\vee \otimes \mathcal{I}_{\mathcal{D}}^\vee \otimes K_{C \times \mathbb{P}^1} \otimes \mathcal{I}_{\mathcal{B}_{\mathrm{nonred}}} |_{\{s=0\}}\right)^\vee   \\
       & \cong H^0\left( \mathrm{im}(\widetilde{v}_{\mathrm{prin}}), (\mathcal{N}^\vee \otimes \mathcal{I}_{\mathcal{D}}^\vee \otimes K_{C \times \mathbb{P}^1} \otimes \mathcal{I}_{\mathcal{B}_{\mathrm{nonred}}}) \mid_{\mathrm{im}(\widetilde{v}_{\mathrm{prin}})} \right)^\vee \\
    &\cong H^0\left(C,  v_{\mathrm{prin}}^* N^\vee \left(\sum_{k=1}^\infty \sum_{a=1}^{\alpha_k} k p_{k, a}-\sum_{k=1}^\infty \sum_{a=1}^{\alpha_k} (k-1) p_{k, a}\right) \otimes K_C \right)^\vee \\
    &\cong H^1\left(C, v_{\mathrm{prin}}^* N \left( - \sum_{k=1}^\infty \sum_{a=1}^{\alpha_k}  p_{k, a} \right) \right) \quad (\mbox{Serre duality})
\end{align}
The last cohomology group can be computed as the quotient of $C^\infty(C, \mathrm{Hom}^{0,1}(TC, v_{\mathrm{prin}}^* N ))$ by the image under $\overline{\partial}$ of the subspace $C^\infty(C, v_{\mathrm{prin}}^*N)( - \sum p_{k,a}) \subseteq C^\infty(C, v_{\mathrm{prin}}^*N )$ of sections which vanish \emph{only to zeroth order} along the divisor where the vertical bubbles are attached. On the other hand, the fiber of the obstruction bundle $\mathrm{Obs}|_s$ is the quotient of the same space $C^\infty(C, \mathrm{Hom}^{0,1}(TC, v_{\mathrm{prin}}^* N ))$ but by the $\overline{\partial}$ image of sections $C^\infty(C, v_{\mathrm{prin}}^*N)( - \sum k p_{k,a})$ which vanish along the bubbling locus with the correct multiplicity. This gives the geometric description of $\overline{\mathrm{Obs}}$: it still supports the class of the perturbation datum $[\nu]$ carried by the principal component $v_{\mathrm{prin}}: C \to \mathbb{P}^1$, but the class is considered in a cokernel of $\overline{\partial}$ from a larger collection of sections.
\end{rem}

\begin{proof}[{Proof sketch of \cref{thm:noneq-section-compactification}, given \cref{lem:extension-of-obs-section}}]

    (For further details, see \cite[Proposition 3.6, Proposition 3.8]{Voi96}.)

    By choosing a Hermitian metric, we can choose a complex antilinear isomorphism of vector bundles $\mathrm{Obs} \cong \mathrm{Obs}^\vee$. Note that the subsheaf $\mathcal{F}$ of \eqref{eqn:subsheaf-of-obs-dual} injects into $\mathrm{Obs}^\vee$. Fix a surjection $E \to \mathcal{F}$ from a vector bundle $E$. After choosing a Hermitian metric, there exists a complex antilinear isomorphism of vector bundles $E \cong E^\vee$. We obtain the diagram
    \begin{equation}\label{eqn:extending-obs-section}
    \begin{tikzcd}
        & \mathrm{Obs}^\vee \dar["\cong", sloped] & \mathcal{F} \lar[hook'] & E \lar[twoheadrightarrow] \\
        & \mathrm{Obs} \rar[twoheadrightarrow] \dar \arrow[rr, bend left=30] & \overline{\mathrm{Obs}} & E^\vee \uar["\cong", sloped] \\
        \mathcal{P}_d^\circ \rar[hook] \arrow[ur, "{[\nu]}"] & \mathcal{P}_d \arrow[ur, "{\overline{[\nu]}}"'] 
    \end{tikzcd}.
    \end{equation}
    Here the map $\mathrm{Obs} \to E^\vee$ is obtained by dualizing the composition $E \to \mathcal{F} \to \mathrm{Obs}^\vee$. The monodromy around the upper right rectangle gives a complex linear endomorphism $\Theta: \mathrm{Obs} \to \mathrm{Obs}$.

    It follows from \cref{lem:extension-of-obs-section} that $\Theta[\nu]$ extends to a global continuous section of $\mathrm{Obs}$ over $\mathcal{P}_d$. Moreover, we see that the vanishing locus of $\Theta[\nu]$ is exactly the vanishing locus of $\overline{[\nu]}$, as $\overline{[\nu]}$ is by definition $\langle -, \Theta[\nu]^\vee \rangle \in \mathcal{F} \cong \overline{\mathrm{Obs}}$. From the description in \cref{rem:geometric-interpretation-quotient-obs}, we see that over the stratum $\mathcal{P}_d^\alpha$, $\overline{[\nu]}$ defines a section of the vector bundle $\mathrm{Obs}^\alpha \to \mathcal{P}_d^\alpha$, where the fiber of $\mathrm{Obs}^\alpha = \overline{\mathrm{Obs}}|_{\mathcal{P}_d^\alpha}$ at $s \in \mathcal{P}_d^\alpha$ is exactly given by $H^1(C, v_{\mathrm{prin}}^*N ( - \sum p_{k, a}))$. Hence, the vanishing locus of $\Theta[\nu]$, equivalently the vanishing locus of $\overline{[\nu]}$, is stratified into subsets $\mathcal{W}_d^\alpha := \{ \overline{[\nu]} = 0 \} \cap \mathcal{P}_d^\alpha$.

    We can compute
    \begin{equation}\label{eqn:dim-of-strata-alpha}
        \dim_{\mathbb{R}} \mathcal{P}_d^\alpha = 2 \left( 2 \deg (v_{\mathrm{prin}}) +1 + \sum \alpha_k \right) = 2 \left( 2 \left( d - \sum_k k \alpha_k\right) +1 + \sum_k \alpha_k \right)
    \end{equation}
    where $\sum \alpha_k$ is the number of marked points where bubbles can be attached. Also note that
    \begin{equation}\label{eqn:rank-of-obs-alpha}
        \mathrm{rank}_{\mathbb{R}} \ \mathrm{Obs}^\alpha = 2 \left( (2d-2) - 2 \sum_{k} (k-1) \alpha_k \right)
    \end{equation}
    since $C^\infty(C, v_{\mathrm{prin}}^*N)( - \sum k p_{k,a}) \subseteq C^\infty(C, v_{\mathrm{prin}}^*N)( - \sum p_{k,a})$ is an inclusion of complex codimension $2\sum_k (k-1)\alpha_k$. In particular,
    \begin{equation}
        \dim_{\mathbb{R}} \mathcal{P}_d^\alpha - \mathrm{rank}_{\mathbb{R}} \ \mathrm{Obs}^\alpha = \dim_{\mathbb{R}} \mathcal{V}_d - 2 \sum \alpha_k \le \dim_{\mathbb{R}} \mathcal{V}_d - 2.
    \end{equation}

    If $\nu$ is chosen generically, then for every $\alpha \neq 0$ the stratum $\mathcal{W}_d^\alpha = \{ \overline{[\nu]} = 0 \} \cap \mathcal{P}_d^\alpha$ is a smooth submanifold of $\mathcal{P}_d^\alpha$ of real dimension $\le \mathrm{dim}_{\mathbb{R}} \mathcal{V}_d - 2$. Since $\mathcal{V}_d = \{ [\nu] = 0 \} \subseteq \{ \overline{[\nu]} = 0 \}$, it follows that the closure $\overline{\mathcal{V}}_d \subseteq \{ \overline{[\nu]} = 0 \}$. Setting $\mathcal{V}_d^\alpha := \overline{\mathcal{V}}_d \cap \mathcal{W}_d^\alpha$ establishes (ii) and (iii). 
    
    For (iv), it remains to note that the homology class of $[\overline{\mathcal{V}}_d]$ is also represented by the homology class of the zero locus of a smooth perturbation $[\nu]'$ of $\Theta[\nu]$. Then $[\nu]'$ is a smooth section of $\mathrm{Obs} \to \mathcal{P}_d$, so the zero locus represents the Poincar\'e dual of the top Chern class $c_{\mathrm{top}} (\mathrm{Obs})$, as desired.
\end{proof}

\subsubsection{Section moduli space: Incidence constraints}

For the map moduli space $\mathcal{M}_A$, the incidence constraints were implemented as \eqref{eqn:noneq-cycle} by choosing a submanifold $\mathcal{Y} \subseteq X \times X^p \times X$. The map moduli spaces are rigidified by considering the evaluation map from the moduli space to the target $X \times X^p \times X$ and taking the preimage of $\mathcal{Y}$ assuming (strong) transversality; see \cref{defn:cycle-strongly-transverse}.

For the section moduli space $\mathcal{V}_d$, the following viewpoint provides a different method to implement incidence constraints. As in \eqref{eqn:noneq-cycle}, choose an oriented submanifold
\begin{equation}
    \mathcal{Y} = Y_0 \times (Y_1 \times \cdots \times Y_p) \times Y_\infty \subseteq X \times X^p \times X
\end{equation}
as the (non-equivariant) incidence cycle. For our example $X = \mathrm{Tot}(\pi : \mathcal{O}(-1)^{\oplus 2} \to \mathbb{P}^1)$, we will fix each $Y_j$ (for $j \in \{0, 1, \dots, p, \infty\}$) to be either $X$ (the total space) or $\pi^{-1}(y) \subseteq X$ for $y \in \mathbb{P}^1$ (the fiber of $\pi$) as embedded submanifolds. The following is an immediate consequence of our definition of decoupling perturbation data $\nu$.

\begin{lem}\label{lem:incidence-on-X-same-as-incidence-on-P1}
    Fix $z \in C$ and $u = (v, \phi) \in \mathcal{M}_A \cong \mathcal{V}_A$. Fix $y \in \mathbb{P}^1$ and the corresponding fiber $Y = \pi^{-1}(y) \subseteq X$. Then $u(z) \in Y$ if and only if $v(z) = y$.
\end{lem}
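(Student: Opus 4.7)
The plan is to unwind the definitions; this is essentially an observation rather than a substantive proof. Recall from the discussion of decoupling perturbation data that if $u \in \mathcal{M}_A$ is written as $u = (v, \phi)$, then by construction $v = \pi \circ u : C \to \mathbb{P}^1$, where $\pi : X \to \mathbb{P}^1$ is the bundle projection. This identification holds regardless of whether $\phi$ is zero at $z$ or not, and relies only on the splitting $TX = T\mathbb{P}^1 \oplus N$ together with the fact that our perturbation datum takes values in the vertical subbundle $N$ (which forces $v$ to be honestly $J$-holomorphic, but that regularity is not needed for this lemma).

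Given this, the equivalence follows from the set-theoretic definition of $Y$: since $Y = \pi^{-1}(y)$, we have $u(z) \in Y$ if and only if $\pi(u(z)) = y$, which by the identity $\pi \circ u = v$ is exactly $v(z) = y$. No transversality or genericity hypothesis is needed; the only potential subtlety is making sure the reader understands that $v$ is literally defined as the composition $\pi \circ u$, so there is no obstacle to mention. In particular, under the bijection $\mathcal{M}_A \cong \mathcal{V}_A$ of \cref{prop:noneq-map-equal-section}, the incidence constraint $u(z_j) \in \pi^{-1}(y_j)$ at a marked point $z_j$ translates directly into the incidence constraint $v(z_j) = y_j$ on the holomorphic curve $v$ encoded by the section $s_v \in \mathcal{P}_d^\circ$. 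This is the statement that will be used later to reinterpret the fiber-type incidence cycles $\mathcal{Y}$ on the section moduli space $\mathcal{V}_d$ as point constraints on the universal divisor $\mathcal{D} \to \mathcal{P}_d$.
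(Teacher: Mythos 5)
Your proof is correct and takes the same approach as the paper's one-line proof, which simply observes that the claim follows from $v = \pi\circ u$. The additional context you provide about how the lemma feeds into the section-moduli reinterpretation is accurate, though not needed for the proof itself.
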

\begin{proof}
    Follows from $v = \pi \circ u$.
\end{proof}

The condition $v(z) = y$ for a fixed $y \in \mathbb{P}^1$ can be rewritten as follows.

\begin{lem}\label{lem:point-constraint-as-line-bundle}
    Fix $z \in C$, $y \in \mathbb{P}^1$ and let $s \in \mathcal{P}_d$ be a section of $\mathcal{O}(d,1) \to C \times \mathbb{P}^1$. Then $(z,y)$ determines a smooth section $\mathrm{ev}_{(z,y)}$ of $\mathcal{O}(1) \to \mathcal{P}_d$ such that $(z, y) \in \{ s= 0 \} \subseteq C \times \mathbb{P}^1$ if and only if $\mathrm{ev}_{(z,y)}$ vanishes at $s$.
\end{lem}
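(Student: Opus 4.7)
The plan is to construct $\mathrm{ev}_{(z,y)}$ directly from the universal description of the projective space $\mathcal{P}_d = \mathbb{P}(H^0(C\times\mathbb{P}^1,\mathcal{O}(d,1)))$, and then observe that the desired vanishing condition is tautological. Recall that the tautological line bundle $\mathcal{O}_{\mathcal{P}_d}(-1)$ sits inside the trivial bundle $H^0(\mathcal{O}(d,1))\otimes \mathcal{O}_{\mathcal{P}_d}$ as the subbundle whose fiber at $[s]\in\mathcal{P}_d$ is the line $\mathbb{C}\cdot s\subseteq H^0(\mathcal{O}(d,1))$, and its dual $\mathcal{O}_{\mathcal{P}_d}(1)$ has fiber $(\mathbb{C}\cdot s)^\vee$.

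First I would build the ``raw'' evaluation map. Fiberwise evaluation at the fixed point $(z,y)\in C\times \mathbb{P}^1$ is a $\mathbb{C}$-linear functional
\begin{equation}
\mathrm{ev}^0_{(z,y)}: H^0(C\times\mathbb{P}^1,\mathcal{O}(d,1)) \longrightarrow \mathcal{O}(d,1)|_{(z,y)}, \qquad s\mapsto s(z,y).
\end{equation}
Viewing this as a bundle map $H^0(\mathcal{O}(d,1))\otimes \mathcal{O}_{\mathcal{P}_d} \to \mathcal{O}(d,1)|_{(z,y)}\otimes \mathcal{O}_{\mathcal{P}_d}$ over $\mathcal{P}_d$ and restricting to the tautological subbundle produces a bundle map $\mathcal{O}_{\mathcal{P}_d}(-1) \to \mathcal{O}(d,1)|_{(z,y)}$. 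By adjunction this is a holomorphic (in particular smooth) section of $\mathcal{O}_{\mathcal{P}_d}(1)\otimes \mathcal{O}(d,1)|_{(z,y)}$.

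Next, since $\mathcal{O}(d,1)|_{(z,y)}$ is a one-dimensional complex vector space, a choice of trivialization identifies this with a section $\mathrm{ev}_{(z,y)}$ of $\mathcal{O}_{\mathcal{P}_d}(1)$. The vanishing locus of the section is independent of this trivialization. By construction, $\mathrm{ev}_{(z,y)}$ vanishes at $[s]\in\mathcal{P}_d$ if and only if $\mathrm{ev}^0_{(z,y)}(s)=s(z,y)=0$ in $\mathcal{O}(d,1)|_{(z,y)}$, i.e.\ $(z,y)$ lies in $\{s=0\}\subseteq C\times\mathbb{P}^1$, which is exactly the desired equivalence.

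There is no real obstacle here; the only slightly non-canonical step is the trivialization of the fiber $\mathcal{O}(d,1)|_{(z,y)}$, but this is harmless since only the zero locus of $\mathrm{ev}_{(z,y)}$ enters the statement. Alternatively, one can phrase the construction universally without trivializing: the universal divisor $\mathcal{D}\subseteq \mathcal{P}_d\times (C\times\mathbb{P}^1)$ from \cref{defn:universal-divisor} is cut out by a tautological section of $\mathcal{O}_{\mathcal{P}_d}(1)\boxtimes \mathcal{O}_{C\times\mathbb{P}^1}(d,1)$, and $\mathrm{ev}_{(z,y)}$ is nothing other than the restriction of this tautological section to $\mathcal{P}_d\times\{(z,y)\}$, after the (non-canonical) trivialization of $\mathcal{O}_{C\times\mathbb{P}^1}(d,1)|_{(z,y)}$.
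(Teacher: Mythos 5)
Your proof is correct and follows essentially the same approach as the paper: define the fiberwise evaluation functional $s\mapsto s(z,y)$, observe that up to a trivialization of $\mathcal{O}(d,1)|_{(z,y)}$ this gives a section of the hyperplane bundle on $\mathcal{P}_d$, and note that the zero locus is independent of that choice. Your remark that $\mathrm{ev}_{(z,y)}$ is the restriction of the tautological section cutting out the universal divisor $\mathcal{D}$ is a nice additional observation, but the core argument matches the paper's.
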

\begin{proof}
    Define the linear functional to be
    \begin{align}
        \label{eqn:incidence-section}
        \mathrm{ev}_{(z,y)}: H^0(C \times \mathbb{P}^1, \mathcal{O}(d,1)) &\to \mathcal{O}(d,1)|_{(z,y)} .\\
        s &\mapsto s(z,y)
    \end{align}
    Considering this as an element of $H^0(C \times \mathbb{P}^1, \mathcal{O}(d,1))^\vee$ involves a choice of an isomorphism $\mathcal{O}(d,1)|_{(z,y)} \cong \mathbb{C}$. Nevertheless, the vanishing locus of $\mathrm{ev}_{(z,y)}$ is well-defined independent of the choice, and indeed $\mathrm{ev}_{(z,y)}$ defines a well-defined section of the hyperplane bundle over $\mathcal{P}_d = \mathbb{P}H^0(\mathcal{O}(d,1))$. Clearly $s(z,y) = 0$ if and only if $\mathrm{ev}_{(z,y)}([s])=0$, where $[s]$ is the point in $\mathcal{P}_d$ corresponding to $s$.
\end{proof}

Fix $\mathcal{Y} = Y_0 \times (Y_1 \times \cdots \times Y_p) \times Y_\infty$, such that each $Y_j$ is either the total space $X$ or fiber cycles of the form $\pi^{-1}(y_j)$ for $y_j \in \mathbb{P}^1$. For each $j \in \{0, 1, \dots, p, \infty\}$, let
\begin{equation}
    \label{eqn:incidence-line-bundle}
    \mathcal{L}_j := \begin{cases} 0 & \mbox{ if } Y_j \cong X \\ \mathcal{O}_{\mathcal{P}_d}(1) & \mbox{ if } Y_j \cong \pi^{-1}(y_j) \mbox{ for } y_j \in \mathbb{P}^1 \end{cases}.
\end{equation}

\begin{defn}
    \label{defn:incidence-bundle}
    Fix $\mathcal{Y} \subseteq X \times X^p \times X$ and let $\mathcal{L}_j$ be as in \eqref{eqn:incidence-line-bundle}. The \emph{(non-equivariant) incidence constraint bundle over the section moduli space} is the vector bundle over $\mathcal{P}_d$ defined as
    \begin{equation}\label{eqn:incidence-bundle}
        \mathrm{IC} = \mathrm{IC}_d := \mathcal{L}_0 \oplus \left( \mathcal{L}_1 \oplus \cdots \oplus \mathcal{L}_p \right) \oplus \mathcal{L}_\infty.
    \end{equation}
\end{defn}
Consider all tuples $(z_j, y_j)$ for $j \in \{ 0, 1, \dots, p, \infty \}$ such that $Y_j$ is a fiber over $y_j$ and $z_j$ are the distinguished marked points on $C$. The associated sections $\mathrm{ev}_{(z_j, y_j)}$ of $\mathcal{L}_j$ together determine a global section $\mathrm{ev}_{\mathcal{Y}}: \mathcal{P}_d \to \mathrm{IC}$ of the incidence constraint bundle $\mathrm{IC}$.

The curves $u=(v,\phi) \in \mathcal{V}_d$ which pass through the fibers $Y_j$ at points $z_j \in C$ are identified with the intersection of the zero locus $\mathrm{ev}_{\mathcal{Y}}^{-1}(0)$ and $\mathcal{V}_d$. Using the homological characterization $[\overline{\mathcal{V}}_d] = c_{\mathrm{top}}(\mathrm{Obs})$ from \cref{thm:noneq-section-compactification}, one can compute the number of curves satisfying the incidence constraint by computing the integral of Chern classes coming from and the incidence constraint bundle:

\begin{lem}\label{lem:section-counts-equal-to-Chern-integral}
    Suppose $\mathcal{Y} = Y \times (Y_1 \times\cdots \times Y_p) \times Y$ is chosen so that $\mathrm{ev}_{\mathcal{Y}}^{-1}(0)$ intersects every $\mathcal{V}_d^\alpha$ transversely and that the intersection only happens along $\mathcal{V}_d \subseteq \overline{\mathcal{V}}_d$. Then
    \begin{equation}
        \# \mathcal{V}_d \pitchfork \mathrm{ev}_{\mathcal{Y}}^{-1}(0) = \int_{\mathcal{P}_d} c_{\mathrm{top}}(\mathrm{Obs}) \cup c_{\mathrm{top}}(\mathrm{IC}). 
    \end{equation}
\end{lem}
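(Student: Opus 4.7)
The plan is to identify both sides of the equality as instances of the same cohomological pairing on $\mathcal{P}_d$, and then to use the transversality assumption to convert the cohomological pairing into a geometric count. By (iv) of \cref{thm:noneq-section-compactification} the closed subvariety $\overline{\mathcal{V}}_d \subseteq \mathcal{P}_d$ represents a homology class Poincar\'e dual to $c_{\mathrm{top}}(\mathrm{Obs}) \in H^*(\mathcal{P}_d)$. Independently, the section $\mathrm{ev}_{\mathcal{Y}}$ of the direct sum of pulled-back hyperplane bundles $\mathrm{IC}$ of \cref{defn:incidence-bundle} is a holomorphic section; since each factor $\mathrm{ev}_{(z_j,y_j)}$ is a nontrivial linear functional on $H^0(\mathcal{O}(d,1))$ (so its zero locus is a hyperplane in $\mathcal{P}_d$), one may moreover perturb $\mathcal{Y}$ slightly within the allowed family (varying $y_j \in \mathbb{P}^1$ and $z_j \in C$) so that $\mathrm{ev}_{\mathcal{Y}}$ is transverse to the zero section. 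Hence $\mathrm{ev}_{\mathcal{Y}}^{-1}(0) \subseteq \mathcal{P}_d$ is a cycle Poincar\'e dual to $c_{\mathrm{top}}(\mathrm{IC})$.

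The next step is to relate the geometric and homological intersection numbers. By standard intersection theory, provided the cycles intersect only in points at which both are smooth and the intersection is transverse,
\begin{equation}
    \# \ \overline{\mathcal{V}}_d \pitchfork \mathrm{ev}_{\mathcal{Y}}^{-1}(0) = \int_{\mathcal{P}_d} \mathrm{PD}[\overline{\mathcal{V}}_d] \cup \mathrm{PD}[\mathrm{ev}_{\mathcal{Y}}^{-1}(0)] = \int_{\mathcal{P}_d} c_{\mathrm{top}}(\mathrm{Obs}) \cup c_{\mathrm{top}}(\mathrm{IC}).
\end{equation}
The hypothesis of the lemma is exactly that the intersection $\overline{\mathcal{V}}_d \cap \mathrm{ev}_{\mathcal{Y}}^{-1}(0)$ is contained in the smooth open stratum $\mathcal{V}_d \subseteq \overline{\mathcal{V}}_d$ and that this intersection is transverse there. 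Consequently the left-hand side above coincides with $\# \ \mathcal{V}_d \pitchfork \mathrm{ev}_{\mathcal{Y}}^{-1}(0)$, which yields the claimed equality.

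The only genuinely nontrivial aspect is the orientation and sign comparison: the geometric intersection is counted with signs coming from complex structures on $\mathcal{V}_d$ (as the regular zero locus of $[\nu]$, a section of the complex vector bundle $\mathrm{Obs}$ over the complex manifold $\mathcal{P}_d^\circ$) and on $\mathrm{ev}_{\mathcal{Y}}^{-1}(0)$ (as the regular zero locus of a holomorphic section of the holomorphic vector bundle $\mathrm{IC}$), while the right-hand side is the cup-product evaluation with its natural complex orientation on $\mathcal{P}_d$. Both sides use the same complex orientation conventions, so the signs agree and no additional factors are introduced. The main obstacle is verifying that the transversality hypothesis can actually be realized; this is straightforward here because each component of $\mathrm{ev}_{\mathcal{Y}}$ depends on the independent parameters $(z_j, y_j)$ and because the singular strata $\mathcal{V}_d^\alpha$ have real codimension at least $2$ in $\overline{\mathcal{V}}_d$ by \cref{thm:noneq-section-compactification}(iii), so perturbing $\mathcal{Y}$ generically places all intersection points in the open stratum.
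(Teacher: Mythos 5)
Your proof is correct and matches the approach the paper takes for the equivariant analogue (\cref{lem:eq-section-counts-equal-to-Chern-integral}), namely: identify $[\overline{\mathcal{V}}_d]$ with $\mathrm{PD}(c_{\mathrm{top}}(\mathrm{Obs}))$ via the perturbation of the extended section $\overline{[\nu]}$, observe that $\mathrm{ev}_{\mathcal{Y}}^{-1}(0)$ is a transverse complete intersection of hyperplanes dual to $c_{\mathrm{top}}(\mathrm{IC})$, and evaluate the intersection pairing using the transversality hypothesis to restrict attention to the smooth stratum $\mathcal{V}_d$. The paper's equivariant proof packages the same count as the regular zero locus of the combined section $[\nu]' \oplus \mathrm{ev}_{\mathcal{Y}}$ of $\mathrm{Obs} \oplus \mathrm{IC}$ and applies multiplicativity of the top Chern class, whereas you factor through \cref{thm:noneq-section-compactification}(iv) first and then intersect — a cosmetically different route to the same computation. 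One small slip in your optional aside: the marked points $z_j \in C$ are fixed as part of the domain data, so one may only perturb the constraint points $y_j \in \mathbb{P}^1$ (and, implicitly, the perturbation data $\nu$); this does not affect the argument since the lemma already assumes the required transversality.
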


\begin{rem}
    In \cite{AM93} and \cite{Voi96}, this lemma is executed where $\mathcal{Y}$ represents three point constraints on the base $\mathbb{P}^1$, and hence yields (a form of) the celebrated multiple cover formula,
    \begin{equation}
        \langle \mathrm{pt}, \mathrm{pt}, \mathrm{pt} \rangle_{0, 3, d [\mathbb{P}^1]} = \int_{\mathcal{P}_d} c_{\mathrm{top}}(\mathrm{Obs})  \cup c_1(\mathcal{O}(1))^3 = \int_{\mathbb{P}^{2d+1}} H^{2d-2} \cdot H^{3} =  1. 
    \end{equation}
\end{rem}

The following proposition implies that, if the relevant moduli spaces are rigidified to be $0$-dimensional, then the counts from the map moduli spaces and section moduli spaces must be equal.

\begin{prop}\label{prop:transversality-is-equal}
    Fix the incidence cycle $\mathcal{Y} = Y_0 \times (Y_1 \times \cdots \times Y_p) \times Y_\infty$ such that every $Y_j$ for $j \in \{ 0, 1, \dots, p, \infty\}$ is either the total space $Y_j = X$ or a fiber cycle, $Y_j := \pi^{-1}(y_j)$ for $y_j \in \mathbb{P}^1$. Assume the moduli spaces $\mathcal{M}_A$ and $\mathcal{V}_A = \mathcal{V}_d$ are both regular. Then the evaluation map $\mathrm{ev}: \mathcal{M}_A \to X \times X^p \times X$ is transverse to the inclusion of the incidence cycle $\mathcal{Y} \hookrightarrow X \times X^p \times X$ if and only if the section $\mathrm{ev}_{\mathcal{Y}} : \mathcal{P}_d \to \mathrm{IC}$ is transverse to $\mathcal{V}_d$ (in the zero section of $\mathrm{IC}$). 
\end{prop}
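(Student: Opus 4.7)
The plan is to reduce both transversality conditions to a common linearized statement about evaluation of tangent vectors at the marked points, and then verify their equivalence factor-by-factor.

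First I will use that $\mathcal{Y} = Y_0 \times (Y_1 \times \cdots \times Y_p) \times Y_\infty$ is a product and $\mathrm{IC} = \bigoplus_j \mathcal{L}_j$ is a direct sum to decompose both transversality conditions into conjunctions of conditions associated to each marked point. Indices $j$ with $Y_j = X$ (equivalently $\mathcal{L}_j = 0$) impose no condition on either side, so it suffices to fix a single $j \in \{0, 1, \dots, p, \infty\}$ with $Y_j = \pi^{-1}(y_j)$ a fiber, together with a curve $u = (v, \phi) \in \mathcal{M}_A$ satisfying $u(z_j) \in Y_j$ (equivalently, by \cref{lem:incidence-on-X-same-as-incidence-on-P1}, $v(z_j) = y_j$), and to show that transversality of $\mathrm{ev}_{z_j}$ to $Y_j$ at $u$ is equivalent to transversality of $\mathrm{ev}_{(z_j, y_j)}$ to the zero section of $\mathcal{L}_j$ along $\mathcal{V}_d$ at $s_v$.

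On the map side, I will use the splitting $T_{u(z_j)} X = T_{v(z_j)} \mathbb{P}^1 \oplus N|_{v(z_j)}$ together with $T_{u(z_j)} Y_j = N|_{v(z_j)}$. Transversality of $\mathrm{ev}_{z_j}$ to $Y_j$ then reduces to surjectivity of the composition $T_{(v, \phi)} \mathcal{M}_A \xrightarrow{d\,\mathrm{ev}_{z_j}} T_{u(z_j)} X \xrightarrow{d\pi} T_{v(z_j)} \mathbb{P}^1$. In terms of tangent vectors $(\xi, \psi) \in T_{(v, \phi)} \mathcal{M}_A \subseteq C^\infty(C, v^*T\mathbb{P}^1) \oplus C^\infty(C, v^*N)$ as in \eqref{eqn:linearization-splitting}, this is simply $(\xi, \psi) \mapsto \xi(z_j)$.

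On the section side, I will use the identification $T_{s_v} \mathcal{P}_d^\circ \cong H^0(C, v^*T\mathbb{P}^1)$ via the correspondence between deformations $\dot{s}$ of the defining section and deformations $\dot{v}$ of the graph (the analytic implicit function theorem, as already used in the proof of \cref{prop:noneq-map-equal-section}). Under this identification, $T_{s_v} \mathcal{V}_d$ is precisely the subspace of $\dot{v}$ that lift to some $(\dot{v}, \psi) \in T_{(v,\phi)} \mathcal{M}_A$. The section $\mathrm{ev}_{(z_j,y_j)}$ of $\mathcal{L}_j = \mathcal{O}_{\mathcal{P}_d}(1)$ from \cref{lem:point-constraint-as-line-bundle} is $s \mapsto s(z_j, y_j)$, and differentiating the identity $s_t(z, v_t(z)) \equiv 0$ at $t = 0$ gives
\[
\dot{s}(z_j, y_j) = -\bigl(d_{\mathbb{P}^1} s_v\bigr)(z_j, y_j) \cdot \dot{v}(z_j),
\]
where $d_{\mathbb{P}^1} s_v(z_j, y_j) : T_{y_j} \mathbb{P}^1 \to \mathcal{O}(d, 1)|_{(z_j, y_j)}$ is a linear isomorphism because the graph $\mathrm{im}(\widetilde{v})$ meets the vertical slice $\{z_j\} \times \mathbb{P}^1$ transversely at $(z_j, y_j)$. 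Consequently $d\,\mathrm{ev}_{(z_j, y_j)}|_{T_{s_v} \mathcal{V}_d}$ surjects onto $\mathcal{L}_j|_{s_v}$ if and only if the evaluation $\dot{v} \mapsto \dot{v}(z_j)$ surjects onto $T_{v(z_j)} \mathbb{P}^1$.

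Combining these, both transversality conditions reduce to the same surjectivity statement, via the isomorphism $T_{(v, \phi)} \mathcal{M}_A \cong T_{s_v} \mathcal{V}_d$ established in \cref{prop:noneq-map-equal-section}. The main technical point to get right is the compatibility of this isomorphism with the two different geometric pictures: on the map side one projects $(\xi, \psi) \mapsto \xi$ to extract a holomorphic vector field on $\mathbb{P}^1$ along $v$, while on the section side one identifies tangent vectors in $T_{s_v} \mathcal{P}_d^\circ$ with deformations of the graph. I expect the bulk of the work to lie in verifying this compatibility carefully; once it is in place, the equivalence of the two transversality conditions is immediate.
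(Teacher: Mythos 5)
Your core computation matches the paper's proof: both identify $T_{(v,\phi)}\mathcal{M}_A$ with $T_{s_v}\mathcal{V}_d$ via \cref{prop:noneq-map-equal-section}, differentiate $s_{v_t}(z_j, v_t(z_j)) \equiv 0$ to express the section-side linearized evaluation as $\dot{s}(z_j, y_j) = -\bigl(d_{\mathbb{P}^1}s_v\bigr)(z_j,y_j)\cdot\xi(z_j)$, and observe that the factor $(d_{\mathbb{P}^1}s_v)(z_j,y_j): T_{y_j}\mathbb{P}^1 \to \mathcal{O}(d,1)|_{(z_j,y_j)}$ is a linear isomorphism (you phrase this via transversality of the graph with the vertical slice $\{z_j\}\times\mathbb{P}^1$; the paper phrases it as nonvanishing of the linear form $s_v|_{\{z_j\}\times\mathbb{P}^1}$, ruled out because $s_v \in \mathcal{P}_d^\circ$ has no vertical bubble at $z_j$ --- the same fact).

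The one place where your framing needs care is the opening reduction. It is \emph{not} true in general that transversality of a map to a product $Y_0 \times \cdots \times Y_\infty$ is equivalent to the conjunction, over $j$, of transversality of the $j$th component to $Y_j$: joint transversality is strictly stronger (take $f(t) = (t,t) \colon \mathbb{R} \to \mathbb{R}^2$ against $\{0\}\times\{0\}$). So ``it suffices to fix a single $j$'' is not a valid reduction as stated. What actually closes the argument --- and what your computation in fact delivers --- is stronger than a factor-by-factor equivalence: the componentwise identities $\dot{s}(z_j,y_j) = -(d_{\mathbb{P}^1}s_v)(z_j,y_j)\,\xi(z_j)$ show that the two full linearized evaluation maps
\[
T_{(v,\phi)}\mathcal{M}_A \longrightarrow \bigoplus_{j:\, Y_j \ne X} T_{y_j}\mathbb{P}^1 \quad\text{and}\quad T_{s_v}\mathcal{V}_d \longrightarrow \bigoplus_{j:\, Y_j \ne X} \mathcal{L}_j|_{s_v}
\]
differ by a fixed block-diagonal isomorphism of the targets, so their \emph{joint} surjectivities coincide; this gives both directions of the proposition at once. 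You should restate the reduction in those terms; once you do, your argument is complete and coincides with the paper's, including the treatment of the $Y_j = X$ factors by dropping the corresponding summands.
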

\begin{proof}
    Let us assume for simplicity that all $Y_j$ are fiber cycles.
    
    Set-theoretically, \cref{lem:incidence-on-X-same-as-incidence-on-P1} and \cref{lem:point-constraint-as-line-bundle} shows that $\mathcal{M}_A \cap \mathrm{ev}^{-1}(\mathcal{Y}) \cong \mathcal{V}_A \cap \mathrm{ev}_{\mathcal{Y}}^{-1}(0)$.

    Assume the transversality in the sense of map moduli: $\mathcal{M}_A \pitchfork \mathrm{ev}^{-1}(\mathcal{Y})$. Fix $s = s_v \in \mathcal{V}_A \cap \mathrm{ev}_{\mathcal{Y}}^{-1}(0)$, which cuts out the graph of $v: C \to \mathbb{P}^1$. Fix $s_t$ for $t \in (-\varepsilon, \varepsilon)$, a $1$-parameter family of sections such that $s_0 = s$ with $\partial_t s_t|_{t=0} \in T_s \mathcal{V}_A$. For sufficiently small $\varepsilon$, all $s_t$ are graphical: $s_t = s_{v_t}$ for a $1$-parameter family of holomorphic curves $v_t : C \to \mathbb{P}^1$ of degree $d$, such that $v_0 = v$.

    Consider
    \begin{equation}
        \mathrm{ev}_{\mathcal{Y}}(s_{v_t}) = \left(s_{v_t}(z_0, y_0), \bigoplus_{j=1}^p s_{v_t} (z_j, y_j), s_{v_t}(z_\infty, y_\infty) \right).
    \end{equation}
    We will show that transversality in the sense of section moduli holds at $s$. First consider the first marked point $z_0 \in C$ and the fiber cycle $Y_0 = \pi^{-1}(y_0)$.

    Since $s_v$ satisfies the incidence constraint, $y_0 = v_0(z_0)$.     Recall that $\mathrm{ev}_{(z_0,y_0)} (s) = s(z_0, y_0) = 0 \in \mathcal{O}(d,1)|_{z_0, y_0}$. For transversality, we need to show that the derivative of $\mathrm{ev}_{\mathcal{Y}}$ surjects onto $\mathcal{L}_0|_s \cong \mathcal{O}_{\mathcal{P}_d}(1)|_{s} \cong \mathcal{O}_{C \times \mathbb{P}^1}(d,1)|_{(z_0, y_0)}$, i.e. $\partial_t s_{v_t}|_{t=0} \in T_s \mathcal{V}_A \mapsto \partial_t s_{v_t} (z_0, y_0)$ (over all choices of $v_t$) surjects onto $\mathcal{O}(d,1)|_{(z_0, y_0)}$.

    Fix a connection $\nabla$ on the line bundle $\mathcal{O}(d,1) \to C \times \mathbb{P}^1$. One has the tautological identity $s_{v_t}(z_0, v_t(z_0)) = 0$, as $s_{v_t}$ cuts out the graph of $v_t$. Differentiating this in $t$ gives
    \begin{equation}
        \partial_t s_{v_t} (z_0, y_0) + \nabla_t s_{v_0} (z_0, v_t(z_0))|_{t=0} = 0.
    \end{equation}

    Using the identification from \cref{prop:noneq-map-equal-section}, the family of curves $v_t : C \to \mathbb{P}^1$ associates a (holomorphic) vector field $\xi \in C^\infty(C, v^*T\mathbb{P}^1)$. Observe that for the vertical differential $\mathrm{d} s_v^{\mathrm{vert}} : T_{z_0} C \times T_{y_0} \mathbb{P}^1 \to \mathcal{O}(d,1)|_{(z_0,y_0)}$ (which is defined even without a choice of a connection, since $s_v(z_0, y_0) = 0$), we have the identity
    \begin{equation}
        \nabla_t s_v(z_0, v_t(z_0))|_{t=0} = \mathrm{d} s_v^{\mathrm{vert}} (0, \xi(z_0)).
    \end{equation}
    Hence it suffices to show that $\mathrm{d}s_v^{\mathrm{vert}}(0, \xi(z_0))$, as $\xi$ ranges over all vector fields coming from all choices of $v_t$, surjects onto $\mathcal{O}(d,1)|_{z_0,y_0}$.
    
    Because each $Y_j = \pi^{-1}(y_j)$ is a fiber cycle, the transversality assumption that $\mathrm{ev}: \mathcal{M}_A \to X \times X^p \times X$ is transversal to the inclusion of $(Y_0; Y_1, \dots, Y_p; Y_\infty)$ is equivalent to the claim that $\pi \circ \mathrm{ev} : \mathcal{M}_A \to \mathbb{P}^1 \times (\mathbb{P}^1)^p \times \mathbb{P}^1$ is transversal to the inclusion of the point $(y_0; y_1, \dots, y_p ; y_\infty)$. The derivative of $\pi \circ \mathrm{ev}: (v, \phi) \mapsto (v(z_0); v(z_1), \dots, v(z_p); v(z_\infty))$ at $(v,\phi) \in \mathcal{M}_A \cap \mathrm{ev}^{-1}(\mathcal{Y})$ is given as
    \begin{align}
        \mathrm{d} (\pi \circ \mathrm{ev}) : T_{(v, \phi)} \mathcal{M}_A &\to T_{y_0}\mathbb{P}^1 \oplus \bigoplus_{j=1}^p T_{y_j} \mathbb{P}^1 \oplus T_{y_\infty} \mathbb{P}^1. \\
        (\xi, \psi) &\mapsto \left( \xi (z_0); \xi(z_1), \dots, \xi(z_p);  \xi(z_\infty) \right) 
    \end{align}
    Therefore transversality implies that for any $\xi_0 \in T_{y_0} \mathbb{P}^1$, there exists $(\xi, \psi) \in T_{(v,\phi)} \mathcal{M}_A$ such that $\xi(z_0) = \xi_0$. Hence the vector fields $\xi \in C^\infty(\mathbb{P}^1, v^*T\mathbb{P}^1)$ coming from $v_t : C \to \mathbb{P}^1$ can take any value in $T_{y_0} \mathbb{P}^1$. We are reduced to showing that $\mathrm{d}s_v^{\mathrm{vert}}|_{0 \oplus T_{y_0} \mathbb{P}^1}$ is surjective.

    But $s_v$ is a holomorphic section, so $\mathrm{d}s_v$ is in fact complex linear. Hence it suffices to show that $\mathrm{d} s_v|_{0 \oplus T\mathbb{P}^1} = \mathrm{d} (s_v |_{z_0 \times \mathbb{P}^1})$ is nonzero. But $s_v|_{z_0 \times \mathbb{P}^1}$ is a linear form on $\mathbb{P}^1$, so $\mathrm{d}(s_v|_{z_0 \times \mathbb{P}^1})$ is nonzero unless $s_v |_{z_0 \times \mathbb{P}^1}$ is constantly zero. This only happens if there is a vertical bubble attached at $z_0$, which never happens over $\mathcal{V}_A \subseteq \mathcal{P}_A^\circ$. Therefore $\mathrm{d}s_v^{\mathrm{vert}}|_{0 \oplus T_{y_0}\mathbb{P}^1}$ is surjective (in fact an isomorphism of complex dimension $1$ vector spaces), as desired.

    Other points $(z_j, y_j)$ can be treated in the same way, and from this follows the transversality in the section moduli sense $\mathcal{V}_A \pitchfork \mathrm{ev}_{\mathcal{Y}}^{-1}(0)$. The converse direction (from $\mathcal{V}_A \pitchfork \mathrm{ev}_{\mathcal{Y}}^{-1}(0)$ to $\mathcal{M}_A \pitchfork \mathrm{ev}^{-1}(\mathcal{Y})$) also follows the same argument, just in reverse order. The proof is the same when any $Y_j = \pi^{-1}(y_j)$ is replaced with $X$, with corresponding summands in vector bundles deleted.
\end{proof}

    \cref{prop:transversality-is-equal} is not strictly necessary to define the counts of $0$-dimensional counts, because $\mathcal{Y}$ can be perturbed so that it is transversal to $\mathcal{M}_A \cong \mathcal{V}_A$ in both senses. Rather, it shows that if it is transversal in one sense, it is automatically transversal in the other sense.

\section{The local $\mathbb{P}^1$ Calabi--Yau 3-fold ($\mathbb{Z}/p$-Equivariant)}\label{sec:eq-localP1}
We now introduce the equivariant versions of the constructions in the previous section, including the decoupling perturbation data. This is analogous to the construction of equivariant moduli spaces in \cref{ssec:eq-moduli-spaces}. Most constructions are verbatim, being parametrized versions of the ordinary construction. We therefore choose to focus on the aspects that newly arise in the equivariant setting.

Once we develop the equivariant versions of our previous constructions, we could fully compute the structure constants for the quantum Steenrod operations (loosely interpreted, due to non-compactness issues). This is done in the final subsection.

\subsection{Equivariant decoupling perturbation data and map moduli spaces}\label{ssec:eq-decoupling-nu}
The equivariant analogue of the decoupling perturbation data \eqref{noneq-nu-dc} is
\begin{equation}
    \nu^{eq}  \in C^\infty \left( S^\infty \times_{\mathbb{Z}/p} (C \times \mathbb{P}^1), \mathrm{Hom}^{0,1 }(TC, N)\right). 
\end{equation}

As earlier, equivariant data are maybe best understood as parametrized data over $B\mathbb{Z}/p \simeq S^\infty / (\mathbb{Z}/p)$. For $w \in S^\infty$, let
\begin{equation}
\nu_w^{eq} = (w \times \mathrm{id}_{C \times \mathbb{P}^1})^* \nu^{eq} \in C^\infty \left(C \times \mathbb{P}^1, \mathrm{Hom}^{0,1}(TC, N) \right),
\end{equation}
and this satisfies the equivariance relation $\nu^{eq}_{\sigma \cdot w} = \sigma_C^* \  \nu^{eq}_{w}$, where we recall that $\sigma_C$ denotes the rotation action the marked curve $C$ defined in \cref{ssec:eq-moduli-spaces}.

As before, using equivariant perturbation data (pulled back from equivariant decoupling perturbation data), we can define the equivariant moduli spaces. Fix $A = d [\mathbb{P}^1] \in H_2(X;\mathbb{Z}) $, and an equivariant degree $i \ge 0$.
\begin{defn}[Compare \cref{defn:eq-map-moduli}, \eqref{eqn:dc-moduli}]
\label{defn:eq-map-dc-moduli}
The \emph{$i$th equivariant map moduli space of degree $A$} is the space
    \begin{equation}
        \mathcal{M}_A^{eq, i} := \left\{ \left(w \in \Delta^i, v: C \to \mathbb{P}^1, \phi \in C^\infty(C, v^*N) \right) : \ \overline{\partial}_J v = 0, \ \overline{\partial} \phi  = (w \times \widetilde{v})^* \nu^{eq} , \ v_*[C] = A \right\}.
    \end{equation}
of solutions using equivariant decoupling perturbation data.
\end{defn}

With the definition of equivariant versions of our previous constructions, we can now discuss the equivariant versions of the issues discussed in the non-equivariant case: regularity, compactness, and incidence constraints.

Fix a reference equivariant decoupling perturbation data $\nu_0^{eq}$ which is a pullback of a genuinely $\mathbb{Z}/p$-invariant decoupling perturbation data. Consider its neighborhood $\mathcal{S}^{eq} \ni \nu_0^{eq}$ in the $C^\infty$-topology. (Recall that the topology is given in terms of the restrictions to finite dimensional approximations, see \cref{defn:eq-perturbation-data}.) 

\begin{lem}[Compare \cref{lem:dc-map-transverse}]
\label{lem:eq-dc-map-transverse}
    There exists a comeager subset $\mathcal{S}^{eq, \mathrm{reg}} \subseteq \mathcal{S}^{eq}$, such that for equivariant decoupling perturbation data $\nu^{eq} \in \mathcal{S}^{eq, \mathrm{reg}}$, every $\mathcal{M}_A^{eq, i}$ for $i \ge 0$ is regular of dimension $i + \dim_{\mathbb{R}} X + 2 c_1(A) = i + 6$.
\end{lem}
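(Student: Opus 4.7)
The plan is to adapt the argument of \cref{lem:dc-map-transverse} to a $\Delta^i$-parametrized setting, combined with an inductive extension procedure over the cellular skeleton of $B\mathbb{Z}/p$ in the spirit of \cref{lem:eq-transversality}. Since both the decoupled equation and its linearization are ``fibered'' over the equivariant parameter $w \in \Delta^i$, the non-equivariant analysis applies essentially fiberwise, and the role of the equivariant parameter is only to be perturbed freely in the interior of each cell.

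First I would set up, for each $i \ge 0$ and a neighborhood $\mathcal{S}^{eq}_\varepsilon$ of $\nu_0^{eq}$ in Floer's $C^\infty_\varepsilon$-topology, the $\Delta^i$-parametrized universal moduli space
\begin{equation}
(\mathcal{M}_A^{eq,i})^{\mathrm{univ}} = \bigl\{ (w, v, \phi; \nu^{eq} \in \mathcal{S}^{eq}_\varepsilon) :\ \overline{\partial}_J v = 0,\ \overline{\partial}\phi = (w \times \widetilde{v})^*\nu^{eq},\ v_*[C]=A \bigr\}
\end{equation}
and verify its regular locus is nonempty by computing the linearization. Just as in \eqref{universal-map-moduli-noneq-linearization}, it has the upper-triangular form
\begin{equation}
D^{\mathrm{univ}} \;=\; \begin{pmatrix} D_v & 0 & 0 & 0 \\ \ast & \overline{\partial}_{v^*N} & \ast_w & (w \times \widetilde{v})^* \end{pmatrix},
\end{equation}
where the added column ``$\ast_w$'' records the derivative in the equivariant parameter $w \in \Delta^i$. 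The Dolbeault operator $D_v = \overline{\partial}_{v^*T\mathbb{P}^1}$ is surjective because $v^*T\mathbb{P}^1 \cong \mathcal{O}(2d)$ has vanishing $H^{0,1}$; thus the only substantive transversality issue is surjectivity of the bottom row, which is already handled by perturbing $\nu^{eq}$ through its variation $(w \times \widetilde{v})^*\alpha$ — exactly the argument in the proof of \cref{lem:dc-map-transverse}, since for each fixed $w \in \Delta^i$ the embedding $\widetilde{v}:C \hookrightarrow C \times \mathbb{P}^1$ allows bump-function localization near any point where a putative annihilator is nonzero. Sard--Smale then gives a comeager subset of $\mathcal{S}^{eq}_\varepsilon$ over which $\mathcal{M}_A^{eq,i}$ is cut out transversely, of the expected dimension $i + \dim_{\mathbb{R}}\mathcal{M}_A = i + 6$.

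Next I would assemble the transversality data compatibly across all $i$ and across the $\mathbb{Z}/p$-action by a cell-by-cell induction, following the template of \cref{lem:eq-transversality}. Assume inductively that equivariant data $\nu^{eq,j}$ has been chosen on $\bigcup_{k=0}^{p-1}\sigma^k\Delta^j$ for every $j<i$ so that each $\mathcal{M}_A^{eq,j}$ (and its $\sigma^k$-translates) is regular. Using the collar description of $\partial\overline{\Delta}^i$ given in \eqref{eqn:equiv-param-compact}, extend these data constantly in the collar direction into a neighborhood $U^i$ of $\partial\overline{\Delta}^i \subseteq \overline{\Delta}^i$; openness of transversality gives regularity on this collar. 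Then on the complement of $U^i$ inside $\Delta^i$, apply the universal moduli space argument of the previous paragraph — perturbing $\nu^{eq}$ relative to what has been fixed on $U^i$ — to obtain a comeager regular subset. Finally, extend the resulting $\nu^{eq,i}$ to $\sigma^k\Delta^i$ by the prescribed equivariance relation $\nu^{eq}_{\sigma\cdot w} = \sigma_C^*\,\nu^{eq}_w$; this automatically makes the translated moduli spaces $\sigma^k\mathcal{M}_A^{eq,i}$ regular as well.

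The main obstacle, as in the non-equivariant case, is passing from genericity in the $C^\infty_\varepsilon$-topology to genericity in the $C^\infty$-topology on $\mathcal{S}^{eq}$, since the latter is the ambient space in which the lemma is stated. The Taubes trick (\cite[{\S}4.4.2]{Wen-lec}) applies here just as in \cref{lem:dc-map-transverse}: exhaust each $\mathcal{M}_A^{eq,i}$ by the compact subsets cut out by uniform $C^1$-bounds on the curves, which depend continuously on $\nu^{eq}$ since $\Delta^i$ is relatively compact in $\overline{\Delta}^i \subset S^\infty$ (and the equivariant cellular structure is preserved). Intersecting the resulting comeager subsets of $C^\infty$-regular data over all $i \ge 0$ — a countable intersection in the topology on $C^\infty(S^\infty \times_{\mathbb{Z}/p}(C \times \mathbb{P}^1), \mathrm{Hom}^{0,1}(TC,N))$ defined by restriction to finite-dimensional approximations — produces the desired comeager $\mathcal{S}^{eq,\mathrm{reg}} \subseteq \mathcal{S}^{eq}$.
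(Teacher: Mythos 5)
Your proposal is correct and follows essentially the same approach as the paper: you set up the $\Delta^i$-parametrized universal moduli space, observe the upper-triangular linearization with $D_v = \overline{\partial}_{v^*T\mathbb{P}^1}$ automatically surjective, apply Sard--Smale and the Taubes trick, and assemble regularity across cells by the constant-collar-extension induction from \cref{lem:eq-transversality} and \cref{rem:equivariant-genericity-cycle}. The paper states this more tersely (deferring the induction and the Floer $C^\infty_\varepsilon$ issues to its cited precedents), but the ingredients and their order match.
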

\begin{proof}
    The proof follows the same strategy as in the construction of equivariantly strongly transverse incidence cycles, \cref{lem:eq-transverse-incidence-cycles} and \cref{rem:equivariant-genericity-cycle}. Namely, we start from a perturbation that makes $\mathcal{M}_A^{eq, 0}$ regular (\cref{lem:dc-map-transverse}), and continue inductively by (i) first extending the data constantly in the neighborhood of $\partial \overline{\Delta}^i \subseteq \overline{\Delta}^i$ and (ii) then extending into the interior $\Delta^i \subseteq \overline{\Delta}^i$ using parametrized transversality.

    Step (ii) involves a parametrized version of \cref{lem:dc-map-transverse}. This is straightforward, with the only difference being that the universal moduli space is given by
    \begin{equation}
        \mathcal{M}_A^{univ} = \left\{ (w \in \Delta^i, v, \phi, \nu^{eq, i} \in \mathcal{S}^{eq, i}) : \ \overline{\partial}_J v = 0, \ \overline{\partial} \phi = (w \times \widetilde{v})^* \nu^{eq, i}, \ v_*[C] = A \right\}
    \end{equation}
    where $\mathcal{S}^{eq, i}$ is a neighborhood of $\nu_0^{eq, i} = \nu_0^{eq}|_{\Delta^i \times C \times \mathbb{P}^1}$. The linearization at $(w, v, \phi, \nu^{eq, i})$ is given by
    \begin{align}
    \label{universal-map-moduli-eq-linearization}
    D^{univ, i}: T_w \Delta^i \oplus \Gamma(v^* T \mathbb{P}^1) \oplus \Gamma(v^* N) \oplus T_{\nu^{eq, i}} \mathcal{S}^{eq, i} &\to \Gamma^{0,1}(v^* T \mathbb{P}^1) \oplus \Gamma^{0,1}(v^* N) \\
    \left(\eta, \xi, \psi, \alpha \right) &\mapsto \left( D_v \xi, \  \overline{\partial} \psi - \nabla_{\eta} \nu^{eq, i} - \nabla_\xi \nu^{eq, i} + (w \times \widetilde{v})^* \alpha  \right).
    \end{align}
    (We have suppressed the notation $C^\infty(C, E)$, $C^\infty(C, \mathrm{Hom}^{0,1}(TC, E))$ to $\Gamma(E)$, $\Gamma^{0,1}(E)$ respectively for complex vector bundles $E \to C$.) 

    Indeed there is a decoupling of this linearized operator of the form (cf. \cref{eqn:universal-linearized-decouples})
    \begin{equation}
    D^{univ, i} = \begin{pmatrix} D_1 & 0 \\ D' & D_2 \end{pmatrix},
    \end{equation}
    where
    \begin{align}
    D_1 : T_w \Delta^i \oplus \Gamma(v^*T\mathbb{P}^1) \to \Gamma^{0,1}(v^*T\mathbb{P}^1), \quad & D_1(\eta, \xi)= D_v\xi \\
    D' : T_w \Delta^i \oplus \Gamma(v^*T\mathbb{P}^1) \to \Gamma^{0,1}(v^*N), \quad  & D'(\eta, \xi)= - \nabla_{\eta} \nu^{eq, i} -\nabla_{\xi} \nu^{eq, i}, \\
    D_2 : \Gamma(v^*N) \oplus T_{\nu^{eq, i}} \mathcal{S}^{eq, i} \to \Gamma^{0,1} (v^*N), \quad & D_2(\psi, \alpha) = \overline{\partial} \psi +(w \times \widetilde{v})^* \alpha .  
    \end{align}
    
    The proof of \cref{lem:dc-map-transverse} then applies verbatim, and provides a comeager subset $\mathcal{S}^{eq, i, \mathrm{reg}} \subseteq \mathcal{S}^{eq, i}$ such that for $\nu^{eq, i} \in \mathcal{S}^{eq, i, \mathrm{reg}}$, the corresponding moduli space $\mathcal{M}_A^{eq, i}$ is regular. As in \cref{rem:equivariant-genericity-cycle}, the preimage under the restriction map of the subsets $\mathcal{S}^{eq, i, \mathrm{reg}}$ can be intersected to define $\mathcal{S}^{eq, \mathrm{reg}} \subseteq \mathcal{S}^{eq}$. 
\end{proof}

Recall that the equivariant moduli spaces $\mathcal{M}_A^{eq, i}$ still carry evaluation maps associated to the $(p+2)$ special marked points:
\begin{align}
\mathrm{ev}^{eq, i}: \mathcal{M}^{eq, i}_A &\to X \times \left(\overline{\Delta}^i \times  X^p\right) \times X \\
(w, u) &\mapsto \left( u(z_0), (w, u(z_1), \dots, u(z_p)), u(z_\infty) \right).
\end{align}

\begin{lem}[Compare \cref{prop:gromov-decoupling}]\label{lem:eq-dc-stable-map-transverse}
    For a generic choice of equivariant decoupling perturbation data $\nu^{eq}$, the limit set of $\mathrm{ev}^{eq, i}$ is covered by $\Delta^i$-parametrized analogues of the simple stable maps
    \begin{equation}
        \mathcal{M}^{eq, i}_{T, \{A_b'\}} (X;\nu^{eq})(-\Sigma z_j'),
    \end{equation}
    together with moduli spaces of lower equivariant degrees,
    \begin{equation}
        \bigsqcup_{k=1}^{p} \sigma^k \mathcal{M}^{eq, j}_A \mbox{ and } \bigsqcup_{k=1}^p \sigma^k \mathcal{M}^{eq, j}_{T, \{A_b'\}} (X;\nu^{eq})(-\Sigma z_j')
    \end{equation}
    for $j < i$.
\end{lem}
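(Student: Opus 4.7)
The plan is to combine the fiberwise non-equivariant Gromov compactness of \cref{prop:gromov-decoupling} with an analysis of what happens as the equivariant parameter $w$ approaches the boundary of the open cell $\Delta^i$. Given any sequence $(w_n, v_n, \phi_n) \in \mathcal{M}_A^{eq,i}$, I would first extract a subsequence with $w_n \to w_\infty \in \overline{\Delta}^i$, and then split into two cases according to whether $w_\infty \in \Delta^i$ or $w_\infty \in \partial \overline{\Delta}^i$.

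If $w_\infty \in \Delta^i$, then eventually all $w_n$ lie in a compact subset $K \subseteq \Delta^i$, and the moduli problem restricts to a $K$-parametrized family of non-equivariant decoupling problems with perturbation data $\nu^{eq}_{w_n}$. I would then apply \cref{prop:gromov-decoupling} uniformly over $K$: any bubbles must be genuinely $J$-holomorphic spheres in fibers of $C \times X \to C$, which by the maximum principle lie in the zero section $\mathbb{P}^1 \subseteq X$, and the nodal matching combined with \cref{lem:vertical-vanishing-at-marked-points} forces the normal section $\phi_0$ on the principal component to vanish at each nodal point. This would identify the limit with an element of the $\Delta^i$-parametrized simple stable map moduli space $\mathcal{M}^{eq,i}_{T,\{A_b'\}}(X;\nu^{eq})(-\Sigma z_j')$.

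If instead $w_\infty \in \partial \overline{\Delta}^i$, the cellular decomposition \eqref{eqn:equiv-param-compact} places $w_\infty$ inside some $\sigma^k \Delta^j$ with $j < i$. Using the equivariance $\nu^{eq}_{\sigma \cdot w} = \sigma_C^* \nu^{eq}_w$, I would precompose by $\sigma_C^{-k}$ so that the limit $(w_\infty, v_\infty, \phi_\infty)$ satisfies the equation parametrized over $\Delta^j$, placing it in one of the $\sigma^k \mathcal{M}^{eq,j}_A$. Simultaneous bubbling and boundary degeneration are treated by running the previous paragraph inside this lower stratum, producing the $\sigma^k \mathcal{M}^{eq,j}_{T,\{A_b'\}}(X;\nu^{eq})(-\Sigma z_j')$ contribution.

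Genericity would be handled inductively over $i$ following the scheme of \cref{lem:eq-dc-map-transverse}: the base case is the non-equivariant \cref{prop:gromov-decoupling}, and at each inductive step one first constantly extends the data from $\partial \overline{\Delta}^i$ to a collar neighborhood, and then perturbs freely over the interior using parametrized Sard--Smale applied to universal moduli spaces indexed by the (countable) collection of star-shaped trees $T$ and degree decompositions $\{A_b'\}$. The main obstacle I anticipate is surjectivity of the linearized operator for each parametrized principal-component moduli with prescribed vanishing at the nodal points: the decoupling perturbation data, constrained to take values in the subbundle $\mathrm{Hom}^{0,1}(TC, N) \subseteq \mathrm{Hom}^{0,1}(TC, TX)$, must still be rich enough to kill all cokernel classes after imposing the real codimension-four vanishing at each nodal marked point. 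This should follow by localizing the bump-function argument of \cref{lem:dc-map-transverse} away from the finitely many nodal points, where $\nu^{eq}$ is in any case required to vanish.
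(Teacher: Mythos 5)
The paper states this lemma without proof, treating it as a straightforward parametrized version of \cref{prop:gromov-decoupling}, so there is no paper proof to compare against directly. Your approach is the natural one and, as far as I can tell, correct in its overall structure: split on whether $w_\infty$ remains in the open cell $\Delta^i$ or escapes to $\partial\overline{\Delta}^i$, use the cellular description \eqref{eqn:equiv-param-compact} to place boundary limits in lower cells $\sigma^k\Delta^j$, apply the maximum-principle argument and \cref{lem:vertical-vanishing-at-marked-points} fiberwise over the parameter, and then run the inductive genericity scheme of \cref{lem:eq-dc-map-transverse} for the (countably many) stable-map types.

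One small inaccuracy you should correct: you assert that the decoupling perturbation data $\nu^{eq}$ ``is in any case required to vanish'' at the nodal points. This is only true for type (I) bubbles attached at the special marked points $\{z_0,z_1,\dots,z_p,z_\infty\}$, where the convention after \eqref{noneq-nu} forces $\nu$ to vanish. For type (II) bubbles attached at freely moving points $z_j'\in C$, the perturbation data need not vanish there. This does not damage the argument, because the point of the bump-function step in \cref{lem:dc-map-transverse} is that one can localize the perturbation of $\nu$ at a point $(z,v(z))$ of one's choosing; choosing $z$ away from the finitely many nodal points $z_1',\dots,z_k'$ preserves the vanishing constraints on $\phi$ and still kills any cokernel class $\beta$, since $\beta$ is a nonzero smooth solution to an elliptic equation and so cannot vanish on any open set. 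You should phrase the argument in terms of this freedom of support rather than claiming $\nu^{eq}$ vanishes at the nodes. A second purely cosmetic remark: the precomposition by $\sigma_C^{-k}$ is not needed for the boundary case, since \cref{defn:eq-map-moduli} already defines $\sigma^k\mathcal{M}^{eq,j}_A$ by the condition $w\in\sigma^k\Delta^j$, so the limiting point lands in that moduli space directly without any reparametrization.
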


For the incidence constraints, fix $\mathcal{Y} = Y_0 \times (Y_1 \times \cdots \times Y_p) \times Y_\infty \subseteq X \times X^p \times X$ so that each $Y_j$ is either diffeomorphic to $X$ or $\pi^{-1}(y_j)$ for $y_j \in \mathbb{P}^1$. For equivariance to hold, assume that $Y_1 = \cdots = Y_p$ are all fibers, $Y_j = \pi^{-1}(y_j)$. By \cref{lem:eq-transverse-incidence-cycles}, there is an equivariant incidence cycle $\mathcal{Y}^{eq}: S^\infty \times \mathcal{Y} \to X \times (S^\infty \times X^p) \times X$ with each $\mathcal{Y}^{eq}_w$ being a perturbation of $\mathcal{Y}$, such that $\mathcal{Y}^{eq, i} := \mathcal{Y}^{eq}|_{\Delta^i \times \mathcal{Y}}$ is transverse to evaluation maps from equivariant map moduli spaces (and simple stable maps). 

One can in fact do better, by requiring that every cycle $Y_j(w) = \mathrm{proj}_j \circ \mathcal{Y}^{eq}_w  : Y_j \to X$ is not only isotopic in $X$ to a fiber $\pi^{-1}(y_j) \subseteq X$ but is also a fiber of the form $\pi^{-1}(y_j(w)) \subseteq X$ itself, where $y_j(w) \in \mathbb{P}^1$ is a $S^\infty$-dependent family of points in $\mathbb{P}^1 = \pi(X)$. This is not strictly necessary for rigidifying the map moduli spaces, but will help in implementing the equivariant incidence constraints in the discussion of section moduli spaces.

\begin{lem}[See \cref{defn:eq-cycle}, \cref{defn:eq-strong-transverse}, compare \cref{lem:eq-transverse-incidence-cycles}]
\label{lem:eq-incidence-cycle-only-fibers}
    Suppose that the incidence cycle $\mathcal{Y} = Y_0 \times (Y_1 \times \cdots \times Y_p) \times Y_\infty \subseteq X \times X^p \times X$ satisfies that $Y_1 = \cdots = Y_p = \pi^{-1}(y_1)$ are all fibers. Then there is an equivariantly strongly transverse equivariant incidence cycle $\mathcal{Y}^{eq}$ such that every $\mathcal{Y}^{eq}_w$ is of the form $Y_0(w) \times Y_1(w) \times \cdots \times Y_p(w) \times Y_\infty(w) \subseteq X \times X^p \times X$, where each $Y_j(w) = \pi^{-1}(y_j(w))$ is a fiber of $y_j(w) \in \mathbb{P}^1$.
\end{lem}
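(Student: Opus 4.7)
My plan is to adapt the proof of \cref{lem:eq-transverse-incidence-cycles} by observing that, thanks to \cref{prop:transversality-is-equal}, the class of perturbations obtained by only varying the base points $y_j \in \mathbb{P}^1$ of the fiber cycles already suffices to achieve equivariantly strong transversality. That proposition asserts that for fiber-cycle incidence constraints, transversality of the evaluation against $\pi^{-1}(y_j) \subseteq X$ is equivalent to transversality of the $\pi$-composed evaluation against the point $y_j \in \mathbb{P}^1$ itself. Thus the transversality problem can be formulated entirely inside $\mathbb{P}^1$, and the allowed perturbations become precisely smooth maps $y_j^{eq}: S^\infty \to \mathbb{P}^1$.

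Concretely, I would introduce maps $y_j^{eq}: S^\infty \to \mathbb{P}^1$ for $j \in \{0, 1, \dots, p, \infty\}$ subject to the equivariance $y_0^{eq}(\sigma w) = y_0^{eq}(w)$, $y_\infty^{eq}(\sigma w) = y_\infty^{eq}(w)$ (since $z_0, z_\infty \in C$ are fixed by $\sigma_C$) and $y_k^{eq}(\sigma w) = y_{k-1}^{eq}(w)$ for $k \in \{1, \dots, p\}$ (indices mod $p$), and set
\begin{equation*}
\mathcal{Y}^{eq}_w := \pi^{-1}(y_0^{eq}(w)) \times \pi^{-1}(y_1^{eq}(w)) \times \cdots \times \pi^{-1}(y_p^{eq}(w)) \times \pi^{-1}(y_\infty^{eq}(w)).
\end{equation*}
This is manifestly of the stipulated form and satisfies the equivariance (iii) of \cref{defn:eq-cycle} by construction.

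Next, I would run the inductive construction from the proof of \cref{lem:eq-transversality} with target $X$ replaced by $\mathbb{P}^1$ and the cycles $Y_j$ replaced by the points $y_j$. At the base case $i=0$, a generic initial choice of $y_j \in \mathbb{P}^1$ works by Sard's theorem applied to the maps $\pi \circ \mathrm{ev}^{eq, 0}$ from $\mathcal{M}_A^{eq, 0}$ and from the finitely many components of $\partial \overline{\mathcal{M}}_A^{eq, 0}$ present in any fixed degree $A$. At the inductive step for dimension $i$, one first constantly extends the data from $\partial \overline{\Delta}^i$ into a collar neighborhood, preserving transversality by openness, then extends into the interior by a small parametric perturbation to achieve transversality within $\mathbb{P}^1$ on the entire cell. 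Intersecting the comeager subsets of allowable $y_j^{eq}$'s obtained at each stage yields a simultaneous choice for all $i \ge 0$, and \cref{prop:transversality-is-equal} then upgrades this to equivariantly strong transversality of $\mathrm{ev}^{eq, i}$ against $\mathcal{Y}^{eq, i}$ in the sense of \cref{defn:eq-strong-transverse}.

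The main technical point requiring care is the extension of \cref{prop:transversality-is-equal} to the simple-stable-map strata $\partial \overline{\mathcal{M}}_A^{eq, i}$ appearing in condition (ii) of \cref{defn:eq-strong-transverse}. On the principal component, the evaluation factors through the holomorphic $v$ as before; on a bubble attached at a marked point, the evaluation factors through $\mathbb{P}^1$ because bubbles land in the zero section by the maximum principle and \cref{lem:vertical-vanishing-at-marked-points}. In either case the fiber-transversality criterion of \cref{prop:transversality-is-equal} applies verbatim, so the reduction to a transversality problem in $\mathbb{P}^1$ is legitimate also on the boundary strata and the same parametric construction produces the desired $y_j^{eq}$.
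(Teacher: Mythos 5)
Your proposal is correct and follows essentially the same route as the paper: both reduce the transversality problem to $\mathbb{P}^1$ via the observation (appearing inside the proof of \cref{prop:transversality-is-equal}) that transversality of $\mathrm{ev}$ against a product of fiber cycles is equivalent to transversality of $\pi \circ \mathrm{ev}$ against a point, and then both run the inductive parametrized transversality argument of \cref{lem:eq-transversality} with the projected evaluation maps as inputs. The paper's proof is terser since it simply quotes \cref{lem:eq-transversality} applied to the orbits of $\mathcal{M}_A^{eq,i}$ and $\partial\overline{\mathcal{M}}_A^{eq,i}$ composed with $\pi$, whereas you unpack the base case, the collar-neighborhood extension, and the Baire intersection; your extra discussion of the boundary strata is a good sanity check but is in fact already subsumed by the lemma, since the criterion ``transversality against $\pi^{-1}(y)$ iff transversality of the $\pi$-projected map against $y$'' is a purely local linear-algebra fact valid for any map into $X\times X^p\times X$, not only for the main-stratum evaluation. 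One small caution: with the paper's convention $\sigma_{X^p}(x_1,\dots,x_p)=(x_2,\dots,x_p,x_1)$, the equivariance reads $y_k^{eq}(\sigma w)=y_{k+1}^{eq}(w)$ rather than $y_{k-1}^{eq}(w)$ as you wrote; this is an immaterial indexing slip but worth keeping consistent with \eqref{eqn:action-on-X}.
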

\begin{proof}
    As in \cref{lem:eq-transverse-incidence-cycles}, we will apply \cref{lem:eq-transversality} to $A^{eq} := \bigcup_{i \ge 0} \bigcup_{k=1}^p \sigma^k \mathcal{M}_A^{eq,i}$ and $(A')^{eq} := \bigcup_{i \ge 0} \bigcup_{k=1}^p \sigma^k \partial\overline{\mathcal{M}}_A^{eq,i}$. Take $e^{eq}: A^{eq} \to \mathbb{P}^1 \times (S^\infty \times (\mathbb{P}^1)^p) \times \mathbb{P}^1$ and $(e')^{eq} : (A')^{eq} \to \mathbb{P}^1 \times (S^\infty \times (\mathbb{P}^1)^p) \times \mathbb{P}^1$ this time to be the evaluation maps composed with the projection $\pi: X \to \mathbb{P}^1$.

    Take $f: \mathrm{pt} \to \mathbb{P}^1 \times (\mathbb{P}^1)^p \times \mathbb{P}^1$ to be the fixed incidence cycle $\mathcal{Y}$ projected down, i.e. the inclusion of the point $(y_0, y_1, \dots, y_p, y_\infty)$ (where $y_1 = \cdots = y_p$). Then \cref{lem:eq-transversality} provides a choice of $f^{eq} : S^\infty \to \mathbb{P}^1 \times (S^\infty \times (\mathbb{P}^1)^p) \times \mathbb{P}^1$ (i.e. an $S^\infty$-dependent family of points) such that for each $i \ge 0$,  $f^{eq, i}$ is transverse to both (projected) evaluation maps $e^{eq, i}$, $(e')^{eq, i}$ from the equivariant moduli spaces and the simple stable maps.

    As in the proof of \cref{prop:transversality-is-equal}, the usual evaluation map $\mathrm{ev}: \mathcal{M}_A \to X \times X^p \times X$ is transversal to the inclusion of $(Y_0; Y_1, \dots, Y_p; Y_\infty)$ for fiber cycles $Y_j = \pi^{-1}(y_j)$ if and only if the projected evaluation map $\pi \circ \mathrm{ev} : \mathcal{M}_A \to \mathbb{P}^1 \times (\mathbb{P}^1)^p \times \mathbb{P}^1$ is transversal to the inclusion of the point $(y_0; y_1, \dots, y_p ; y_\infty)$. Hence by taking $\mathcal{Y}^{eq}_w \subseteq X \times X^p \times X$ to be given as $Y_0(w) \times Y_1(w) \times Y_p(w) \times Y_\infty(w)$ where $Y_j (w) = \pi^{-1}(\mathrm{proj}_j \circ f^{eq}(w))$ we obtain an equivariant incidence cycle satisfying the desired conditions.
\end{proof}

\subsection{Equivariant section moduli spaces}\label{ssec:eq-section-moduli}
The section moduli spaces of \cref{defn:noneq-sec-moduli} also admit equivariant extensions. First note that the action of $\mathbb{Z}/p$ on $C$ given by the rotation induces an action on $C \times \mathbb{P}^1$ by taking the action on target $\mathbb{P}^1$ to be trivial. The induced action on the tautological line bundle $\mathcal{O}_C(-1)$ further induces an action on the line bundle $\mathcal{O}_{C \times \mathbb{P}^1} (d, 1)$, which acquires the structure of a $\mathbb{Z}/p$-equivariant line bundle over $C \times \mathbb{P}^1$. 

Consequently, $H^0(\mathcal{O}(d,1))$ is a $\mathbb{Z}/p$-representation, so its projectivization $\mathcal{P}_d = \mathbb{P} H^0(\mathcal{O}(d,1))$ admits a $\mathbb{Z}/p$-action. The tautological divisor $\mathcal{D} \subseteq \mathcal{P}_d \times (C \times \mathbb{P}^1)$ is also $\mathbb{Z}/p$-invariant under the diagonal action on $\mathcal{P}_d \times (C \times \mathbb{P}^1)$, so the obstruction bundle $\mathrm{Obs} \to \mathcal{P}_d$ also admits the structure of an equivariant vector bundle. Note that the stratification of $\mathcal{P}_d = \mathcal{P}_d^\circ \sqcup \bigsqcup_{\alpha \neq 0} \mathcal{P}_d^\alpha$ into bubbling types is also $\mathbb{Z}/p$-invariant.  By gluing all data to the principal $\mathbb{Z}/p$-bundle $S^\infty \to B\mathbb{Z}/p$, we obtain the following.

\begin{defn}[Compare \cref{defn:thick-noneq-section-moduli}]
\label{defn:thick-eq-sec-moduli}
    The \emph{thickened equivariant section moduli space of degree $A$} is the homotopy orbits of $\mathcal{P}_d$ under the $\mathbb{Z}/p$-action, namely
\begin{equation}
\mathcal{P}_A^{eq} = \mathcal{P}_d^{eq} := S^\infty \times_{\mathbb{Z}/p} \mathcal{P}_d. 
\end{equation}
The \emph{equivariant obstruction bundle} is the homotopy orbits of the total space of $\mathrm{Obs} \to \mathcal{P}_d$, which is a vector bundle over $\mathcal{P}_d^{eq}$:
\begin{equation}
\mathrm{Obs}^{eq} := S^\infty \times_{\mathbb{Z}/p} \mathrm{Obs} \to S^\infty \times_{\mathbb{Z}/p} \mathcal{P}_d = \mathcal{P}_d^{eq}.
\end{equation}
In particular the rank of the equivariant obstruction bundle is equal to the rank of the non-equivariant obstruction bundle with the same description of the fibers. We define a smooth section (cf. \cref{eqn:obs-section})
\begin{align}\label{eq-obs-section}
[\nu^{eq}] : (\mathcal{P}_d^{eq})^\circ &\to \mathrm{Obs}^{eq}, \\
(w, s_v) & \mapsto [ (w \times \widetilde{v})^* \nu^{eq} ] \in H^{0,1}_{\overline{\partial}} (C, v^*N) \cong H^1(C, v^*N) \cong \mathrm{Obs}^{eq}|_{(w,s_v)}.
\end{align}
defined over the open locus $(\mathcal{P}_d^{eq})^\circ := S^\infty \times_{\mathbb{Z}/p} \mathcal{P}_d^\circ \subseteq \mathcal{P}_d^{eq}$.
\end{defn}

\begin{defn}[Compare \cref{defn:noneq-sec-moduli}]
\label{defn:eq-sec-moduli}
    The \emph{equivariant section moduli space of degree $A$} is the zero locus of the section $[\nu^{eq}]$ ,
    \begin{equation}
        \mathcal{V}_A^{eq} = \mathcal{V}_d^{eq} := (\mathcal{P}_d^\circ)^{eq} \cap [\nu^{eq}]^{-1}(0) =  \left\{ (w \in S^\infty, s_v \in \mathcal{P}_d^\circ) : \ [\nu^{eq}] (w, s_v) = 0 \right\} / (\mathbb{Z}/p).
    \end{equation}
\end{defn}

By restricting to the open cell $\Delta^i \subseteq B\mathbb{Z}/p$, we obtain the analogues $\mathcal{P}_d^{eq, i}$, $\mathrm{Obs}^{eq, i}$, and $\mathcal{V}_d^{eq, i}$ for equivariant degree $i$. The section $[\nu^{eq}]$ restricts to a section of $\mathrm{Obs}^{eq, i} \to (\mathcal{P}_d^{eq, i})^\circ$, which is equal to the section induced by $\nu^{eq,i} := \nu^{eq}|_{\Delta^i \times C \times \mathbb{P}^1}$.

We can now discuss equivariant versions of regularity, compactness, and incidence constraints for the section moduli spaces. 

\begin{lem}[Compare \cref{lem:section-transverse}, \cref{prop:noneq-map-equal-section}]\label{lem:eq-map-equal-section}
For a generic equivariant decoupling perturbation data $\nu^{eq}$, every $\mathcal{V}_A^{eq, i}$ for $i \ge 0$ is a smooth oriented manifold of dimension $i + \dim_{\mathbb{R}} X + 2c_1(A) = i + 6$, and moreover $\mathcal{M}_A^{eq, i} \cong \mathcal{V}_A^{eq, i}$ as smooth oriented manifolds.
\end{lem}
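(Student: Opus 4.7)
The plan is to parametrize the non-equivariant arguments of Lemma \ref{lem:section-transverse} and Proposition \ref{prop:noneq-map-equal-section} over the open cells $\Delta^i \subseteq S^\infty$, using the parametrized transversality scheme already set up in Lemma \ref{lem:eq-dc-map-transverse}. The structure will closely mirror the non-equivariant counterpart, with the cell parameter $w \in \Delta^i$ playing the role of an extra free direction throughout.

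First I will exhibit the set-theoretic bijection $\mathcal{M}_A^{eq,i} \leftrightarrow \mathcal{V}_A^{eq,i}$. For each fixed $w \in \Delta^i$, Proposition \ref{prop:noneq-map-equal-section} applied to the non-equivariant perturbation $\nu^{eq}_w$ already furnishes a bijection $(v,\phi) \leftrightarrow s_v$ between $\mathcal{M}_A(X;\nu^{eq}_w)$ and $\mathcal{V}_A(\nu^{eq}_w)$. These assemble over $w \in \Delta^i$ into the desired bijection on the total parametrized moduli spaces.

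Next I will establish the smooth structure on $\mathcal{V}_A^{eq,i}$. The universal moduli / Sard--Smale argument of Lemma \ref{lem:eq-dc-map-transverse} will be adapted to the partially defined section $[\nu^{eq,i}]$ of $\mathrm{Obs}^{eq,i}$ over $(\mathcal{P}_d^{eq,i})^\circ$: the same inductive extension across the skeleta of $S^\infty$ (extending constantly in a collar of $\partial \overline{\Delta}^i$, then perturbing generically on the interior) produces a comeager class of equivariant perturbations $\nu^{eq}$ simultaneously making every $[\nu^{eq,i}]$ transverse to the zero section, cutting out each $\mathcal{V}_A^{eq,i}$ as a smooth submanifold of the expected dimension $i + 6$. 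To upgrade the bijection to a diffeomorphism, I will match the linearizations at a corresponding pair $(w,v,\phi) \in \mathcal{M}_A^{eq,i}$ and $(w,s_v) \in \mathcal{V}_A^{eq,i}$. The parametrized linearization decomposes as in \eqref{eqn:linearization-splitting}:
\begin{equation}
D^{eq,i}(\eta, \xi, \psi) = \bigl( \overline{\partial}_{v^*T\mathbb{P}^1} \xi, \ \overline{\partial}_{v^*N} \psi - \nabla_\eta \nu^{eq,i} - \nabla_\xi \nu^{eq,i} \bigr).
\end{equation}
Surjectivity of $\overline{\partial}_{v^*T\mathbb{P}^1}$ and injectivity of $\overline{\partial}_{v^*N}$ identify $\ker D^{eq,i}$ with the kernel of
\begin{equation}
T_w \Delta^i \oplus \ker \overline{\partial}_{v^*T\mathbb{P}^1} \xrightarrow{\nabla \nu^{eq,i}} \mathrm{coker}\, \overline{\partial}_{v^*N}.
\end{equation}
The implicit function theorem gives $\ker \overline{\partial}_{v^*T\mathbb{P}^1} \cong T_{s_v} \mathcal{P}_d^\circ$ and the definition of $\mathrm{Obs}^{eq,i}$ gives $\mathrm{coker}\, \overline{\partial}_{v^*N} \cong \mathrm{Obs}^{eq,i}|_{(w,s_v)}$; under these identifications, the operator becomes the linearization $d[\nu^{eq,i}]$ whose kernel is $T_{(w,s_v)} \mathcal{V}_A^{eq,i}$. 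Orientations are compared as in Proposition \ref{prop:noneq-map-equal-section}: the antilinear piece $\nabla \nu^{eq,i}$ is lower order, so both tangent bundles inherit the orientation induced by the complex determinant line of the decoupled complex-linear operator $(\overline{\partial}_{v^*T\mathbb{P}^1}, \overline{\partial}_{v^*N})$.

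The hard part will be arranging parametric regularity of $[\nu^{eq,i}]$ simultaneously for all $i \ge 0$ (as opposed to one cell at a time), but this is handled by the same cellular induction used in Lemma \ref{lem:eq-dc-map-transverse} and Lemma \ref{lem:eq-transverse-incidence-cycles}, intersecting the preimages of comeager loci across finite-dimensional approximations. Beyond bookkeeping the additional $T_w \Delta^i$ factor in the linearizations, no essentially new difficulty appears.
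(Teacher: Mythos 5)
Your argument is correct and follows essentially the same route as the paper's own proof: run the non-equivariant set-theoretic bijection and linearization comparison of Proposition \ref{prop:noneq-map-equal-section} in the $\Delta^i$-parametrized setting, with the only new ingredient being the extra $T_w\Delta^i$ summand and the local triviality of $(\mathcal{P}_d^{eq,i})^\circ \to \Delta^i$ to match the tangent spaces. One small imprecision worth fixing: the set-theoretic bijection should not be justified by applying Proposition \ref{prop:noneq-map-equal-section} ``at each fixed $w$,'' since that proposition is stated under a regularity hypothesis that need not hold slice-by-slice; instead, invoke directly the regularity-free part of its proof (the correspondence $(v,\phi) \leftrightarrow s_v$ together with the equivalence of $\overline{\partial}\phi = (w\times\widetilde{v})^*\nu^{eq}$ with $[\nu^{eq}](w,s_v)=0$ and uniqueness of $\phi$ from $H^0(C,v^*N)=0$), which holds pointwise in $w$ without any transversality.
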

\begin{proof}
    The first statement is just a parametrized version of \cref{lem:section-transverse}. The proof of the second statement is the same as that of \cref{prop:noneq-map-equal-section}. The linearized operator for $(w, v, \phi) \in \mathcal{M}_A^{eq, i}$ now has the form (again, write $\Gamma(E)$, $\Gamma^{0,1}(E)$ in place of $C^\infty(C, E)$ and $C^\infty(C, \mathrm{Hom}^{0,1}(TC,E))$)
    \begin{align}
        D_{(w,v,\phi)}^{eq, i}: T_{w}\Delta^i \oplus \Gamma(v^*T\mathbb{P}^1) \oplus \Gamma(v^*N) &\to \Gamma^{0,1}(v^*T\mathbb{P}^1) \oplus \Gamma^{0,1}(v^*N) \\
        (\eta, \xi, \psi) &\mapsto ( \overline{\partial} \xi , \overline{\partial}\psi - \nabla_{\eta} \nu^{eq, i} - \nabla_{\xi} \nu^{eq,i})
    \end{align}
    and accordingly
    \begin{align}
        T_{(w, v, \phi)} \mathcal{M}_A^{eq, i} \cong \ker\left(\nabla \nu^{eq, i} : T_w \Delta^i \oplus \ker (\overline{\partial}_{v^*T\mathbb{P}^1} ) \to \mathrm{coker}(\overline{\partial}_{v^*N}) \right).
    \end{align}
    This is identified with
    \begin{align}
        T_{(w,v,\phi)}\mathcal{V}_A^{eq, i} \cong \ker \left( d[\nu^{eq, i}]: T_{(w, s_v)}(\mathcal{P}_d^{eq,i})^\circ \to \mathrm{Obs}^{eq, i}|_{(w,s_v)} \right),
    \end{align}
    because the fibration $(\mathcal{P}_d^{eq, i})^\circ \to \Delta^i$ is locally trivial. 
\end{proof}

From here, the natural next step would be to show the analogue of \cref{thm:noneq-section-compactification}, that is to show that $\mathcal{V}_A^{eq} \subseteq \mathcal{P}_A^{eq}$ can be bordified so that $[\overline{\mathcal{V}}_A^{eq}]$ defines a $\mathbb{F}_p$-coefficient equivariant homology cycle (see \cref{rem:no-Fp-pseudocycle}). We will bypass that general theory, proving a statement slightly weaker but enough for our applications.

Denote the fiber of the projection $\mathcal{P}_d^{eq} \to B\mathbb{Z}/p$ over $w \in \Delta^i \subseteq B\mathbb{Z}/p$ by $\mathcal{P}_{d,w}^{eq}$. Clearly $\mathcal{P}_{d,w}^{eq} \cong \mathcal{P}_d$ and it admits an induced stratification from the stratification of $\mathcal{P}_d$ (which is $\mathbb{Z}/p$-invariant, and hence the stratification of $\mathcal{P}_{d,w}^{eq}$ assembles into a stratification of $\mathcal{P}_d^{eq}$). Denote $\mathcal{V}_{d, w}^{eq}$ for the corresponding fiber of $\mathcal{V}_d^{eq} \to B\mathbb{Z}/p$.

\begin{prop}[Compare \cref{thm:noneq-section-compactification}]\label{prop:eq-section-compactification}
    For a generic choice of $\nu^{eq}$, the following holds.

    \begin{enumerate}[topsep=-2pt, label=(\roman*)]
    \item $\mathcal{V}_d^{eq, i} \subseteq (\mathcal{P}_d^{eq, i})^\circ$ is smooth of real dimension $i + \dim_{\mathbb{R}} X + 2c_1(d[\mathbb{P}^1]) = i+6$ (\cref{lem:eq-map-equal-section}), 
    \item There is a fiberwise compactification $\overline{\mathcal{V}}_d^{eq, i} \subseteq \mathcal{P}_d^{eq, i}$ of $\mathcal{V}_d^{eq, i}$ over the projection to the equivariant parameter space $\mathcal{P}_d^{eq, i} \to \Delta^i$. That is, the fiber of $\overline{\mathcal{V}}_d^{eq, i}$ over $w \in \Delta^i$ is the closure of $\mathcal{V}_{d,w}^{eq, i} \subseteq \mathcal{P}_{d,w}^{eq, i}$. It is stratified as follows:
    \begin{equation}
\overline{\mathcal{V}}_d^{eq, i} = \mathcal{V}_d^{eq, i} \sqcup \bigsqcup_{\alpha \neq 0} (\mathcal{V}_d^{eq, i})^\alpha, \quad \alpha = (\alpha_1, \alpha_2, \dots), \ \alpha_k = 0 \mbox{ for } k \gg 0, \ \sum_{k \ge 0} k \cdot \alpha_k \le d.
\end{equation}
    \item For each $\alpha \neq 0$, the stratum $(\mathcal{V}_d^{eq,i})^\alpha$  is contained in a locally closed subvariety (of $\mathcal{P}_d^{eq, i}$) of dimension $\le \dim \mathcal{V}_d^{eq, i} -2$.
\end{enumerate}
\end{prop}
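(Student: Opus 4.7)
The plan is to directly parametrize the non-equivariant proof of \cref{thm:noneq-section-compactification} over the cell $\Delta^i \subseteq S^\infty$, exploiting the fact that every construction in that proof is $\mathbb{Z}/p$-equivariant by naturality.

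First, I would note that the stratification $\mathcal{P}_d = \mathcal{P}_d^\circ \sqcup \bigsqcup_{\alpha \neq 0} \mathcal{P}_d^\alpha$ from \cref{prop:thick-section-moduli-stratification} is $\mathbb{Z}/p$-invariant, because the $\mathbb{Z}/p$-action is induced from an action on the equivariant line bundle $\mathcal{O}(d,1)$ and preserves the combinatorial type of the bubbling. Hence it descends (in fact, pulls back from $B\mathbb{Z}/p$ via the Borel construction) to a stratification
\begin{equation}
    \mathcal{P}_d^{eq,i} = (\mathcal{P}_d^{eq,i})^\circ \sqcup \bigsqcup_{\alpha \neq 0} (\mathcal{P}_d^{eq,i})^\alpha.
\end{equation}
Similarly, the universal divisor $\mathcal{D}$, the obstruction bundle $\mathrm{Obs}$, the subsheaf $\mathcal{F}$ of \eqref{eqn:subsheaf-of-obs-dual}, the quotient sheaf $\overline{\mathrm{Obs}}$, and the continuous extension $\overline{[\nu]}$ from \cref{lem:extension-of-obs-section} are all canonically associated to the chosen $\mathbb{Z}/p$-invariant data and therefore have equivariant lifts. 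Passing to the Borel construction over $\Delta^i$ produces continuous sections $\overline{[\nu^{eq,i}]}$ of $\overline{\mathrm{Obs}}^{eq,i}$ extending $[\nu^{eq,i}]$ defined on $(\mathcal{P}_d^{eq,i})^\circ$.

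Next I would define $\overline{\mathcal{V}}_d^{eq,i} \subseteq \mathcal{P}_d^{eq,i}$ as the fiberwise closure over the projection $\mathcal{P}_d^{eq,i} \to \Delta^i$, and set $(\mathcal{V}_d^{eq,i})^\alpha := \overline{\mathcal{V}}_d^{eq,i} \cap (\mathcal{P}_d^{eq,i})^\alpha$. Exactly as in the non-equivariant argument (using the monodromy endomorphism $\Theta$ and diagram \eqref{eqn:extending-obs-section}), each stratum $(\mathcal{V}_d^{eq,i})^\alpha$ is contained in the vanishing locus of the restriction of $\overline{[\nu^{eq,i}]}$ to a vector bundle $\mathrm{Obs}^{eq,i,\alpha} \to (\mathcal{P}_d^{eq,i})^\alpha$ whose fiberwise rank matches \eqref{eqn:rank-of-obs-alpha}. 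Since the Borel construction adds $i$ real dimensions to both $\mathcal{P}_d^\alpha$ and to the fiber $\mathcal{V}_{d,w}$, but leaves the rank of $\mathrm{Obs}^{eq,i,\alpha}$ unchanged, the same arithmetic as in the proof of \cref{thm:noneq-section-compactification} yields
\begin{equation}
    \dim_{\mathbb{R}} (\mathcal{P}_d^{eq,i})^\alpha - \mathrm{rank}_{\mathbb{R}} \ \mathrm{Obs}^{eq,i,\alpha} = \dim_{\mathbb{R}} \mathcal{V}_d^{eq,i} - 2 \sum_k \alpha_k \le \dim_{\mathbb{R}} \mathcal{V}_d^{eq,i} - 2.
\end{equation}
This gives (iii), once we know each $(\mathcal{V}_d^{eq,i})^\alpha$ is a smooth submanifold of expected codimension.

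The step I expect to require the most care is the genericity argument establishing that transversality holds for all strata simultaneously and for all $i$ at once. I would follow the same inductive scheme as \cref{lem:eq-dc-map-transverse} and \cref{lem:eq-transverse-incidence-cycles}: begin with a reference $\nu_0^{eq}$, extend constantly across a collar of $\partial \overline{\Delta}^i \subseteq \overline{\Delta}^i$, then apply a parametric Sard--Smale argument to the universal vanishing locus of the induced section of $\mathrm{Obs}^{eq,i,\alpha}$ for each $\alpha$. Since there are only countably many $\alpha$ with $\sum_k k \alpha_k \le d$, intersecting the resulting countable family of comeager subsets in a neighborhood of $\nu_0^{eq}$ still yields a comeager subset, and the Taubes trick passes from $C^\infty_\varepsilon$-genericity to $C^\infty$-genericity. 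Finally intersecting over $i \ge 0$ (using the topology on equivariant perturbation data defined via restrictions to finite-dimensional skeleta) produces the desired generic $\nu^{eq}$ for which all strata $(\mathcal{V}_d^{eq,i})^\alpha$ are smooth manifolds of the expected dimension, completing (ii) and (iii).
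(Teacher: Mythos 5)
Your proposal takes essentially the same route as the paper: lift Voisin's construction to the Borel setting over $\Delta^i$, define $\overline{\mathcal{V}}_d^{eq,i}$ as the fiberwise closure, and run the non-equivariant dimension count. Your dimension arithmetic (the Borel construction adds $i$ to $\dim \mathcal{P}_d^\alpha$ and to $\dim\mathcal{V}_{d,w}$ but does not change the rank of $\mathrm{Obs}^\alpha$) is correct, as is the inductive Sard--Smale genericity scheme.

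The one place where you wave past the paper's actual technical content is in claiming that everything needed is ``canonically associated to the chosen $\mathbb{Z}/p$-invariant data.'' That is true for $\mathcal{D}$, $\mathrm{Obs}$, $\mathcal{F}$, $\overline{\mathrm{Obs}}$, and the section $\overline{[\nu]}$, but it is \emph{not} true for the monodromy endomorphism $\Theta$ of diagram \eqref{eqn:extending-obs-section}, which you then invoke. The diagram \eqref{eqn:extending-obs-section} depends on a non-canonical choice of surjection $E \twoheadrightarrow \mathcal{F}$ from a vector bundle $E$, and on Hermitian metrics for $\mathrm{Obs}$ and $E$; these auxiliary choices must be made $\mathbb{Z}/p$-equivariantly for the equivariant analogue of the diagram to exist. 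The paper's single substantive step is exactly this: a non-equivariant surjection $q : E \twoheadrightarrow \mathcal{F}$ is upgraded to the equivariant surjection $E \otimes \mathbb{C}[\mathbb{Z}/p] \twoheadrightarrow \mathcal{F}$, $e \otimes \sigma \mapsto \sigma \cdot q(e)$ (the metrics can then be averaged over the finite group). Once this is supplied, the rest of your argument carries through as written.
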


\begin{proof}
    The proof follows that of \cref{thm:noneq-section-compactification}. The equivariant analogue of the key lemma \cref{lem:extension-of-obs-section} is immediate: $\nu^{eq}$ induces the diagram
    \begin{equation}\label{eqn:eq-extension-of-obs-section}
        \begin{tikzcd}
            & \mathrm{Obs}^{eq} \dar \arrow[r, twoheadrightarrow] & \overline{\mathrm{Obs}}^{eq} \\
            (\mathcal{P}_d^\circ)^{eq} \arrow[r, hook] \arrow[ru, "{[\nu^{eq}]}"] & \mathcal{P}_d^{eq} \arrow[ru, "{\overline{[\nu^{eq}]}}"']&
        \end{tikzcd}.
    \end{equation}
    The next step is to extend $[\nu^{eq}]$ to a global continuous section of $\mathrm{Obs}^{eq} \to \mathcal{P}_d^{eq}$, i.e. to establish an equivariant analogue of \cref{eqn:extending-obs-section}. The construction of the global extension depended on the ability to fix a surjection from a vector bundle $E \to \mathcal{F}$ for the subsheaf $\mathcal{F} = R^0(\mathrm{proj}_{\mathcal{P}_d})_* \left(\mathcal{N}^\vee (\mathcal{D}) \otimes K_{D/\mathcal{P}_d} \otimes \mathcal{I}_{\mathcal{B}_{\mathrm{nonred}}} \right)$ of $\mathrm{Obs}^\vee$. Any non-equivariant surjection $q: E \to \mathcal{F}$ from a vector bundle $E$ can be extended to an equivariant surjection from a vector bundle by taking $E \otimes \mathbb{C}[\mathbb{Z}/p] \to \mathcal{F}$, defined as $e \otimes \sigma \mapsto \sigma \cdot q(e)$. Hence the $\mathbb{Z}/p$-equivariant sheaf $\mathcal{F}$ admits a surjection from a $\mathbb{Z}/p$-equivariant vector bundle. This suffices to construct the desired continuous extension of $[\nu^{eq}]$ to $\mathcal{P}_d^{eq}$, and the argument from \cref{thm:noneq-section-compactification} passes through to establish (ii) and (iii).
\end{proof}

For the incidence constraints, choose $\mathcal{Y} \subseteq X \times X^p \times X$ as in the discussion for the equivariant map moduli, so that $Y_1 = \cdots = Y_p$ are all fiber cycles of the form $Y_j = \pi^{-1}(y_j)$, with $y_1 = \cdots = y_p$. By \cref{lem:eq-incidence-cycle-only-fibers}, we can choose an equivariant incidence cycle $\mathcal{Y}^{eq}$ such that each $Y_j (w) := \mathrm{proj}_j \circ \mathcal{Y}^{eq}_w$ is also a fiber cycle of the form $Y_j(w) = \pi^{-1}(y_j(w))$ for $y_j(w) \in \mathbb{P}^1$. 

By definition $\mathcal{L}_1 \cong \cdots \mathcal{L}_p \cong \mathcal{O}(1)$. This renders $\mathrm{IC} = \mathcal{L}_0 \oplus \left( \bigoplus_{j=1}^p \mathcal{L}_j \right) \oplus \mathcal{L}_\infty$ into an equivariant vector bundle over $\mathcal{P}_{d}$, where the $\mathbb{Z}/p$ acts on $\mathrm{IC}$ by cyclically permuting the $p$ summands in $\bigoplus_{j=1}^p \mathcal{L}_j$ and fixing $\mathcal{L}_0$, $\mathcal{L}_\infty$. Extending $\mathrm{IC}^{eq} \to \mathcal{P}_d^{eq}$, we see that the tuples $(z_j, y_j(w))$ and the associated sections $\mathrm{ev}_{(z_j, y_j(w))}$ assemble to a global section $\mathrm{ev}_{\mathcal{Y}^{eq}} : \mathcal{P}_d^{eq} \to \mathrm{IC}^{eq}$, namely

\begin{equation}
        \mathrm{ev}_{\mathcal{Y}^{eq}}(w, s) = \left(s(z_0, y_0(w)), \bigoplus_{j=1}^p s (z_j, y_j(w)),  s(z_\infty, y_\infty(w)) \right).
    \end{equation}

As $\mathbb{Z}/p$-equivariant bundles, $\mathrm{Obs}^{eq}$, $\mathrm{IC}^{eq}$ have (mod $p$ reductions of) equivariant Euler classes $c_{\mathrm{top}}^{eq} (\mathrm{Obs}^{eq})$, $c_{\mathrm{top}}^{eq}(\mathrm{IC}^{eq})$, which are equivariant cohomology classes in $H^*_{\mathbb{Z}/p}(\mathcal{P}_d ; \mathbb{F}_p)$.

\begin{lem}[Compare \cref{lem:section-counts-equal-to-Chern-integral}]\label{lem:eq-section-counts-equal-to-Chern-integral}
    Suppose $\mathcal{Y}^{eq}$ is chosen so that for each $i \ge 0$, the restrictions of  $\mathrm{ev}_{\mathcal{Y}^{eq}}: \mathcal{P}_d^{eq} \to \mathrm{IC}^{eq}$ to $\mathcal{P}_d^{eq, i}$ are transverse to every $(\mathcal{V}_d^{eq, i})^{\alpha}$. For the unique $i$ such that $\dim_{\mathbb{R}} \mathcal{V}_d^{eq, i} = i + \dim_{\mathbb{R}} \mathcal{V}_d = \mathrm{codim}_{X \times X^p \times X} \mathcal{Y}$, so that $\mathcal{V}_d^{eq, i} \pitchfork \mathrm{ev}^{-1}_{\mathcal{Y}^{eq}}(0)$ is a $0$-dimensional manifold, we have
    \begin{equation}
        \# \mathcal{V}_d^{eq, i} \pitchfork  \mathrm{ev}_{\mathcal{Y}^{eq}}^{-1}(0) = \left\langle \Delta^i, 
 \int_{\mathcal{P}_d^{eq}} c_{\mathrm{top}}^{eq}(\mathrm{Obs}^{eq}) \cup c_{\mathrm{top}}^{eq}(\mathrm{IC}^{eq}) \right\rangle \in \mathbb{F}_p,
    \end{equation}
    where the equivariant integral is valued in $H^*_{\mathbb{Z}/p}(\mathrm{pt};\mathbb{F}_p) := \mathbb{F}_p[\![t, \theta]\!]$.
\end{lem}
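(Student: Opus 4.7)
The plan is to establish the equivariant avatar of \cref{thm:noneq-section-compactification}(iv) — that $\overline{\mathcal{V}}_d^{eq}$ represents the Poincar\'e dual of $c_{\mathrm{top}}^{eq}(\mathrm{Obs}^{eq})$ in $H^{\mathbb{Z}/p}_*(\mathcal{P}_d^{eq};\mathbb{F}_p)$ — and then to intersect with $\mathrm{ev}_{\mathcal{Y}^{eq}}^{-1}(0)$ as in the non-equivariant \cref{lem:section-counts-equal-to-Chern-integral}, finally pushing forward along $\mathcal{P}_d^{eq} \to B\mathbb{Z}/p$ and pairing with $[\Delta^i]$.

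For the first step, I would follow the equivariant version of the argument already indicated in \cref{prop:eq-section-compactification}. Starting from the continuous extension $\overline{[\nu^{eq}]}$ through $\overline{\mathrm{Obs}}^{eq}$ in \eqref{eqn:eq-extension-of-obs-section}, I would pick an equivariant surjection $E^{eq} \twoheadrightarrow \mathcal{F}^{eq}$ from a $\mathbb{Z}/p$-equivariant vector bundle (which exists by the $E \otimes \mathbb{C}[\mathbb{Z}/p]$ averaging trick applied to any non-equivariant surjection) and form the equivariant analogue $\Theta^{eq}[\nu^{eq}]$ of the monodromy diagram \eqref{eqn:extending-obs-section}. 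This is a globally defined continuous section of $\mathrm{Obs}^{eq}$ whose zero locus agrees set-theoretically with $\overline{\mathcal{V}}_d^{eq}$. A generic equivariant smooth perturbation is transverse and its zero locus represents (the mod $p$ reduction of) $c_{\mathrm{top}}^{eq}(\mathrm{Obs}^{eq})$ by standard equivariant Chern class theory. The unperturbed cycle $\overline{\mathcal{V}}_d^{eq,i}$ differs from this smooth zero locus only along the lower strata $(\mathcal{V}_d^{eq,i})^\alpha$, of real codimension $\ge 2$ in $\mathcal{P}_d^{eq,i}$ via the dimension counts combining \eqref{eqn:dim-of-strata-alpha}--\eqref{eqn:rank-of-obs-alpha} with \cref{prop:eq-section-compactification}(iii), so the two agree in $\mathbb{F}_p$-homology.

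For the second step, $\mathrm{ev}_{\mathcal{Y}^{eq}}$ is a transverse section of the equivariant bundle $\mathrm{IC}^{eq}$ whose zero locus represents $c_{\mathrm{top}}^{eq}(\mathrm{IC}^{eq})$. In the unique equivariant degree $i$ selected by the dimension condition $\mathrm{rank}_{\mathbb{R}}\,\mathrm{IC}^{eq} = \dim_{\mathbb{R}} \mathcal{V}_d^{eq,i}$, the hypothesized transversality against every stratum together with the codimension bound forces $\overline{\mathcal{V}}_d^{eq,i} \cap \mathrm{ev}_{\mathcal{Y}^{eq}}^{-1}(0)$ to lie entirely on the open stratum $\mathcal{V}_d^{eq,i}$ (any intersection on an $\alpha \neq 0$ stratum would violate dimension count). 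Then the signed mod-$p$ count of the resulting $0$-dimensional intersection is the value of the equivariant intersection pairing, and integrating along the fibers of $\mathcal{P}_d^{eq} \to B\mathbb{Z}/p$ and pairing with $[\Delta^i]$ rewrites it as the right-hand side.

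The main obstacle I anticipate is exactly the one addressed in \cref{lem:eq-0-dim-count-map-moduli}: $\mathcal{P}_d^{eq,i}$ has a genuine codimension-$1$ boundary over $\partial \overline{\Delta}^i$, so the integer count could in principle fail to be invariant under a $1$-parameter family of perturbations when an intersection slides out through $\partial \overline{\Delta}^i$. As in that lemma, however, all constructions ($\mathrm{Obs}^{eq}$, $\mathrm{IC}^{eq}$, and the sections $\Theta^{eq}[\nu^{eq}]$ and $\mathrm{ev}_{\mathcal{Y}^{eq}}$) are $\mathbb{Z}/p$-equivariant by design, so every such boundary contribution comes in a full $\mathbb{Z}/p$-orbit of size $p$ through $\sigma^k \mathcal{P}_d^{eq,i-1}$; these orbits vanish modulo $p$, and the asserted equality holds in $\mathbb{F}_p$.
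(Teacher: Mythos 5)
Your argument is essentially the paper's, with one organizational difference worth flagging. You propose to first establish the equivariant avatar of \cref{thm:noneq-section-compactification}(iv), i.e.\ that $[\overline{\mathcal{V}}_d^{eq}]$ is Poincar\'e dual to $c_{\mathrm{top}}^{eq}(\mathrm{Obs}^{eq})$ in $H^{\mathbb{Z}/p}_*(\mathcal{P}_d^{eq};\mathbb{F}_p)$, and only then cap against $\mathrm{ev}_{\mathcal{Y}^{eq}}$. But the paper deliberately declines to formulate such a duality statement (the remark immediately preceding \cref{prop:eq-section-compactification} says it will ``bypass that general theory, proving a statement slightly weaker but enough for our applications''): $\mathcal{P}_d^{eq}$ is a Borel construction, hence infinite-dimensional, and equivariant Poincar\'e duality there is genuinely delicate. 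The paper instead treats the $0$-dimensional intersection $\mathcal{V}_d^{eq,i} \pitchfork \mathrm{ev}_{\mathcal{Y}^{eq}}^{-1}(0)$ \emph{directly} as the zero locus of the single section $[\nu^{eq}]' \oplus \mathrm{ev}_{\mathcal{Y}^{eq}}$ of $\mathrm{Obs}^{eq} \oplus \mathrm{IC}^{eq}$ restricted to $\mathcal{P}_d^{eq,i}$, and computes that count by Chern--Weil over the truncation $\bigcup_{j\le i}\mathcal{P}_d^{eq,j}$, where everything is finite-dimensional and non-equivariant theory applies. If you want to keep your two-step formulation, you should make the duality precise only after restriction to a finite-dimensional approximation; as written, ``represents the Poincar\'e dual in $H^{\mathbb{Z}/p}_*(\mathcal{P}_d^{eq};\mathbb{F}_p)$'' is not a well-posed claim.

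Beyond that, the essential ingredients you use — the equivariant extension $\Theta^{eq}[\nu^{eq}]$ via the averaged surjection $E\otimes\mathbb{C}[\mathbb{Z}/p]\twoheadrightarrow\mathcal{F}$, the codimension-$\ge 2$ bound from \cref{prop:eq-section-compactification}(iii) forcing the intersection onto the open stratum, and the $p$-fold cancellation of boundary contributions over $\partial\overline{\Delta}^i$ — coincide with those in the paper, so the remainder of your plan is sound.
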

\begin{proof}
    The well-definedness of $\# \mathcal{V}_d^{eq, i} \pitchfork \mathrm{ev}_{\mathcal{Y}^{eq}}^{-1}(0)$ is exactly as in \cref{lem:eq-0-dim-count-map-moduli}: the zero locus of $\mathrm{ev}_{\mathcal{Y}^{eq}}$ only meets $\overline{\mathcal{V}}_d^{eq, i}$ along $\mathcal{V}_d^{eq, i}$ by dimension reasons, and in a $1$-parameter choice of $\mathcal{Y}^{eq}$ the count can only jump by multiples of $p$ (from intersections with $\sigma^k \mathcal{V}_d^{eq, i-1}$). The argument uses the dimension count from \cref{prop:eq-section-compactification} (iii).

    Let $[\nu^{eq}]'$ denote a smooth perturbation of a continuous global extension $\Theta[\nu^{eq}]$ of $[\nu^{eq}]$. Then the intersection $\mathcal{V}_d^{eq, i} \pitchfork \mathrm{ev}^{-1}_{\mathcal{Y}^{eq}}(0)$ is exactly the zero locus of the section $[\nu^{eq}]' \oplus \mathrm{ev}_{\mathcal{Y}^{eq}}$ of $\mathrm{Obs}^{eq} \oplus \mathrm{IC}^{eq}$ restricted to $\mathcal{P}_d^{eq, i} \subseteq \mathcal{P}_d^{eq}$. Since $\mathcal{P}_d^{eq, i}$ is the preimage of $\Delta^i$ under the projection $\mathcal{P}_d^{eq} \to B\mathbb{Z}/p$, this count is exactly given by
    \begin{equation}
        \left \langle \Delta^i,  \int_{\mathcal{P}_d^{eq}} c_{\mathrm{top}}^{eq}(\mathrm{Obs}^{eq} \oplus \mathrm{IC}^{eq}) \right\rangle = \left\langle \Delta^i, 
 \int_{\mathcal{P}_d^{eq}} c_{\mathrm{top}}^{eq}(\mathrm{Obs}^{eq}) \cup c_{\mathrm{top}}^{eq}(\mathrm{IC}^{eq}) \right\rangle
    \end{equation}
    by the general theory of Chern classes. (To bypass working with the whole polynomial $\int_{\mathcal{P}_d^{eq}} c_{\mathrm{top}}^{eq}(\mathrm{Obs}^{eq}) \cup c_{\mathrm{top}}^{eq}(\mathrm{IC}^{eq}) \in \mathbb{F}_p [\![t, \theta]\!]$ and equivariant Poincar\'e duality issues, one may restrict to $\bigcup_{j \le i} \mathcal{P}_d^{eq, j} \subseteq \mathcal{P}_d^{eq}$ which truncates the equivariant parameters after equivariant degree $i$, and apply non-equivariant theory to the restriction.)
\end{proof}

\subsection{Comparison of equivariant moduli spaces}\label{ssec:comparison-of-eq-moduli}

The final piece of our strategy for computation is the equivariant version of the comparison lemma,  \cref{prop:transversality-is-equal}. Fix $\mathcal{Y} = Y_0 \times (Y_1 \times \cdots \times Y_p) \times Y_\infty$ as before, with $Y_1 = \cdots = Y_p$ being fiber cycles, and let $\mathcal{Y}^{eq}$ be an $S^\infty$-parametrized equivariant family of perturbations such that each $Y_j(w) = \mathrm{proj}_j \circ \mathcal{Y}^{eq}_w$ is still a fiber cycle (see \cref{lem:eq-incidence-cycle-only-fibers}).

\begin{lem}[Compare \cref{prop:transversality-is-equal}]\label{lem:eq-transversality-is-equal}
    Consider the unique $i$ such that $\mathcal{M}_A^{eq, i}$, $\mathcal{V}_A^{eq, i}$ are both regular of dimension $\dim_{\mathbb{R}} \mathcal{M}_A^{eq, i} = \dim_{\mathbb{R}} \mathcal{V}_A^{eq, i} = \mathrm{codim}_{X \times X^p \times X} \mathcal{Y}$. Then the evaluation map $ev^{eq, i} : \mathcal{M}_A^{eq, i} \to X \times (\overline{\Delta}^i \times X^p) \times X$ is transverse to $\mathcal{Y}^{eq, i} : \Delta^i \times \mathcal{Y} \to X \times (\overline{\Delta}^i \times X^p) \times X$ if and only if the section $\mathrm{ev}_{\mathcal{Y}^{eq, i}} : \mathcal{P}_A^{eq, i} \to \mathrm{IC}^{eq, i}$ is transverse to $\mathcal{V}_A^{eq, i}$ (in the zero section).
\end{lem}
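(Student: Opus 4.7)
The plan is to parallel the proof of \cref{prop:transversality-is-equal}, treating the equivariant parameter $w \in \Delta^i$ as an additional tangent direction. The crucial input is \cref{lem:eq-incidence-cycle-only-fibers}: since each $\mathcal{Y}^{eq}_w$ remains a product of fiber cycles $Y_j(w) = \pi^{-1}(y_j(w))$ for a smooth family $y_j : S^\infty \to \mathbb{P}^1$, the map-side incidence condition at $(w,v,\phi)$ is equivalent to $v(z_j) = y_j(w)$ for every $j$, and the section-side incidence condition at $(w, s_v)$ is $s_v(z_j, y_j(w)) = 0$. As in the non-equivariant case, after reducing to the subset of indices where $Y_j$ is a genuine fiber (the total-space factors add no constraint on either side), we can pass from $X$ to its base $\mathbb{P}^1$ and work with $\pi \circ \mathrm{ev}^{eq, i}$ instead of $\mathrm{ev}^{eq,i}$.

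Next I would fix a point $(w, v, \phi) \in \mathcal{M}_A^{eq, i} \cap (\mathrm{ev}^{eq, i})^{-1}(\mathcal{Y}^{eq, i})$ with associated $(w, s_v) \in \mathcal{V}_A^{eq, i} \cap \mathrm{ev}_{\mathcal{Y}^{eq, i}}^{-1}(0)$, and compute both linearizations. Using the identification $T_{(w,v,\phi)} \mathcal{M}_A^{eq, i} \cong T_{(w, s_v)} \mathcal{V}_A^{eq, i}$ from \cref{lem:eq-map-equal-section} (both projecting onto $T_w \Delta^i$ via the forgetful map, with $\mathrm{d}/\mathrm{d}t|_{t=0}s_{v_t}$ corresponding to the holomorphic vector field $\xi \in \ker \overline{\partial}_{v^*T\mathbb{P}^1}$ induced by the family $v_t$), a tangent vector $(\eta, \xi, \psi)$ on the map side matches $(\eta, \dot{s})$ on the section side. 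Differentiating the graph identity $s_{v_t}(z_j, v_t(z_j)) = 0$ yields $\dot{s}(z_j, v(z_j)) = - \mathrm{d}s_v^{\mathrm{vert}}(0, \xi(z_j))$, and differentiating the section $(w, s) \mapsto s(z_j, y_j(w))$ in both arguments gives
\begin{equation}
\mathrm{d}\,\mathrm{ev}_{\mathcal{Y}^{eq, i}}|_{(w, s_v)}(\eta, \dot{s}) = \dot{s}(z_j, y_j(w)) + \mathrm{d}s_v^{\mathrm{vert}}\bigl(0, \mathrm{d}y_j(\eta)\bigr) = \mathrm{d}s_v^{\mathrm{vert}}\bigl(0, \mathrm{d}y_j(\eta) - \xi(z_j)\bigr).
\end{equation}
On the other hand, transversality of $\pi \circ \mathrm{ev}^{eq,i}$ with the graph $\{(w; y_0(w), \ldots, y_\infty(w))\}$ is precisely the statement that $(\xi(z_j) - \mathrm{d}y_j(\eta))_j$ surjects onto $\bigoplus_j T_{y_j(w)} \mathbb{P}^1$ as $(\eta, \xi, \psi)$ ranges over the linearized equivariant moduli.

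The proof concludes exactly as in \cref{prop:transversality-is-equal}: since $(w, s_v) \in (\mathcal{P}_d^{eq, i})^\circ$ (no vertical bubble at any $z_j$), the restriction $s_v|_{\{z_j\} \times \mathbb{P}^1}$ is a nonzero linear form, so the vertical differential $\mathrm{d}s_v^{\mathrm{vert}}|_{0 \oplus T_{y_j(w)}\mathbb{P}^1} : T_{y_j(w)}\mathbb{P}^1 \xrightarrow{\sim} \mathcal{O}(d,1)|_{(z_j, y_j(w))}$ is a complex-linear isomorphism of one-dimensional spaces. This isomorphism intertwines the two transversality conditions at every marked point, yielding the equivalence in both directions. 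The main obstacle is the bookkeeping of $w$-dependence: the family $y_j(w)$ appears \emph{twice} on the section side (once through $\mathrm{d}y_j(\eta)$ entering $\mathrm{d}\,\mathrm{ev}_{\mathcal{Y}^{eq, i}}$, once through the base point at which $\dot{s}$ is evaluated) and must be correctly matched to the single occurrence on the map side via the graph identity, which is what makes the difference $\mathrm{d}y_j(\eta) - \xi(z_j)$ appear symmetrically on both sides.
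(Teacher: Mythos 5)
Your proof is correct and takes exactly the route the paper intends: the paper's own proof is a one-line remark that "the proof is the same as [the non-equivariant analogue], incorporating the tangent spaces from the equivariant parameter spaces $\Delta^i$," and your proposal supplies precisely those omitted details, paralleling \cref{prop:transversality-is-equal} with the extra $\eta \in T_w\Delta^i$ direction and matching the two transversality conditions via the isomorphism $\mathrm{d}s_v^{\mathrm{vert}}|_{0 \oplus T_{y_j(w)}\mathbb{P}^1}$. Your closing observation that the single $w$-dependence on the map side (through $\xi(z_j)$ at the coincidence locus $v(z_j) = y_j(w)$) must be matched against the two occurrences of $y_j(w)$ on the section side is the correct accounting, and the identity $\mathrm{d}\,\mathrm{ev}_{\mathcal{Y}^{eq, i}}(\eta, \dot{s}) = \mathrm{d}s_v^{\mathrm{vert}}\bigl(0, \mathrm{d}y_j(\eta) - \xi(z_j)\bigr)$ is exactly the right reformulation.
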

\begin{proof}
    The proof is the same as that of \cref{prop:transversality-is-equal}, incorporating the tangent spaces from the equivariant parameter spaces $\Delta^i$. We omit the details.
\end{proof}

By comparing the equivariant moduli spaces $\mathcal{M}_A^{eq, i} \cong \mathcal{V}_A^{eq, i}$, rigidified to $0$-dimensional counts by means of an equivariant incidence cycle $\mathcal{Y}^{eq}$, we can compute the structure constants for the quantum Steenrod operations. The structure constants are defined by means of the map moduli spaces, which we identify with the section moduli spaces, where the computations can be done by equivariant cohomology.

\begin{rem}\label{rem:QSt-BM-cycles}
    Strictly speaking, quantum Steenrod operations were only defined for closed manifolds, so we cannot directly consider quantum Steenrod operations on $X$. One way to make the theorem below precise is to compactify $X$ with its projective completion $\hat{X} = \mathbb{P}(\mathcal{O}(-1)^{\oplus 2} \oplus \underline{\mathbb{C}}) \to \mathbb{P}^1$ and observe that the curves we consider are always near $\mathbb{P}^1 \subseteq X$. Therefore we can work relatively to $\hat{X} \setminus X$. The fiber cycles $Y_j$ define locally finite homology cycles in $H_*^{\mathrm{BM}}(X) = H_*(\hat{X}, \hat{X}\setminus X)$, and therefore the theorem can be interpreted as computing the structure constants of quantum Steenrod operations on $\hat{X}$.
\end{rem}

We now proceed to compute the quantum Steenrod operations $Q\Sigma_b(b_0)$, determined by the structure constants $(Q\Sigma_b(b_0), b_\infty) \in \Lambda_{\mathbb{F}_p}[\![t, \theta]\!]$ where $(\cdot, \cdot)$ is the Poincar\'e pairing. Below we describe each of the coefficients $\langle \Delta^i, (Q\Sigma_b(b_0), b_\infty) \rangle \in \Lambda_{\mathbb{F}_p}$ of $(t,\theta)^i$.

\begin{thm}\label{thm:qst-equals-Chern-integral}
    Fix $b_0, b, b_\infty \in H^*(X)$ to be Poincar\'e dual to the locally finite cycles given by the embedded submanifolds $Y_0$, $Y \cong Y_1 \cong \cdots \cong Y_p$, and $Y_\infty$. The corresponding structure constants for the quantum Steenrod operations can be computed by the equivariant integral
    \begin{equation}
    (Q\Sigma_b (b_0), b_\infty) = \sum_i \sum_{A \in H_2(X)} \left\langle \Delta^i, 
    \int_{\mathcal{P}_A^{eq}} c_{\mathrm{top}}^{eq}(\mathrm{Obs}^{eq}) \cup c_{\mathrm{top}}^{eq}(\mathrm{IC}^{eq}) \right\rangle q^A \ (t,\theta)^i \in \Lambda_{\mathbb{F}_p}[\![t, \theta]\!].
    \end{equation}
\end{thm}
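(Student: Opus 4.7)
The plan is to assemble the ingredients already developed in \cref{ssec:eq-decoupling-nu} through \cref{ssec:comparison-of-eq-moduli} into a coherent chain of identifications, starting from the definition of the structure constants and ending with the equivariant Chern class integral on the thickened section moduli space.

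First, I would unpack the left-hand side via \cref{defn:QSt-structure-constants}: the coefficient of $q^A (t,\theta)^i$ in $(Q\Sigma_b(b_0), b_\infty)$ is (up to the sign $(-1)^\star$ of \eqref{eqn:sign-in-QSt}) the mod $p$ count $\# \ \mathrm{ev}^{eq, i}(\mathcal{M}_A^{eq, i}) \pitchfork \mathcal{Y}^{eq, i}$, where the incidence cycle is $\mathcal{Y} = Y_0 \times (Y \times \cdots \times Y) \times Y_\infty$ built from the Poincar\'e dual submanifolds of $b_0, b, b_\infty$. Under the non-compactness caveat of \cref{rem:QSt-BM-cycles}, I take each $Y_j$ to be either the total space of $X$ or a fiber of $\pi: X \to \mathbb{P}^1$ (noting $b \in H^2(X)$ is fiber Poincar\'e dual). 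By \cref{lem:eq-incidence-cycle-only-fibers}, I may fix an equivariant incidence cycle $\mathcal{Y}^{eq}$ with each $\mathcal{Y}^{eq}_w$ still a product of fiber cycles, and by \cref{lem:eq-dc-map-transverse} choose a generic equivariant decoupling perturbation datum $\nu^{eq}$ so that all relevant $\mathcal{M}_A^{eq, i}$ (and their simple-stable-map boundaries from \cref{lem:eq-dc-stable-map-transverse}) are regular, with $\mathcal{Y}^{eq}$ equivariantly strongly transverse.

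Second, I invoke \cref{lem:eq-map-equal-section} to identify $\mathcal{M}_A^{eq, i} \cong \mathcal{V}_A^{eq, i}$ as smooth oriented manifolds, and \cref{lem:eq-transversality-is-equal} to transfer the strong transversality from the map moduli side to the assertion that $\mathrm{ev}_{\mathcal{Y}^{eq}} : \mathcal{P}_A^{eq, i} \to \mathrm{IC}^{eq, i}$ is transverse to $\mathcal{V}_A^{eq, i}$. This matches the set-theoretic bijection of preimages: the count $\# \ \mathrm{ev}^{eq, i}(\mathcal{M}_A^{eq, i}) \pitchfork \mathcal{Y}^{eq, i}$ equals $\# \ \mathcal{V}_A^{eq, i} \pitchfork \mathrm{ev}_{\mathcal{Y}^{eq}}^{-1}(0)$, including the signs/orientations (the latter coming from the complex orientation of $\mathcal{V}_A^{eq, i}$ as a zero locus, as in the last paragraph of the proof of \cref{prop:noneq-map-equal-section}).

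Third, I apply \cref{lem:eq-section-counts-equal-to-Chern-integral} to express this signed count as
\begin{equation}
    \# \ \mathcal{V}_A^{eq, i} \pitchfork \mathrm{ev}_{\mathcal{Y}^{eq}}^{-1}(0) = \left\langle \Delta^i, \int_{\mathcal{P}_A^{eq}} c^{eq}_{\mathrm{top}}(\mathrm{Obs}^{eq}) \cup c^{eq}_{\mathrm{top}}(\mathrm{IC}^{eq}) \right\rangle,
\end{equation}
using that the fiberwise compactification $\overline{\mathcal{V}}_A^{eq, i}$ of \cref{prop:eq-section-compactification} carries the Poincar\'e dual of $c^{eq}_{\mathrm{top}}(\mathrm{Obs}^{eq})$ on $\mathcal{P}_A^{eq}$. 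Summing over $i \geq 0$ and $A \in H_2(X;\mathbb{Z})$, multiplied by $q^A (t,\theta)^i$, yields the claimed formula. The dimension constraint in \cref{defn:QSt-structure-constants} (which otherwise sets the count to zero) is automatically enforced: $c^{eq}_{\mathrm{top}}(\mathrm{Obs}^{eq}) \cup c^{eq}_{\mathrm{top}}(\mathrm{IC}^{eq})$ has complex degree $\mathrm{rk}(\mathrm{Obs}) + \mathrm{rk}(\mathrm{IC}) = 2(d-1) + \tfrac{1}{2}(|b_0|+p|b|+|b_\infty|)$, so the pairing with $\Delta^i$ vanishes unless $i$ has the correct parity and the overall degree balances.

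The main obstacle I expect is sign bookkeeping: verifying that the orientations coming from (a) the map-moduli description via a perturbed Cauchy--Riemann operator, and (b) the section-moduli description as the zero locus of a $\mathbb{C}$-linear map of complex vector bundles, agree at the level of signed $0$-dimensional counts, and that this is compatible with the sign $(-1)^\star$ built into \cref{defn:QSt-structure-constants} after \cref{rem:equiv-convention-2}. For a non-equivariant setup this is the orientation check at the end of the proof of \cref{prop:noneq-map-equal-section}; in the equivariant setup one must additionally check that tangent-space contributions from $T_w \Delta^i$ on both sides match, which is straightforward because $(\mathcal{P}_d^{eq, i})^\circ \to \Delta^i$ is locally trivial and the linearized operators differ only by lower-order antilinear terms that do not affect the determinant line's orientation.
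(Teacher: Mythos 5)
Your proposal follows the paper's own proof essentially step for step: unpack the structure constants via \cref{defn:QSt-structure-constants}, choose generic decoupling $\nu^{eq}$ and a fiber-cycle equivariant incidence cycle via \cref{lem:eq-dc-map-transverse}, \cref{lem:eq-dc-stable-map-transverse}, \cref{lem:eq-incidence-cycle-only-fibers}, identify map and section moduli via \cref{lem:eq-map-equal-section} and \cref{lem:eq-transversality-is-equal}, and close with \cref{lem:eq-section-counts-equal-to-Chern-integral}. The only small thing you flag as an obstacle that the paper dispatches immediately is the sign $(-1)^\star$: since $b_0, b, b_\infty$ are all even-degree classes (Poincar\'e dual to complex submanifolds), the formula \eqref{eqn:sign-in-QSt} gives $(-1)^\star = 1$ outright, so no orientation bookkeeping beyond what was already established in \cref{prop:noneq-map-equal-section} and \cref{lem:eq-map-equal-section} is actually required.
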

\begin{proof}
   We can choose $\nu^{eq}$ generically so that:
    \begin{itemize}
        \item (\cref{lem:eq-dc-map-transverse}) The map moduli spaces $\mathcal{M}_A^{eq, i}$ are regular.
        \item (\cref{lem:eq-dc-stable-map-transverse}) The simple stable maps $\partial \overline{\mathcal{M}}_A^{eq, i} = \bigcup_{T} \mathcal{M}_{T, \{ A_b'\}}^{eq, i} (-\Sigma z_j')$ are regular.
        \item (\cref{lem:eq-map-equal-section}) The section moduli spaces $\mathcal{V}_A^{eq, i}$ are regular and isomorphic to $\mathcal{M}_A^{eq, i}$.
        \item (\cref{prop:eq-section-compactification}) The compactifying strata $(\mathcal{V}_A^{eq, i})^\alpha$ are regular.
    \end{itemize}
    Moreover, following \cref{lem:eq-transverse-incidence-cycles}, we can choose $\mathcal{Y}^{eq}$, so that each $Y_j(w)$ is a fiber $\pi^{-1}(y_j(w))$, satisfying:
    \begin{itemize}
        \item  For every $i \ge 0$, $\mathcal{Y}^{eq, i} : \Delta^i \times \mathcal{Y} \to X \times (\overline{\Delta}^i \times X^p) \times X$ is transverse to evaluation maps from the map moduli spaces $\mathcal{M}_A^{eq, i}$ and simple stable maps $\partial \overline{\mathcal{M}}_A^{eq, i}$.
        \item For every $i \ge 0$, $\mathrm{ev}_{\mathcal{Y}^{eq, i}} : \mathcal{P}_d^{eq, i} \to \mathrm{IC}^{eq, i}$ is transverse to the section moduli spaces $\mathcal{V}_A^{eq, i}$ and the compactifying strata $(\mathcal{V}_A^{eq, i})^{\alpha}$.
    \end{itemize}

    The choices of such $\nu^{eq}$ and $\mathcal{Y}^{eq}$ are generic in the sense of \cref{rem:equivariant-genericity-cycle}. Now the desired result follows from the chain of equalities

    \begin{align}
        \label{eqn:thm-r1}
        \mathrm{Coefficient} \left(q^A (t,\theta)^i ; ( Q\Sigma_b (b_0), b_\infty )  \right) &= \# \ \mathrm{ev}^{eq} \left( \overline{\mathcal{M}}_A^{eq, i} \right) \pitchfork \mathcal{Y}^{eq} = \# \ \mathrm{ev}^{eq} \left( \mathcal{M}_A^{eq, i} \right) \pitchfork \mathcal{Y}^{eq} \\
        \label{eqn:thm-r2}
        &= \# \ \mathcal{V}_A^{eq, i} \pitchfork \mathrm{ev}_{\mathcal{Y}^{eq}}^{-1}(0) = \# \ \overline{\mathcal{V}}_A^{eq, i} \pitchfork \mathrm{ev}_{\mathcal{Y}^{eq}}^{-1}(0) \\
        \label{eqn:thm-r3}&= \left\langle \Delta^i, 
 \int_{\mathcal{P}_A^{eq}} c_{\mathrm{top}}^{eq}(\mathrm{Obs}^{eq}) \cup c_{\mathrm{top}}^{eq}(\mathrm{IC}^{eq}) \right\rangle \in \mathbb{F}_p.
    \end{align}
    The first equality \eqref{eqn:thm-r1} is the definition of quantum Steenrod operations (see \cref{defn:QSt-structure-constants}), together with the observation that decoupling perturbation data can be regarded as a special subclass of the usual perturbation data used in the definition (see the discussion after \cref{defn:noneq-nu-dc}). Note that there is no sign because $b_0, b, b_\infty$ all have even degrees. 
    
    The second equality follows from dimension counts and transversality assumptions for the map moduli spaces. Third equality \eqref{eqn:thm-r2} is \cref{lem:eq-map-equal-section}, \cref{lem:eq-transversality-is-equal}. Fourth equality follows from dimension counts and transversality assumptions for the section moduli spaces. Finally the last count \eqref{eqn:thm-r3} can be computed using equivariant cohomology, which is exactly the content of \cref{lem:eq-section-counts-equal-to-Chern-integral}. A different choice of $\nu^{eq}$ and $\mathcal{Y}^{eq}$ gives the same count in $\mathbb{F}_p$, by the argument in \cref{lem:eq-0-dim-count-map-moduli}.
\end{proof}

\subsection{Example computations}\label{ssec:computaiton-localP1}
Using \cref{thm:qst-equals-Chern-integral} (see also \cref{rem:QSt-BM-cycles}), one can compute the quantum Steenrod operations for the fiber class $b = \mathrm{PD}[Y = \pi^{-1}(y)] \in H^2(X)$. The interesting computations come from applying $Q\Sigma_b$ to classes $1 \in H^0$ and $b \in H^2$.

In light of \cref{thm:qst-equals-Chern-integral}, it suffices to compute the equivariant Chern classes of the obstruction bundle and the incidence constraint bundle. To compute the equivariant weights of the bundles involved, one must first choose a \emph{linearization} of the $\mathbb{Z}/p$-action $\sigma_C$ on $C = \mathbb{P}^1$, i.e. a lift of $\sigma_C$ to an action on the tautological bundle $\mathcal{O}_C(-1) \to C$. (The resulting answer should not depend on the choice, but the intermediate calculations do depend on the choice.)

Recall for $\zeta = e^{2\pi i/p}$, the action on $C$ is given by $\sigma_C[z:w] = [z: \zeta^{-1}w]$. We choose the linearization to be $\sigma (z, w) = (z, \zeta^{-1}w)$ on $\mathcal{O}_C(-1) \to C$. Note that this choice induces $\mathbb{Z}/p$-action on all bundles $\mathcal{O}_C(d) \to C$ for $d \in \mathbb{Z}$. In particular, $H^0(C, \mathcal{O}_C(d))$ becomes a $\mathbb{Z}/p$-representation for every $d$ and our choice is that $V := H^0(C, \mathcal{O}(1)) = \mathbb{C}_{1} \oplus \mathbb{C}_{\zeta}$. Here $\mathbb{C}_\lambda$ denotes the rank $1$ irreducible representation where the generator $\sigma \in \mathbb{Z}/p$ acts by character $\lambda \in \mathrm{U}(1)$. For $d \ge 0$, $H^0(C, \mathcal{O}(d)) \cong \mathrm{Sym}^d V$ as representations, where $\mathrm{Sym}^d$ denotes the $d$th symmetric power operation.

\begin{lem}\label{lem:eq-coh-section}
The equivariant cohomology algebra of $\mathcal{P}_d$ is computed as
\begin{equation}\label{eqn:eq-coh-section}
    H^*_{\mathbb{Z}/p}(\mathcal{P}_d ; \mathbb{F}_p) = H^*(\mathcal{P}_d^{eq} ; \mathbb{F}_p) \cong \mathbb{F}_p [ H, t, \theta]/H^2(H-t)^2\cdots(H-dt)^2,
\end{equation}
where $t = c_1^{\mathbb{Z}/p}(\mathcal{O}_{B\mathbb{Z}/p}(1)) \in H^2_{\mathbb{Z}/p}(\mathrm{pt};\mathbb{F}_p)$ is as in \eqref{eqn:equiv-coh-ground-ring} and $H = c_1^{\mathbb{Z}/p}(\mathcal{O}_{\mathcal{P}_d}(1)) = c_1(\mathcal{O}_{\mathcal{P}_d^{eq}}(1)) \in H^2_{\mathbb{Z}/p}(\mathcal{P}_d;\mathbb{F}_p)$ is the (equivariant) hyperplane class.
\end{lem}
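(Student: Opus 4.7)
My plan is to identify the Borel construction $\mathcal{P}_d^{eq} = S^\infty \times_{\mathbb{Z}/p} \mathcal{P}_d$ with the projectivization of an explicitly splitting vector bundle over $B\mathbb{Z}/p$ and then invoke the projective bundle formula. The heart of the computation is just to identify the characters appearing in the $\mathbb{Z}/p$-representation $H^0(C \times \mathbb{P}^1, \mathcal{O}(d,1))$ and to track sign conventions for the first Chern classes of the associated line bundles.

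First I would decompose the ambient vector space as a $\mathbb{Z}/p$-representation. By K\"unneth and the fact that $\mathbb{Z}/p$ acts trivially on the $\mathbb{P}^1$ factor, there is an isomorphism of representations
\begin{equation*}
H^0(C \times \mathbb{P}^1, \mathcal{O}(d,1)) \cong H^0(C, \mathcal{O}_C(d)) \otimes H^0(\mathbb{P}^1, \mathcal{O}(1)).
\end{equation*}
Using the chosen linearization where $V := H^0(C, \mathcal{O}_C(1)) \cong \mathbb{C}_1 \oplus \mathbb{C}_\zeta$, we get $H^0(C, \mathcal{O}_C(d)) \cong \mathrm{Sym}^d V \cong \bigoplus_{k=0}^d \mathbb{C}_{\zeta^k}$. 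Since $H^0(\mathbb{P}^1, \mathcal{O}(1))$ is a trivial rank $2$ representation, altogether
\begin{equation*}
H^0(C \times \mathbb{P}^1, \mathcal{O}(d,1)) \cong \bigoplus_{k=0}^d \mathbb{C}_{\zeta^k}^{\oplus 2}.
\end{equation*}

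Second, I would pass to associated bundles over $B\mathbb{Z}/p$. By construction, $\mathcal{P}_d^{eq} \cong \mathbb{P}(E)$ where $E := S^\infty \times_{\mathbb{Z}/p} H^0(C \times \mathbb{P}^1, \mathcal{O}(d,1))$ inherits the direct sum decomposition $E \cong \bigoplus_{k=0}^d L_k^{\oplus 2}$, with $L_k$ the line bundle associated to the character $\sigma \mapsto \zeta^k$. Since $L_k \cong L_1^{\otimes k}$, we have $c_1(L_k) = k \cdot c_1(L_1)$. The paper's convention that $t$ is the pullback of $c_1(\mathcal{O}_{\mathbb{CP}^\infty}(1))$ along $B\mathbb{Z}/p \to \mathbb{CP}^\infty$ (induced by $\sigma \mapsto \zeta$) identifies $\mathcal{O}(-1)$ with $L_1$ (tautological versus dual), so $c_1(L_1) = -t$ and hence $c_1(L_k) = -kt$ in $H^2_{\mathbb{Z}/p}(\mathrm{pt};\mathbb{F}_p)$.

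Third, I would apply the projective bundle theorem to $\mathbb{P}(E) \to B\mathbb{Z}/p$: $H^*(\mathbb{P}(E); \mathbb{F}_p)$ is the free module over $H^*(B\mathbb{Z}/p;\mathbb{F}_p)$ generated by $1, H, \ldots, H^{2d+1}$, where $H = c_1(\mathcal{O}_{\mathbb{P}(E)}(1))$ satisfies the Grothendieck relation
\begin{equation*}
\prod_{k=0}^d (H + c_1(L_k))^2 \ = \ \prod_{k=0}^d (H - kt)^2 \ = \ H^2 (H-t)^2 \cdots (H-dt)^2 \ = \ 0.
\end{equation*}
Substituting \eqref{eqn:equiv-coh-ground-ring} for $H^*_{\mathbb{Z}/p}(\mathrm{pt};\mathbb{F}_p)$ yields the claimed presentation. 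The one mild technicality is that $\mathcal{P}_d^{eq}$ and $B\mathbb{Z}/p$ are infinite-dimensional, so the theorem should be applied to the finite-dimensional approximations $S^{2N+1} \times_{\mathbb{Z}/p} \mathcal{P}_d$ and the result assembled in the inverse limit; this is the same limit procedure already used in \eqref{eqn:equiv-coh-ground-ring}. The main place to be careful is the sign bookkeeping in the identification $c_1(L_k) = -kt$, which determines whether the relation reads $\prod(H-kt)^2$ or $\prod(H+kt)^2$; the paper's choice of $t$ coming from $\mathcal{O}(1)$ (rather than $\mathcal{O}(-1)$) as in \cref{rem:equiv-convention} makes the signs come out as stated.
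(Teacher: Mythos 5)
Your proof is correct and takes essentially the same route as the paper: decompose $H^0(C\times\mathbb{P}^1,\mathcal{O}(d,1))$ via K\"unneth and $\mathrm{Sym}^d V \cong \bigoplus_{k=0}^d \mathbb{C}_{\zeta^k}$, invoke the projective bundle (splitting principle) formula, and fix signs via the convention $c_1(L_1) = -t$ from \cref{rem:equiv-convention}. The extra details you supply about $L_k \cong L_1^{\otimes k}$ and about passing to finite-dimensional approximations are fine but are the same as what the paper implicitly uses.
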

\begin{proof}
    This is a corollary of the splitting principle formula for the cohomology of a projective bundle (see e.g. \cite[Example 9.1.2.1]{CK99}) applied to $\mathcal{P}_d = \mathbb{P} H^0(C \times \mathbb{P}^1 , \mathcal{O}_{C \times \mathbb{P}^1}(d,1))$. Note that $H^0(C \times \mathbb{P}^1 , \mathcal{O}_{C \times \mathbb{P}^1}(d,1)) \cong H^0(C, \mathcal{O}_C(d)) \otimes H^0(\mathbb{P}^1, \mathcal{O}_{\mathbb{P}^1}(1)) \cong \mathrm{Sym}^d V \otimes (\mathbb{C}_1)^{\oplus 2}$, because the $\mathbb{Z}/p$-action on $\mathbb{P}^1$ factor is trivial. Moreover, $\mathrm{Sym}^d V \cong \bigoplus_{k=0}^d \mathbb{C}_{\zeta^k}$. It remains to recall that our choice of convention (\cref{rem:equiv-convention}) is that the line bundle over $B\mathbb{Z}/p$ induced from the representation $\mathbb{C}_{\zeta}$ has weight $c_1^{\mathbb{Z}/p}(\mathcal{O}_{B\mathbb{Z}/p}(-1))=-t$.
\end{proof}

\begin{lem}\label{lem:eq-euler-obs}
    The equivariant top Chern class of the obstruction bundle is given by
    \begin{equation}\label{eqn:eq-euler-obs}
        c_{\mathrm{top}}(\mathrm{Obs}^{eq}) = (H-t)^2 (H-2t)^2 \cdots (H-(d-1)t)^2 \in H^*_{\mathbb{Z}/p}(\mathcal{P}_d ; \mathbb{F}_p).
    \end{equation}
\end{lem}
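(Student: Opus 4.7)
The plan is to upgrade the cohomological description of $\mathrm{Obs}$ in \cref{lem:obs-description} to the $\mathbb{Z}/p$-equivariant setting and then read off the Chern class from the resulting splitting. The derivation in \cref{lem:obs-description} --- the ideal sheaf sequence $0 \to \mathcal{N}(-\mathcal{D}) \to \mathcal{N} \to \mathcal{O}_{\mathcal{D}} \otimes \mathcal{N} \to 0$, the identification $\mathrm{Obs} \cong R^2\pi_* \mathcal{N}(-\mathcal{D})$ coming from its long exact sequence, and the ensuing K\"unneth / projection formula computation --- is entirely natural and therefore goes through verbatim $\mathbb{Z}/p$-equivariantly. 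Since the $\mathbb{Z}/p$-action on the $\mathbb{P}^1$ factor of $C \times \mathbb{P}^1$ is trivial, this yields the equivariant isomorphism
\begin{equation}
\mathrm{Obs}^{eq} \cong \mathcal{O}_{\mathcal{P}_d^{eq}}(-1) \otimes H^1(C, \mathcal{O}_C(-d))^{\oplus 2},
\end{equation}
where $H^1(C, \mathcal{O}_C(-d))$ is regarded as a $\mathbb{Z}/p$-representation inducing an equivariant trivial bundle on $\mathcal{P}_d^{eq}$.

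The second step is to identify the representation $H^1(C, \mathcal{O}_C(-d))$. I would do this by a direct \v{C}ech calculation rather than by equivariant Serre duality, in order to sidestep having to separately track a possible character twist between $K_C$ and $\mathcal{O}_C(-2)$ under our chosen linearization. Using the cover $\{U_0, U_\infty\}$ with affine coordinate $u = w/z$ on $U_0$ and the frame $(1,u)$ for $\mathcal{O}_C(-1)|_{U_0}$ (which is invariant under the linearization fixed in \cref{lem:eq-coh-section}, since that linearization acts trivially on the first coordinate $z$), a basis for $H^1(C, \mathcal{O}_C(-d))$ in this trivialization is provided by the Laurent monomials $u^{-1}, u^{-2}, \ldots, u^{-(d-1)}$. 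The action of $\sigma$ is by pullback of functions of $u$ through the inverse base action, and tracking the same sign conventions as in the proof of \cref{lem:eq-coh-section} shows that $u^{-i}$ transforms with weight $\zeta^{-i}$. Hence
\begin{equation}
H^1(C, \mathcal{O}_C(-d)) \cong \bigoplus_{i=1}^{d-1} \mathbb{C}_{\zeta^{-i}}.
\end{equation}

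It remains to assemble the Chern class. In the conventions of \cref{lem:eq-coh-section} one has $c_1^{eq}(\mathcal{O}_{\mathcal{P}_d^{eq}}(-1)) = -H$, and the relation $\prod_{k=0}^d (H - kt)^2 = 0$ together with the splitting $H^0(C, \mathcal{O}_C(d)) \cong \bigoplus_{k=0}^d \mathbb{C}_{\zeta^k}$ forces $c_1^{eq}(\mathbb{C}_{\zeta^k}) = -kt$. Therefore $c_1^{eq}(\mathcal{O}(-1) \otimes \mathbb{C}_{\zeta^{-i}}) = -H + it$, and Whitney's formula gives
\begin{equation}
c_{\mathrm{top}}^{eq}(\mathrm{Obs}^{eq}) = \prod_{i=1}^{d-1} (-H + it)^2 = \prod_{i=1}^{d-1}(H - it)^2,
\end{equation}
which is the claimed formula. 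The only real obstacle is the careful bookkeeping of sign conventions for $H$, $t$, and the character weights, together with verifying that the \v{C}ech basis and the linearization are compatible; once these are pinned down, the argument reduces to an elementary representation-theoretic computation that is parametrized smoothly over $B\mathbb{Z}/p$.
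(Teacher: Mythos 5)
Your proof is correct and follows essentially the same route as the paper: both reduce the computation to the equivariant upgrade of \cref{lem:obs-description} and then read off the top Chern class from the resulting splitting into line bundles. The one place you diverge is in identifying the $\mathbb{Z}/p$-representation: the paper asserts $H^0(C, \mathcal{O}(d)\otimes K_C) = \bigoplus_{k=1}^{d-1}\mathbb{C}_{\zeta^k}$ with no elaboration, while you compute the Serre-dual $H^1(C,\mathcal{O}_C(-d)) \cong \bigoplus_{i=1}^{d-1}\mathbb{C}_{\zeta^{-i}}$ directly by \v{C}ech. Your version is a bit more careful: the paper's formula silently uses that the canonical identification $K_C \cong \mathcal{O}_C(-2)$ is \emph{not} equivariant for the chosen linearization (there is an extra twist $K_C \cong \mathcal{O}_C(-2)\otimes\mathbb{C}_\zeta$, since $d\sigma$ acts by $\zeta^{-1}$ on $T_0C$ while $\mathcal{O}(-2)|_0$ is trivial), and your \v{C}ech calculation avoids having to track that twist explicitly. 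The two answers agree, as one can verify via equivariant Serre duality.
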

\begin{proof}
    This follows from \cref{lem:obs-description}, which establishes an isomorphism 
    \begin{equation}
        \mathrm{Obs}^\vee \cong \mathcal{O}_{\mathcal{P}_d}(1) \otimes \left( H^0(C \times \mathbb{P}^1, \mathcal{O}(d,2) \otimes K_{C \times \mathbb{P}^1} )\right)^{\oplus 2}
    \end{equation} 
    where $K_{C \times \mathbb{P}^1}$ denotes the canonical bundle (whose global sections are products of $1$-forms on $C$ and $\mathbb{P}^1$). To compute the Chern class, it therefore suffices to determine $H^0(C \times \mathbb{P}^1, \mathcal{O}(d,2) \otimes K_{C \times \mathbb{P}^1} ) \cong H^0(C, \mathcal{O}(d) \otimes K_C) \otimes H^0(\mathbb{P}^1, \mathcal{O}(2) \otimes K_{\mathbb{P}^1}) \cong H^0(C, \mathcal{O}(d) \otimes K_C)$ as a representation of $\mathbb{Z}/p$. Using the basis $z^k \otimes dz$ ($k = 1, \dots, d-2$) (where $z \in \mathbb{C} \subseteq C$ is the affine coordinate) for the global sections of $\mathcal{O}(d) \otimes K_C$, one sees that as representations $H^0(C, \mathcal{O}(d) \otimes K_C) = \bigoplus_{k=1}^{d-1} \mathbb{C}_{\zeta^k}$. From this the result follows.
\end{proof}

\begin{lem}\label{lem:eq-euler-IC}
    The equivariant top Chern class of the incidence constraint bundle is given by
    \begin{equation}
        c_{\mathrm{top}} (\mathrm{IC}^{eq}) = H(H-t)\cdots(H-(p-1)t) \cdot c_{0, \infty}(H,t) \in H^*_{\mathbb{Z}/p}(\mathcal{P}_d),
    \end{equation}
    where
    \begin{equation}
        c_{0, \infty}(H,t) = 
        \begin{cases}
            1 & \mbox{ if } b_0 = 1, \ b_\infty = 1 \\
            (H-dt) & \mbox{ if } b_0 = b, \ b_\infty = 1 \\
            H & \mbox{ if } b_0 = 1, \ b_\infty = b \\
            H(H-dt) & \mbox{ if } b_0 = b, \ b_\infty = b
        \end{cases}.
    \end{equation}
\end{lem}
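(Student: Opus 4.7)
My plan is to decompose $\mathrm{IC}^{eq}$ into its line bundle summands and compute the equivariant first Chern class of each summand directly. Since $\mathrm{IC} = \mathcal{L}_0 \oplus \bigl(\bigoplus_{j=1}^p \mathcal{L}_j\bigr) \oplus \mathcal{L}_\infty$ as $\mathbb{Z}/p$-equivariant bundles, and $\mathbb{Z}/p$ acts only on the middle $p$ summands (by cyclic permutation), the Euler class factors accordingly. The computation splits into three pieces: the equatorial factor, and the two fixed-endpoint factors at $z_0$ and $z_\infty$. In each case, the tautological identification $\mathcal{L}_j \cong \mathcal{O}_{\mathcal{P}_d}(1) \otimes \mathcal{O}(d,1)|_{(z_j, y_j)}$ (valid as holomorphic line bundles, with equivariant structure tracking the stabilizer action on the one-dimensional fiber) reduces the problem to computing weights of $1$-dimensional representations.

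For the middle factor, the plan is to use that $\mathbb{Z}/p$ acts freely and transitively on the orbit $\{z_1, \ldots, z_p\} \subseteq C$, so that
\begin{equation*}
\bigoplus_{j=1}^p \mathcal{O}(d,1)|_{(z_j, y_j)} \cong \mathrm{Ind}_{\{1\}}^{\mathbb{Z}/p} \mathbb{C} = \mathbb{C}[\mathbb{Z}/p] \cong \bigoplus_{k=0}^{p-1} \mathbb{C}_{\zeta^k}
\end{equation*}
as $\mathbb{Z}/p$-representations (the $\mathcal{O}_{\mathbb{P}^1}(1)|_{y_j}$ tensor factor is trivial because $\mathbb{Z}/p$ acts trivially on the target $\mathbb{P}^1$). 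Twisting by $\mathcal{O}_{\mathcal{P}_d}(1)$ and using the convention $c_1^{\mathbb{Z}/p}(\mathbb{C}_{\zeta^k}) = -kt$ implicit in \cref{lem:eq-coh-section} yields $c_{\mathrm{top}}\bigl(\bigoplus_{j=1}^p \mathcal{L}_j\bigr) = \prod_{k=0}^{p-1}(H - kt) = H(H-t)\cdots(H-(p-1)t)$, which is the leading factor in the claimed formula.

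For the endpoint factors I would directly read off the weights of $\mathcal{O}_C(d)|_{z_0}$ and $\mathcal{O}_C(d)|_{z_\infty}$ from the chosen linearization $\sigma(z, w) = (z, \zeta^{-1} w)$ on $\mathcal{O}_C(-1)$. Writing $z_0 = [0:1]$ and $z_\infty = [1:0]$ in homogeneous coordinates (the convention consistent with $\sigma_C$ being counterclockwise rotation near $z_0$, i.e.\ $z \mapsto \zeta z$ in the affine chart $z = X_0/X_1$), the fiber $\mathcal{O}_C(-1)|_{z_0}$ is spanned by $(0,1)$ and carries the representation $\mathbb{C}_{\zeta^{-1}}$, while $\mathcal{O}_C(-1)|_{z_\infty}$ is spanned by $(1,0)$ and is trivial. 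Dualizing and taking the $d$-fold tensor power gives $c_1^{\mathbb{Z}/p}(\mathcal{O}_C(d)|_{z_0}) = -dt$ and $c_1^{\mathbb{Z}/p}(\mathcal{O}_C(d)|_{z_\infty}) = 0$, hence $c_1^{\mathbb{Z}/p}(\mathcal{L}_0) = H - dt$ and $c_1^{\mathbb{Z}/p}(\mathcal{L}_\infty) = H$. The four cases for $c_{0,\infty}(H, t)$ then follow by including the factor $c_1^{\mathbb{Z}/p}(\mathcal{L}_0)$ precisely when $Y_0$ is a fiber (i.e.\ $b_0 = b$) and the factor $c_1^{\mathbb{Z}/p}(\mathcal{L}_\infty)$ precisely when $b_\infty = b$, with the factor omitted otherwise since $\mathcal{L}_j = 0$ in that case.

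The main obstacle is not conceptual but bookkeeping: one must carefully reconcile (i) the paper's stipulation that $\sigma_C$ acts by counterclockwise rotation near $z_0$, (ii) the chosen linearization $(X_0, X_1) \mapsto (X_0, \zeta^{-1} X_1)$ on $\mathcal{O}_C(-1)$, and (iii) the sign convention for the equivariant parameter $t$ fixed in \cref{rem:equiv-convention} and manifested in \cref{lem:eq-coh-section}. Once these are made consistent, no further geometric input is required beyond the identification of each $\mathcal{L}_j$ with a twist of $\mathcal{O}_{\mathcal{P}_d}(1)$, and the formula follows by the splitting principle.
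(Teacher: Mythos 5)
Your proposal is correct and follows essentially the same route as the paper's proof: both split $\mathrm{IC}$ into the freely-permuted middle factor (identified with $\mathrm{Reg}_{\mathbb{Z}/p} \otimes \mathcal{O}_{\mathcal{P}_d}(1)$, giving $\prod_{k=0}^{p-1}(H-kt)$) and the fixed-point twists at $z_0$, $z_\infty$ (giving $H-dt$ and $H$). The only difference is that you spell out the fiber-weight computation at $z_0$, $z_\infty$ from the chosen linearization, whereas the paper states the representations $\mathcal{O}(d,1)|_{(0,y_0)} \cong \mathbb{C}_{\zeta^d}$, $\mathcal{O}(d,1)|_{(\infty,y_\infty)} \cong \mathbb{C}_1$ without the intermediate steps.
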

\begin{proof}
    Recall the definition of $\mathrm{IC}$ (\cref{eqn:incidence-bundle} and \eqref{eqn:incidence-line-bundle}), given as $\mathrm{IC} = \mathcal{L}_0 \oplus (\mathcal{L}_1 \oplus \cdots \oplus \mathcal{L}_p) \oplus \mathcal{L}_\infty$ where each $\mathcal{L}_j$ is either $0$ or a line bundle isomorphic to $\mathcal{O}_{\mathcal{P}_d}(1)$.

    Considered equivariantly, $\mathcal{L}_1 \oplus \cdots \oplus \mathcal{L}_\infty \cong \mathrm{Reg}_{\mathbb{Z}/p} \otimes \mathcal{O}_{\mathcal{P}_d}(1)$ where $\mathrm{Reg}_{\mathbb{Z}/p}$ denotes the regular representation of $\mathbb{Z}/p$. In terms of characters, $\mathrm{Reg}_{\mathbb{Z}/p}$ is the rank $p$ representation decomposing as $\bigoplus_{k=0}^{p-1} \mathbb{C}_{\zeta^k}$. This accounts for the factor $\prod_{k=0}^{p-1}(H-kt)$.

    For $\mathcal{L}_0$ and $\mathcal{L}_\infty$, the first guess would be to associate $H = c_1^{\mathbb{Z}/p}(\mathcal{O}_{\mathcal{P}_d}(1))$ whenever the corresponding cohomology class is the (Poincar\'e dual of) the fiber cycle. But recall that the definition of $\mathcal{L}_0$ and $\mathcal{L}_\infty$ involves a choice of an isomorphism $\mathcal{O}(d,1)|_{(0, y_0)} \cong \mathbb{C}$ and $\mathcal{O}(d,1)|_{(\infty, y_\infty)} \cong \mathbb{C}$. In our case, $0, \infty \in C$ are fixed points of the $\mathbb{Z}/p$-action and therefore the fibers $\mathcal{O}(d,1)|_{(0, y_0)}$, $\mathcal{O}(d,1)|_{(\infty, y_\infty)}$ are $\mathbb{Z}/p$-representations. With our choice of linearization, $\mathcal{O}(d,1)|_{(0, y_0)} \cong \mathbb{C}_{\zeta^d}$, $\mathcal{O}(d,1)|_{(\infty, y_\infty)} \cong \mathbb{C}_1$ as representations. Therefore when $b_0=b$ the corresponding line bundle must be twisted by $\mathbb{C}_{\zeta^d}$, accounting for the shift $H \mapsto H-dt$. For $b_\infty = b$ no shift is needed.
\end{proof}

\begin{cor}\label{cor:computation}
The structure constants for $Q\Sigma_b$ are computed as follows. As usual we write $q^d$ for $q^A$ such that $A  = d[\mathbb{P}^1] \in H_2(X;\mathbb{Z})$ for fixed $d \ge 0$.
\begin{equation}
    Q\Sigma_b = \begin{pmatrix} (Q\Sigma_b (1), b) & (Q\Sigma_b (b), b) \\ (Q\Sigma_b (1 ), 1 )  & (Q\Sigma_b (b), 1) \end{pmatrix} = \begin{pmatrix} \sum_{d=1}^{\infty} -d^{p-2}t^{p-2} q^{d} & \sum_{d=1}^\infty -t^{p-1} q^{pd} \\ \sum_{d=1}^{\infty} -2d^{p-3}t^{p-3} q^{d} & \sum_{d=1}^{\infty} d^{p-2}t^{p-2} q^{d} \end{pmatrix}.
\end{equation}
\end{cor}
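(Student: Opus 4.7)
The plan is to apply \cref{thm:qst-equals-Chern-integral} and unwind the resulting equivariant Chern integrals using the explicit presentations already computed. By that theorem together with \cref{lem:eq-coh-section,lem:eq-euler-obs,lem:eq-euler-IC}, each structure constant of $Q\Sigma_b$ at degree $d \ge 1$ equals the coefficient of $H^{2d+1}$ in the polynomial
\[
  I_d(H,t) \;:=\; \prod_{k=1}^{d-1}(H-kt)^2 \cdot \bigl(H^p - H t^{p-1}\bigr) \cdot c_{0,\infty}(H,t)
\]
viewed in the quotient ring $\mathbb{F}_p[H,t]/\prod_{k=0}^d (H-kt)^2$, where I have used the polynomial Fermat identity $\prod_{k=0}^{p-1}(H-kt) = H^p - Ht^{p-1}$ in $\mathbb{F}_p[H,t]$ to simplify the Euler class of $\mathrm{IC}^{eq}$.

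The central observation is the factorisation $\prod_{k=0}^d(H-kt)^2 = H^2(H-dt)^2 \cdot \prod_{k=1}^{d-1}(H-kt)^2$ in $\mathbb{F}_p[H,t]$. Consequently, multiplication by the Obstruction factor $\prod_{k=1}^{d-1}(H-kt)^2$ factors through the small quotient $R := \mathbb{F}_p[H,t]/H^2(H-dt)^2$, and it suffices to reduce $F(H,t) := (H^p - Ht^{p-1}) c_{0,\infty}(H,t)$ to a polynomial $F_{\mathrm{red}}$ of $H$-degree at most $3$ modulo $R$. Since the leading term of $\prod_{k=1}^{d-1}(H-kt)^2$ is $H^{2(d-1)}$, the integral is then simply the coefficient of $H^3$ in $F_{\mathrm{red}}$, which is a very short calculation.

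The reduction is performed in two regimes. When $p \nmid d$, the Chinese Remainder Theorem gives $R \cong \mathbb{F}_p[H,t]/H^2 \oplus \mathbb{F}_p[H,t]/(H-dt)^2$. At $H = 0$ the term $H^p$ vanishes modulo $H^2$; at $H = dt$, Fermat's identity $d^{p-1} \equiv 1$ yields $(H^{p-1} - t^{p-1})|_{H=dt} = 0$ and the first-order expansion $H^{p-1}-t^{p-1} \equiv -d^{p-2} t^{p-2}(H-dt) \pmod{(H-dt)^2}$. Substituting the four values of $c_{0,\infty}$ from \cref{lem:eq-euler-IC} then yields the matrix entries: for $(Q\Sigma_b(b),b)$ the factor $H(H-dt)(H^p - Ht^{p-1})$ is already divisible by $H^2(H-dt)^2$ (visible from $(H-dt) \mid (H^{p-1}-t^{p-1})$ when $p \nmid d$), so the integral vanishes; the remaining cases produce the claimed $-d^{p-2}t^{p-2}$, $-2d^{p-3}t^{p-3}$, and $d^{p-2}t^{p-2}$ contributions. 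When $p \mid d$, the two points $H = 0$ and $H = dt$ coalesce, $R$ becomes $\mathbb{F}_p[H,t]/H^4$, and $H^p$ vanishes for $p \ge 4$; a direct check then shows that only $(Q\Sigma_b(b),b)$ survives, with $F_{\mathrm{red}} \equiv -t^{p-1} H^3$, yielding the exclusive $-t^{p-1} q^{pd}$ contribution in the upper-right entry. The main delicacy in writing this cleanly will be coordinating the two regimes $p \nmid d$ and $p \mid d$ uniformly, but both are controlled by Fermat and the factorisation above.
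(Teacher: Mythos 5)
Your computation is essentially correct and supplies the algebra that the paper's one-line proof (``\cref{lem:eq-coh-section}, \cref{lem:eq-euler-obs}, \cref{lem:eq-euler-IC} allows computation from \cref{thm:qst-equals-Chern-integral}'') leaves to the reader. In particular the observation that the relation $\prod_{k=0}^d(H-kt)^2$ factors through $\prod_{k=1}^{d-1}(H-kt)^2$, so that the Obstruction factor merely shifts degree and the whole integral collapses to the $H^3$-coefficient of $F_{\mathrm{red}} := (H^p - Ht^{p-1})\, c_{0,\infty}(H,t) \bmod H^2(H-dt)^2$, is the right way to organize this, and your CRT/Taylor-expansion bookkeeping for $p \nmid d$ reproduces all four matrix entries, with $d^{-1}$ converted to $d^{p-2}$ via Fermat.

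One caveat: your assertion for $p \mid d$ that ``only $(Q\Sigma_b(b),b)$ survives'' is valid only for $p \ge 5$. When $p = 3$, we have $H^p = H^3 \not\equiv 0 \pmod{H^4}$, so in the case $c_{0,\infty} = 1$ the reduction $F_{\mathrm{red}} = H^3 - Ht^2$ already contributes $b_3 = 1 \equiv -2 \pmod 3$ to the lower-left entry. This does not contradict the corollary, since for $p = 3$ the exponent $p - 3 = 0$ makes $d^{p-3} = 1$ for \emph{all} $d$, so the stated sum $\sum_d -2 d^{p-3}t^{p-3}q^d$ really is $\sum_d -2\, q^d$ including the multiples of $p$ --- but your exposition should handle this case explicitly rather than dismissing it via ``$H^p$ vanishes for $p \ge 4$.'' With that edge case patched, the argument is complete.
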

\begin{proof}
    \cref{lem:eq-coh-section}, \cref{lem:eq-euler-obs}, \cref{lem:eq-euler-IC} allows computation from \cref{thm:qst-equals-Chern-integral}.
\end{proof}

\begin{rem}
    The results in \cref{cor:computation} agrees with the results obtained by the recursion from covariant constancy, modulo the Novikov parameters annihilated by the derivative (i.e. monomials of the form $(\mathrm{const}\cdot q^{pA})$). But \cref{cor:computation} provides strictly more: $(Q\Sigma_b(b), b)$ carries $q^{pA}$ terms.
    
    Intriguingly, this count is nonzero for the curve with fiber insertions at every marked point, in fact exactly equal to $ - 1$ for each multiple of $p$. Such a curve arises from a $p$-fold multiple cover of a curve with three marked points with fiber insertions at every marked point, so it would be interesting to compare the results with the enumerative content of the Aspinwall--Morrison formula $\langle b, b, b \rangle_{pA} = 1$.
    
\end{rem}

\section{$S^1$-equivariant quantum Steenrod operations}\label{sec:S1-QSt}
The aim of this section is to provide a generalization of the definition of the quantum Steenrod operations to the setting where the target symplectic manifold $X$ admits an $S^1$-action. We use almost complex structures of $X$ parametrized by $BS^1 \simeq \mathbb{CP}^\infty$ to define the $(\mathbb{Z}/p \times S^1)$-equivariant analogues $\mathcal{M}^{eq, i, j} = \mathcal{M}_{S^1}^{eq, i, j}$ of the $\mathbb{Z}/p$-equivariant  moduli spaces $\mathcal{M}^{eq, i}$. The proof of the regularity of these moduli spaces and their transversal intersections with $S^1$-equivariant analogues of the incidence cycles are immediate generalizations of the results in \cref{sec:QSt}, so we will be brief with their discussion.

Using the $S^1$-equivariant moduli spaces, the definition of quantum Steenrod operations and the covariant constancy relation are extended to the $S^1$-equivariant setting. In particular, the computation of $S^1$-equivariant quantum Steenrod operations yields a covariantly constant endomorphism for the $S^1$-equivariant quantum connection (in $\mathbb{F}_p$-coefficients).

\subsection{$S^1$-equivariant cohomology}\label{ssec:S1-eq-coh}
As an analogue of \cref{ssec:eq-coh}, we describe our choice of the cellular model for $S^1$-equivariant cohomology. We need to fix a contractible space $ES^1$ with a free $S^1$-action. Again take
\begin{equation}
S^\infty = \{ \sw = (\sw_0, \sw_1, \dots ) \in \mathbb{C}^\infty : \ \sw_k = 0 \mbox{ for } k \gg 0, \|\sw\|^2 = 1 \}
\end{equation}
for the choice of our model. We will denote its elements by $\sw \in ES^1$ to distinguish them from $w \in E\mathbb{Z}/p \simeq S^\infty$. On $S^\infty \simeq ES^1$, an element $\vartheta \in S^1 := \{\vartheta \in \mathbb{C}: \|\vartheta\|^2 = 1 \}$ acts (on the left) by
\begin{equation}
    \vartheta \cdot \sw = \vartheta \sw = \left( \vartheta \sw_0, \vartheta \sw_1, \dots \right).
\end{equation}
We take $BS^1 := S^\infty / S^1 \cong \mathbb{CP}^\infty$ as the quotient.

There is a cellular model for this description, analogous to \eqref{eqn:equiv-param}. Fix $k \ge 0$. Consider
\begin{align}
    \sDelta^{2k} &= \{ \sw \in S^\infty : \sw_k > 0, \sw_{k+1} = \sw_{k+2} = \cdots = 0 \}, \\
    \sDelta^{2k+1} &= \{ \sw \in S^\infty : |\sw_k| > 0, \mathrm{arg}(\sw_k) \neq 0, \sw_{k+1} = \sw_{k+2} = \cdots = 0 \}.
\end{align}
Both $\sDelta^{2k}$ and $\sDelta^{2k+1}$ are homeomorphic to open disks of dimension $2k, (2k+1)$ respectively. Their orientations are given as in \eqref{eqn:equiv-param}.

Compactifications $\overline{\sDelta}^{2k}$ and $\overline{\sDelta}^{2k+1}$ are given by adding in boundaries
\begin{align}
    \partial \overline{\sDelta}^{2k} &= \{ \sw \in S^\infty : \sw_k = \sw_{k+1} = \cdots = 0 \}, \\
    \partial \overline{\sDelta}^{2k+1} &= \{ \sw \in S^\infty : \sw_k \ge 0, \ \sw_{k+1} = \cdots = 0 \}.
\end{align}
The boundaries $\partial \overline{\sDelta}^{2k}$ and $\partial \overline{\sDelta}^{2k+1}$ are stratified by cells of lower dimension. The union $\sDelta^{2k} \cup \sDelta^{2k+1} \cong S^1 \times \sDelta^{2k}$ is exactly the $S^1$-orbit of the cell $\sDelta^{2k}$. Correspondingly, the quotient $BS^1 = S^\infty / S^1 \cong \mathbb{CP}^\infty$ is given as the union of $\sDelta^{2j}$ for every even nonnegative integer $2j$ that defines a CW-structure. Each cell $\sDelta^{2j}$ becomes a cycle in cellular homology and provides an additive basis for $H_*^{S^1}(\mathrm{pt};\mathbb{Z}) := H_*(BS^1 ; \mathbb{Z})$.

In light of this, the $S^1$-equivariant analogues of the equivariant moduli spaces that we will introduce later will only use the even-dimensional cells $\sDelta^{2j}$ as parameter spaces.

For any commutative ring $R$, the $S^1$-equivariant ground ring accordingly admits a presentation
\begin{equation}
    H^*_{S^1}(\mathrm{pt} ; R) \cong R [\![ \teq ]\!],
\end{equation}
where $\teq \in H^2(BS^1;R)$ is the generator such that $\langle \teq, \sDelta^2 \rangle = -1$. We will later consider the case where $R = \mathbb{F}_p$ and $R = \Lambda_{\mathbb{F}_p}$.

\subsection{$\mathbb{Z}/p \times S^1$-equivariant moduli spaces}\label{ssec:S1-eq-moduli}
Using the cellular description of the parameter space $BS^1$, we describe the relevant moduli spaces for defining the $S^1$-equivariant quantum cup product and $S^1$-equivariant quantum Steenrod operations. As in the previous description in \cref{ssec:eq-moduli-spaces}, we will use the Borel model, where we allow the almost complex structure on $X$ to vary over the parameter space $BS^1$.

Let $(X, \omega)$ be a symplectic manifold equipped with a $S^1$-action that preserves the symplectic form. Denote by $\vartheta_X \in \mathrm{Diff}(X, \omega)$ the symplectomorphism given by the action of $\vartheta \in S^1$.
\begin{defn}\label{defn:S1-acs}
    A \emph{$S^1$-equivariant collection of almost complex structures} is the set $J^{eq} := \{ J_{\sw} \}_{\sw \in ES^1}$ where each $J_{\sw} \in \mathrm{End}(TX)$ is a $\omega$-compatible almost complex structure, smoothly depending on $\sw \in ES^1 = S^\infty$, satisfying the equivariance condition $J_{\vartheta \cdot \sw} = \vartheta_X^* J_\sw$. 
\end{defn}
Using the fact that the space of $\omega$-compatible almost complex structures is contractible, one can inductively (for the finite dimensional approximations of $ES^1 \simeq S^\infty$) choose perturbations of a fixed $J$ to establish the existence of $S^1$-equivariant collection of almost complex structures.

To every fixed $J_\sw$, one can repeat the construction of the $\mathbb{Z}/p$-equivariant perturbation data in \cref{defn:eq-perturbation-data}. Use the actions $\sigma_C \times \vartheta_X$ to define an action of $\mathbb{Z}/p \times S^1$ on $C \times X$.

\begin{defn}[Compare \cref{defn:eq-perturbation-data}]\label{defn:S1-eq-perturbation data}
    A collection of \emph{$(\mathbb{Z}/p \times S^1)$-equivariant perturbation data} is a perturbation data
    \begin{align}
        \nu_{S^1,X}^{eq} \in C^\infty \left( (S^\infty \times S^\infty) \times_{\mathbb{Z}/p \times S^1} (C \times X) ; \mathrm{Hom}^{0,1}(TC, TX) \right)
    \end{align}
    where $TC$ and $TX$ are pulled back to the homotopy orbits $(S^\infty \times S^\infty) \times_{\mathbb{Z}/p \times S^1} (C \times X)$, and $TX$ is given the structure of a complex vector bundle by means of $J_{S^1 \cdot \sw}$ along the restriction $(S^\infty \times (S^1 \cdot \sw)) \times_{\mathbb{Z}/p \times S^1} (C \times X)$ for the orbit $S^1 \cdot \sw$ of $\sw$.
\end{defn}

Indeed, the $(\mathbb{Z}/p \times S^1)$-equivariant perturbation data can be considered as $BS^1$-parametrized version of the previously discussed $\mathbb{Z}/p$-equivariant perturbation data, now valued in $J_{\sw}$-complex antilinear homomorphisms from $TC$ to $TX$ (as opposed to having fixed $J$).

\begin{defn}[Compare \cref{defn:eq-map-moduli}]\label{defn:S1-eq-map-moduli}
    Fix $A \in H_2(X;\mathbb{Z})$, $i \ge 0$, $j \ge 0$. The \emph{$(\mathbb{Z}/p \times S^1)$-equivariant map moduli space of degree $A$ and equivariant degree $(i, 2j)$}, or the \emph{$(i, 2j)$th equivariant map moduli space of degree $A$} denoted by $\mathcal{M}_{A}^{eq, i , 2j} := \mathcal{M}_{A}^{eq, i, 2j}(X; \nu_{S^1, X}^{eq})$ is the set
    \begin{equation}
        \mathcal{M}_{A}^{eq, i, 2j} := \left\{ \left( w \in \Delta^i, \sw \in \sDelta^{2j}, u : C \to X \right) : \overline{\partial}_{J_\sw} u = (w \times \sw \times \widetilde{u})^* \nu_{S^1, X}^{eq}, \ u_*[C] = A \right\}.
    \end{equation}
\end{defn}

A generic choice of $J^{eq}$ and $\nu^{eq}_{S^1, X}$ renders these moduli spaces regular of dimension $i + 2j + \dim_{\mathbb{R}}\mathcal{M}_A$, where $\mathcal{M}_A$ is the non-equivariant moduli space of \cref{defn:noneq-map-moduli}. The proof is again the combination of parametrized version of the standard transversality result and the observation that only countably many genericity conditions are imposed (so that their intersection remains generic in the Baire sense). 

These moduli spaces carry evaluation maps
\begin{align}
    \mathrm{ev}^{eq, i, 2j} : \mathcal{M}_A^{eq, i, 2j} &\to \overline{\sDelta}^{2j} \times \left( X \times (\overline{\Delta}^i \times X^p ) \times X \right) \\
    (w, \sw, u) &\mapsto \left( \sw ; u(z_0); (w, u(z_1), \dots, u(z_p)); u(z_\infty) \right)
\end{align}
and admit parametrized analogues of the stable maps for their compactification.

\begin{defn}[Compare \cref{defn:eq-cycle}]\label{defn:S1-eq-cycle}
    Fix a (non-equivariant) incidence cycle $\mathcal{Y} \subseteq X \times X^p \times X$ such that $Y_1 = \cdots = Y_p$. A \emph{$(\mathbb{Z}/p \times S^1)$-equivariant incidence cycle} corresponding to $\mathcal{Y}$ is a smooth map
    \begin{equation}
        \mathcal{Y}^{eq}_{S^1} : E\mathbb{Z}/p \times ES^1 \times \mathcal{Y} \to ES^1 \times \left( X \times (E\mathbb{Z}/p \times X^p) \times X \right)
    \end{equation}
    such that
    \begin{itemize}
        \item For each $(w, \sw) \in E\mathbb{Z}/p \times ES^1 = S^\infty \times S^\infty$, the restriction $\mathcal{Y}^{eq}_{w , \sw} := \mathcal{Y}^{eq}_{S^1}|_{(w, \sw) \times \mathcal{Y}}$ maps into $\{\sw\} \times \left( X \times (\{w\} \times X^p )\times X \right)$, and hence $\mathcal{Y}^{eq}_{w, \sw} : \mathcal{Y} \to X \times X^p \times X$;
        \item For each $\sw \in ES^1$, the restriction $\mathcal{Y}^{eq}_{S^1}|_{E\mathbb{Z}/p \times \{ \sw \} \times \mathcal{Y}}$ is a $\mathbb{Z}/p$-equivariant incidence cycle in the sense of \cref{defn:eq-cycle};
        \item Finally, understood as maps from $\mathcal{Y}$, we have $\mathcal{Y}^{eq}_{\sigma \cdot w, \vartheta \cdot \sw} = \vartheta_X \circ \sigma_{X^p} \circ \mathcal{Y}^{eq}_{w, \sw}$ where $\vartheta_X$ denotes the diagonal action of $\vartheta_X$ on all factors of $X \times X^p \times X$.
    \end{itemize}
\end{defn}

Given a $(\mathbb{Z}/p \times S^1)$-equivariant incidence cycle, denote by
\begin{equation}
    \mathcal{Y}^{eq, i, 2j} := \mathcal{Y}^{eq}_{S^1} |_{\Delta^i \times \sDelta^{2j} \times \mathcal{Y}} : \Delta^i \times \sDelta^{2j} \times \mathcal{Y} \to \overline{\sDelta}^{2j} \times \left( X \times (\overline{\Delta}^i \times X^p ) \times X \right)
\end{equation}
its restriction to the parameter space $\Delta^i \times \sDelta^{2j} \subseteq E\mathbb{Z}/p \times ES^1$. We refer to this as the $(i, 2j)$th equivariant incidence cycle. 

\begin{lem}[Compare \cref{lem:eq-transversality}, \cref{lem:eq-transverse-incidence-cycles}]\label{lem:S^1-eq-transversality}
    Given a sequence of regular $(\mathbb{Z}/p \times S^1)$-equivariant moduli spaces $\{\mathcal{M}_A^{eq, i, 2j}\}_{i, j \ge 0}$, regular equivariant simple stable maps, and a choice of incidence cycle $\mathcal{Y}$ such that $Y_1 = \cdots = Y_p$, there exists an $(\mathbb{Z}/p \times S^1)$-equivariant incidence cycle $\mathcal{Y}^{eq}_{S^1}$ such that each $\mathcal{Y}^{eq, i, 2j}$ is transverse to evaluation maps from $\mathcal{M}_A^{eq, i, 2j}$ and the associated simple stable maps.
\end{lem}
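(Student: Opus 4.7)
The plan is to adapt the double-induction argument used for \cref{lem:eq-transversality} and \cref{lem:eq-transverse-incidence-cycles} to the product parameter space $E\mathbb{Z}/p \times ES^1 \simeq S^\infty \times S^\infty$. Its cellular decomposition is a product of the cellular decompositions from \cref{ssec:eq-coh} and \cref{ssec:S1-eq-coh}, with relevant open cells of the form $\Delta^i \times \sDelta^{2j}$ and their translates under the $(\mathbb{Z}/p \times S^1)$-action (where the full $S^1$-orbit of $\sDelta^{2j}$ is $\sDelta^{2j} \cup \sDelta^{2j+1}$). I will order these cells lexicographically by total dimension $n = i + 2j$ and construct $\mathcal{Y}^{eq}_{S^1}$ cell-by-cell, extending by equivariance to the full orbit of each cell.

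For the base case $(i, 2j) = (0, 0)$, the parameter space is a point, so a generic perturbation of $\mathcal{Y}$ in the $p$-fold product form $Y_0 \times (Y_1 \times \cdots \times Y_p) \times Y_\infty$ yields transversality against $\mathcal{M}_A^{eq, 0, 0}$ and its stable-map boundary by Sard's theorem, just as in the base case of \cref{lem:eq-transversality}. For the inductive step, assume $\mathcal{Y}^{eq, i', 2j'}$ has been chosen on all preceding cells. The boundary of $\overline{\Delta}^i \times \overline{\sDelta}^{2j}$ decomposes as $(\partial \overline{\Delta}^i \times \overline{\sDelta}^{2j}) \cup (\overline{\Delta}^i \times \partial \overline{\sDelta}^{2j})$, on which the data is already determined (possibly through equivariant translates from lower-dimensional cells, using the fact that $\overline{\Delta}^i$ has manifold-with-corners boundary as treated in \cref{lem:eq-transversality} and $\partial \overline{\sDelta}^{2j}$ is a lower-dimensional $S^1$-CW-subcomplex). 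Constantly extrapolate this data into a collar neighborhood of the boundary inside $\overline{\Delta}^i \times \overline{\sDelta}^{2j}$; contractibility of the space of admissible $p$-fold-product perturbations $\mathcal{S}$ of $\mathcal{Y}$ allows this collar datum to be extended (not necessarily transversally) to a map on all of $\Delta^i \times \sDelta^{2j}$. A parametric Sard--Smale argument applied to the finite-dimensional evaluation maps from $\mathcal{M}_A^{eq, i, 2j}$ and from each stratum of the parametrized simple stable maps then provides a comeager subset of perturbations that restore transversality while preserving the collar datum (cf.\ \cref{rem:equivariant-genericity-cycle}). Choose any such perturbation as $\mathcal{Y}^{eq, i, 2j}$, and propagate to the $(\mathbb{Z}/p \times S^1)$-orbit of the cell via $\mathcal{Y}^{eq}_{\sigma w, \vartheta \sw} = \vartheta_X \circ \sigma_{X^p} \circ \mathcal{Y}^{eq}_{w, \sw}$.

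The main obstacle, and the only point where the argument differs nontrivially from \cref{lem:eq-transversality}, is checking that the two equivariance conditions are jointly compatible and that transversality propagates along the free $S^1$-orbit. Since $\mathbb{Z}/p$ and $S^1$ act on disjoint factors of $E\mathbb{Z}/p \times ES^1$, the combined equivariance relation of \cref{defn:S1-eq-cycle} decouples into $\mathbb{Z}/p$-equivariance at each fixed $\sw$ and $S^1$-equivariance at each fixed $w$; the inductive construction satisfies the former by the $p$-fold product structure of $\mathcal{S}$ (as in \cref{lem:eq-transversality}), and the latter is achieved tautologically by defining $\mathcal{Y}^{eq}$ on $\Delta^i \times \sDelta^{2j+1}$ as the $\vartheta_X$-translate of its value on $\Delta^i \times \sDelta^{2j}$. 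Transversality along the $\sDelta^{2j+1}$-slice is then inherited from transversality along $\sDelta^{2j}$, because $\vartheta_X$ is a diffeomorphism of $X$ that commutes with the evaluation maps in the appropriate equivariant sense (the perturbation data $\nu^{eq}_{S^1, X}$ transforms equivariantly as well). Intersecting the resulting comeager subsets over all cells $(i, 2j)$ yields the equivariantly strongly transverse $\mathcal{Y}^{eq}_{S^1}$ and concludes the proof.
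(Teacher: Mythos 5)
Your proof is correct and takes essentially the same approach as the paper's: both reduce to the cell-by-cell inductive construction of \cref{lem:eq-transversality}, with the key new observation being that transversality propagates from $\Delta^i \times \sDelta^{2j}$ to the odd $S^1$-orbit cell $\Delta^i \times \sDelta^{2j+1}$ automatically by $S^1$-equivariance of the data. The paper's organization is slightly leaner --- an outer induction on $j$ alone, noting that $\partial\overline{\sDelta}^{2j+2}$ is a smooth sphere $S^{2j+1}$ so the corner issues of the odd-cell case in \cref{lem:eq-transversality} do not arise from the $ES^1$ direction and the even-cell argument applies directly --- but the mathematical content matches yours.
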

\begin{proof}
We proceed as in \cref{lem:eq-transversality}, proving parametrized transversality inductively for increasing $j$. The inductive step follows from the following two observations.

(i) First note that the transversality for (evaluation maps from) $(i, 2j)$th moduli spaces automatically implies transversality for $(i, 2j+1)$th moduli spaces, defined using the parameter spaces $\sDelta^{2j+1}$, by $S^1$-equivariance. That is, note that $\sDelta^{2j+1} = \bigcup_{\vartheta \neq 1} \vartheta \cdot \sDelta^{2j}$, and that transversality holds in the $\sDelta^{2j}$-parametrized sense for every $\vartheta \cdot \sDelta^{2j}$ by the inductive assumption. A fortiori this implies transversality in the $\sDelta^{2j+1}$-parametrized sense. 

(ii) Also note that the boundary of the even cell $\sDelta^{2j+2}$ is a copy of $S^{2j+1}$, so that the argument of \cref{lem:eq-transversality} (for the cells $\Delta^i$ with $i$ even) passes through. That is, the transversality for the $(i, 2j)$th and $(i, 2j+1)$th moduli spaces together imply the transversality for $(i, 2j+2)$th moduli spaces.
\end{proof}
Hence if the transversal intersection $\mathrm{ev}^{eq, i, 2j}(\mathcal{M}_A^{eq, i, 2j}) \pitchfork \mathcal{Y}^{eq, i, 2j}$ is $0$-dimensional, its count in $\mathbb{F}_p$ may be used to define the structure constants of operations on $(\mathbb{Z}/p \times S^1)$-equivariant quantum cohomology.

\subsection{$S^1$-equivariant quantum Steenrod operations}\label{ssec:S1-QSt-defn}
Using the $S^1$-equivariant analogues $\mathcal{M}_A^{eq, i, 2j}$ of the previous equivariant moduli spaces, we may now define the $S^1$-equivariant analogues of the quantum Steenrod operations. We begin by reviewing the $S^1$-equivariant quantum product.

Let $(X, \omega)$ be a symplectic manifold satisfying the positivity condition from \cref{assm:pos-condition}, admitting an $S^1$-action preserving the symplectic form $\omega$, such that the fixed locus $F = X^{S^1}$ is compact. As in \eqref{eqn:borel-eq-coh}, $H^*_{S^1}(X; \mathbb{F}_p) := H^*(ES^1 \times_{S^1} X ; \mathbb{F}_p)$ denotes the Borel equivariant cohomology ring of $X$.

We assume that $F = X^{S^1}$ is a connected embedded submanifold of $X$, and denote the inclusion by $\iota_F : F \to X$. Under such assumptions, the fixed locus $F$ has a well-defined normal bundle $N_{F/X}$ that is naturally a $S^1$-equivariant vector bundle over $F$. After choosing orientations for $X$ and $F$, the normal bundle has an equivariant Euler class $e_{S^1}(N_{F/X}) \in H^*_{S^1}(F)$. 

The $S^1$-equivariant Poincar\'e pairing is defined on $H^*_{S^1}(X;\mathbb{F}_p)$ by means of localization theorems in equivariant cohomology (as in e.g. \cite[Chapter 3]{AP93}). To apply the localization theorem for $S^1$ with $\mathbb{F}_p$-coefficients, we further assume the following about the $S^1$-action on $X$:
\begin{assm}\label{assm:S^1-action-semifree}
\begin{equation}
    \mbox{the isotropy subgroup of any orbit of the } S^1\mbox{-action is } \{1 \} \mbox { or } S^1.
\end{equation}
\end{assm}
Note that the assumption is always satisfied for our examples, where the $S^1$-action is given on $X = T^*M$ for a complex manifold $M$ as the rotation of cotangent fibers. 
\begin{lem}[Localization; see {\cite[Section 3.2, 5.3]{AP93}}]
    Under \cref{assm:S^1-action-semifree}, the natural restriction map $\iota_F^* : H^*_{S^1}(X) \to H^*_{S^1}(F)$ and the Gysin map $(\iota_F)_{!} : H^*_{S^1}(F) \to H^{*+\mathrm{rk}(N_{F/X})}(X)$ become isomorphisms after tensoring with the fraction field $\mathrm{Frac}(H^*_{S^1}(\mathrm{pt};\mathbb{F}_p)) = \mathbb{F}_p(\!(\teq)\!)$. 
\end{lem}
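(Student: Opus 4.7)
The argument follows the standard Atiyah--Bott--Borel localization strategy, adapted to $\mathbb{F}_p$-coefficients using the semifreeness hypothesis from \cref{assm:S^1-action-semifree}. The plan is to analyze the long exact sequence
\begin{equation*}
    \cdots \to H^*_{S^1}(X, X \setminus F; \mathbb{F}_p) \to H^*_{S^1}(X; \mathbb{F}_p) \to H^*_{S^1}(X \setminus F; \mathbb{F}_p) \to \cdots
\end{equation*}
of the pair $(X, X \setminus F)$, after tensoring with $\mathbb{F}_p(\!(\teq)\!)$. First I would apply the equivariant tubular neighborhood theorem to identify an $S^1$-invariant open neighborhood of $F$ with the normal bundle $N_{F/X}$; then equivariant excision and the equivariant Thom isomorphism identify the relative term with $H^{*-\mathrm{rk}(N_{F/X})}_{S^1}(F; \mathbb{F}_p)$ in such a way that the connecting map to $H^*_{S^1}(X;\mathbb{F}_p)$ is precisely the Gysin pushforward $(\iota_F)_!$.

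The heart of the argument is to show that $H^*_{S^1}(X \setminus F; \mathbb{F}_p) \otimes_{\mathbb{F}_p[\![\teq]\!]} \mathbb{F}_p(\!(\teq)\!) = 0$. Under \cref{assm:S^1-action-semifree}, the $S^1$-action on $X \setminus F$ is free, so $(X \setminus F) \times_{S^1} ES^1 \simeq (X \setminus F)/S^1$, and the structure map to $BS^1$ is just the classifying map of this principal bundle. In our setting ($X$ a finite-dimensional manifold which, in the examples, equivariantly retracts onto a compact neighborhood of $F$), the quotient $(X \setminus F)/S^1$ has finite cohomological dimension over $\mathbb{F}_p$, so its cohomology is concentrated in a bounded range of degrees. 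Since $\teq$ acts by raising degree by two, it acts nilpotently on each class, and hence the localization vanishes. From the long exact sequence, this immediately yields that $(\iota_F)_!$ becomes an isomorphism after inverting $\teq$.

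To handle the restriction map $\iota_F^*$, the plan is to invoke the self-intersection identity $\iota_F^* \circ (\iota_F)_! = e_{S^1}(N_{F/X}) \cup$ on $H^*_{S^1}(F; \mathbb{F}_p)$. Under semifreeness, the fiberwise $S^1$-weights on $N_{F/X}$ must all lie in $\{\pm 1\}$ (otherwise, the action on a weight-$w$ eigenline would have nontrivial isotropy $\mu_{|w|}$), so the leading term of $e_{S^1}(N_{F/X})$ in $\teq$ is $(\pm 1)^{\mathrm{rk}_{\mathbb{C}}(N_{F/X})} \teq^{\mathrm{rk}_{\mathbb{C}}(N_{F/X})}$, with invertible leading coefficient in $\mathbb{F}_p$. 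Hence $e_{S^1}(N_{F/X})$ is a unit in $H^*_{S^1}(F;\mathbb{F}_p) \otimes_{\mathbb{F}_p[\![\teq]\!]} \mathbb{F}_p(\!(\teq)\!)$, so from surjectivity of $\iota_F^* \circ (\iota_F)_!$ together with bijectivity of $(\iota_F)_!$ after localization, $\iota_F^*$ is itself an isomorphism after inverting $\teq$.

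\textbf{Main obstacle.} The genuinely characteristic-$p$ point is the $\teq$-torsion-ness of $H^*_{S^1}(X \setminus F; \mathbb{F}_p)$; this is where the semifreeness assumption is used essentially. If some orbit in $X \setminus F$ had isotropy $\mu_n$ with $p \mid n$, then its contribution would involve $H^*(B\mu_p; \mathbb{F}_p)$, which is infinite-dimensional over $\mathbb{F}_p$ and emphatically not $\teq$-torsion, and the argument would break. Similarly, the invertibility of $e_{S^1}(N_{F/X})$ after localization would fail if the normal bundle admitted a weight $w$ with $p \mid w$; the semifreeness hypothesis rules both pitfalls out at once by forcing every nonzero normal weight to be $\pm 1$.
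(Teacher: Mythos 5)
Your proof is correct and follows essentially the same strategy as the paper's sketch: excise $F$, then show the equivariant cohomology of the complement $X \setminus F$ is $\teq$-torsion because semifreeness makes the $S^1$-action free there (the paper phrases this via Mayer--Vietoris rather than the long exact sequence of the pair $(X, X \setminus F)$, but the content is the same). Where you usefully expand on the paper's one-line sketch is in treating $\iota_F^*$ separately: the paper never explicitly explains why the restriction map, as opposed to the Gysin map, becomes an isomorphism, while you derive it from the self-intersection identity $\iota_F^* \circ (\iota_F)_! = e_{S^1}(N_{F/X}) \cup (-)$ together with the observation that semifreeness forces every normal weight into $\{\pm 1\}$, making the $\teq^{\mathrm{rk}_{\mathbb{C}} N_{F/X}}$-coefficient of the equivariant Euler class a unit in $\mathbb{F}_p$ and hence $e_{S^1}(N_{F/X})$ invertible after localization.
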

\begin{proof}
By choosing an $S^1$-invariant metric, fix a $S^1$-invariant tubular neighborhood of $F = X^{S^1}$. The assumption ensures that the complement $X \setminus F$ is covered by union of (open neighborhoods of) free $S^1$-orbits. (Without the assumption, there may be orbits with nontrivial proper isotropy groups.) The free orbits have $S^1$-equivariant cohomology that are $\teq$-torsion. The result follows from Mayer--Vietoris sequence for equivariant cohomology.
\end{proof}

By the localization theorem, by passing to the fraction field $\mathrm{Frac}(H^*_{S^1}(\mathrm{pt};\mathbb{F}_p)) = \mathbb{F}_p(\!(\teq)\!)$, the Euler class $e(N_{F/X}) = (\iota_F)^*(\iota_F)_! \ 1 \in H^*_{S^1}(F)$ becomes invertible.

\begin{defn}\label{defn:S1-Poincare-pairing}
    The \emph{$S^1$-equivariant Poincar\'e pairing} is a pairing $(\cdot, \cdot)_{S^1} : H^*_{S^1}(X;\mathbb{F}_p) \otimes H^*_{S^1}(X;\mathbb{F}_p) \to \mathbb{F}_p(\!(\teq)\!)$ given by
    \begin{equation}\label{eqn:S1-Poincare-pairing}
        (a, b)_{S^1} = \int_{X} a \cup b := \int_{F} \frac{\iota_F^*(a \cup b)}{e_{S^1}(N_{F/X})} \in \mathbb{F}_p(\!(\teq)\!).
    \end{equation}
\end{defn}

The $S^1$-equivariant Poincar\'e pairing is nondegenerate if $F$ is compact oriented, and for a fixed $b \in H^*_{S^1}(X;\mathbb{F}_p)\otimes \mathbb{F}_p(\!(\teq)\!)$ its linear dual under the Poincar\'e pairing is again denoted $b^\vee \in H^*_{S^1}(X;\mathbb{F}_p) \otimes \mathbb{F}_p(\!(\teq)\!)$.

The $S^1$-equivariant quantum product is defined using the $S^1$-equivariant Gromov--Witten invariants and the Poincar\'e pairing (see e.g. \cite[Section 4]{BMO11}). We further assume that $(X, \omega)$ is closed, for the Gromov--Witten invariants to be well-defined. (This assumption will be relaxed later for $X = T^*\mathbb{P}^1$.)

\begin{defn}\label{defn:S1-quantum-product}
    The \emph{$S^1$-equivariant quantum product} $\ast_{S^1}$ is defined for $a, b \in H^*_{S^1}(X;\mathbb{F}_p)$ by means of the Poincar\'e pairing $(\cdot, \cdot)_{S^1} : H^*_{S^1}(X;\mathbb{F}_p) \otimes H^*_{S^1}(X;\mathbb{F}_p) \to \mathbb{F}_p(\!(\teq)\!)$ as
    \begin{equation}
        a \ast_{S^1} b = \sum_{A} \langle a, b, c \rangle_{A}^{S^1} \ q^A \ c^\vee \in \Lambda_{\mathbb{F}_p}(\!(\teq)\!).
    \end{equation}
    Here $\langle a, b, c \rangle_A^{S^1} \in \mathbb{F}_p[\![\teq]\!]$ is the $S^1$-equivariant $3$-pointed Gromov--Witten invariant of degree $A$, defined from $S^1$-equivariant versions of the moduli space of $3$-pointed genus $0$ curves and equivariant incidence cycles coming from Poincar\'e dual cycles of $a, b, c$. The sum is well-defined as an element of $\Lambda_{\mathbb{F}_p}(\!(\teq)\!)$ by Gromov compactness. The product is extended $q$-linearly to $H^*_{S^1}(X;\Lambda)$. 
\end{defn}

\begin{defn}\label{defn:S1-QH}
    The \emph{$(\mathbb{Z}/p \times S^1)$-equivariant quantum cohomology algebra of $X$} is the ring
    \begin{equation}
        QH^*_{\mathbb{Z}/p \times S^1}(X;\mathbb{F}_p) := \left( H^*_{S^1}(X;\Lambda)[\![t, \theta]\!] \otimes \mathbb{F}_p(\!(\teq)\!), \ast_{S^1} \right),
    \end{equation}
    with the quantum product $\ast_{S^1}$ extended linearly for the $\mathbb{Z}/p$-equivariant parameters $t, \theta$.
\end{defn}

Take $QH^*_{\mathbb{Z}/p \times S^1}(X;\mathbb{F}_p)$, and fix a cohomology class $b \in \mathrm{PD}[Y] \in H^*(X;\mathbb{F}_p)$ that is (mod $p$ reduction of) Poincar\'e dual to a homology cycle represented by an $S^1$-invariant embedded submanifold $Y \subseteq X$. Similarly fix cohomology classes $b_0, b_\infty \in H^*(X;\mathbb{F}_p)$ Poincar\'e dual to $S^1$-invariant submanifolds $Y_0, Y_\infty \subseteq X$. Note that by $S^1$-invariance of the submanifolds $Y_j$, these cohomology classes may be considered as equivariant cohomology classes in $H^*_{S^1}(X;\mathbb{F}_p)$, dual to the Borel constructions $ES^1 \times_{S^1} Y_j \subseteq ES^1 \times_{S^1} X$.

Fix the corresponding non-equivariant incidence cycle $\mathcal{Y} = Y_0 \times (Y \times \cdots \times Y) \times Y_\infty \subseteq X \times X^p \times X$. Choose an equivariant incidence cycle $\mathcal{Y}_{S^1}^{eq}$. Note that by definition for any choice of $w \in E\mathbb{Z}/p$, the image of $\mathcal{Y}^{eq}_{S^1}|_{\{w\} \times ES^1 \times \mathcal{Y}}: ES^1 \times \mathcal{Y} \to ES^1 \times (X \times X^p \times X)$ has $S^1$-quotient that is isotopic to the fiberwise product of the Borel construction $ES^1 \times_{S^1} Y_j \to BS^1$. 

Assume that the data $(J^{eq}, \nu_{S^1, X}^{eq})$, and an equivariant incidence cycle $\mathcal{Y}^{eq}$ are chosen so that the counts $\mathcal{M}_A^{eq, i, 2j} \pitchfork \mathcal{Y}^{eq, i, 2j} \in \mathbb{F}_p$ of curves satisfying the incidence constraints is well-defined.
\begin{defn}\label{defn:S1-QSt}
    For $b \in H^*_{S^1}(X;\mathbb{F}_p)$, the \emph{$S^1$-equivariant quantum Steenrod operation} is a map
    \begin{equation}
        Q\Sigma_b^{S^1} : QH^*_{\mathbb{Z}/p \times S^1} (X;\mathbb{F}_p) \to QH^*_{\mathbb{Z}/p \times S^1} (X;\mathbb{F}_p) 
    \end{equation}
    defined for $b_0 \in H^*_{S^1}(X;\mathbb{F}_p)$ as the sum
    \begin{equation}
        Q\Sigma_b^{S^1}(b_0) = \sum_{b_\infty} \sum_A \sum_i (-1)^\star \left( \# \mathrm{ev}^{eq, i, 2j}(\mathcal{M}_A^{eq, i, 2j}) \pitchfork \mathcal{Y}^{eq, i, 2j} \right) q^A \ (t, \theta)^i \ \teq^{j} \ b_\infty^\vee \in QH^*_{\mathbb{Z}/p \times S^1} (X;\mathbb{F}_p).
    \end{equation}
    The first sum is over a chosen basis of $QH^*_{S^1}(X;\mathbb{F}_p)$ as a $\mathbb{F}_p(\!(\teq)\!)$-module, and $b_\infty^\vee \in H^*_{S^1}(X;\mathbb{F}_p) \otimes \mathbb{F}_p(\!(\teq)\!)$ denotes the dual of $b_\infty \in H^*_{S^1}(X;\mathbb{F}_p)$ under the $S^1$-equivariant Poincar\'e pairing. 
\end{defn}
The structure constants given by the counts are defined to be zero unless
\begin{equation}
    i + 2j + \dim_{\mathbb{R}} X + 2c_1(A) = |b_0| + p|b| + |b_\infty|,
\end{equation}
so that the intersection $ \mathrm{ev}^{eq, i, 2j}(\mathcal{M}_A^{eq, i, 2j}) \pitchfork \mathcal{Y}^{eq, i, 2j}$ is indeed discrete. We extend the operation linearly in the $q$-parameters and $(t, \theta)$-parameters to allow $b_0 \in QH^*_{\mathbb{Z}/p \times S^1}(X;\mathbb{F}_p)$ as an input. The sign $\star$ is as in \eqref{eqn:sign-in-QSt}.

By the dimension constraint, $Q\Sigma_b^{S^1}$ is a map of graded vector spaces (over the graded field $\mathbb{F}_p(\!(\teq)\!)$) of degree $p|b|$.

TQFT-type arguments identical to those in \cite[Section 4]{SW22} may be used to establish the following properties for the $S^1$-equivariant setting. The proofs rely on Morse-theoretic chain-level models for defining the ($S^1$-equivariant) quantum Steenrod operations, and are immediate generalizations of their analogues for the usual quantum Steenrod operations. For this reason, we have decided to highlight only the (very mildly) new aspects that arise in the $S^1$-equivariant case.
\begin{prop}[Compare \cref{prop:QSt-properties}]\label{prop:S1-QSt-properties}
    For $b \in H^*_{S^1}(X;\mathbb{F}_p)$, the operations $Q\Sigma_b^{S^1}$ satisfy the following properties.
        \begin{itemize}
        \item (i) $Q\Sigma_{1}^{S^1} = \mathrm{id}$ for $1 \in H^0(X;\mathbb{F}_p)$ the unit of cohomology algebra.
        \item (ii) $Q\Sigma_b^{S^1}(c)|_{\teq = 0} = Q\Sigma_b(c)$, hence $Q\Sigma_b^{S^1}$ deforms the quantum Steenrod operations of \cref{defn:QSt-structure-constants} in the $S^1$-equivariant parameters.
        \item (iii) $Q\Sigma_b^{S^1}(c)|_{t = \theta = 0} = \overbrace{b \ast_{S^1} \cdots \ast_{S^1} b}^{p} \ \ast_{S^1} \ c$, hence $Q\Sigma_b^{S^1}$ deforms $p$-fold $S^1$-equivariant quantum product in the $\mathbb{Z}/p$-equivariant parameters.
        \item (iv) $Q\Sigma_b^{S^1}(c)|_{q^{A \neq 0}=0} = \mathrm{St}(b) \cup_{S^1} c$, hence $S^1$-equivariant quantum Steenrod operations deform classical Steenrod operations (for the cohomology of $ES^1 \times_{S^1} X$) in quantum parameters.
    \end{itemize}
\end{prop}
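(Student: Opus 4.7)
The plan is to adapt, essentially verbatim, the TQFT-style arguments that establish the analogous properties of the non-$S^1$-equivariant operations $Q\Sigma_b$ (see \cite[Section 4]{SW22} and \cite{Wil20}). Each property corresponds to specializing one of the three parameter sources (quantum, $\mathbb{Z}/p$-equivariant, $S^1$-equivariant), and in each case I would realize the specialization as restriction to a distinguished stratum of the relevant parameter space and then identify the counts there with the claimed moduli-theoretic quantities.

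Properties (ii) and (iii) are closest to definitional and I would handle them first. For (ii), setting $\teq = 0$ is, in view of the cellular description of $H^*_{S^1}(\mathrm{pt};\mathbb{F}_p) \cong \mathbb{F}_p[\![\teq]\!]$, the same as extracting the coefficient of the $0$-cell $\overline{\sDelta}^0 \subseteq BS^1$. Over $\sDelta^0 = \{\mathrm{pt}\}$, the $S^1$-equivariant data $(J^{eq}, \nu^{eq}_{S^1,X})$ restrict to a fixed almost complex structure together with a $\mathbb{Z}/p$-equivariant perturbation datum in the sense of \cref{defn:eq-perturbation-data}, and the moduli spaces $\mathcal{M}_A^{eq,i,0}$ become canonically identified with the $\mathbb{Z}/p$-equivariant moduli spaces $\mathcal{M}_A^{eq,i}$ of \cref{defn:eq-map-moduli}; similarly for the incidence cycles. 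This identifies all structure constants of $Q\Sigma_b^{S^1}|_{\teq=0}$ with those of $Q\Sigma_b$. For (iii), the same argument with the roles of $\Delta^\bullet$ and $\sDelta^\bullet$ exchanged: setting $t = \theta = 0$ extracts the $i = 0$ contribution, and on $\Delta^0$ the perturbation datum has no $\mathbb{Z}/p$-equivariance to remember, so the counts become the $(p+2)$-pointed $S^1$-equivariant Gromov--Witten invariants with insertions $(b_0; b, \dots, b; b_\infty)$. A standard TQFT/associativity argument, together with nondegeneracy of the $S^1$-equivariant Poincar\'e pairing, rewrites this as the coefficient of $b_\infty^\vee$ in the $p$-fold $S^1$-equivariant quantum product $b \ast_{S^1} \cdots \ast_{S^1} b \ast_{S^1} b_0$.

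Property (iv) is the constant-map specialization. Setting $q^{A\neq 0} = 0$ restricts to the degree $A=0$ stratum of $\mathcal{M}_A^{eq,i,2j}$, which for generic perturbation data is the moduli of constant maps. For this component one can choose the perturbation datum so that the equation degenerates to the defining equation for the classical $(\mathbb{Z}/p \times S^1)$-equivariant cohomology operation on $H^*_{S^1}(X;\mathbb{F}_p)$; the count of transverse intersections with the equivariant incidence cycle then computes $(\mathrm{St}(b) \cup_{S^1} b_0, b_\infty)_{S^1}$ by the very construction of the equivariant Steenrod powers in this Borel model (see \cite[Section 2b]{SW22} for the analogous non-$S^1$-equivariant assertion, which imports directly). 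For property (i), with $b = 1 \in H^0(X;\mathbb{F}_p)$, the equatorial incidence cycles become $Y_1 = \cdots = Y_p = X$, i.e. no constraint at all. I would then argue that only the $(A,i,2j) = (0,0,0)$ term contributes, either by a forgetful map argument (the $p$ equatorial marked points become forgettable, producing positive-dimensional fibers for any $A \neq 0$ or $(i,2j) \neq (0,0)$, hence zero counts after generic perturbation), or by invoking (iv) together with $\mathrm{St}(1) = 1$ for the $q=0$ part and (iii) together with $1 \ast_{S^1} \cdots \ast_{S^1} 1 \ast_{S^1} c = c$ for the $t = \theta = 0$ part; combining these along the bigraded specializations, together with the fact that at least one equivariant parameter is active in any remaining contribution, forces the identification $Q\Sigma_1^{S^1} = \mathrm{id}$.

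The main obstacle, as in \cite{SW22} and \cite{Wil20}, is the vanishing step in (i): one must argue that moduli spaces with trivial equatorial constraint and $(A,i,2j) \neq (0,0,0)$ contribute zero mod $p$. The cleanest route is to choose $(J^{eq}, \nu^{eq}_{S^1,X})$ so that the forgetful map collapsing the $p$ equatorial marked points is defined equivariantly for the $(\mathbb{Z}/p \times S^1)$-action on the parameter space, and to verify that its fibers have positive dimension outside $(A,i,2j) = (0,0,0)$ by the index/dimension formula $\dim_{\mathbb{R}} \mathcal{M}_A^{eq,i,2j} = i + 2j + \dim_{\mathbb{R}} X + 2c_1(A)$. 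Once the forgetful map is in place, genericity of the incidence cycle $\mathcal{Y}^{eq}_{S^1}$ (via \cref{lem:S^1-eq-transversality}) ensures that no isolated intersections contribute. Beyond this, the proofs of (ii), (iii), (iv) require no essentially new input beyond the $BS^1$-parametrized repetition of the arguments already used to establish \cref{prop:QSt-properties}.
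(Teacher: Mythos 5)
Your argument for (ii) is exactly what the paper does: restricting to the $0$-cell of $BS^1$ recovers the $\mathbb{Z}/p$-equivariant moduli problem. For (iv) and the primary (forgetful-map) route to (i), the paper defers to the corresponding non-$S^1$-equivariant statements transported into $BS^1$-parametrized Morse-theoretic chain models, which is what you propose.

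The real gap is in (iii), where you invoke ``a standard TQFT/associativity argument'' and present it as formally symmetric to (ii). It is not: (ii) is literal restriction to a cell of the parameter space, whereas (iii) requires comparing a \emph{single} $BS^1$-parametrized $(p+2)$-pointed moduli problem against a $p$-fold iteration of $BS^1$-parametrized $3$-pointed problems. On one side, each factor in $b\ast_{S^1}\cdots\ast_{S^1}b\ast_{S^1}c$ contributes its own power $\teq^{j_k}$ and these multiply to $\teq^{j_1+\cdots+j_p}$; on the other, the coefficient of $\teq^j$ in $Q\Sigma_b^{S^1}(c)|_{t=\theta=0}$ is a count over a single $\mathbb{CP}^j$-cell of $BS^1$. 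The degeneration of $C$ into a length-$p$ chain of $3$-pointed spheres only yields the identity after one verifies that these two bookkeepings agree, and the paper does this via the observation that $p$ generically chosen linear subspaces of codimensions $j_1,\dots,j_p$ in $\mathbb{CP}^j$ with $j_1+\cdots+j_p=j$ intersect in a single point, so that each contribution carrying $\teq^{j_1}\cdots\teq^{j_p}$ on the iterated-product side pairs off with exactly one contribution carrying $\teq^j$ on the $Q\Sigma^{S^1}$ side. This is the genuinely new step in passing from \cref{prop:QSt-properties} to \cref{prop:S1-QSt-properties} and cannot be left implicit in ``standard TQFT argument.''

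Your secondary route to (i) is also not valid as stated: knowing $Q\Sigma_1^{S^1}|_{q=0}=\mathrm{id}$ and $Q\Sigma_1^{S^1}|_{t=\theta=0}=\mathrm{id}$ constrains nothing about coefficients of monomials with both a nontrivial $q^A$ and a nontrivial $(t,\theta)^i$; ``at least one equivariant parameter is active in any remaining contribution'' describes those terms but is not a vanishing argument for them. Keep the forgetful-map/free-$\mathbb{Z}/p$-action argument as the actual proof of (i).
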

\begin{proof}[Proof sketch]
    (ii) is a consequence of the observation that projecting to the $\teq$-constant term amounts to considering all cohomology classes $b \in H^*_{S^1}(X;\mathbb{F}_p)$ by their non-equivariant counterpart $b|_{h=0} \in H^*(X;\mathbb{F}_p)$ and discarding for all $j>0$ the $(i, 2j)$th $(\mathbb{Z}/p \times S^1)$-equivariant moduli spaces with incidence constraints. The $(i,0)$th $(\mathbb{Z}/p \times S^1)$-equivariant moduli spaces exactly recover the $i$th $\mathbb{Z}/p$-equivariant moduli spaces.

    (i), (iii), (iv) are immediate generalizations of the results \cite[Proposition 4.8, Lemma 3.5, Lemma 4.6]{SW22}, which use Morse-theoretic models for their proofs. The Morse model can be adopted immediately to the $S^1$-equivariant setting by taking $BS^1$-parametrized perturbations of a fixed Morse function to define Morse functions on $ES^1 \times_{S^1} X \to BS^1$ (compare \cref{defn:S1-eq-cycle}). 
    
    For statement (iii), we consider a degeneration of the domain curve $C$ to a length $p$ linear chain of $3$-pointed genus $0$ curves. Each of such $3$-pointed curve is the domain for the maps that define the structure constants for the (Morse-theoretic chain-level) quantum product. We claim that the associated degeneration of the moduli space (with incidence constraints $b$ along $z_1, \dots, z_p \in C$) indeed counts the $p$-fold iterated $S^1$-equivariant quantum product (with $b$). For this, it suffices to observe that linear subspaces $\mathbb{CP}^{j_1}, \dots, \mathbb{CP}^{j_p}$ satisfying $j_1 + \cdots + j_p = j$ generically intersect at a single point in $\mathbb{CP}^j$. Therefore for every contribution from a map from length $p$ chain of genus $0$ curves in the expression $b \ast_{S^1} \cdots \ast_{S^1} b \ast_{S^1} c$ where each $b\ast_{S^1}$ provides a factor of $\teq^{j_1}, \dots, \teq^{j_p}$, there is exactly one contribution from a map from $C$ in $Q\Sigma_b^{S^1}(c)|_{t=\theta=0}$ carrying $\teq^{j_1 + \cdots + j_p} = \teq^j$ obtained by reversing the degeneration, and this is the desired result.
\end{proof}

\subsection{Covariant constancy for the $S^1$-equivariant quantum connection}\label{ssec:S1-cov-constancy}
By replacing the quantum product $\ast$ with the $S^1$-equivariant quantum product $\ast_{S^1}$, we obtain the $S^1$-equivariant quantum connection. The assumption on the manifold $(X, \omega)$ and the $S^1$-action on $X$ is as in the previous \cref{ssec:S1-QSt-defn}.

Again fix $a \in H^2_{S^1}(X;\mathbb{Z})$ to define the differentiation operator $\partial_a : \Lambda[\![t, \theta]\!] \to \Lambda[\![t, \theta]\!]$. Note that choosing $\teq \in H^2_{S^1}(X;\mathbb{Z})$ amounts to the zero operator (since $\teq \cdot A$ = 0), so there is actually no loss of generality by restricting to the case $a \in H^2(X;\mathbb{Z})$ as in the previous definition, \cref{defn:quantum-connection}.

\begin{defn}\label{defn:S1-Qconn}
The \emph{(small) ($S^1$-)equivariant quantum connection} of $X$ is the collection of endomorphisms $\nabla^{S^1}_a : QH^*_{\mathbb{Z}/p \times S^1} (X;\mathbb{F}_p) \to QH^*_{\mathbb{Z}/p \times S^1}(X;\mathbb{F}_p)$  for each $a \in H^2_{S^1}(X;\mathbb{Z})$ defined by
\begin{equation}
    \nabla_a^{S^1} \beta = t \partial_a \beta - a \ast_{S^1} \beta.
\end{equation}
\end{defn}

The $S^1$-equivariant quantum connection is the main object for study in the equivariant mirror symmetry program, and above we consider its version with $\mathbb{F}_p$-coefficients. 

The following proposition generalizes the covariant constancy relation of \cite{SW22}, and establishes the fact that the operations $Q\Sigma_b^{S^1}$ may be used to find covariantly constant endomorphisms for the equivariant quantum connection.

\begin{prop}\label{prop:S1-cov-constancy}
    For any choice of $b \in H^*(X;\mathbb{F}_p)$, the $S^1$-equivariant quantum Steenrod operation $Q\Sigma_b^{S^1}$ is a covariantly constant endomorphism for the $S^1$-equivariant quantum connection, that is it satisfies
    \begin{equation}
        \nabla_a^{S^1} \circ Q\Sigma_b^{S^1} - Q\Sigma_b^{S^1} \circ \nabla_a^{S^1} = 0
    \end{equation}
    for any $a \in H^2_{S^1}(X;\mathbb{Z})$.
\end{prop}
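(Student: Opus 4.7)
The plan is to adapt the proof strategy sketched for the non-$S^1$-equivariant \cref{thm:covariant-constancy}, exploiting the fact that the $\mathbb{Z}/p$-action is on the \emph{domain} curve $C$ while the $S^1$-action is on the \emph{target} $X$, so the two actions are essentially orthogonal. The geometric input is the $(\mathbb{Z}/p \times S^1)$-equivariant auxiliary moduli space $\mathcal{M}_A^{eq, i, 2j}(a)$ parametrizing solutions to the perturbed $J_\sw$-holomorphic curve equation (in the sense of \cref{defn:S1-eq-map-moduli}) equipped with one extra free marked point $z_{\mathrm{free}} \in C$ carrying an $S^1$-equivariant incidence constraint Poincar\'e dual to $a \in H^2_{S^1}(X;\mathbb{Z})$. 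Transversality and compactness for this moduli space, and for a compatible choice of equivariant incidence cycles for the pair $(a, b)$, follow by the same inductive cellular construction used in \cref{lem:S^1-eq-transversality}, and introduce no new difficulties beyond those already treated for $Q\Sigma_b^{S^1}$ itself.

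Using these moduli spaces, one defines an auxiliary operation $Q\Pi_{a,b}^{S^1}$ whose structure constants are the equivariant counts $\#\,\mathrm{ev}^{eq, i, 2j}(\mathcal{M}_A^{eq, i, 2j}(a)) \pitchfork \mathcal{Y}^{eq, i, 2j}$, weighted by $q^A (t,\theta)^i \teq^j$. The first key identification is $Q\Pi_{a,b}^{S^1} = \partial_a Q\Sigma_b^{S^1}$: for each rigid contribution $u \in \mathcal{M}_A^{eq, i, 2j}$ to $Q\Sigma_b^{S^1}$, the preimage $u^{-1}(\mathrm{PD}[a])$ contains $(a \cdot A)$ points where $z_{\mathrm{free}}$ can land, each contributing identically. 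This intersection number computation is unchanged from the non-$S^1$-equivariant case because the equivariant incidence cycle for $a$ restricts, for each fixed $\sw \in ES^1$, to a cycle in $X$ whose intersection pairing with $u_*[C]$ reads off the Chern number $a \cdot A$.

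The remaining step is the fixed-point localization on $C$. Since the $\mathbb{Z}/p$-action on the domain $C$ has fixed points $z_0, z_\infty$ independently of any choices on the target, the chain-level relation $t \cdot [C] = [z_\infty] - [z_0]$ inside the $\mathbb{Z}/p$-equivariant cellular chain complex of $C$ (established in \cite[Section 2c]{SW22}) continues to hold verbatim. Applying it to the one-parameter family of free marked points $z_{\mathrm{free}} \in C$ and identifying the boundary contributions (where $z_{\mathrm{free}}$ collides with $z_0$ or $z_\infty$) via a TQFT-type gluing argument yields
\[
t\,\partial_a\, Q\Sigma_b^{S^1}(-) \;=\; a \ast_{S^1} Q\Sigma_b^{S^1}(-) \;-\; Q\Sigma_b^{S^1}(a \ast_{S^1} -),
\]
which is exactly the desired relation $\nabla_a^{S^1} \circ Q\Sigma_b^{S^1} - Q\Sigma_b^{S^1} \circ \nabla_a^{S^1} = 0$.

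The main obstacle to watch for is the TQFT-gluing identification of the boundary strata with the $S^1$-equivariant quantum product $\ast_{S^1}$ (rather than merely the ordinary quantum product): when $z_{\mathrm{free}}$ collides with $z_0$ or $z_\infty$, the principal curve degenerates into a nodal curve, and one must verify that the $BS^1$-parametrized family $J^{eq}$ of almost complex structures and the equivariant perturbation data $\nu^{eq}_{S^1, X}$ can be chosen compatibly with the degeneration so that the resulting count of broken configurations faithfully computes the iterated $S^1$-equivariant Gromov--Witten invariants appearing in $a \ast_{S^1}(-)$. This is a standard parametrized neck-stretching argument, and is the only place where the $S^1$-equivariance enters nontrivially beyond the parallel arguments in \cite{SW22}.
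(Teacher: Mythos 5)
Your proof takes essentially the same approach as the paper's: both generalize the argument of \cite[Section 5]{SW22} by replacing $\mathcal{M}_A^{eq,i}(a)$ with the $(\mathbb{Z}/p\times S^1)$-equivariant auxiliary moduli spaces $\mathcal{M}_A^{eq,i,2j}(a)$, identify the resulting operation with $\partial_a Q\Sigma_b^{S^1}$, and invoke the cellular chain-level identity $t[C]=[z_\infty]-[z_0]$ on the domain (which, as you correctly observe, is unaffected by the $S^1$-action on the target) to equate the boundary strata with $a\ast_{S^1}Q\Sigma_b^{S^1}(-)$ and $Q\Sigma_b^{S^1}(a\ast_{S^1}-)$. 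Your final paragraph flagging the compatibility of the $BS^1$-parametrized data with the degeneration is a useful explicit articulation of a point the paper leaves implicit.
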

\begin{proof}
    The proof is a direct generalization of the proof for the covariant constancy of $Q\Sigma_b$ with respect to $\nabla_a$, given in \cite[Section 5]{SW22}. The sketch of their proof was provided in \cref{thm:covariant-constancy}. Here we mention the necessary modifications for the $S^1$-equivariant setting, adopting the notation from \cref{thm:covariant-constancy}.

    The analogue of the moduli spaces $\mathcal{M}^{eq, i}_A(a)$ with an extra free marked point are given by $\mathcal{M}^{eq, i, 2j}_A(a)$, the only difference being that the almost complex structure $\{J_\sw\}_{\sw \in ES^1}$ is allowed to vary (over $BS^1$). Collecting the zero-dimensional moduli spaces $\mathcal{M}^{eq, i, 2j}_A(a)$ as structure constants defines the operation $\partial_a Q\Sigma_b^{S^1}$.

    We consider the two strata in the moduli spaces $\mathcal{M}^{eq, i, 2j}_A(a)$ where the free marked point collides with $z_0 = 0$ or $z_\infty = \infty$ in $C$. It remains to note that, as in the discussion in \cref{prop:S1-QSt-properties}, the operations obtained by taking the structure constants to be the counts of solutions from such strata can be identified with $Q\Sigma_b^{S^1} (a \ast_{S^1} -)$ and $a \ast_{S^1} Q\Sigma_b^{S^1}(-)$, respectively.

    By using the relationship $t [C] = [z_\infty] - [z_0]$ in a distinguished choice (\cite[Section 2c]{SW22}) of the cellular chain complex for $C$, one obtains
    \begin{equation}
        t \partial_a Q\Sigma_b^{S^1}(-) = a \ast_{S^1} Q\Sigma_b^{S^1}(-) - Q\Sigma_b^{S^1}(a\ast_{S^1} - )
    \end{equation}
    which is equivalent to the desired result.
\end{proof}

The theorem will be used in the next section to determine a covariantly constant endomorphism for the $S^1$-equivariant quantum connection of the simplest symplectic resolution (the case of $T^*\mathbb{P}^1$).

\section{Endomorphisms for the equivariant quantum connection of $T^*\mathbb{P}^1$}\label{sec:T*P1}
By the $S^1$-equivariant analogue of covariant constancy (\cref{prop:S1-cov-constancy}), the task of constructing a covariantly constant endomorphism for the equivariant quantum connection is reduced to the task of fully computing the operations $Q\Sigma_b^{S^1}$. We execute this task in this section.

\subsection{The $S^1$-equivariant quantum connection of $T^*\mathbb{P}^1$ in characteristic $p$}
We first compute the $S^1$-equivariant quantum connection for the geometry of $T^*\mathbb{P}^1$.

Fix $p >2$. Consider $X = T^*\mathbb{P}^1$ with its standard K\"ahler form. Note that this is \emph{not} the standard (complex) Liouville symplectic form on the cotangent bundle, and the zero section $\mathbb{P}^1$ is a symplectic, not a Lagrangian, submanifold for this symplectic structure. On $T^*\mathbb{P}^1$, we consider the $\mathbb{C}^*$-action which dilates the cotangent fibers and restrict to the $S^1$-action rotating the fibers under the equatorial inclusion $S^1 \subseteq \mathbb{C}^*$. 

Observe that $T^*{\mathbb{P}^1}$ deformation retracts to the base, and the deformation retract can also be chosen to be $S^1$-equivariant. Hence $H^*_{S^1}(T^*\mathbb{P}^1) \cong H^*_{S^1}(\mathbb{P}^1)$. The $S^1$-action on the base $\mathbb{P}^1$ is trivial, so by the K\"unneth formula we additively have
\begin{equation}
    H^*_{S^1}(T^*\mathbb{P}^1;\mathbb{F}_p) \cong H^*_{S^1}(\mathbb{P}^1;\mathbb{F}_p) = \mathbb{F}_p[\![\teq]\!] \cdot 1 \oplus \mathbb{F}_p [\![ \teq] \!] \cdot b,
\end{equation}
for $1 \in H^0_{S^1}(T^*\mathbb{P}^1)$ (the Poincar\'e dual of the fundamental cycle) and $b \in H^2_{S^1}(T^*{\mathbb{P}^1})$ (the Poincar\'e dual of the fiber). The ring structure is undeformed, so $b^2 =0$. We are interested in determining the operation $Q\Sigma_b^{S^1}$ for the fiber class $b \in H^2_{S^1}(T^*\mathbb{P}^1)$. 

\begin{lem}
    The $S^1$-equivariant Poincar\'e pairing on $T^*\mathbb{P}^1$ is given by
    \begin{equation}\label{eqn:S1-Poincare-pairing-T*P1}
        ( \cdot, \cdot)_{S^1} = \begin{pmatrix} (1,1)_{S^1} & (b,1)_{S^1} \\ (1,b)_{S^1}& (b,b)_{S^1} \end{pmatrix} = \begin{pmatrix}
           2 \teq^{-2} & \teq^{-1} \\ \teq^{-1} & 0
        \end{pmatrix}.
    \end{equation}
\end{lem}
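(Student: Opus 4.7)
The plan is a direct application of the localization definition of the $S^1$-equivariant Poincar\'e pairing in \eqref{eqn:S1-Poincare-pairing}. The $S^1$-action on $T^*\mathbb{P}^1$ by fiber rotation has fixed locus $F = \mathbb{P}^1$, the zero section, and the normal bundle of the zero section inclusion $\iota_F : \mathbb{P}^1 \hookrightarrow T^*\mathbb{P}^1$ is canonically identified with the holomorphic cotangent bundle $T^*\mathbb{P}^1 \cong \mathcal{O}(-2)$ as a complex line bundle on $\mathbb{P}^1$. Since the $S^1$-action acts on the cotangent fibers by the standard scaling representation, its equivariant first Chern class is
\begin{equation*}
    e_{S^1}(N_{F/X}) = c_1^{S^1}(T^*\mathbb{P}^1) = -2H + \teq \ \in H^*_{S^1}(\mathbb{P}^1;\mathbb{F}_p) \cong \mathbb{F}_p[\![\teq]\!][H]/(H^2),
\end{equation*}
where $H \in H^2(\mathbb{P}^1;\mathbb{F}_p)$ is the hyperplane class and the $+\teq$ shift encodes the weight of the rotation (with the sign fixed by the convention of \cref{rem:equiv-convention}).

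Next, I would identify $b \in H^2_{S^1}(T^*\mathbb{P}^1;\mathbb{F}_p)$ Poincar\'e dual to a cotangent fiber with its restriction under the retraction isomorphism $H^*_{S^1}(T^*\mathbb{P}^1) \cong H^*_{S^1}(\mathbb{P}^1)$. Since a fiber meets the zero section transversely in a single point, $\iota_F^* b = H$. The two elements $\{1, H\}$ provide an $\mathbb{F}_p[\![\teq]\!]$-basis, and the localized inverse of the Euler class expands as
\begin{equation*}
    \frac{1}{-2H + \teq} = \frac{1}{\teq} \cdot \frac{1}{1 - 2H/\teq} = \frac{1}{\teq} + \frac{2H}{\teq^2},
\end{equation*}
where the series truncates after two terms because $H^2 = 0$.

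Finally, I would compute each entry of the matrix by extracting the coefficient of $H$ after multiplying through:
\begin{align*}
    (1,1)_{S^1} &= \int_{\mathbb{P}^1} \frac{1}{-2H+\teq} = \frac{2}{\teq^2}, \\
    (1,b)_{S^1} = (b,1)_{S^1} &= \int_{\mathbb{P}^1} \frac{H}{-2H+\teq} = \frac{1}{\teq}, \\
    (b,b)_{S^1} &= \int_{\mathbb{P}^1} \frac{H^2}{-2H+\teq} = 0.
\end{align*}
The only genuinely non-mechanical step is the sign of the equivariant shift in $e_{S^1}(N_{F/X})$; this is fixed by the convention of \cref{rem:equiv-convention} and is consistent with the shape of the connection \eqref{eqn:T*P1-quantum-connection} (e.g.\ the $-2\teq q/(1-q)$ entry). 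Once the sign is pinned down, the remainder is a one-line geometric series expansion.
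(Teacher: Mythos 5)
Your proof is correct and takes essentially the same route as the paper: identify the fixed locus as the zero section, compute the equivariant Euler class of the normal bundle $\mathcal{O}(-2)$ with its weight-$1$ rotation as $-2b+\teq$, invert in the localized ring, and read off the three pairings. The only cosmetic difference is that you introduce the separate symbol $H = \iota_F^* b$ for the restricted class, whereas the paper keeps the letter $b$ throughout; this is slightly cleaner but not a different argument.
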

\begin{proof}
    It suffices to describe the pairing for the basis elements $1$ and $b$, as the pairing is $\mathbb{F}_p[\![\teq]\!]$-bilinear. The pairing (see \cref{defn:S1-Poincare-pairing}) is defined in terms of the fixed point localization theorem, so we must determine the Euler class of the normal bundle of the fixed locus of the $S^1$-action. The fixed locus is just the zero section $\mathbb{P}^1 \subseteq T^*\mathbb{P}^1$.

    As a complex vector bundle, the normal bundle is given by $N_{\mathbb{P}^1/T^*\mathbb{P}^1} \cong \mathcal{O}_{\mathbb{P}^1}(-2)$, and this carries a weight $1$ action by our choice of the $S^1$-action on $T^*\mathbb{P}^1$. The equivariant Euler class can be computed by twisting the non-equivariant line bundlle $\mathcal{O}(-2)$ with this weight $1$ representation rotating the fibers, so that $e(N_{\mathbb{P}^1/T^*\mathbb{P}^1}) = -2b + \teq$ (under our conventions for $\teq$). In the localized ring $H^*_{S^1}(\mathbb{P}^1) \otimes \mathbb{F}_p(\!(\teq)\!)$, the inverse of the Euler class is given by $e(N_{\mathbb{P}^1/T^*\mathbb{P}^1})^{-1} = \teq^{-1} ( 1 + 2\teq^{-1} b)$. The desired result now follows from computing using \eqref{eqn:S1-Poincare-pairing}.
\end{proof}

We proceed to compute the $S^1$-equivariant quantum connection for $T^*\mathbb{P}^1$. Note that $H_2(T^*\mathbb{P}^1 ; \mathbb{Z}) \cong H_2(\mathbb{P}^1;\mathbb{Z}) \cong \mathbb{Z}$ so the symbols $q^A \in \Lambda$ can be identified with monomials $q^d$ for $d \ge 0$. Note that the target manifold $T^*\mathbb{P}^1$ is not compact. Nevertheless, by reduction to the local $\mathbb{P}^1$ geometry (\cref{lem:nbhd-of-T*P1}) and maximum principle for the local $\mathbb{P}^1$ (see discussion in \cref{sssec:map-moduli-cptness}), its $S^1$-equivariant Gromov--Witten invariants and hence the quantum connection are well-defined.

\begin{lem}
    The $S^1$-equivariant quantum connection for $T^*\mathbb{P}^1$ is given by
    \begin{equation}
        \nabla_b^{S^1} = t \partial_b - b \ \ast_{S^1} = t q \partial_q  - \begin{pmatrix}
            0 & \frac{\teq^2 q}{1-q} \\ 1 & \frac{-2\teq q}{1-q} 
        \end{pmatrix}  .
    \end{equation}
\end{lem}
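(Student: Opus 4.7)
The plan is to compute the $S^1$-equivariant quantum multiplication by $b$ on the $\mathbb{F}_p[\![\teq]\!]$-basis $\{1, b\}$ of $H^*_{S^1}(T^*\mathbb{P}^1;\mathbb{F}_p)$ and to assemble the result into the stated matrix form. For the first column, the $S^1$-equivariant string equation will force $\langle b, 1, c\rangle^{S^1}_d = 0$ for every $d \geq 1$, so $b \ast_{S^1} 1 = b$ with no quantum corrections, yielding $(0,1)^T$.

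For the second column, I first observe that the classical part vanishes, since $b^2 = 0$ in $H^*_{S^1}(T^*\mathbb{P}^1;\mathbb{F}_p) \cong \mathbb{F}_p[\![\teq]\!][H]/(H^2)$ via the $S^1$-equivariant retraction to the zero section. A virtual dimension count (the moduli $\overline{\mathcal{M}}_{0,3}(T^*\mathbb{P}^1, d[\mathbb{P}^1])$ has virtual complex dimension $2$, since $c_1(T T^*\mathbb{P}^1) = 0$) then shows that only the invariant $\langle b, b, b\rangle^{S^1}_d$ can contribute at positive degree, and by the $S^1$-equivariant Poincar\'e pairing it must lie in $\mathbb{F}_p \cdot \teq$. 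The core computational claim to establish is
\begin{equation*}
    \langle b, b, b \rangle^{S^1}_d = \teq \qquad \text{for every } d \geq 1.
\end{equation*}

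The cleanest route to establish this identity will be via Atiyah's simultaneous resolution (\cref{lem:nbhd-of-T*P1}), which identifies the $S^1$-equivariant enumerative problem on $T^*\mathbb{P}^1$ with the ordinary enumerative problem on the local $\mathbb{P}^1$ Calabi--Yau threefold $Z = \mathrm{Tot}(\mathcal{O}(-1)^{\oplus 2} \to \mathbb{P}^1)$; under this correspondence the desired count reduces to the Aspinwall--Morrison/Voisin multiple cover computation recalled in \cref{sec:noneq-localP1}. Alternatively, one may apply $S^1$-equivariant virtual localization \`a la Graber--Pandharipande directly on $T^*\mathbb{P}^1$: the fixed locus of $\overline{\mathcal{M}}_{0,3}(T^*\mathbb{P}^1, d)$ is $\overline{\mathcal{M}}_{0,3}(\mathbb{P}^1, d)$, and the virtual normal bundle contribution is computed from $N_{\mathbb{P}^1/T^*\mathbb{P}^1} \cong \mathcal{O}(-2)$ equipped with $S^1$-weight $1$.

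With the invariant in hand, the final assembly is routine: $b \ast_{S^1} b = \frac{\teq q}{1-q} \cdot b^\vee$, and a direct computation from the pairing \eqref{eqn:S1-Poincare-pairing-T*P1} gives $b^\vee = \teq \cdot 1 - 2 b$, yielding the second column $\left(\frac{\teq^2 q}{1-q},\ -\frac{2\teq q}{1-q}\right)^T$. The mod $p$ formulation then follows from the integrality of the $q$-series $\frac{q}{1-q}$. The main technical obstacle is the identity $\langle b, b, b\rangle^{S^1}_d = \teq$ itself: both routes require careful tracking of equivariant weights, with the localization route involving a delicate identification of multiple-cover automorphism contributions with the excess intersection on the obstruction bundle, so the route through \cref{lem:nbhd-of-T*P1} is preferable for clarity.
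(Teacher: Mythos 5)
Your proposal is correct and follows essentially the same route as the paper's proof: reduce to the single structure constant $\langle b,b,b\rangle^{S^1}_d$ via unitality/string equation, evaluate it as $\teq$ by passing through the simultaneous resolution (\cref{lem:nbhd-of-T*P1}) to the Aspinwall--Morrison/Voisin multiple cover formula, and then dualize against the equivariant Poincar\'e pairing using $b^\vee = \teq - 2b$. The extra observations you add (dimension count, string equation for the first column, the alternative direct localization route) are sound but not needed beyond what the paper already invokes.
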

\begin{proof}
    Since $1$ remains a unit for the $S^1$-equivariant quantum product, it suffices to compute $b \ast_{S^1} b$ to determine the connection matrix. Due to the key geometric observation \cref{lem:nbhd-of-T*P1}, the computation of the structure constants (the three-pointed $S^1$-equivariant Gromov--Witten invariants) is reduced to the Aspinwall--Morrison multiple cover formula for the local $\mathbb{P}^1$, from \cite{AM93}, \cite{Voi96}. The result is that
    \begin{equation}
        ( b \ast_{S^1} b, b )_{S^1} = \sum_{d \ge 1} \langle b, b, b \rangle_{d[\mathbb{P}^1]}^{S^1} q^d = \teq \sum_{d \ge 1} q^d = \frac{\teq q}{1-q}.
    \end{equation}
    By considering the dual $b^\vee = -2b + \teq$ for the basis $\{1, b\}$ under the equivariant Poincar\'e pairing \eqref{eqn:S1-Poincare-pairing-T*P1}, we obtain the desired result
    \begin{equation}
        b \ast_{S^1} b = \frac{\teq q}{1-q} (-2b + \teq) = \frac{\teq^2 q}{1-q} \cdot 1 + \frac{-2\teq q}{1-q} \cdot b.
    \end{equation}
\end{proof}

By \cref{prop:S1-cov-constancy}, $Q\Sigma_b^{S^1}$ will be a covariantly constant endomorphism for $\nabla_b^{S^1}$. Our next goal is to compute the structure constants to determine this endomorphism.

\subsection{Reduction to local $\mathbb{P}^1$ geometry}
The computation of all the structure constants of $Q\Sigma_b^{S^1}$ may appear to be a much harder task than that of its non-$S^1$-equivariant analogue $Q\Sigma_b$. Here we describe how the computation for the $T^*\mathbb{P}^1$ case can in fact be reduced to the previous example of local $\mathbb{P}^1$ CY3, and computed with the same method (mildly generalized to incorporate the $S^1$-action).

\subsubsection{Deformations of $T^*\mathbb{P}^1$}
The key observation for this reduction is the following, which is a classical result going back at least to Atiyah \cite{Ati58}.
\begin{lem}\label{lem:nbhd-of-T*P1}
    Consider $X = (T^*\mathbb{P}^1, J_X)$ with its integrable complex structure $J_X$. The local $\mathbb{P}^1$ CY3 $(Z = \mathrm{Tot}(\mathcal{O}(-1)^{\oplus 2} \to \mathbb{P}^1), J_Z)$ is biholomorphic to the total space of a family of smooth complex varieties $\mathcal{X} \to \mathbb{C}$, with every fiber $\mathcal{X}_s$, $s \in \mathbb{C}$ diffeomorphic to $X$, and the central fiber $\mathcal{X}_0$ biholomorphic to $(X, J_X)$. In particular, the complex structure on each fiber $\mathcal{X}_s$ can be thought of as an almost complex structure $J_s$ on $X$.
\end{lem}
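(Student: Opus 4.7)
The plan is to exhibit $Z$ as the total space of a family $\pi: Z \to \mathbb{C}$ arising from Atiyah's simultaneous resolution of the versal deformation of the $A_1$ surface singularity. I would first realize $Z$ as the small resolution of the conifold $\mathcal{Y} = \{xy - zw = 0\} \subset \mathbb{C}^4$ via
\begin{equation*}
  Z \cong \{((x,y,z,w), [\alpha:\beta]) \in \mathbb{C}^4 \times \mathbb{P}^1 : \alpha x = \beta z,\ \alpha w = \beta y\},
\end{equation*}
with the projection $\rho: Z \to \mathcal{Y}$ an isomorphism away from the origin, contracting the zero section $\mathbb{P}^1 \subseteq Z$ to the singular point. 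A direct check on the two standard patches of $\mathbb{P}^1$ identifies this incidence variety with $\mathrm{Tot}(\mathcal{O}(-1)^{\oplus 2} \to \mathbb{P}^1)$.

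Next, define $\pi := \pi_0 \circ \rho : Z \to \mathbb{C}$, where $\pi_0 : \mathbb{C}^4 \to \mathbb{C}$ is the linear form $(x,y,z,w) \mapsto w-z$. For each $s \in \mathbb{C}$, the image $\pi_0^{-1}(s) \cap \mathcal{Y}$ is the affine surface $\{xy = z(z+s)\}$; after completing the square via $z' = z + s/2$ this becomes $\{xy - (z')^2 = -s^2/4\}$. For $s \neq 0$ this is a smooth affine quadric, and for $s = 0$ it is the $A_1$ du Val singularity $\{xy = z^2\}$. Restricting $\rho$, the central fiber $\pi^{-1}(0)$ is thereby identified with the minimal resolution of $\{xy = z^2\}$, which is biholomorphic to $(T^*\mathbb{P}^1, J_X)$ by the classical description of the $A_1$ resolution. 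Since $\rho$ is already an isomorphism over the smooth fibers, $\pi^{-1}(s)$ for $s \neq 0$ is biholomorphic to the smooth affine quadric, which is well-known to be diffeomorphic to $T^*S^2 \cong T^*\mathbb{P}^1$ as a smooth manifold.

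To ensure the diffeomorphism type of the fibers is constant in a compatible way, one needs a smooth trivialization $\mathcal{X} \cong \mathbb{C} \times T^*\mathbb{P}^1$ of the underlying real manifolds. The central fiber's exceptional $\mathbb{P}^1$ deforms to the vanishing $2$-sphere in nearby affine quadrics (the $A_1$ Lefschetz thimble structure), giving a canonical diffeomorphism of a compact neighborhood of the zero section in each fiber to a fixed $T^*\mathbb{P}^1$-neighborhood. Outside a compact neighborhood of the zero section, $\pi$ is identified with the smooth family of affine quadric complements, which admit explicit radial trivializations using the $\mathbb{C}^*$-action rescaling the fibers of $\mathcal{O}(-1)^{\oplus 2}$ (equivalently, the $\mathbb{C}^*$-action on $\mathbb{C}^4$ by common scaling). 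Patching the two trivializations via a $\mathbb{C}^*$-equivariant cutoff produces the required smooth trivialization, after which we pull back the complex structures fiberwise to obtain the desired family $\{J_s\}_{s \in \mathbb{C}}$ on $T^*\mathbb{P}^1$ with $J_0 = J_X$.

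The main obstacle is handling the non-compactness of the fibers: Ehresmann's theorem does not apply to $\pi$ directly since it is not proper. I would resolve this by leveraging the $\mathbb{C}^*$-symmetry described above, which makes the family conical at infinity and lets the compact-region trivialization be extended globally in a canonical way. An alternative (and perhaps cleaner) route is to invoke hyperk\"ahler geometry: all fibers carry compatible Eguchi--Hanson-type hyperk\"ahler metrics varying smoothly in $s$, realizing the entire family as a hyperk\"ahler rotation of a single smooth manifold $T^*\mathbb{P}^1$ — this sidesteps the non-proper Ehresmann issue entirely.
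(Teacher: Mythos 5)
Your argument is the same Atiyah simultaneous resolution invoked in the paper's proof sketch; the paper presents it via the quadric $\{x^2+y^2+z^2=t^2\}$ and contracts one ruling of the exceptional $\mathbb{P}^1\times\mathbb{P}^1$ of the blowup, while you write the small resolution directly as an incidence variety over the conifold $\{xy=zw\}$ — a linear change of coordinates identifies the two, and in both cases the family parameter is a residual linear form (your $w-z$, the paper's $t$). You are somewhat more careful than the paper in that you flag the non-properness of $\pi$ and the resulting inapplicability of Ehresmann's theorem, which the paper's sketch passes over in silence.

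One caution on the final paragraph. The $\mathbb{C}^*$-equivariance you cite is genuinely useful, but perhaps not quite in the way you describe: the scaling action on $\mathbb{C}^4$ lifts to $Z$ and covers the scaling on the base $\mathbb{C}$, so it permutes the fibers $\mathcal{X}_s\to\mathcal{X}_{\lambda s}$ rather than acting within each fiber; its real content is that all $\mathcal{X}_s$ with $s\neq 0$ are mutually \emph{biholomorphic}, reducing the problem to comparing $\mathcal{X}_1$ with $\mathcal{X}_0$ near $s=0$. For the same reason, the proposed hyperk\"ahler rotation shortcut does not work as stated: the twistor family of a fixed Eguchi--Hanson metric is parametrized by $\mathbb{P}^1$, and the periods of the holomorphic $2$-form vary linearly in the twistor parameter, whereas in your family they vary as $-s^2/4$; the natural comparison is a branched double cover of (a chart of) the twistor line, not an identification, and for $s\neq 0$ your fibers are all the same complex manifold rather than a one-parameter variation. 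A family of hyperk\"ahler metrics with varying periods would be needed instead, which is no simpler than the cutoff-and-patch argument you give first.
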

\begin{proof}[Proof sketch]
    This is a well-known result which goes under the name of ``small/simultaneous resolution.'' The key observation is that the exceptional divisor in the blowup of the ordinary double point in dimension $3$, $\{x^2+y^2+z^2 = t^2 \} \subseteq \mathbb{C}^3_{x,y,z} \times \mathbb{C}_t$, is a smooth quadric $Q = \mathbb{P}^1 \times \mathbb{P}^1 \subseteq \mathbb{P}^3$. Hence by collapsing one factor of $\mathbb{P}^1$ in the exceptional divisor of the blowup $\mathrm{Bl} \subseteq (\mathbb{P}^1 \times \mathbb{P}^1) \times \mathbb{C}^4$ we obtain the small resolution $Z \subseteq \mathbb{P}^1 \times \mathbb{C}^4$. The corresponding projection maps $Z \to \mathbb{P}^1$ and $Z \to \mathbb{C}_t$ are identified with $Z = \mathrm{Tot}(\mathcal{O}(-1)^{\oplus 2} \to \mathbb{P}^1)$ and $\mathcal{X} \to \mathbb{C}$, respectively. There are two choices for $Z$ (corresponding to the choice of the factor of $\mathbb{P}^1 \times \mathbb{P}^1$ to collapse) related by a birational transformation; we choose one once and for all.
\end{proof}

To compute the operations $Q\Sigma_b^{S^1}$, one must fix a $S^1$-equivariant collection of almost complex structures (\cref{defn:S1-acs}) on $X = T^*\mathbb{P}^1$. By \cref{lem:nbhd-of-T*P1}, there is a family of almost complex structures $J_s$ on $X$ parametrized by $s \in \mathbb{C}$, given by the complex structure on the fiber $\mathcal{X}_s \to \mathbb{C}$ (the ``smoothing'' of the $A_1$ singularity for $s \neq 0$). The base $\mathbb{C}$ can in fact be identified with the $S^1$-equivariant miniversal deformation space of $(X, J_X)$ (see for example \cite[Theorem 1]{KM92}), and the $S^1$-action of $(X, J_X)$ extends to an action on the total space of the whole family $Z = \mathcal{X} \to \mathbb{C}$. We will choose our almost complex structures $\{J_\sw\}_{\sw \in ES^1}$ within this class of deformations, so that for every $\sw \in ES^1$ there exists $s \in \mathbb{C}$ such that $J_\sw = J_s$.

\subsubsection{Locus of the distinguished complex structure of $T^*\mathbb{P}^1$}
We now choose $J^{eq} = \{ J_\sw\}_{\sw \in ES^1}$ satisfying the equivariance condition $J_{\vartheta \cdot \sw} = \vartheta_X^* J_\sw$ within the set $\{J_s\}_{s \in \mathbb{C}}$, where $J_0 = J_X$ is the integrable structure of $X$. The choice $J^{eq}$ can equivalently be regarded as the data of a $S^1$-equivariant map
\begin{equation}
    \mathbf{J} : ES^1 = S^\infty \to \mathbb{C}, \quad \mathbf{J}(\sw) = s \iff J_\sw = J_s.
\end{equation}
where the action of $\vartheta \in S^1$ on $\mathbb{C}$ is by scalar multiplication (rotation).

Since $(X, J_s)$ with $s \neq 0$ is an affine variety (a smoothing of an $A_1$ singularity), the Gromov--Witten invariants for such almost complex structures are trivial. The nontrivial counts therefore only occur in the moduli spaces over $\sw \in BS^1$ such that $\mathbf{J}(\sw) = 0$. (The vanishing locus of $\mathbf{J}$ as a subset of $BS^1 = ES^1 / S^1$ is well-defined, since $0 \in \mathbb{C}$ is the fixed point of the $S^1$-action on $\mathbb{C}$.)

\begin{lem}\label{lem:locus-J-standard}
    Denote the locus of $\sw \in BS^1$ for which $\mathbf{J}(\sw) = 0$ by $V_{\mathbf{J}} \subseteq BS^1$. Restricted to each finite-dimensional approximation $\mathbb{CP}^j = \bigcup_{k=1}^{j} \sDelta^{2j}$, the locus $V_\mathbf{J} \cap \mathbb{CP}^{j}$ can be identified with a hyperplane $\mathbb{CP}^{j-1} \hookrightarrow \mathbb{CP}^j$. 
\end{lem}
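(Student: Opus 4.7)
The plan is to reduce the statement to a direct check by making a judicious choice of $\mathbf{J}$ within the class of $S^1$-equivariant maps $ES^1 \to \mathbb{C}$. Recall from \cref{lem:nbhd-of-T*P1} that $X = T^*\mathbb{P}^1$ carries a family of integrable complex structures $\{J_s\}_{s \in \mathbb{C}}$ parametrized by the base of the deformation $\mathcal{X} \to \mathbb{C}$, and the equivariant collection $J^{eq}$ is encoded by $\mathbf{J}$ via $J_\sw = J_{\mathbf{J}(\sw)}$.

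The first step is to identify the weight of the induced $S^1$-action on the deformation parameter $s \in \mathbb{C}$. Using the small resolution description $Z \subseteq \mathbb{P}^1 \times \mathbb{C}^4$, the rotation of cotangent fibers of $T^*\mathbb{P}^1$ extends to an action on $\mathbb{C}^4$ under which $(x,y,z,t) \mapsto (\vartheta x, \vartheta y, \vartheta z, \vartheta t)$, since all four coordinates are linear in the fibers of the projection $Z \to \mathbb{P}^1$. In particular the base coordinate $t$ carries weight one. Combined with the biholomorphisms $\vartheta_{\mathcal{X}} : \mathcal{X}_s \to \mathcal{X}_{\vartheta s}$ induced by this $S^1$-action on the total space, the equivariance condition $J_{\vartheta \sw} = \vartheta_X^* J_\sw$ becomes the weight-one condition $\mathbf{J}(\vartheta \sw) = \vartheta \mathbf{J}(\sw)$.

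The second step is to exploit the freedom in choosing $\mathbf{J}$: any complex-linear functional $\mathbb{C}^\infty \to \mathbb{C}$, restricted to $S^\infty$, automatically satisfies this equivariance. I would specialize to the particular choice $\mathbf{J}(\sw_0, \sw_1, \ldots) = \sw_0$. The zero locus of this functional in $S^\infty$ descends to $V_\mathbf{J} = \{[\sw_0 : \sw_1 : \cdots] : \sw_0 = 0\} \subseteq BS^1$. Under the standard identification $\mathbb{CP}^j = \{[\sw_0 : \cdots : \sw_j]\} \hookrightarrow \mathbb{CP}^\infty$ (which matches the cellular decomposition $\mathbb{CP}^j = \bigsqcup_{k=0}^j \sDelta^{2k}$ from \cref{ssec:S1-eq-coh}), the intersection $V_\mathbf{J} \cap \mathbb{CP}^j$ is precisely $\{[0 : \sw_1 : \cdots : \sw_j]\} \cong \mathbb{CP}^{j-1}$, a hyperplane.

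The only nontrivial point in the argument is determining the $S^1$-weight on the deformation parameter, which requires unravelling the small resolution and the compatibility of the biholomorphisms $\vartheta_\mathcal{X}$ with the fiber identifications $\varphi_s: X \to \mathcal{X}_s$; once this is settled, both the existence of $\mathbf{J}$ with the desired equivariance and the explicit identification of its zero locus with a hyperplane are essentially formal.
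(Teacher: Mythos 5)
Your proposal is correct and follows essentially the same route as the paper: the paper argues that $\mathbf{J}$, being $S^1$-equivariant of weight one, extends to a holomorphic (hence linear) section of $\mathcal{O}_{\mathbb{CP}^\infty}(1)$, whose zero locus is a hyperplane; you simply make the underlying linear choice explicit by taking $\mathbf{J}(\sw) = \sw_0$ and reading off the hyperplane directly. Your additional check that the deformation parameter has $S^1$-weight one is correct (and consonant with the paper's later weight computation in the proof of the obstruction-bundle Euler class), though the phrase ``linear in the fibers'' glosses over the fact that one needs the equation $x^2+y^2+z^2 = t^2$ to be weight-homogeneous; in any case this point is inherited from the surrounding discussion rather than part of the lemma itself.
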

\begin{proof}
    Consider $\mathbf{J} : ES^1 \cong S^\infty \to \mathbb{C}$, and extend this uniquely to a fiberwise linear function on the total space of the complex line bundle $\mathcal{O}_{\mathbb{CP}^\infty}(-1) \to \mathbb{CP}^\infty = BS^1$. The vanishing locus of $\mathbf{J}$ in $BS^1 = \mathbb{CP}^\infty$ can then be identified with the vanishing locus of a section of $\mathcal{O}_{\mathbb{CP}^\infty}(1)$, which is a hyperplane $V_{\mathbf{J}} \subseteq \mathbb{CP}^\infty$. Namely, $V_{\mathbf{J}}$ is another copy of $\mathbb{CP}^\infty$ with an embedding $V_\mathbf{J} \subseteq \mathbb{CP}^\infty$ that restricts to an embedding $\mathbb{CP}^{j-1} \hookrightarrow \mathbb{CP}^j$ of some hyperplane.
\end{proof}

\begin{lem}\label{lem:reduction-rank1}
    Consider the vanishing locus $V_{\mathbf{J}} \cap \mathbb{CP}^j \subseteq \mathbb{CP}^j$ from \cref{lem:locus-J-standard}, and consider $V_{\mathbf{J}} \cap \sDelta^{2j}$ as a subset in $ES^1$. For a sufficiently small transversal slice $D$ (biholomorphic to a neighborhood of $0 \in \mathbb{C}$) to the vanishing locus $V_{\mathbf{J}} \cap \sDelta^{2j}$ at a chosen point $\sw' \in V_{\mathbf{J}} \cap \sDelta^{2j}$, the family $(X, J_\sw)_{\sw \in D} \to D$ obtained by restriction can be identified biholomorphically with a neighborhood of $\mathbb{P}^1$ in the local $\mathbb{P}^1$ CY3, i.e. $Z = \mathrm{Tot}(\mathcal{O}(-1)^{\oplus 2} \to \mathbb{P}^1)$. 
\end{lem}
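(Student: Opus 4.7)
The plan is to deduce the identification from the universal property of the miniversal deformation together with the transversality hypothesis. First I would recall, as was used in the paragraph preceding the lemma, that $\mathbb{C}$ is the base of the $S^1$-equivariant miniversal deformation of $(X, J_X)$, with the total space of this family given precisely by $\mathcal{X} \to \mathbb{C}$ from \cref{lem:nbhd-of-T*P1}. Miniversality means that any germ of a holomorphic family of complex structures on $X$ centered at $J_X$ is isomorphic, as a family of complex manifolds, to the pullback of $\mathcal{X} \to \mathbb{C}$ under a (uniquely determined on derivatives) holomorphic classifying map from the parameter germ into $\mathbb{C}$.

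Next, I would observe that by the very definition of $V_\mathbf{J}$, the map $\mathbf{J} : ES^1 \to \mathbb{C}$ already plays the role of such a classifying map for the family $\{J_\sw\}_{\sw \in ES^1}$: the fiber $(X, J_\sw)$ equals $\mathcal{X}_{\mathbf{J}(\sw)}$ by construction. Restricting to the slice $D$ at $\sw' \in V_\mathbf{J} \cap \sDelta^{2j}$, the restricted family $(X, J_\sw)_{\sw \in D}$ is identified with the pullback of $\mathcal{X} \to \mathbb{C}$ along $\mathbf{J}|_D : D \to \mathbb{C}$, which sends $\sw'$ to $0$. The transversality of $D$ to the hypersurface $V_\mathbf{J} \cap \sDelta^{2j} = \mathbf{J}^{-1}(0) \cap \sDelta^{2j}$ at $\sw'$ is exactly the statement that $d \mathbf{J}|_{\sw'}$ does not vanish on $T_{\sw'} D$; since $D$ and $\mathbb{C}$ are both of complex dimension one, this makes $\mathbf{J}|_D$ a local biholomorphism from a neighborhood of $\sw' \in D$ to a neighborhood of $0 \in \mathbb{C}$.

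Consequently the pulled-back family is biholomorphic to the restriction of $\mathcal{X} \to \mathbb{C}$ to a neighborhood of $0$. Invoking \cref{lem:nbhd-of-T*P1} once more, the total space of $\mathcal{X} \to \mathbb{C}$ is biholomorphic to $Z = \mathrm{Tot}(\mathcal{O}(-1)^{\oplus 2} \to \mathbb{P}^1)$, with the central fiber $\mathcal{X}_0$ mapped to the zero section $\mathbb{P}^1 \subseteq Z$. Restricting this biholomorphism to a neighborhood of $0 \in \mathbb{C}$ in the base then yields a biholomorphism between the total space of $(X, J_\sw)_{\sw \in D}$ and a neighborhood of $\mathbb{P}^1 \subseteq Z$, as claimed.

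I do not anticipate a serious obstacle: the argument is a direct unwinding of definitions. The one point worth being careful about is the invocation of miniversality in its $S^1$-equivariant form, ensuring that the classifying map can be taken compatibly with the $S^1$-action scaling cotangent fibers. This is standard (see e.g. the reference to \cite{KM92} in the text immediately above the lemma), and it is exactly what ensures that our chosen $\mathbf{J}$ actually exists as an $S^1$-equivariant classifying map of the family $\{J_\sw\}_{\sw \in ES^1}$, so that the rest of the argument applies verbatim.
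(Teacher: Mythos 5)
Your proof takes essentially the same route as the paper's: invoke the identification of $\mathbb{C}$ with the $S^1$-equivariant miniversal deformation base (with $\mathcal{X}\to\mathbb{C}$ the universal family, biholomorphic to $Z$ by \cref{lem:nbhd-of-T*P1}), view $\mathbf{J}$ as the classifying map, and restrict to a transversal slice $D$ through $\sw'$. The one step you state a bit too quickly is that non-vanishing of $d\mathbf{J}|_{\sw'}$ on $T_{\sw'}D$ makes $\mathbf{J}|_D$ a local \emph{biholomorphism}: transversality only yields a local diffeomorphism, and $\mathbf{J}|_D$ is not a priori holomorphic for an arbitrary smooth slice. This is repaired either by choosing $D$ to be a complex disk (possible since $V_{\mathbf{J}}$ is cut out by the holomorphic extension of $\mathbf{J}$ from \cref{lem:locus-J-standard}) or by declaring the complex structure on $D\cong\mathbb{C}$ to be the one pulled back by $\mathbf{J}|_D$; the paper's own proof is equally terse on this point.
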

\begin{proof}
    The target $\mathbb{C}$ of the map $\mathbf{J}$ is the miniversal deformation space of $(X, J_X)$, and the family $(X, J_s)_{s \in \mathbb{C}}$ over $\mathbb{C}$ is exactly the local $\mathbb{P}^1$ CY3 by \cref{lem:nbhd-of-T*P1}. So a small neighborhood $U$ of $0 \in \mathbb{C}$ carries the family $(X, J_u)_{u \in U}$ which is a neighborhood of the zero fiber $(X, J_0) \cong (X, J_X)$ inside $Y$. By choosing the transversal slice $D$ to be the preimage $\mathbf{J}|_{\sDelta^{2j}}^{-1}(V)$ for an open disk $V \subset U$, we obtain the desired family.
\end{proof}

\subsubsection{$(\mathbb{Z}/p \times S^1)$-equivariant local $\mathbb{P}^1$ CY3}
\cref{lem:locus-J-standard} and \cref{lem:reduction-rank1} allows the computation of the structure constants of $Q\Sigma_b^{S^1}$ for the fiber class $b \in H^2_{S^1}(T^*\mathbb{P}^1)$ to be reduced to the local $\mathbb{P}^1$ case.

Fix $\mathcal{Y} = Y_0 \times (Y_1 \times \cdots \times Y_p) \times Y_\infty$ such that $Y_1 = \cdots = Y_p$ are fiber cycles, and $\mathcal{Y}^{eq}_{S^1}$ be a choice of $E\mathbb{Z}/p \times ES^1$-parametrized equivariant family of perturbations $\mathcal{Y}^{eq}_{w, \sw}$ such that each $Y_j(w, \sw) = \mathrm{proj}_j \circ \mathcal{Y}^{eq}_{w, \sw}$ is still a fiber cycle. (Compare the discussion with \cref{ssec:comparison-of-eq-moduli}.)

Recall that the $S^1$-action on $X = T^*\mathbb{P}^1$ extends to an action of the local $\mathbb{P}^1$ CY3 $Z = \mathrm{Tot}(\mathcal{O}(-1)^{\oplus 2} \to \mathbb{P}^1)$ (see discussion after \cref{lem:nbhd-of-T*P1}). Indeed, the constructions of the $\mathbb{Z}/p$-equivariant versions of the thickened moduli space $\mathcal{P}_A^{eq}$, the obstruction bundle $\mathrm{Obs}^{eq}$, and the incidence constraint bundle $\mathrm{IC}^{eq}$ from \cref{ssec:eq-section-moduli} all admit obvious generalizations into their $(\mathbb{Z}/p \times S^1)$-equivariant versions by means of the $(\mathbb{Z}/p \times S^1)$-equivariant Borel construction. Denote these by $\mathcal{P}^{eq}_{S^1, A}$, $\mathrm{Obs}^{eq}_{S^1}$, $\mathrm{IC}^{eq}_{S^1}$, respectively. 

\begin{prop}\label{prop:integration-formula-S1-QSt}
    Fix $b_0, b, b_\infty \in H^*(X)$ to be the Poincar\'e dual to $Y_0, Y = Y_1 = \cdots = Y_p,$ and $Y_\infty$. The corresponding structure constants for $Q\Sigma_b^{S^1} : QH^*_{\mathbb{Z}/p \times S^1}(X; \mathbb{F}_p) \to QH^*_{\mathbb{Z}/p \times S^1}(X;\mathbb{F}_p)$ from $b \in H^2_{S^1}(X;\mathbb{F}_p)$, given by
    \begin{equation}
       (Q\Sigma_b(b_0), b_\infty)_{S^1} = \sum_{i, j} \sum_A \# \ \mathrm{ev} (\mathcal{M}_A^{eq, i, 2j}) \pitchfork \mathcal{Y}^{eq, i, 2j} \  q^A \ (t,\theta)^i \ \teq^j
    \end{equation}
    for $Q\Sigma_b^{S^1} : QH^*_{\mathbb{Z}/p \times S^1}(X; \mathbb{F}_p) \to QH^*_{\mathbb{Z}/p \times S^1}(X;\mathbb{F}_p)$ from $b \in H^2_{S^1}(X;\mathbb{F}_p)$ are equal to
    \begin{equation}
        \sum_{i,j} \sum_{A} \teq \cdot  \left\langle \Delta^i \times \sDelta^{2j-2}, 
    \int_{\mathcal{P}_{S^1, A}^{eq}} c_{\mathrm{top}}^{eq}(\mathrm{Obs}_{S^1}^{eq}) \cup c_{\mathrm{top}}^{eq}(\mathrm{IC}_{S^1}^{eq}) \right\rangle q^A \ (t,\theta)^i \ \teq^{j-1}
    \end{equation}
    obtained from $(\mathbb{Z}/p \times S^1)$-equivariant Chern class integrals.
\end{prop}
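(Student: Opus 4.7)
My plan is to use the deformation-theoretic observations (\cref{lem:nbhd-of-T*P1}, \cref{lem:locus-J-standard}, \cref{lem:reduction-rank1}) to reduce the $(\mathbb{Z}/p \times S^1)$-equivariant moduli problem for $T^*\mathbb{P}^1$ to the $(\mathbb{Z}/p \times S^1)$-equivariant moduli problem for the local $\mathbb{P}^1$ CY3 geometry, and then invoke the Chern-integral formula (the $(\mathbb{Z}/p \times S^1)$-equivariant analogue of \cref{thm:qst-equals-Chern-integral}) for the latter. The factor of $\teq$ and the degree shift $\sDelta^{2j}\rightsquigarrow\sDelta^{2j-2}$ are to arise geometrically as follows: $V_{\mathbf{J}}$ is a hyperplane in $BS^1=\mathbb{CP}^\infty$ Poincar\'e dual to $\teq$ (up to sign, absorbed into the count), and the $(i,2j)$th moduli space for $T^*\mathbb{P}^1$ will be shown to be supported along $V_{\mathbf{J}}\cap \sDelta^{2j}\simeq \sDelta^{2j-2}$, yielding the local-CY3 moduli of equivariant degree $(i,2j-2)$ together with an overall factor of $\teq$ coming from integrating out the normal direction.

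The steps I will carry out are as follows. First, choose the $S^1$-equivariant collection $J^{eq} = \{J_{\sw}\}_{\sw \in ES^1}$ of almost complex structures on $X=T^*\mathbb{P}^1$ to lie inside the one-parameter family $\{J_s\}_{s\in\mathbb{C}}$ of \cref{lem:nbhd-of-T*P1} by way of the equivariant map $\mathbf{J}:ES^1\to\mathbb{C}$, and pick the $(\mathbb{Z}/p \times S^1)$-equivariant perturbation datum $\nu^{eq}_{S^1,X}$ to have support concentrated in a small tubular neighborhood $U$ of $V_{\mathbf{J}}$ in $BS^1$. Second, note that for $\sw\notin V_{\mathbf{J}}$, the target $(X,J_\sw)$ is an affine smoothing of the $A_1$ surface singularity and hence admits no non-constant $J_\sw$-holomorphic spheres by the maximum principle; the concentrated support of $\nu^{eq}_{S^1,X}$ together with a Taubes-trick rescaling (as in the discussion following \cref{lem:acs-compatibility}) then ensures that every solution in $\mathcal{M}_A^{eq,i,2j}$ for $A\neq 0$ has equivariant parameter $\sw$ lying in $U$. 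Third, shrinking $U$ and applying \cref{lem:reduction-rank1}, I will identify the family $(X,J_\sw)_{\sw\in U}$ with a $(V_{\mathbf{J}}\cap\sDelta^{2j})$-parametrized family of neighborhoods of the zero section inside the local $\mathbb{P}^1$ CY3 $Z=\mathrm{Tot}(\mathcal{O}(-1)^{\oplus 2}\to\mathbb{P}^1)$, with the normal disc $D\cong\mathbb{C}$ to $V_{\mathbf{J}}$ playing the role of the deformation parameter for $Z\to\mathbb{C}$. Under this identification the perturbation $\nu^{eq}_{S^1,X}$ can be taken to come from an $(\mathbb{Z}/p \times S^1)$-equivariant decoupling perturbation datum $\nu^{eq}_{S^1}$ on $Z$, and $\mathcal{M}_A^{eq,i,2j}$ identifies as oriented manifolds with the $(\mathbb{Z}/p \times S^1)$-equivariant map moduli of $Z$ parametrized over $\Delta^i\times(V_{\mathbf{J}}\cap\sDelta^{2j})\times D$. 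Fourth, since every curve occurs over the locus $\{0\}\subseteq D$, the $D$-factor contributes via the transverse intersection with the central fiber, which in equivariant cohomology is Poincar\'e dual to the generator $\teq$ (our sign-convention from \cref{ssec:S1-eq-coh}). Finally, applying the $(\mathbb{Z}/p \times S^1)$-equivariant extension of \cref{thm:qst-equals-Chern-integral} to the local CY3 moduli converts the resulting count into the stated Chern-class integral over $\mathcal{P}_{S^1,A}^{eq}$.

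The main obstacle I anticipate is a careful orchestration of two transversality requirements at once: the decoupling perturbation $\nu^{eq}_{S^1}$ must be $(\mathbb{Z}/p \times S^1)$-equivariant and concentrated near $V_{\mathbf{J}}$, while simultaneously being generic enough to make the section moduli spaces $\mathcal{V}_{S^1,A}^{eq,i,2j}$ (and their boundary strata $(\mathcal{V}_{S^1,A}^{eq,i,2j})^\alpha$) regular and to make the equivariant incidence cycle $\mathcal{Y}^{eq}_{S^1}$ transverse to them. This is a $(\mathbb{Z}/p \times S^1)$-parametrized analogue of \cref{lem:eq-dc-map-transverse}, \cref{lem:eq-map-equal-section} and \cref{prop:eq-section-compactification}, and I expect it to go through by the same cellular induction (now over both $\Delta^i$ and $\sDelta^{2j}$, as in \cref{lem:S^1-eq-transversality}), but the care needed to keep the support of $\nu^{eq}_{S^1,X}$ inside the tubular neighborhood of $V_{\mathbf{J}}$ while preserving equivariance requires a genericity argument performed relatively to the $V_{\mathbf{J}}$-direction. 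A secondary technical point is verifying that the orientation on $\mathcal{M}_A^{eq,i,2j}$ (induced from the linearized $\overline{\partial}$-operator on $T^*\mathbb{P}^1$) agrees under the identification with that on the local-CY3 side times the complex orientation on $D\cong\mathbb{C}$, which will follow from a homotopy of the linearized operators through complex-linear deformations as in the orientation argument of \cref{prop:noneq-map-equal-section}.
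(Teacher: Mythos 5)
Your overall strategy — reduce the $T^*\mathbb{P}^1$ moduli problem to the local $\mathbb{P}^1$ CY3 via the simultaneous resolution, then invoke the $(\mathbb{Z}/p\times S^1)$-equivariant version of \cref{thm:qst-equals-Chern-integral} — is exactly the paper's. Your steps 1--3 and 5 align well, as does the transversality concern you flag at the end, which the paper addresses by first taking a generic $\nu_X$ (with the maximum principle localizing solutions over a tubular neighborhood of $V_{\mathbf J}$) and only afterwards switching to decoupling data for $\nu_Z$ by perturbation independence, rather than imposing concentrated support on $\nu^{eq}_{S^1,X}$ from the start.

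There are two places where your account does not quite hold up. First, your step 4, where you describe $\mathcal{M}_A^{eq,i,2j}$ as parametrized over $\Delta^i\times(V_{\mathbf J}\cap\sDelta^{2j})\times D$ and then say the $D$-factor contributes a $\teq$ ``via the transverse intersection with the central fiber,'' is geometrically confused. The $D$-direction is not integrated out: under the identification of \cref{lem:reduction-rank1}, the normal disk $D$ becomes two \emph{new target directions} of $Z$ (recall $\dim_{\mathbb R}T^*\mathbb{P}^1 = 4$ while $\dim_{\mathbb R}Z = 6$), so that $\mathcal{M}_A^{eq,i,2j}(X;\nu_X)$ is identified directly and dimension-for-dimension with $\mathcal{M}_A^{eq,i,2j-2}(Z;\nu_Z)$, parametrized only over $\Delta^i\times(V_{\mathbf J}\cap\sDelta^{2j})\cong\Delta^i\times\sDelta^{2j-2}$. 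There is no intersection step. The factor of $\teq$ appearing in the proposition's second display is purely formal bookkeeping, rewriting $\teq^j = \teq\cdot \teq^{j-1}$ to align the $\sDelta^{2j}$-coefficient on the $T^*\mathbb{P}^1$ side with the $\sDelta^{2j-2}$-pairing on the $Z$ side; trying to source it from a cohomological cap would require making ``transverse intersection with the central fiber'' precise in a moduli-space that has no $D$-parameter after the identification, and I do not see how that goes through.

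Second, your proposal omits the identification of the equivariant incidence cycles, which is a genuine part of the argument. The fiber cycles $Y_j(w,\sw)=\pi^{-1}(y_j(w,\sw))\subseteq X$ that rigidify $\mathcal{M}_A^{eq,i,2j}(X)$ must be matched, over the $D$-slices, with fibers of $Z\to\mathbb{P}^1$. The paper carries this out by observing that a fiber of $Z\to\mathbb{P}^1$ is a complex $2$-plane intersecting each $X_s$ in a single line transversal to the zero section, and constructing an explicit isotopy identifying the family $\{Y_{X,j}(w',\sw)\}_{\sw\in D}$ of fibers of $X\to\mathbb{P}^1$ with a single fiber of $Z\to\mathbb{P}^1$. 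Without this step, the equality of rigidified counts $\#\,\mathcal{M}_A^{eq,i,2j}(X;\nu_X)\pitchfork\mathcal{Y}_X^{eq,i,2j}=\#\,\mathcal{M}_A^{eq,i,2j-2}(Z;\nu_Z)\pitchfork\mathcal{Y}_Z^{eq,i,2j-2}$ is not established, and that equality is the actual content of the proposition.
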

\begin{proof}
    For $X = T^*\mathbb{P}^1$, choose generic equivariant perturbation data $\nu_X$ and consider the moduli space $\mathcal{M}_A^{eq, i, 2j}(X;\nu_X)$ with its projection to $\sDelta^{2j}$. By \cref{lem:locus-J-standard}, there is a distinguished hypersurface $V_{\mathbf{J}} \cap \sDelta^{2j}$ over which $J_\sw = J_X$ is the integrable almost complex structure on $X$. By the maximum principle, all solutions $(w, \sw, u: C \to X) \in \mathcal{M}_A^{eq, i, 2j}(X; \nu_X)$ for sufficiently small perturbation data are supported over a relatively compact subset of an open neighborhood $\mathcal{U}$ of $V_{\mathbf{J}} \cap \sDelta^{2j}$ in $\sDelta^{2j}$.

    Again let $Z = \mathrm{Tot}(\mathcal{O}(-1)^{\oplus 2} \to \mathbb{P}^1)$ denote the local $\mathbb{P}^1$ CY3. By \cref{lem:reduction-rank1}, by shrinking $\mathcal{U}$ if necessary, the total space of the family $(X, J_\sw)_{\sw \in \mathcal{U}} \to \mathcal{U}$ may be biholomorphically identified with the total space of $S^{2j-1} \times_{S^1} Z \to V_{\mathbf{J}}$ (restricted to the open subset $V_{\mathbf{J}} \cap \sDelta^{2j} \subseteq V_{\mathbf{J}}$), the finite dimensional approximation to the Borel construction of $Z$. Therefore the moduli space $\mathcal{M}_A^{eq, i, 2j}(X;\nu_X)$ (for sufficiently small $\nu_X$) can be identified as smooth oriented manifolds with $\mathcal{M}_A^{eq, i, 2j-2}(Z;\nu_Z)$, where $\nu_Z$ is the perturbation data for $Z$ induced from $\nu_X$, parametrized over $V_\mathbf{J} \cong BS^1$. The claim assumes regularity of these moduli spaces, achieved for generic perturbation data.

    We moreover claim that the incidence constraints $\mathcal{Y}_X^{eq, i, 2j}$ obtained from fiber classes (of $T^*\mathbb{P}^1 \to \mathbb{P}^1$) in $X$ can also be identified with the incidence constraints $\mathcal{Y}_Z^{eq, i, 2j-2}$ obtained from fiber classes (of $\mathcal{O}(-1)^{\oplus 2} \to \mathbb{P}^1$) in $Z$. Recall that we have chosen the $(\mathbb{Z}/p\times S^1)$-equivariant incidence cycle $\mathcal{Y}^{eq}_{S^1, X}$ to consist of  perturbations such that every $\mathcal{Y}^{eq}_{S^1, X}|_{w, \sw} =: \mathcal{Y}^{eq}_{w, \sw}$ is a product of fiber cycles. The fiber cycles in $X$ are represented by $S^1$-invariant embedded submanifolds of $X$. Therefore we can moreover assume that for each fixed $w' \in \Delta^i$, the $BS^1$-family of perturbations $\mathcal{Y}^{eq}_{w', \sw}$ (i.e. a family that varies over $\sw \in ES^1$ with equivariance condition) is given by the $S^1$-equivariant Borel construction $ES^1 \times_{S^1} \mathcal{Y}^{eq}_{w', 1}$ of $\mathcal{Y}^{eq}_{w', 1}$ for $1 = \sDelta^0 \in ES^1$:
    \begin{equation}
        \mathcal{Y}^{eq}_{w', \sw} := (\sw, \mathcal{Y}^{eq}_{w', 1}) \subseteq \{\sw\} \times (X \times X^p \times X).
    \end{equation}
    With that assumption, consider the embedding of fiber
    \begin{equation}
        Y_{X, j} (w', \sw) \subseteq X
    \end{equation}
    obtained by projecting the image of $\mathcal{Y}^{eq}_{w', \sw}$ to the factor of $X$ corresponding to $j = 0, 1, \dots, p, \infty$. Let $\sw$ vary over $D$, and consider the $1$-parameter family of fibers $\{Y_{X,j}(w', \sw) \}_{\sw \in D}$ of $X$ over the transversal slice $D$ to $\sw' \in \mathbb{CP}^{j-1} \cap \sDelta^{2j}$ from \cref{lem:reduction-rank1}. It remains to show that this family of fibers of $X$ can be identified with a single fiber $Y_{Z, j}(w', \sw')$ of $Z$, possibly after an isotopy. To see this, note that in the geometry of simultaneous resolution described in \cref{lem:nbhd-of-T*P1}, any fiber of $Z \to \mathbb{P}^1$ is a complex $2$-plane which intersects every $\mathcal{X}_s$ for $s \in \mathbb{C}$ in a single complex line. This line is transversal to the zero section $\mathbb{P}^1 \subseteq X$ along the diffeomorphism $X \cong \mathcal{X}_s$, and hence can be isotoped into any chosen fiber of $X \to \mathbb{P}^1$.
    
    Choose a diffeomorphism $f$ of $D$ with an open disk of $\mathbb{C}$, and one can choose such an isotopy between $Y_{Z,j}(w', \sw)  \cap \mathcal{X}_{f(\sw)}$ with $Y_{X,j}(w', \sw)$ for one $\sw \neq \sw' \in D$. A choice of such an isotopy for a single $\sw \in D$ induces a choice for all isotopy for $\sw \in D$, as the collection of diffeomorphisms $X \cong \mathcal{X}_{f(\sw)}$ is equivariant respect to the $S^1$-action on $D$. Using this family of isotopies, one can identify the family of fibers $\{Y_{X, j}(w', \sw)\}_{\sw \in D}$ of $X \to \mathbb{P}^1$ with a single fiber of $Z \to \mathbb{P}^1$.

    In sum, we are reduced to computing the following counts for $Z = \mathrm{Tot}(\mathcal{O}(-1)^{\oplus 2} \to \mathbb{P}^1)$:
    \begin{equation}
        \# \ \mathcal{M}_A^{eq, i, 2j}(X; \nu_X) \pitchfork \mathcal{Y}_X^{eq, i, 2j} = \# \ \mathcal{M}_A^{eq, i, 2j-2}(Z; \nu_Z
        ) \pitchfork \mathcal{Y}_Z^{eq, i, 2j-2}. 
    \end{equation}
    
    By the independence of counts on perturbation data, we now take $\nu_Z$ to be $(\mathbb{Z}/p \times S^1)$-equivariant analogues of the decoupling perturbation data from \cref{ssec:eq-decoupling-nu}. The same proof as in \cref{thm:qst-equals-Chern-integral} identifies this count with the integral
    \begin{equation}
        \left\langle \Delta^i \times \sDelta^{2j-2}, 
    \int_{\mathcal{P}_{S^1, A}^{eq}} c_{\mathrm{top}}^{eq}(\mathrm{Obs}_{S^1}^{eq}) \cup c_{\mathrm{top}}^{eq}(\mathrm{IC}_{S^1}^{eq}) \right\rangle \in \mathbb{F}_p.
    \end{equation}
\end{proof}

\subsection{Computation of the $S^1$-equivariant quantum Steenrod operations}
The results of the previous sections allow the computation for the $S^1$-equivariant quantum Steenrod operations to be reduced to the geometry of local $\mathbb{P}^1$ CY3 example. We only need to extend the results of \cref{ssec:computaiton-localP1} to incorporate $S^1$-weights to the computations in equivariant cohomology. Denote by $Y = \mathrm{Tot}(\mathcal{O}(-1)^{\oplus 2} \to \mathbb{P}^1)$.

\begin{lem}\label{lem:S^1-eq-coh-section}
    The $(\mathbb{Z}/p \times S^1)$-equivariant cohomology algebra of $\mathcal{P}_d$ is computed as
    \begin{equation}
        H^*_{\mathbb{Z}/p \times S^1}(\mathcal{P}_d) ; \mathbb{F}_p) = \mathbb{F}_p [\![H, t, \theta, \teq]\!]/H^2(H-t)^2\cdots(H-dt)^2,
    \end{equation}
    where the notations are as in \cref{lem:eq-coh-section}.
\end{lem}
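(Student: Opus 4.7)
The plan is to reduce this computation to the $\mathbb{Z}/p$-equivariant calculation already carried out in \cref{lem:eq-coh-section}, by verifying that the $S^1$-action on $\mathcal{P}_d$ is trivial; the extra $S^1$-factor will then only contribute the coefficient ring $H^*_{S^1}(\mathrm{pt};\mathbb{F}_p) \cong \mathbb{F}_p[\![\teq]\!]$.

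First I would check triviality of the $S^1$-action on the $\mathbb{P}^1$ factor of $C \times \mathbb{P}^1$, which here is the base of the projection $Z = \mathrm{Tot}(\mathcal{O}(-1)^{\oplus 2} \to \mathbb{P}^1) \to \mathbb{P}^1$. Under the simultaneous resolution of \cref{lem:nbhd-of-T*P1}, the $S^1$-action on $Z$ extends the fiberwise rotation on $T^*\mathbb{P}^1$, whose fixed locus is the zero section $\mathbb{P}^1 \subseteq T^*\mathbb{P}^1$; this corresponds precisely to the base $\mathbb{P}^1 \subseteq Z$. Hence $S^1$ acts trivially on this base, with nontrivial weight only along the normal $\mathcal{O}(-1)^{\oplus 2}$ directions (which do not enter the definition of $\mathcal{P}_d$).

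Since $\mathbb{Z}/p$ acts trivially on the $\mathbb{P}^1$-factor and $S^1$ acts trivially on both factors of $C \times \mathbb{P}^1$, I would choose the trivial $S^1$-linearization of $\mathcal{O}_{C \times \mathbb{P}^1}(d,1)$, compatibly with the $\mathbb{Z}/p$-linearization already fixed in \cref{lem:eq-coh-section}. Then $S^1$ acts trivially on $H^0(C \times \mathbb{P}^1, \mathcal{O}(d,1))$ and hence on $\mathcal{P}_d$, so the $(\mathbb{Z}/p \times S^1)$-Borel construction splits as
\[
(ES^1 \times S^\infty) \times_{S^1 \times \mathbb{Z}/p} \mathcal{P}_d \;\simeq\; BS^1 \times \bigl( S^\infty \times_{\mathbb{Z}/p} \mathcal{P}_d \bigr).
\]

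I would then apply the K\"unneth formula, passing through finite-dimensional approximations as in our conventions for equivariant cohomology (\cref{ssec:eq-coh}, \cref{ssec:S1-eq-coh}), to conclude
\[
H^*_{\mathbb{Z}/p \times S^1}(\mathcal{P}_d;\mathbb{F}_p) \;\cong\; H^*_{S^1}(\mathrm{pt};\mathbb{F}_p) \,\widehat\otimes_{\mathbb{F}_p}\, H^*_{\mathbb{Z}/p}(\mathcal{P}_d;\mathbb{F}_p).
\]
Substituting \cref{lem:eq-coh-section} on the right and $\mathbb{F}_p[\![\teq]\!]$ on the left yields the claimed presentation. The only step that demands a moment's reflection is the verification that the $S^1$-action on the base $\mathbb{P}^1 \subseteq Z$ is trivial; once that is extracted from the simultaneous resolution picture, the identification of the resulting cohomology ring is purely formal.
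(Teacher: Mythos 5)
Your proof is correct and follows essentially the same route as the paper: observe that $S^1$ acts trivially on the base $\mathbb{P}^1 \subseteq Z$, hence trivially on $\mathcal{P}_d$, and apply a (completed) K\"unneth formula over $H^*_{\mathbb{Z}/p}(\mathcal{P}_d;\mathbb{F}_p)$ and $\mathbb{F}_p[\![\teq]\!]$. The paper states the triviality of the $S^1$-action on $\mathcal{P}_d$ as given; you supply the (correct) justification from the simultaneous resolution picture and the harmless normalization to the trivial $S^1$-linearization of $\mathcal{O}(d,1)$.
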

\begin{proof}
    The $S^1$-action on the base $\mathbb{P}^1$ of $Y$ is trivial, so the $S^1$-action on $\mathcal{P}_d = \mathbb{P}H^0(C \times \mathbb{P}^1 , \mathcal{O}_{C \times \mathbb{P}^1} (d, 1))$ is also trivial. Therefore the $(\mathbb{Z}/p \times S^1)$-equivariant cohomology algebra is obtained from the $\mathbb{Z}/p$-equivariant cohomology algebra from \cref{lem:eq-coh-section} just by taking the (completed) tensor product with the ground ring of $S^1$-equivariant cohomology, $\mathbb{F}_p[\![\teq]\!]$.
\end{proof}

\begin{lem}\label{lem:S1-eq-euler-obs}
    The $(\mathbb{Z}/p \times S^1)$-equivariant top Chern class of the obstruction bundle is given by
     \begin{equation}\label{eqn:S1-eq-euler-obs}
        c_{\mathrm{top}}(\mathrm{Obs}^{eq}_{S^1}) = (H-t -\teq)^2 (H-2t - \teq)^2 \cdots (H-(d-1)t - \teq)^2 \in H^*_{\mathbb{Z}/p \times S^1}(\mathcal{P}_d ; \mathbb{F}_p).
    \end{equation}
\end{lem}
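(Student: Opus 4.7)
The plan is to extend the $\mathbb{Z}/p$-equivariant computation of \cref{lem:eq-euler-obs} to incorporate the $S^1$-weights. The key observation is that the $S^1$-action on $Z = \mathrm{Tot}(\mathcal{O}(-1)^{\oplus 2} \to \mathbb{P}^1)$ (coming from the extension of the fiber-rotation action on $T^*\mathbb{P}^1$ via \cref{lem:nbhd-of-T*P1}) fixes both the source curve $C$ and the base $\mathbb{P}^1$, concentrating all nontrivial $S^1$-weight contributions onto the fibers of $N = \mathcal{O}(-1)^{\oplus 2}$.

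First I would observe that since $C$ and $\mathbb{P}^1$ carry trivial $S^1$-actions, the $S^1$-action on the thickened section moduli space $\mathcal{P}_d = \mathbb{P} H^0(C \times \mathbb{P}^1, \mathcal{O}(d,1))$ is trivial. This already underlies the product decomposition $\mathcal{P}_{S^1, d}^{eq} \cong \mathcal{P}_d^{eq} \times BS^1$ used in \cref{lem:S^1-eq-coh-section}. Consequently, the only nontrivial $S^1$-weight contributions to $\mathrm{Obs}^{eq}_{S^1}$ come through the factor of $N$ in the isomorphism $\mathrm{Obs} \cong \mathcal{O}_{\mathcal{P}_d}(-1) \otimes H^0(C, \mathcal{O}_C(d-2))^\vee \otimes N|_{\mathrm{pt}}$ from \cref{lem:obs-description}, where $N|_{\mathrm{pt}} \cong \mathbb{C}^2$ captures the fiber of $N$ with its $S^1$-representation structure.

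Next I would identify the $S^1$-weight on each of the two $\mathcal{O}(-1)$ summands of $N$. Both summands carry the same weight, whose value is dictated by the extension of the $T^*\mathbb{P}^1$-fiber rotation: unwinding the simultaneous resolution from \cref{lem:nbhd-of-T*P1}, this weight is the one that shifts each Chern root $H - kt$ of $\mathrm{Obs}$ by $-\teq$, so that each factor $(H - kt)^2$ becomes $(H - kt - \teq)^2$. Concatenating with \cref{lem:eq-euler-obs} yields the desired formula.

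The main obstacle will be the convention-tracking step to pin down the precise sign of the $\teq$-shift. I expect to fix this by direct comparison with the equivariant Euler class computation $e_{S^1}(N_{\mathbb{P}^1/T^*\mathbb{P}^1}) = -2b + \teq$ used in deriving \eqref{eqn:S1-Poincare-pairing-T*P1}: since $T^*\mathbb{P}^1 \cong \mathcal{X}_0$ is a fiber of the simultaneous resolution family of \cref{lem:nbhd-of-T*P1}, and the $S^1$-action on $Z$ restricts to the fiber-rotation action on $T^*\mathbb{P}^1$, the induced weight on each $\mathcal{O}(-1)$ summand of $N$ is determined. The resulting signed shift of $-\teq$ can then be cross-validated against the $S^1$-equivariant quantum product computation for $T^*\mathbb{P}^1$ carried out earlier in this section, via the Chern integral formula of \cref{prop:integration-formula-S1-QSt}.
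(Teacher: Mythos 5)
Your proposal follows essentially the same route as the paper: you correctly observe that the $S^1$-action on $\mathcal{P}_d$ is trivial (so the $\teq$-dependence concentrates on the $N$-factor of $\mathrm{Obs}$), and you then try to read off the weight on $N$ from the $T^*\mathbb{P}^1$ fiber-rotation via \cref{lem:nbhd-of-T*P1}. This is indeed the paper's strategy.

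However, there is a small but genuine gap in step 3. You assert ``both summands carry the same weight'' and then propose to pin down that common weight by comparing with $e_{S^1}(N_{\mathbb{P}^1/T^*\mathbb{P}^1}) = -2b + \teq$. But $N_{\mathbb{P}^1/T^*\mathbb{P}^1} \cong \mathcal{O}(-2)$ is only a \emph{sub}-bundle of $N = \mathcal{O}(-1)^{\oplus 2}$ (sitting inside it as the $T^*\mathbb{P}^1$-fiber direction). Knowing the $S^1$-weight on $\mathcal{O}(-2) \subseteq N$ determines only one equivariant Chern root of $N$, not both: a priori the quotient $N / \mathcal{O}(-2) \cong \mathcal{O}$ could carry a different weight, in which case the obstruction bundle Chern roots would split into two families with different $\teq$-shifts rather than all shifting uniformly by $-\teq$. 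The paper closes exactly this gap by invoking the short exact sequence
\begin{equation*}
0 \to \mathcal{O}(-2) \to \mathcal{O}(-1)^{\oplus 2} \to \mathcal{O} \to 0,
\end{equation*}
geometrically identifying $\mathcal{O}(-2)$ as the $T^*\mathbb{P}^1$-fiber direction \emph{and} $\mathcal{O}$ as the direction along the base of the simultaneous resolution family $\mathcal{X} \to \mathbb{C}$, and observing that \emph{both} carry $S^1$-weight $1$. This is the piece of geometric input your sketch is missing: the weight on the deformation direction $\mathcal{O}$ needs to be determined independently, and once one does that via the simultaneous resolution picture, the diagonal weight $\teq \oplus \teq$ on $N$ follows. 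Your cross-validation idea against the Euler class of $N_{\mathbb{P}^1/T^*\mathbb{P}^1}$ is consistent with, but strictly weaker than, what is needed.

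One further caution on the sign: the paper's presentation of $\mathrm{Obs}$ used in this proof passes through $N^\vee$ (via Serre duality), which is where the shift $-\teq$ (as opposed to $+\teq$) actually materializes in the Chern roots. Your formulation with $\mathrm{Obs}$ written as $\mathcal{O}_{\mathcal{P}_d}(-1) \otimes H^0(C, \mathcal{O}_C(d-2))^\vee \otimes N|_{\mathrm{pt}}$ has $N$ appearing undualized, so a naive reading would give a $+\teq$ shift; to get the stated sign you will need to track the Serre-duality identifications carefully rather than relying solely on the comparison with the Poincar\'e pairing computation.
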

\begin{proof}
    From \cref{lem:obs-description}, we have a description
    \begin{equation}
        \mathrm{Obs}^\vee \cong \mathcal{O}_{\mathcal{P}_d}(1) \otimes \left( H^0(C \times \mathbb{P}^1, \mathcal{O}(d,1) \otimes \mathrm{proj}_{\mathbb{P}^1}^* \left( \mathcal{O}_{\mathbb{P}^1}(-1)^{\oplus 2} \right)^\vee \otimes K_{C \times \mathbb{P}^1} )\right).
    \end{equation}
    Here, $\mathcal{O}(-1)^{\oplus 2}$ fits into the Euler exact sequence of holomorphic vector bundles
    \begin{equation}
        0 \to \mathcal{O}(-2) \to \mathcal{O}(-1)^{\oplus 2} \to \mathcal{O} \to 0
    \end{equation}
    where $\mathcal{O}(-2) \to \mathbb{P}^1$ is the total space of $X = T^*\mathbb{P}^1$ and $\mathcal{O}$ parametrizes the deformations of $X = \mathcal{X}_0$ to nearby fibers in the simultaneous resolution $\mathcal{X} \to \mathbb{C}$ of \cref{lem:nbhd-of-T*P1}. Both $\mathcal{O}(-2)$ and $\mathcal{O}$ therefore have $S^1$-weight $1$. It follows that the middle term $\mathcal{O}(-1)^{\oplus 2}$ carries weight $\teq \oplus \teq$ under our conventions for the choice of $\teq$. Twisting the result from the $\mathbb{Z}/p$-equivariant case (\cref{lem:eq-euler-obs}) by this $S^1$-action gives the desired result.
\end{proof}

\begin{lem}\label{lem:S1-eq-euler-IC}
    The $(\mathbb{Z}/p \times S^1)$-equivariant top Chern class of the incidence constraint bundle is given by
    \begin{equation}
        c_{\mathrm{top}} (\mathrm{IC}^{eq}_{S^1}) = H(H-t)\cdots(H-(p-1)t) \cdot c_{0, \infty}(H,t) = (H^p - t^{p-1} H) c_{0, \infty}(H, t) \in H^*_{\mathbb{Z}/p \times S^1}(\mathcal{P}_d),
    \end{equation}
    where
    \begin{equation}\label{eqn:IC-0-infty}
        c_{0, \infty}(H,t) = 
        \begin{cases}
            1 & \mbox{ if } b_0 = 1, \ b_\infty = 1 \\
            (H-dt) & \mbox{ if } b_0 = b, \ b_\infty = 1 \\
            H & \mbox{ if } b_0 = 1, \ b_\infty = b \\
            H(H-dt) & \mbox{ if } b_0 = b, \ b_\infty = b
        \end{cases}.
    \end{equation}
\end{lem}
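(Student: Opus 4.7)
The plan is to reduce this statement directly to its $\mathbb{Z}/p$-equivariant counterpart \cref{lem:eq-euler-IC}, by verifying that the $S^1$-action contributes no additional weights to the incidence constraint bundle. The key observation is that the $S^1$-action on $Z = \mathrm{Tot}(\mathcal{O}(-1)^{\oplus 2} \to \mathbb{P}^1)$ was chosen to rotate the fibers of $\mathcal{O}(-1)^{\oplus 2}$, leaving the zero section $\mathbb{P}^1 \subseteq Z$ fixed pointwise (this was the source of the $\teq$-twist computation in \cref{lem:S1-eq-euler-obs}).

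First I would observe that, since $S^1$ acts trivially on the base $\mathbb{P}^1$, the induced action on $C \times \mathbb{P}^1$ (with trivial action on $C$) is trivial, hence so is the induced action on the line bundle $\mathcal{O}_{C \times \mathbb{P}^1}(d,1)$ and its space of global sections $H^0(\mathcal{O}_{C \times \mathbb{P}^1}(d,1))$. Consequently the $S^1$-action on $\mathcal{P}_d = \mathbb{P}H^0(\mathcal{O}_{C \times \mathbb{P}^1}(d,1))$ and on the tautological bundle $\mathcal{O}_{\mathcal{P}_d}(1)$ is trivial as well. This is in sharp contrast to the analogous computation for $\mathrm{Obs}$, where the nontrivial $S^1$-weights arose from the factor $N = \pi^*(\mathcal{O}(-1)^{\oplus 2})$ appearing in \cref{defn:obstruction-bundle} — a factor that is absent from the definition \eqref{eqn:incidence-bundle} of $\mathrm{IC}$.

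Next I would examine the two sources of nontrivial weights in the $\mathbb{Z}/p$-equivariant computation of \cref{lem:eq-euler-IC}: (a) the regular representation structure on $\bigoplus_{j=1}^p \mathcal{L}_j$, yielding the factor $\prod_{k=0}^{p-1}(H-kt)$, and (b) the weight-shifts at the fixed points $z_0, z_\infty \in C$ coming from the linearization of the $\mathbb{Z}/p$-action on $\mathcal{O}_C(-1)$, yielding the shift $H \mapsto H - dt$ at $z_0$ (and no shift at $z_\infty$). Since our equivariant incidence cycle $\mathcal{Y}^{eq}_{S^1}$ is built by choosing fibers $Y_j(w, \sw) = \pi^{-1}(y_j(w, \sw))$ over $(w, \sw) \in E\mathbb{Z}/p \times ES^1$, and since the $S^1$-action fixes the base $\mathbb{P}^1$ pointwise, the fibers over which the evaluation sections $\mathrm{ev}_{(z_j, y_j(w, \sw))}$ are defined are $S^1$-invariant as subspaces of $\mathcal{O}_{C \times \mathbb{P}^1}(d,1)$. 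Both sources (a) and (b) therefore acquire no $S^1$-weight, so the computation passes through verbatim.

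The argument I expect to need the most care is checking point (b): one needs to verify that the chosen $S^1$-equivariant family of fibers $y_j(w, \sw) \in \mathbb{P}^1$ can in fact be taken constant in $\sw$, so that the evaluation sections $\mathrm{ev}_{(z_j, y_j(w, \sw))}$ acquire no $\teq$-twist from a nontrivial parametrization over $ES^1$. This is ensured by the construction in the proof of \cref{prop:integration-formula-S1-QSt}, where the $BS^1$-family of incidence cycles for $Z$ was arranged to arise from the Borel construction $ES^1 \times_{S^1} \mathcal{Y}^{eq}_{w, 1}$ applied to a single $\mathbb{Z}/p$-equivariant incidence cycle. Putting all of the above together, the $(\mathbb{Z}/p \times S^1)$-equivariant Chern class $c_{\mathrm{top}}(\mathrm{IC}^{eq}_{S^1})$ coincides with its $\mathbb{Z}/p$-equivariant counterpart, now regarded as an element of $H^*_{\mathbb{Z}/p \times S^1}(\mathcal{P}_d;\mathbb{F}_p) \cong H^*_{\mathbb{Z}/p}(\mathcal{P}_d;\mathbb{F}_p) \otimes \mathbb{F}_p[\![\teq]\!]$ via \cref{lem:S^1-eq-coh-section}, yielding the claimed formula.
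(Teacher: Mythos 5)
Your proposal is correct and takes essentially the same approach as the paper, which simply remarks that the $S^1$-action on the incidence constraint bundle is trivial and hence contributes no twisting. Your additional care in verifying that the $S^1$-action on $\mathcal{P}_d$ and on $\mathcal{O}_{\mathcal{P}_d}(1)$ is trivial (because $S^1$ fixes the base $\mathbb{P}^1$ pointwise), and that the equivariant family of fibers can be arranged constant in the $ES^1$ direction, is a legitimate and slightly more explicit unpacking of the same underlying observation.
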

\begin{proof}
    This result is exactly as in \cref{lem:eq-euler-IC} with no modification, due to the fact that the $S^1$-action on the incidence constraint bundle is trivial (and hence no twisting).
\end{proof}

\begin{cor}\label{cor:S1-QSt-computation}
    The structure constants for $Q\Sigma_b^{S^1}$ are computed as follows. We write $q^d$ for $q^A$ such that $A  = d[\mathbb{P}^1] \in H_2(T^*\mathbb{P}^1;\mathbb{Z})$ for fixed $d > 0$. Write $d = \alpha p + \beta$ so that $\alpha \ge 0$ and  $0 \le \beta \le p-1$ are the quotient and remainder of $d$ mod $p$. For each $ 0 \le \ell \le p-1$, let
    \begin{equation}
        C_{d, \ell} := C_{d, \ell}(x, t, h) = (x^{p-1} - t^{p-1})^{1 - 2 \alpha} \ c_{0, \infty}(x + \ell t, t) \ \prod_{k=1}^{d-1} (x  + (\ell -k)t - \teq)^2 
    \end{equation}

    be a formal power series in $x$ with coefficients in $\mathbb{F}_p(\!(t, h )\!)$. Then we have the following formula
\begin{align}
    (Q\Sigma_b^{S^1}(b_0), b_\infty)_{S^1} \\
     = (\mathrm{St}(b) \cup b_0, b_\infty)_{S^1} +  \teq \  q^d \cdot  \sum_{d > 0} & \left[ \sum_{\ell=0}^\beta \mathrm{Coeff} \left( x^{2\alpha}:   C_{d, \ell} \ { \prod_{j=1}^\ell (x+jt)^{-2} \prod_{j=1}^{\beta-\ell} (x-jt)^{-2}  } \right) \right. \notag \\
         &\left. + \sum_{\ell = \beta + 1}^{p-1} \mathrm{Coeff} \left( x^{2\alpha - 2} : C_{d, \ell} \ {  \prod_{j=0}^\beta (x+(\ell-j)t)^{-2} } \right)  \right]  \notag.
\end{align}
Here $c_{0, \infty}(x,t)$ is the polynomial from \eqref{eqn:IC-0-infty}, depending on the choice of $b_0, b_\infty$.

In particular, the result can be computed for any $p > 2$ and up to any order in $(t,\theta)^i \teq^j$. Note that there are higher nonlinear terms in $\teq$, with $\teq^j$ with $j > 2$.
\end{cor}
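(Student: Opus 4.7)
The coefficient of $q^0$ in $(Q\Sigma_b^{S^1}(b_0), b_\infty)_{S^1}$ agrees with $(\mathrm{St}(b) \cup b_0, b_\infty)_{S^1}$ by \cref{prop:S1-QSt-properties}(iv), so the task reduces to computing the coefficient of $q^d$ for each $d > 0$. The first step is to apply \cref{prop:integration-formula-S1-QSt} to rewrite this coefficient as $\teq$ times a $(\mathbb{Z}/p \times S^1)$-equivariant Chern class integral over $\mathcal{P}_{S^1, d}^{eq}$. Substituting the explicit expressions from \cref{lem:S^1-eq-coh-section,lem:S1-eq-euler-obs,lem:S1-eq-euler-IC} and using that the $S^1$-action on $\mathcal{P}_d$ is trivial (so $\teq$ enters only through the twisted Chern classes of $\mathrm{Obs}^{eq}_{S^1}$), the computation reduces to the $\mathbb{Z}/p$-equivariant pushforward to the point of
\begin{equation*}
f(H, t, \teq) \ := \ \prod_{k=1}^{d-1}(H - kt - \teq)^2 \cdot \prod_{j=0}^{p-1}(H - jt) \cdot c_{0, \infty}(H, t) \ \in \ H^*_{\mathbb{Z}/p \times S^1}(\mathcal{P}_d; \mathbb{F}_p).
\end{equation*}

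The second step is to apply Atiyah--Bott localization for the $\mathbb{Z}/p$-action on $\mathcal{P}_d = \mathbb{P} H^0(C \times \mathbb{P}^1, \mathcal{O}(d, 1))$. From $H^0(\mathcal{O}(d, 1)) \cong \bigoplus_{k=0}^{d}\mathbb{C}_{\zeta^{k}} \otimes \mathbb{C}^{\oplus 2}$ (proof of \cref{lem:eq-coh-section}), the equivariant weights are $0, t, 2t, \dots, dt$ each with multiplicity $2$. In $\mathbb{F}_p$-equivariant cohomology these weights collapse modulo $p$, so grouping by residue class $\ell \in \{0, 1, \dots, p-1\}$ gives a fixed-locus decomposition $\mathcal{P}_d^{\mathbb{Z}/p} = \bigsqcup_{\ell=0}^{p-1} F_\ell$ with $F_\ell \cong \mathbb{P}^{2m_\ell - 1}$, where $m_\ell = \alpha + 1$ if $\ell \le \beta$ and $m_\ell = \alpha$ otherwise. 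The equivariant pushforward then becomes a sum of higher-order residues
\begin{equation*}
\int_{\mathcal{P}_d} f \ = \ \sum_{\ell = 0}^{p-1}\mathop{\mathrm{Res}}_{x = 0} \frac{f(x + \ell t, t, \teq)}{\prod_{\ell' = 0}^{p-1}(x + (\ell - \ell')t)^{2 m_{\ell'}}}, \qquad x := H - \ell t.
\end{equation*}

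The third step is to simplify each residue using the characteristic-$p$ identity $\prod_{j=1}^{p-1}(x - jt) = x^{p-1} - t^{p-1}$ in $\mathbb{F}_p[x, t]$. This collapses the contribution of all $p - 1$ nontrivial residue classes at their common multiplicity $2\alpha$ into $(x^{p-1} - t^{p-1})^{2\alpha}$, while the ``excess'' multiplicity $+2$ from classes $\ell' \le \beta$ produces $\prod_{j=1}^{\ell}(x + jt)^2 \prod_{j=1}^{\beta - \ell}(x - jt)^2$ for $\ell \le \beta$ and $\prod_{j=0}^{\beta}(x + (\ell - j)t)^2$ for $\ell > \beta$. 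On the numerator side, $\prod_{j=0}^{p-1}(H - jt)$ becomes $x(x^{p-1} - t^{p-1})$ after the substitution $H = x + \ell t$; cancellation of the factor $x$ against the leading pole of order $2 m_\ell$ shifts the coefficient extraction by one, producing $\mathrm{Coeff}(x^{2\alpha}: \cdot)$ for $\ell \le \beta$ and $\mathrm{Coeff}(x^{2\alpha - 2}: \cdot)$ for $\ell > \beta$. Reorganizing these expressions recovers the stated formula.

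The principal obstacle is that in $\mathbb{F}_p$-coefficient equivariant cohomology the $\mathbb{Z}/p$-weights collide modulo $p$, so the standard fixed-point formula for isolated fixed points is inapplicable; the $\mathbb{Z}/p$-fixed loci are positive-dimensional projective subspaces and each contributes a higher-order residue. Accurately bookkeeping the pole orders and the off-by-one shift induced by the $x(x^{p-1} - t^{p-1})$ numerator factor to match the coefficient indices $2\alpha$ and $2\alpha - 2$ is the main combinatorial step.
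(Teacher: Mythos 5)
Your proposal is correct and follows essentially the same route as the paper's proof: reduce to the equivariant Chern class integral via \cref{prop:integration-formula-S1-QSt}, decompose into the isotypic components $\mathbb{P}_{d,\ell} \cong \mathbb{P}^{2m_\ell - 1}$ indexed by residue classes mod $p$, apply the fixed-point localization formula of \cite[Proposition 5.3.18]{AP93} (the Euler classes of the normal bundles and the coefficient-extraction bookkeeping, including the off-by-one shift from the factor $x(x^{p-1}-t^{p-1})$ in the numerator, all match the paper's computation). The one point worth stating explicitly in your write-up is the justification for applying $\mathbb{F}_p$-coefficient localization here: since $\mathbb{Z}/p$ has prime order, every action is automatically semifree in the sense of \cref{assm:S^1-action-semifree}, so the localization theorem for $\mathbb{F}_p$-coefficients applies; the paper invokes this silently via the reference to Allday--Puppe, and your phrase ``Atiyah--Bott localization for the $\mathbb{Z}/p$-action'' should be accompanied by that observation, since the classical Atiyah--Bott theorem over $\mathbb{Q}$ does not directly apply.
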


\begin{proof}
    
    One can apply the integration formula for the fixed point localization theorem \cite[Proposition 5.3.18]{AP93} to compute the integral
    \begin{equation}
        (Q\Sigma_b^{S^1}(b_0), b_\infty)_{S^1} = \int_{\mathcal{P}_{S^1, d}^{eq}} c_{\mathrm{top}}^{eq}(\mathrm{Obs}_{S^1}^{eq}) \cup c_{\mathrm{top}}^{eq}(\mathrm{IC}_{S^1}^{eq}).
    \end{equation}
     The fixed locus of the $(\mathbb{Z}/p \times S^1)$-action on $\mathcal{P}_d$ has components given by the projectivizations on the isotypic decomposition (cf. \cref{lem:eq-coh-section}):
    \begin{align}
        &H^0(\mathcal{O}(d, 1)) \cong \bigoplus_{k=0}^d (\mathbb{C}_{\zeta^k})^{\oplus 2} = \bigoplus_{\ell =0}^{p-1} \left( \mathbb{C}_{\zeta^\ell} \right)^{\oplus 2 (\lfloor (d-\ell)/p \rfloor + 1) } , \\ 
        & (\mathcal{P}_d^{eq})^{\mathbb{Z}/p \times S^1} = \bigcup_{\ell=0}^{p-1} \mathbb{P}\left( \mathbb{C}_{\zeta^\ell} \right)^{\oplus 2 (\lfloor (d-\ell)/p \rfloor + 1)} = \bigcup_{\ell=0}^{p-1} \mathbb{P}^{2\lfloor (d-\ell)/p \rfloor +1 }.
    \end{align}
    
    For convenience, write $d =  \alpha p + \beta$ for $\alpha \ge 0$ and $0 \le \beta \le p-1$, so that $\lfloor d/p \rfloor = \alpha$. Denote the $\ell$th component of the fixed point locus by $\mathbb{P}_{d,\ell}$, which is a copy of a projective space of complex dimension
    \begin{equation}
        2 \lfloor (d-\ell)/p \rfloor + 1 = \begin{cases} 2\alpha +1 & \mbox{ if } \ell \le \beta \\ 2 \alpha -1 & \mbox{ if } \ell > \beta \end{cases}.
    \end{equation} Identify the equivariant cohomology of the $\ell$th component of the fixed locus with $H^*_{\mathbb{Z}/p \times S^1}(\mathbb{P}_{d, \ell}) = \mathbb{F}_p[\![ t, \theta, \teq]\!][x]/(x^{2\lfloor (d - \ell)/p\rfloor +2})$, so that the restriction map in cohomology for $\mathbb{P}_{d, \ell} \hookrightarrow \mathcal{P}_d$ sends $(H \mapsto x+\ell t)$.

    For localization, we first compute the Euler class of the normal bundle of $\mathbb{P}_{d, \ell} \hookrightarrow \mathcal{P}_d$. The normal bundle of $\mathbb{P}_\ell$ can be identified with the direct sum of the isotypic parts, twisted by the weight of the $\ell$th component:
    \begin{equation}\label{eqn:fixed-loc-euler-class}
        N_{\mathbb{P}_{d, \ell}} \cong \bigoplus_{0 \le k \le p-1, k \neq \ell} \left( \mathbb{C}_{\zeta^{k - \ell}} \right)^{\oplus 2 (\lfloor (d-k)/p \rfloor + 1)}
    \end{equation}
    Correspondingly one can compute
    \begin{align}
        c_{\mathrm{top}}( N_{\mathbb{P}_{d, \ell}} ) &= \prod_{0 \le k \le p-1, k \neq \ell} \left( x + (\ell - k) t \right)^{2 (\lfloor (d-k)/p \rfloor + 1)} \\
        &=  {\begin{cases} \prod_{j=1}^{\ell} (x + jt)^{2} \prod_{j=1}^{\beta - \ell} (x - jt)^{2} \prod_{j\neq \ell} (x+(\ell- j) t)^{2\alpha}  & \mbox{ if } \ell \le \beta \\ \prod_{j=0}^\beta (x + (\ell-j)t)^2 \prod_{j\neq \ell} (x+(\ell- j) t)^{2\alpha} & \mbox { if } \ell > \beta \end{cases}}.
    \end{align}

    One can simplify
    \begin{equation}\label{eqn:simplify-euler-class}
        \prod_{j \neq \ell} ( x+ (\ell -j)t) = \frac{\prod_{j=0}^{p-1} (x+ (\ell -j) t)}{ x + (\ell - \ell) t } =  \frac{x^p - t^{p-1}x}{x} = x^{p-1} - t^{p-1}.
    \end{equation}
    
    Now the integration formula \cref{prop:integration-formula-S1-QSt} gives, via fixed point localization,
    \begin{align}
        \mathrm{Coeff} \left( q^d : (Q\Sigma_b^{S^1}(b_0), b_\infty)_{S^1} \right) &= \int_{\mathcal{P}_{S^1, d}^{eq}} c_{\mathrm{top}}^{eq}(\mathrm{Obs}_{S^1}^{eq}) \cup c_{\mathrm{top}}^{eq}(\mathrm{IC}_{S^1}^{eq}) \\
        &= \sum_{\ell =0}^{p-1}  \int_{\mathbb{P}_{d, \ell}} \frac{c_{\mathrm{top}}^{eq}(\mathrm{Obs}_{S^1}^{eq}) \cup c_{\mathrm{top}}^{eq}(\mathrm{IC}_{S^1}^{eq}) }{ c_{\mathrm{top}} (N_{\mathbb{P}_{d, \ell}}) }. \label{eqn:integration-formula-loc-term}
    \end{align}
    The desired result now follows from \eqref{eqn:fixed-loc-euler-class}, \eqref{eqn:simplify-euler-class},  and \cref{lem:S1-eq-euler-obs}, \cref{lem:S1-eq-euler-IC}. We use that
    \begin{equation}
        c_{\mathrm{top}}^{eq}(\mathrm{IC}_{S^1}^{eq})|_{\mathbb{P}_{d, \ell}} = \left((x+\ell t)^p - t^{p-1}(x+\ell t) \right)c_{0,\infty}(x+\ell t, t) = \left( x^p - t^{p-1} x \right) c_{0,\infty}(x+\ell t, t),
    \end{equation}
    so that
    \begin{equation}
        c_{\mathrm{top}}^{eq}(\mathrm{Obs}_{S^1}^{eq}) \cup c_{\mathrm{top}}^{eq}(\mathrm{IC}_{S^1}^{eq}) |_{\mathbb{P}_{d, \ell}} = x \cdot (x^{p-1} - t^{p-1}) \  c_{0, \infty}(x+\ell t, t) \ \prod_{k=1}^{d-1} (x  + (\ell -k)t - \teq)^2 .
    \end{equation}

    The integrand on the right hand side of \eqref{eqn:integration-formula-loc-term} is therefore
    \begin{align}
        \int_{\mathbb{P}_{d, \ell}} \frac{c_{\mathrm{top}}^{eq}(\mathrm{Obs}_{S^1}^{eq}) \cup c_{\mathrm{top}}^{eq}(\mathrm{IC}_{S^1}^{eq}) }{ c_{\mathrm{top}} (N_{\mathbb{P}_{d, \ell}}) } &= 
            \frac{x \cdot (x^{p-1} - t^{p-1}) \ c_{0, \infty}(x+\ell t, t) \ \prod_{k=1}^{d-1} (x  + (\ell -k)t - \teq)^2 }{\prod_{j=1}^{\ell} (x + jt)^{2} \prod_{j=1}^{\beta - \ell} (x - jt)^{2}  (x^{p-1} - t^{p-1})^{2\alpha} }  \mbox{   if } \ell \le \beta 
            \\ &=\frac{x \cdot (x^{p-1} - t^{p-1}) \ c_{0, \infty}(x+\ell t, t) \ \prod_{k=1}^{d-1} (x  + (\ell -k)t - \teq)^2 }{\prod_{j=0}^\beta (x + (\ell-j)t)^2  (x^{p-1}-t^{p-1})^{2\alpha} }  \mbox{   if } \ell > \beta           
    \end{align}
    
    Finally, integration over $\mathbb{P}_{d, \ell}$ is given by extracting the coefficient of \begin{equation}x^{\dim \mathbb{P}_{d, \ell}} = \begin{cases} x^{2\alpha +1} & \mbox{ if } \ell \le \beta \\ x^{2 \alpha -1} & \mbox{ if } \ell > \beta \end{cases}.  \end{equation}
\end{proof}
This concludes the computation of the covariantly constant endomorphism for the equivariant quantum connection of $T^*\mathbb{P}^1$ in characteristic $p$.

\subsection{Uniqueness of the covariantly constant endomorphism}\label{ssec:uniqueness-flat-endo}
The computation above gives a full determination of an endomorphism $Q\Sigma_b^{S^1}$ that is flat for the equivariant quantum connection $\nabla_b^{S^1}$. It is given by the ($S^1$-equivariant) quantum Steenrod operation.

The endomorphism $Q\Sigma_b^{S^1}$ is in fact the unique non-trivial such endomorphism, in the sense described below. First observe that the flat endomorphisms have a $\mathbb{F}_p[\![q^p]\!]$-module structure, due to the linearity of the equation.

\begin{lem}\label{lem:module-structure-of-flat-endo}
    Let $\Sigma$ be a flat endomorphism for the mod $p$ quantum connection, that is it satisfies $\nabla \circ \Sigma - \Sigma \circ \nabla = 0$. For any $f = f(q^p) \in \mathbb{F}_p[\![q^p]\!]$, $f(q^p)\Sigma$ is also a flat endomorphism.
\end{lem}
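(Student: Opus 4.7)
The plan is to verify the commutator relation $\nabla_a \circ (f(q^p) \Sigma) - (f(q^p) \Sigma) \circ \nabla_a = 0$ by a direct computation, reducing it to the flatness of $\Sigma$ together with the vanishing of $\partial_a$ on $p$th powers of quantum parameters in characteristic $p$. The only conceptual input I will use, beyond the hypothesis $[\nabla_a, \Sigma] = 0$, is the Leibniz rule satisfied by the equivariant quantum connection with respect to scalar multiplication by elements of the coefficient ring $\Lambda[\![t,\theta]\!] \otimes \mathbb{F}_p(\!(\teq)\!)$.

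First I would observe that for any $g \in \Lambda[\![t,\theta]\!]$ and any $\beta \in QH^*_{\mathbb{Z}/p \times S^1}(X;\mathbb{F}_p)$, one has
\begin{equation}
    \nabla_a (g \beta) = t \partial_a(g \beta) - a \ast_{S^1} (g\beta) = t(\partial_a g) \beta + g \nabla_a \beta,
\end{equation}
since $\partial_a$ is a derivation of $\Lambda$ and the quantum product is $\Lambda$-linear. Taking $g = f(q^p)$, the key point is that
\begin{equation}
    \partial_a q^{pA} = (a \cdot pA) \, q^{pA} = 0 \quad \text{in } \mathbb{F}_p,
\end{equation}
so $\partial_a f(q^p) = 0$, and therefore $\nabla_a \circ (f(q^p) \cdot) = f(q^p) \cdot \nabla_a$ as operators on $QH^*_{\mathbb{Z}/p \times S^1}(X;\mathbb{F}_p)$. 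In other words, scalar multiplication by any $f \in \mathbb{F}_p[\![q^p]\!]$ is itself flat for $\nabla_a$.

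Then I would conclude by applying this to the composition $f(q^p) \Sigma$:
\begin{align}
    [\nabla_a, f(q^p) \Sigma] &= \nabla_a \circ f(q^p) \circ \Sigma - f(q^p) \circ \Sigma \circ \nabla_a \notag \\
    &= f(q^p) \circ \nabla_a \circ \Sigma - f(q^p) \circ \Sigma \circ \nabla_a = f(q^p) \cdot [\nabla_a, \Sigma] = 0,
\end{align}
using the hypothesis that $\Sigma$ is flat. Since this holds for every $a \in H^2_{S^1}(X;\mathbb{Z})$, the endomorphism $f(q^p) \Sigma$ is flat. There is no real obstacle here: the content of the lemma is the identity $\partial_a q^{pA} = 0 \pmod p$, which is the same mechanism responsible for the failure of covariant constancy to uniquely determine flat endomorphisms in positive characteristic (as already noted in the introduction). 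The more substantial statement---that $\{\mathrm{id}, Q\Sigma_b^{S^1}\}$ freely generate the module of flat endomorphisms over $\mathbb{F}_p[\![q^p]\!](\!(\teq,t)\!)$---will require genuine input from the computation of $Q\Sigma_b^{S^1}$, but that is the subject of the subsequent proposition.
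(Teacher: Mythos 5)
Your argument is correct and coincides with the paper's proof, which simply notes that $\partial_q f = 0$ and invokes the Leibniz rule; you have merely spelled out the Leibniz computation and the vanishing $\partial_a q^{pA} = 0 \pmod p$ in full.
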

\begin{proof}
    The result follows from $\partial_q f = 0$ and the Leibniz rule.
\end{proof}

\begin{prop}\label{prop:uniqueness-flat-endo}
    The $\mathbb{F}_p[\![q^p]\!](\!(\teq, t)\!)$-module of flat endomorphisms for the equivariant quantum connection (mod $p >2$)
    \begin{equation}
        \nabla_b^{S^1} = t q \partial_q - \begin{pmatrix} 0 & \frac{\teq^2 q}{1-q} \\ 1 & \frac{-2\teq q}{1-q} \end{pmatrix} 
    \end{equation}
    is free of rank $2$, generated by the identity endomorphism $\mathrm{id}$ and the quantum Steenrod operation $Q\Sigma_b^{S^1}$.
\end{prop}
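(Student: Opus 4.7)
The strategy is to analyze the flatness equation $[\nabla_b^{S^1}, \Sigma] = 0$ as an order-by-order recursion in the Novikov parameter $q$, and to observe that the failure of this recursion to uniquely determine $\Sigma^{(D)}$ from lower-order coefficients occurs precisely when $p \mid D$, where a two-dimensional ambiguity arises that is exactly absorbed by the $\mathbb{F}_p[\![q^p]\!](\!(\teq, t)\!)$-span of $\mathrm{id}$ and $Q\Sigma_b^{S^1}$.

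Expand $\Sigma = \sum_{D \ge 0} \Sigma^{(D)} q^D$ and decompose the connection matrix as $b \ast_{S^1} = B_0 + B_1 \cdot q(1-q)^{-1}$ with $B_0 = \begin{pmatrix} 0 & 0 \\ 1 & 0 \end{pmatrix}$ and $B_1 = \begin{pmatrix} 0 & \teq^2 \\ 0 & -2\teq \end{pmatrix}$. The flatness condition then reads
\begin{equation*}
    L_D(\Sigma^{(D)}) := \bigl(tD - \mathrm{ad}_{B_0}\bigr)(\Sigma^{(D)}) = \sum_{e=0}^{D-1} [B_1, \Sigma^{(e)}].
\end{equation*}
A direct check gives $\mathrm{ad}_{B_0}^3 = 0$ on $\mathrm{Mat}_{2 \times 2}$ (for $p > 2$), so $L_D = tD \cdot \bigl(\mathrm{id} - (tD)^{-1}\mathrm{ad}_{B_0}\bigr)$ is invertible over $\mathbb{F}_p(\!(\teq, t)\!)$ precisely when $p \nmid D$. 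When $p \mid D$, the kernel of $L_D = -\mathrm{ad}_{B_0}$ is the two-dimensional centralizer $\langle I, B_0 \rangle$. By \cref{prop:S1-QSt-properties}(iv) and the classical Steenrod power formula $\mathrm{St}(b) = t^{p-1} b$ (using $b^2 = 0$ in $H^*_{S^1}(T^*\mathbb{P}^1;\mathbb{F}_p)$ and the vanishing of Bockstein on integral classes), the classical term of $Q\Sigma_b^{S^1}$ equals $t^{p-1} B_0$, which together with $I$ forms a basis of $\ker \mathrm{ad}_{B_0}$ after inverting $t$.

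Uniqueness of the decomposition then follows by examining classical terms: if $f_1 \mathrm{id} + f_2 Q\Sigma_b^{S^1} = 0$ with $f_j = \sum_k f_{j,k} q^{pk}$, the $q^0$ coefficient reads $f_{1,0} I + f_{2,0} t^{p-1} B_0 = 0$, forcing $f_{1,0} = f_{2,0} = 0$; examining subsequent $q^{pk}$ coefficients inductively yields $f_{j,k} = 0$ for all $k$. For existence, given a flat $\Sigma$ I construct the $f_{j,k}$ inductively: decompose $\Sigma^{(0)} \in \ker \mathrm{ad}_{B_0}$ against $\{I, t^{p-1} B_0\}$ to define $f_{1,0}, f_{2,0} \in \mathbb{F}_p(\!(\teq, t)\!)$, form the difference $\Sigma' := \Sigma - f_{1,0} \mathrm{id} - f_{2,0} Q\Sigma_b^{S^1}$ (still flat, now vanishing at $q^0$), and appeal to invertibility of $L_1, \dots, L_{p-1}$ to conclude $(\Sigma')^{(D)} = 0$ for $1 \le D < p$. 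At $D = p$ the recursion only forces $(\Sigma')^{(p)} \in \langle I, B_0 \rangle$, and decomposing again against $\{I, t^{p-1}B_0\}$ defines $f_{1,1}, f_{2,1}$. Iterating this procedure converges in the $q$-adic topology to the desired decomposition.

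The main obstacle I anticipate is the verification that the classical term $(Q\Sigma_b^{S^1})^{(0)}$ has a nonzero $B_0$-component, so that $\{\mathrm{id}, Q\Sigma_b^{S^1}\}$ genuinely spans (rather than sits inside a one-dimensional subspace of) the centralizer $\langle I, B_0 \rangle$. This reduces to the explicit classical Steenrod power formula for the fiber class $b \in H^2$ together with the fact that $t$ is a unit in the coefficient ring --- which is precisely why the proposition is formulated over $\mathbb{F}_p[\![q^p]\!](\!(\teq, t)\!)$ rather than over a ring in which $t$ is merely formal; without inverting $t$, the ``change of basis'' from $\{I, t^{p-1} B_0\}$ to $\{I, B_0\}$ in the centralizer would fail and the module would be strictly larger than rank two.
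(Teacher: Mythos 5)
Your proposal is correct and its core idea coincides with the paper's: expand order-by-order in $q$, observe that the flatness recursion degenerates exactly when $p\mid D$, and use the classical term of $Q\Sigma_b^{S^1}$ to generate the resulting ambiguity. The implementations differ in a way worth noting. The paper makes the ad hoc normalization ``subtract a multiple of identity to assume $d=0$'' and then writes out three scalar ODEs, observing that the $p$-divisible part of $c$ is a free parameter and that the lower-left entry of $Q\Sigma_b^{S^1}$ is a unit. Your version works instead with the operator $L_D = tD - \mathrm{ad}_{B_0}$ directly on $\mathrm{Mat}_{2\times 2}$, using nilpotency of $\mathrm{ad}_{B_0}$ (with $B_0^2=0$) to see that $L_D$ is invertible iff $p\nmid D$, and identifying the ambiguity at $p\mid D$ with the centralizer $\ker\mathrm{ad}_{B_0}=\langle I, B_0\rangle$. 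This is cleaner: it avoids the $d=0$ normalization (which, as stated in the paper, implicitly requires the subtracted multiple of identity to itself lie in $\mathbb{F}_p[\![q^p]\!](\!(\teq,t)\!)$ in order to be flat, a point the paper glosses over; the trace argument only controls $a+d$, not $d$), and it packages the spanning argument as a straightforward iterated decomposition in $\ker\mathrm{ad}_{B_0}$ against the basis $\{I,\,(Q\Sigma_b^{S^1})^{(0)}\}$.

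Two minor slips, neither affecting the argument. First, $\mathrm{ad}_{B_0}^3 = 0$ holds for every $p$; the parenthetical ``(for $p>2$)'' is unnecessary (the genuine use of $p>2$ is in the definition of the Steenrod power $\mathrm{St}$). Second, the classical term is $\mathrm{St}(b) = -t^{p-1}b$ in the paper's conventions (from $\prod_{k=0}^{p-1}(b+kt) = b^p - t^{p-1}b$ and $b^2=0$), so $(Q\Sigma_b^{S^1})^{(0)} = -t^{p-1}B_0$, not $+t^{p-1}B_0$. What matters for spanning is only that this is a nonzero multiple of $B_0$ after inverting $t$, which you correctly highlight as the pivot.

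One thing you could make explicit: when iterating past $D=p$, the equation at each subsequent $D=pk$ is solvable because the right-hand side $\sum_{e<pk}[B_1,(\Sigma')^{(e)}]$ lies in $\mathrm{im}(\mathrm{ad}_{B_0})$ automatically, since $\Sigma'$ at that stage already vanishes through order $q^{pk-1}$ so the sum is zero; you implicitly invoke this when saying the residual at stage $k$ ``vanishes at $q^0,\dots,q^{pk-1}$'' but it is worth flagging, since in principle a cokernel obstruction could arise.
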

\begin{proof}
    We first check the linear independence of the identity endomorphism $\mathrm{id}$ and the quantum Steenrod operation $Q\Sigma := Q\Sigma_b^{S^1}$. Suppose $c_1 \mathrm{id} + c_2 Q\Sigma = 0$ for $c_1, c_2 \in \mathbb{F}_p[\![q^p]\!] (\!(\teq, t)\!)$. By taking trace, we see that $c_1 = - (c_2/2) \mathrm{tr}(Q\Sigma)$, hence $c_2 ( Q\Sigma - \mathrm{tr}(Q\Sigma)/2) = 0$. Since the classical part $Q\Sigma|_{q=0}$ (the cup product with the Steenrod power $\mathrm{St}(b)$) has a nonzero off-diagonal term, $Q\Sigma$ is non-identity. This implies $c_2 = 0$ and hence $c_1 = 0$, as desired.

    Now suppose
    \begin{equation}
        \Sigma = \begin{pmatrix} a & b \\ c & d \end{pmatrix}, \quad a, b, c, d \in \mathbb{F}_p[\![q]\!](\!(\teq, t)\!)
    \end{equation}
    is a flat endomorphism for the connection $\nabla^{S^1}$. By subtracting from $\Sigma$ a suitable multiple of identity, we may assume that $d = 0$. The covariant constancy relation $[\nabla_b^{S^1}, \Sigma] = 0$ is equivalent to the following system of differential equations:
    \begin{align}
        &t q \partial_q a + \frac{\teq^2 q}{1-q} \cdot c - b = 0, \label{eqn:flat1}\\
        &t q \partial_q b - \frac{\teq^2 q}{1-q} \cdot a + \frac{2\teq q}{1-q} \cdot b = 0 , \label{eqn:flat2}\\
        &t q \partial_q c + a - \frac{2\teq q}{1-q} \cdot c = 0. \label{eqn:flat3}
    \end{align}
    Write $a, b, c$ as power series in $q$, so that $a = \sum a_k \ q^k$, $b = \sum b_k \ q^k$, $c = \sum c_k \ q^k$. From \eqref{eqn:flat1} and \eqref{eqn:flat3}, one can see that $b$ and $a$ resp. are divisible by $q$ and hence $a_0 = b_0 = 0$. There is a freedom of choice for $c_0$, which by the equations \eqref{eqn:flat1}, \eqref{eqn:flat2}, \eqref{eqn:flat3}, determines $a_k, b_k, c_k$ for all $0 \le k \le p-1$. More generally, it is easy to see that in fact 
    the part of $c$ with $p$-divisible degrees,
    \begin{equation}
        c' = \sum c_{pk} \ q^{pk},
    \end{equation} determines all coefficients $a_k, b_k, c_k$ for all $k$, hence determines the endomorphism $\Sigma$.
    
    With the degrees of freedom in the operation $\Sigma$ understood, consider the lower-left entry $c_\Sigma$ of $Q\Sigma_b^{S^1}$. Since $Q\Sigma_b^{S^1}|_{q=0} (x) = \mathrm{St}(b)\  \cup_{S^1} x = -t^{p-1} b \ \cup_{S^1} x $, we see that $c_\Sigma|_{q=0} = -t^{p-1}$. Therefore $c_\Sigma$ is an invertible power series in $\mathbb{F}_p[\![ q^p ]\!](\!(\teq, t)\!)$. By multiplying $Q\Sigma_b^{S^1}$ by $c' c_\Sigma^{-1}$ for any prescribed $c'$, we can obtain any flat endomorphism for $\nabla^{S^1}$.
\end{proof}

% \begin{rem}[Higher rank case]
%     In this article, we have only considered the case $X = T^*\mathbb{P}^1$, which is the cotangent bundle of the $\mathrm{SL}_2(\mathbb{C})$-flag variety (i.e. the $A_1$ case). Similar methods are expected to yield similar results for the resolutions of ADE surface singularities or cotangent bundles of complete flag varieties in general, as there are expected to be analogues of the reduction lemma to the rank $1$ case (see \cite[Section 2.1]{BG08} and \cite[Section 1.15]{BMO11}). 
    
%     The reason this extension is not completely immediate for our situation is the following. In the computation of $S^1$-equivariant quantum cohomology, as in \cite{BG08} or \cite{BMO11}, degree considerations suffice to conclude that the contributions only arise from the moduli spaces $\mathcal{M}_A^{eq, i, 2}$ counted as the coefficient for $\teq^1$ in the operations. In our situation, the moduli spaces $\mathcal{M}_A^{eq, i, 2j}$ associated to higher powers $\teq^j$ of the $S^1$-equivariant parameter cannot be excluded, as there are many moduli spaces $\mathcal{M}_A^{eq, i, 2j}$ of the same index but with different values of $j$. For higher values $j > 1$ in higher rank examples, there may exist almost complex structures $J_\sw$, $\sw \in \sDelta^{2j}$ for which the geometry is not fully reduced to the rank $1$ case but only reduced to a more degenerate geometry. 
% \end{rem}

\section{Relationship to arithmetic flat sections}\label{sec:flat-sections}
In this section, we study the relationship between the $S^1$-equivariant quantum Steenrod operations $Q\Sigma_b^{S^1}$ computed for $X = T^*\mathbb{P}^1$ and the mod $p$ solutions (up to gauge transformations) of the equivariant quantum differential equation for $X = T^*\mathbb{P}^1$ obtained in \cite{Var-ar}. Following \cite{Var21}, we call such distinguished mod $p$ solutions the \emph{arithmetic flat sections} of the equivariant quantum connection. 

\subsection{Arithmetic flat sections}
In \cite{Var-ar}, polynomial solutions to the equivariant quantum differential equation in mod $p$ coefficients (up to gauge transformations) are obtained.
%TODO: Rewrite this section, move to appendix

In \cref{sec:appendix-gauge} and \cref{sec:appendix-soln}, we review the derivation of the relevant gauge transform and Varchenko's construction of the arithmetic flat section. The result is summarized as follows.

\begin{lem}[\cref{ssec:appendix-gauge}]\label{lem:gauge-transform}
    By changing the geometric basis $1, b \in H^*_{S^1}(X) \otimes \mathrm{Frac}(H^*_{S^1}(\mathrm{pt})) \cong \mathbb{F}_p(\!(\teq)\!)[b]/(b^2)$ to the preferred basis $b-\teq, b \in H^*_{S^1}(X) \otimes \mathbb{F}_p (\!(\teq)\!)$ (see \eqref{eqn:appendix-stable-envelope}), the quantum differential equation $\nabla_b^{S^1} I = 0$ is written as
    \begin{equation}\label{eqn:QDE-stable-basis}
    q \partial_q I = - \frac{\teq}{t} \left( \begin{pmatrix} 0 & 0 \\ 1& 0  \end{pmatrix} + \frac{q}{1-q} \begin{pmatrix}
    1 & 1 \\ 1 & 1    
    \end{pmatrix} \right) I,
\end{equation}
for vector-valued functions $I$. 
\end{lem}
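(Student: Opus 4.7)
The plan is to observe that this is essentially a finite calculation: since the preferred basis $\{b-\teq, \, b\}$ is expressed in terms of the geometric basis $\{1, b\}$ by a change-of-basis matrix $P$ that is \emph{independent of the quantum parameter} $q$, the connection transforms by ordinary matrix conjugation rather than requiring any correction by $P^{-1} tq\partial_q P$. Thus the gauge transform is purely algebraic and does not interact with the derivative.

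Concretely, I would proceed as follows. First, read off the transition matrix from the identities $b-\teq = -\teq\cdot 1 + 1\cdot b$ and $b = 0\cdot 1 + 1\cdot b$, giving
\begin{equation*}
    P = \begin{pmatrix} -\teq & 0 \\ 1 & 1 \end{pmatrix}, \qquad P^{-1} = \begin{pmatrix} -1/\teq & 0 \\ 1/\teq & 1 \end{pmatrix}.
\end{equation*}
If $I = I^{\mathrm{old}}_1 \cdot 1 + I^{\mathrm{old}}_2 \cdot b$ is expressed in the new basis with coordinate vector $I^{\mathrm{new}}$, then $I^{\mathrm{old}} = P I^{\mathrm{new}}$, and substituting into $\nabla_b^{S^1} I = tq\partial_q I^{\mathrm{old}} - A I^{\mathrm{old}} = 0$ with $A$ the connection matrix from \eqref{eqn:T*P1-quantum-connection} yields $tq\partial_q I^{\mathrm{new}} = (P^{-1} A P) I^{\mathrm{new}}$, since $\partial_q P = 0$.

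The remaining step is a direct computation of $P^{-1}AP$, which after simplification of the lower-left entry via $-\teq - \frac{\teq q}{1-q} = -\frac{\teq}{1-q}$ produces
\begin{equation*}
    P^{-1} A P = -\teq \begin{pmatrix} 0 & 0 \\ 1 & 0 \end{pmatrix} - \frac{\teq q}{1-q}\begin{pmatrix} 1 & 1 \\ 1 & 1 \end{pmatrix}.
\end{equation*}
Dividing by $t$ gives the desired form \eqref{eqn:QDE-stable-basis}. There is no genuine obstacle here; the only thing to be careful about is orienting the gauge transform correctly (acting on coordinate vectors rather than basis vectors, hence the appearance of $P^{-1}AP$ and not $PAP^{-1}$). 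The conceptual content — that the distinguished basis $b-\teq,\, b$ should be used — is extrinsic to the proof and is motivated by the theory of cohomological stable envelopes; the actual verification, sketched above, is a one-line $2\times 2$ matrix computation. Consequently the main body of the argument can be kept to a few lines, with the bulk of the appendix devoted to explaining how $\{b-\teq,\, b\}$ arises naturally as the stable envelope basis and matches conventions in \cite{Var-ar}.
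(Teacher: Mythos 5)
Your proposal is correct, and it is the same calculation the paper carries out in \cref{ssec:appendix-gauge}: the paper simply asserts that conjugating the connection matrix by the stable-envelope change-of-basis (which is $q$-independent, so no $P^{-1}\,tq\partial_q P$ correction arises) yields \eqref{eqn:QDE-stable-basis}, and you supply the explicit $P$, $P^{-1}$ and the resulting $P^{-1}AP$. I verified the arithmetic: $P^{-1}AP$ has every entry equal to $-\tfrac{\teq q}{1-q}$ except the lower-left, which is $\tfrac{\teq q}{1-q}-\teq\tfrac{1+q}{1-q}=-\tfrac{\teq}{1-q}=-\teq-\tfrac{\teq q}{1-q}$, matching the claimed decomposition into $-\teq\left(\begin{smallmatrix}0&0\\1&0\end{smallmatrix}\right)-\tfrac{\teq q}{1-q}\left(\begin{smallmatrix}1&1\\1&1\end{smallmatrix}\right)$.
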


Note that the change of basis simplifies the connection matrix, in that the dependence on the equivariant parameters $\teq \in H^2(BS^1), t \in H^2(B\mathbb{Z}/p)$ becomes a single factor in $\teq / t$. Below, we specialize to a particular value of $\teq / t = \mu$, which is necessary for the construct the arithmetic flat sections (see \cref{sec:appendix-soln}). This specialization also simplifies the $S^1$-equivariant quantum Steenrod operation, see \cref{rem:QSt-simplifies}.

\begin{prop}[\cref{lem:appendix-arithmetic-flat-section}]\label{prop:mod-p-solution}
Fix $\mu \in \mathbb{F}_p$, and let $0 \le m \le p-1$ be the unique integer such that $\mu \equiv m \mod p$. For each such $\mu$, the vector
\begin{equation}\label{eqn:arithmetic-soln}
     I_\mu(q) = (-1)^{p-m} (1-q)^{2m} \left( \sum_{d=0}^{p-m} \binom{p-m-1}{p-m-d} \binom{p-m}{d} q^d , \  \sum_{d=0}^{p-m} \binom{p-m}{p-m-d} \binom{p-m-1}{d} q^d \right)
    \end{equation}
is a solution of the quantum differential equation \eqref{eqn:QDE-stable-basis} after the specialization $\teq/ t = \mu$, i.e. $\nabla_b^{S^1} I_\mu = 0$.
\end{prop}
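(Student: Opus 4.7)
The plan is to verify the ODE $\nabla_b^{S^1} I_\mu = 0$ by direct substitution. Write $I_\mu = (-1)^{p-m}(1-q)^{2m}(A(q), B(q))^T$ for the polynomial factors $A,B$ appearing in \eqref{eqn:arithmetic-soln}. Applying $q\partial_q$ and using $q\partial_q(1-q)^{2m} = -\tfrac{2mq}{1-q}(1-q)^{2m}$, after dividing through by the common factor $(-1)^{p-m}(1-q)^{2m}$ and specializing $\teq/t = \mu \equiv m \pmod p$, the matrix equation \eqref{eqn:QDE-stable-basis} reduces to the pair of scalar polynomial identities
\begin{align*}
(1-q)A'(q) &= m\bigl(A(q)-B(q)\bigr), \\
q(1-q)B'(q) &= m\bigl(qB(q)-A(q)\bigr).
\end{align*}

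The next step is to verify these identities coefficient by coefficient. Using $\binom{n}{k}=\binom{n}{n-k}$, rewrite the coefficients as $a_d := \binom{p-m-1}{d-1}\binom{p-m}{d}$ and $b_d := \binom{p-m-1}{d}\binom{p-m}{d}$. Identity (I) then unfolds to the recurrence $(d+1)a_{d+1} - da_d = m(a_d - b_d)$ for each $0 \le d \le p-m$, and identity (II) gives an analogous recurrence. For the bulk indices $0 \le d \le p-m-1$ these are classical binomial identities over $\mathbb{Z}$, provable from $(d+1)\binom{n}{d+1}=(n-d)\binom{n}{d}$, Pascal's rule, and a short manipulation with $n=p-m$.

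The crucial case, and the main obstacle, is the boundary index $d=p-m$: here the recurrence from (I) reads $-(p-m)a_{p-m} = m(a_{p-m}-b_{p-m}) = m$, and both sides agree precisely because of the congruence $-(p-m)\equiv m \pmod p$ (which fails over $\mathbb{Z}$). This is the arithmetic origin of the truncated polynomial solution: the analogue of $I_\mu$ over $\mathbb{Q}$ is an infinite hypergeometric series, and the mod $p$ reduction allows the solution to terminate at degree $p-m$. The analogous boundary check for (II) completes the verification. As a sanity check, one can test the simplest case $p=3$, $m=1$: here $A(q) = 2q + q^2$ and $B(q) = 1 + 2q$, and a direct expansion confirms $(1-q)A' - m(A-B) = 3(1-q^2) \equiv 0 \pmod 3$, which vanishes precisely mod $3$. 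Alternatively, one could sidestep the direct calculation by identifying $I_\mu$ as the mod $p$ reduction of a truncated Schechtman--Varchenko hypergeometric integral from \cite{Var-ar}, where the ODE holds by construction; the computation above has the merit of self-containment.
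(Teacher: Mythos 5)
Your direct verification is a valid and genuinely different route from the paper's. The paper proves \cref{prop:mod-p-solution} by citing Varchenko's theorem that the vector of truncated hypergeometric coefficients $(\Psi_{p,1},\Psi_{p,2})$ solves the mod $p$ dynamical differential equation, and then invoking the gauge transformation of \cref{lem:appendix-gauge-transform} (supplied by the theory of stable envelopes, \cref{sec:appendix-gauge}) to transport that solution across to the quantum differential equation \eqref{eqn:QDE-stable-basis}; the actual verification work is outsourced to \cite{Var-ar}. Your approach instead unpacks \eqref{eqn:QDE-stable-basis} directly into the two scalar identities $(1-q)A'(q) = m\bigl(A(q)-B(q)\bigr)$ and $q(1-q)B'(q) = m\bigl(qB(q)-A(q)\bigr)$ (both correctly derived), reduces them to coefficient recurrences in $a_d = \binom{p-m-1}{d-1}\binom{p-m}{d}$ and $b_d = \binom{p-m-1}{d}\binom{p-m}{d}$, and checks those recurrences. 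This is more elementary and self-contained; your closing remark rightly identifies the citation-plus-gauge-transform route as what the paper in fact does.

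One inaccuracy is worth flagging, though it does not break the argument. You assert that for bulk indices $0 \le d \le p-m-1$ the recurrence $(d+1)a_{d+1} - da_d = m(a_d - b_d)$ is a classical binomial identity over $\mathbb{Z}$, and that only the boundary index $d = p-m$ relies on the congruence $-(p-m)\equiv m \pmod p$. In fact, writing $n=p-m$, the genuine $\mathbb{Z}$-identity one gets from $d\binom{n-1}{d}=(n-d)\binom{n-1}{d-1}$ is $(d+1)a_{d+1} - da_d = -n(a_d - b_d)$, with factor $-n$ rather than $m$; replacing $-n$ by $m$ requires the very same congruence $m\equiv -n \pmod p$ at \emph{every} $d$, not just at $d=p-m$. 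Your own sanity check already displays this: $(1-q)A'-m(A-B)=3(1-q^2)$ has a nonzero integer coefficient at the bulk index $d=0$ as well as at $d=2$, and both vanish only after reduction mod $3$. The recurrences do hold mod $p$ for all $d$, so the verification is sound, but the dichotomy between bulk and boundary is not the real mechanism; the reduction mod $p$ is used uniformly.
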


In sum, the vector $I_\mu$ for a choice of $\mu \in \mathbb{F}_p$ is a solution to the quantum differential equation \eqref{eqn:QDE-stable-basis} mod $p$. Note that since the quantum differential equation is linear, any multiple $f(q) I_\mu(q)$ for $f(q) \in \mathbb{F}_p[\![q^p]\!]$ is again a solution.

\begin{defn}
    The $\mathbb{F}_p[\![q^p]\!]$-module generated by $I_\mu(q)$ of \eqref{eqn:arithmetic-soln} is the module of \emph{arithmetic flat sections}.
\end{defn}

Arithmetic flat sections of \cite{Var-ar} form a submodule of the $\mathbb{F}_p[\![ q^p]\!]$-module of all mod $p$ solutions. In our case of $X = T^*\mathbb{P}^1$, the module of arithmetic flat sections is (tautologically) a free $\mathbb{F}_p[\![q^p]\!]$-module of rank $1$.

\subsection{Action on arithmetic flat sections}
By \cref{prop:S1-cov-constancy}, $Q\Sigma_b^{S^1}$ is a covariantly constant endomorphism for $\nabla_b^{S^1}$ mod $p$. In particular, if $I$ is a mod $p$ solution of $\nabla_b^{S^1} I = 0$, then $Q\Sigma_b^{S^1} I$ must also be a solution. We study this action on the distinguished solution $I_\mu(q)$, and find that the $S^1$-equivariant quantum Steenrod operation annihilates it.

\begin{prop}\label{prop:QSt-annihilates-soln}
    Let 
    \begin{equation}
     I_\mu(q) = (-1)^{p-m} (1-q)^{2m} \left( \sum_{d=0}^{p-m} \binom{p-m-1}{p-m-d} \binom{p-m}{d} q^d , \  \sum_{d=0}^{p-m} \binom{p-m}{p-m-d} \binom{p-m-1}{d} q^d \right)
    \end{equation}
    from \eqref{eqn:arithmetic-soln} be the distinguished arithmetic flat section obtained by \cite{Var-ar} for a choice of the parameter $\mu \equiv m \in \mathbb{F}_p$. After specializing $\teq / t = \mu$, we have
    \begin{equation}
        Q\Sigma_b^{S^1}|_{\teq/t = \mu} \left(  I_\mu (q) \right) = 0.
    \end{equation}
\end{prop}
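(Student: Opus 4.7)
The plan is to combine a reduction via flatness with an order-by-order verification using the Chern class integration formula from \cref{cor:S1-QSt-computation}.

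First, by \cref{prop:S1-cov-constancy-intro} the $S^1$-equivariant quantum Steenrod operation $Q\Sigma_b^{S^1}$ commutes with $\nabla_b^{S^1}$, and this flatness persists after the specialization $\teq/t = \mu$. Since $\nabla_b^{S^1}|_{\teq/t = \mu} (I_\mu) = 0$ by \cref{prop:mod-p-solution}, the vector $Q\Sigma_b^{S^1}|_{\teq/t = \mu}(I_\mu(q))$ is itself a mod $p$ solution of the specialized equivariant quantum differential equation. The proposition therefore reduces to identifying this particular solution with the zero solution.

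Second, I would verify vanishing of the classical limit at $q = 0$. In the ring $H^*_{S^1}(T^*\mathbb{P}^1;\mathbb{F}_p) \cong \mathbb{F}_p[\![\teq]\!][b]/(b^2)$, the fiber class $b$ has cohomological degree $2$ and squares to zero, so the total Steenrod power simplifies to $\mathrm{St}(b) = -t^{p-1} b + b^p = -t^{p-1} b$. By \cref{prop:S1-QSt-properties}(iv), the classical term of $Q\Sigma_b^{S^1}$ is cup product with $\mathrm{St}(b)$, which acts as the nilpotent rank-one operator with kernel spanned by $b$. Direct evaluation of \eqref{eqn:arithmetic-soln} at $q = 0$ yields $I_\mu(0) = (-1)^{p-m}(0,1)$ in the stable basis $\{b - \teq, b\}$, namely a scalar multiple of $b$ in the geometric basis, and this lies in the kernel of the classical operator. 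Hence $Q\Sigma_b^{S^1}|_{\teq/t = \mu}(I_\mu)$ vanishes at $q = 0$.

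The hard part will be extending this classical vanishing to all orders in $q$. Because the mod $p$ solutions of the equivariant quantum differential equation form a free module of rank $2$ over $\mathbb{F}_p[\![q^p]\!]$ (as $\partial_q q^p = 0$), matching the classical limit alone does not force a solution to vanish, and $p$-periodic ambiguities of the form $q^{pn} \cdot (\text{solution})$ must be ruled out. To overcome this, I would invoke \cref{cor:S1-QSt-computation} directly. After setting $\teq = m t$ (cf.\ \cref{rem:QSt-simplifies}), the factors $(x + (\ell - k)t - \teq)^2$ in $C_{d, \ell}$ collapse to $(x + (\ell - k - m)t)^2$, and the Chern integral simplifies substantially. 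Combined with the polynomial structure of $I_\mu$ (the factor $(1 - q)^{2m}$ times a polynomial of degree $p - m$ in $q$), the product $Q\Sigma_b^{S^1}|_{\teq/t = \mu}(I_\mu)$ becomes a sum that must vanish by a residue identity in the auxiliary variable $x$. The cleanest organization of this cancellation is expected to come from the fact that $I_\mu$ is the mod $p$ reduction of the hypergeometric integral representation of solutions in Varchenko's construction, whose residues are precisely the equivariant Chern-class integrals computing $Q\Sigma_b^{S^1}$; the annihilation then expresses a matching of two residue calculations.
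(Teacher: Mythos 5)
Your observations about the classical limit ($q=0$) and the insufficiency of the flatness-plus-initial-condition argument are both correct, and in particular you correctly identify the genuine difficulty: the kernel of $\partial_q$ is spanned by $q^p$-powers, so a flat combination that vanishes at $q=0$ need not vanish identically. But your proposed resolution of this difficulty is not a proof. You say you would ``invoke \cref{cor:S1-QSt-computation} directly'' and that the sum ``must vanish by a residue identity in the auxiliary variable $x$,'' and that ``the cleanest organization of this cancellation is expected to come from'' a matching of residue calculations. This is a hope, not an argument. You have not exhibited the identity, nor indicated why it should hold, nor explained how an infinite family of coefficient relations (one for each power of $q$) collapses into something finite and checkable.

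The missing idea is a periodicity statement. The paper's proof hinges on \cref{lem:QSt-structure-const-depends-on-remainder}: after the specialization $\teq/t = \mu$, the coefficient matrix $\Sigma_d := \mathrm{Coeff}(q^d : Q\Sigma_b^{S^1}|_{\teq/t=\mu})$ depends only on $d$ modulo $p$, i.e.\ $\Sigma_d = \Sigma_{d+p}$ for $d > 0$. This is established by a direct manipulation of the localization formula: both the cohomology relation $\prod_{k=0}^d (H-kt)^2$ and the Euler class $\prod_{k=1}^{d-1}(H-kt-\teq)^2$ of the obstruction bundle shift by the same factor $(H^p - t^{p-1}H)^2$ when $d \mapsto d+p$, precisely because $\teq = \mu t$. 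With this periodicity in hand, the equation $\Sigma I_\mu = 0$ becomes
\begin{equation}
q^p(\Sigma_0 - \Sigma_p) I_\mu = (\Sigma_0 + \Sigma_1 q + \cdots + \Sigma_{p-1}q^{p-1}) I_\mu,
\end{equation}
which, since $I_\mu$ is a polynomial of degree at most $p-1$ in $q$, reduces the claim to finitely many binomial-coefficient identities that the paper then verifies using the explicit localization formula (\cref{rem:QSt-simplifies}). Your proposal contains no analogue of this reduction. Once the periodicity is identified, your expectation that the $\teq/t=\mu$ specialization drastically simplifies the integrand $C_{d,\ell}$ does materialize (it is how the closed-form $\Sigma_d$ in \eqref{eqn:ann-qst} is obtained), but you would still need to carry out that computation and verify the finite identity rather than assert it.
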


\begin{rem}\label{rem:QSt-simplifies}
    To make the connection with the arithmetic flat sections, which are only obtained after choosing a parameter $\mu$, we have fixed the specialization $\teq / t = \mu$. Such a specialization has an effect of simplifying the localization computation from \cref{cor:S1-QSt-computation}. For example, for $d < p$, the following matrix entry (structure constant in the stable envelope basis, see \eqref{eqn:appendix-stable-envelope}) has the form
    \begin{align}
    &\mathrm{Coeff}\left( q^d :  -(Q\Sigma_b^{S^1}(b-\teq), (b-\teq))_{S^1}) \right) \\
    = & -\teq \left. \sum_{\ell \le d} \frac{\mathrm{Coeff}\left( x^0 = 1 : (x^{p-1} - t^{p-1}) \ (x+\ell t - \teq)(x+\ell t - dt - \teq) \ \prod_{k=1}^{d-1} (x  + (\ell -k)t - \teq)^2 \right)}{ \prod_{j=1}^\ell (x+jt)^2 \prod_{j=1}^{\beta-\ell}(x-jt)^2}   \right|_{\teq/t = \mu} \notag\\
    =&  \ \mu \cdot t^p \ \sum_{\ell \le d} \frac{(\mu - \ell)(\mu - \ell + d)}{(d - \ell)!^2 \ell!^2 }\prod_{k=1}^{d-1} ( \mu - \ell + k )^2 \notag\\
    =& \ \mu \cdot t^p \ \sum_{\ell \le d} \binom{\mu - \ell + d -1}{d} \binom{\mu - \ell + d}{d} {\binom{d}{\ell}}^2 \notag
    \end{align}
    which are themselves values of (generalized) hypergeometric functions. Similar expressions exist for the other structure constants. 
\end{rem}

To give the proof of \cref{prop:QSt-annihilates-soln}, we need the following lemma describing how $Q\Sigma_b^{S^1}$ simplifies under the specialization $\teq /t = \mu$.
\begin{lem}\label{lem:QSt-structure-const-depends-on-remainder}
    After specializing $\teq/t = \mu$, the degree $d$ part of $Q\Sigma_b^{S^1}|_{\teq /t = \mu}$,
    \begin{equation}
        \mathrm{Coeff} \left( q^d : Q\Sigma_b^{S^1}|_{\teq / t = \mu} \right) \in \mathrm{Mat}_{2 \times 2}(\mathbb{F}_p[\![t]\!]) 
    \end{equation}
    only depends on the residue of $d$ mod $p$.
\end{lem}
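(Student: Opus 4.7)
The plan is to use the explicit localization formula from \cref{cor:S1-QSt-computation} directly and compare the contributions at $q^d$ and $q^{d+p}$ after the specialization $\teq = \mu t$. Writing $d = \alpha p + \beta$, so that $d+p = (\alpha+1) p + \beta$, the remainder is unchanged but the quotient shifts by $1$. Under the substitution $\teq = \mu t$, every structure constant at degree $d$ is a sum (over $0 \le \ell \le p-1$) of explicit coefficient extractions on polynomials in $x$ built from $C_{d,\ell}(x,t,\mu t)$, where the order of coefficient extraction depends linearly on $\alpha$. My strategy is to prove that under $\teq = \mu t$,
\begin{equation}
C_{d+p,\ell}(x,t,\mu t) \equiv x^2 \cdot C_{d,\ell}(x,t,\mu t) \pmod{p},
\end{equation}
and to observe that the required extraction order increases by exactly $2$ in both ranges of $\ell$; the factor $x^2$ then cancels against the shift in order and gives equality of structure constants.

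The key arithmetic input is Fermat's little theorem in the form
\begin{equation}
\prod_{s \in \mathbb{F}_p} (x + s t) \equiv x^p - x\, t^{p-1} = x(x^{p-1} - t^{p-1}) \pmod{p}.
\end{equation}
With this in hand, I analyze each of the three factors of $C_{d,\ell}$. The prefactor $(x^{p-1} - t^{p-1})^{1-2\alpha}$ acquires a factor $(x^{p-1} - t^{p-1})^{-2}$ when $\alpha \mapsto \alpha+1$. The polynomial $c_{0,\infty}(x+\ell t, t)$ depends on $d$ only through terms of the form $(H - d t)|_{H=x+\ell t}$, and since $(d+p) t \equiv d t \pmod p$, it is unchanged. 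Finally, the product $\prod_{k=1}^{d+p-1}(x+(\ell-k)t - \mu t)^2$ differs from $\prod_{k=1}^{d-1}(x+(\ell-k)t - \mu t)^2$ by the extra factor $\prod_{k=d}^{d+p-1}(x+(\ell-k-\mu)t)^2$, in which the values $(\ell-k-\mu) \bmod p$ range over a complete residue system as $k$ ranges over $p$ consecutive integers. Hence mod $p$ this extra factor equals $\bigl(\prod_{s\in\mathbb{F}_p}(x+st)\bigr)^2 = x^2(x^{p-1}-t^{p-1})^2$. Multiplying these three contributions establishes the claimed $C_{d+p,\ell} \equiv x^2 \cdot C_{d,\ell} \pmod p$.

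It then remains to match coefficient extractions. For $\ell \le \beta$, the formula asks for the coefficient of $x^{2\alpha}$ at degree $d$ and of $x^{2\alpha+2}$ at degree $d+p$, while the denominator $\prod_{j=1}^\ell (x+jt)^{-2}\prod_{j=1}^{\beta-\ell}(x-jt)^{-2}$ depends only on $\beta$ and $\ell$. Since the coefficient of $x^{2\alpha+2}$ in $x^2 f(x)$ equals the coefficient of $x^{2\alpha}$ in $f(x)$, the two contributions agree. The case $\ell > \beta$ is identical with $2\alpha$ replaced by $2\alpha-2$. Summing over $\ell$ and multiplying by the common overall factor $\teq = \mu t$ shows $\mathrm{Coeff}(q^d) = \mathrm{Coeff}(q^{d+p})$ in $\mathrm{Mat}_{2\times 2}(\mathbb{F}_p[\![t]\!])$, as required.

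There is no serious obstacle here; the only care needed is in bookkeeping the three sources of $d$-dependence in $C_{d,\ell}$ and in checking that the shifts in extraction order exactly compensate for the Frobenius factor $x^2$. The proof works uniformly for all four choices of $(b_0, b_\infty)$ because $c_{0,\infty}$ is the only input from that data and it depends on $d$ only modulo $p$.
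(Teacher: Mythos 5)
Your proof is correct and rests on exactly the same arithmetic observation as the paper's: under $\teq/t = \mu$ the extra obstruction factors coming from $d \mapsto d+p$ multiply out via Fermat's little theorem to $(x^p - t^{p-1}x)^2 = x^2(x^{p-1}-t^{p-1})^2$, which is compensated by a shift of two in the order of the coefficient extraction. The only difference is presentational: you execute the comparison at the level of the fixed-point localization formula (matching contributions of each $\ell$ between $d$ and $d+p$), whereas the paper works pre-localization with the integral $\int_{\mathcal{P}_d^{eq}}$ directly, comparing the cohomology relation and the obstruction Euler class and invoking polynomial division with a degree bound on the remainder.
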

\begin{proof}
    By the integration formula (\cref{prop:integration-formula-S1-QSt}), each structure constant has the form
    \begin{equation}
        \mathrm{Coeff} \left( q^d : (Q\Sigma_b^{S^1}(b_0), b_\infty)_{S^1} |_{\teq/ t = \mu} \right) = \left. \int_{\mathcal{P}_{S^1, d}^{eq}} c_{\mathrm{top}}^{eq}(\mathrm{Obs}_{S^1}^{eq}) \cup c_{\mathrm{top}}^{eq}(\mathrm{IC}_{S^1}^{eq}) \right|_{\teq/t = \mu}.
    \end{equation}
    It suffices to check that this integral is equal for $d$ and $d'$ if $d \equiv d'$ mod $p$. Equivalently by induction, it is enough to check that this integral is equal for $d$ and $d + p$. Recall that the (localized) equivariant cohomology algebra of $\mathcal{P}_{S^1, d}^{eq}$ is given by (\cref{lem:S^1-eq-coh-section})
    \begin{equation}
        \mathrm{Frac} (H^*_{\mathbb{Z}/p \times S^1}(\mathrm{pt})) \otimes H^*_{\mathbb{Z}/p \times S^1}(\mathcal{P}_{S^1, d}^{eq}) = \mathbb{F}_p(\!(\teq, \theta, t)\!) [\![H]\!] / H^2(H-t)^2 \cdots (H-dt)^2,
    \end{equation}
    and the obstruction bundle has $(\mathbb{Z}/p \times S^1)$-equivariant Euler class given by \cref{lem:S1-eq-euler-obs})
    \begin{equation}
        c_{\mathrm{top}}^{eq} (\mathrm{Obs}_{S^1, d}^{eq}) = (H- t - \teq)^2 (H-2t-\teq)^2 \cdots (H-(d-1)t - \teq)^2.
    \end{equation}
    The relation in the equivariant cohomology algebra of $\mathcal{P}_{S^1, d}^{eq}$ and $\mathcal{P}_{S^1, d+p}^{eq}$ are $\prod_{k=0}^d (H-kt)^2$ and $\prod_{k=0}^{d+p}(H-kt)^2$ respectively, which differ by a factor of
    \begin{equation}
        \prod_{k=d+1}^{d+p} (H-kt)^2 = H^2(H-t)^2 \cdots (H-(p-1)t)^2 = (H^p - t^{p-1} H)^2.
    \end{equation}
    Similarly, the Euler classes of the obstruction bundles for degree $d$ and $d+p$ are $\prod_{k=1}^{d-1} (H-kt-\teq)^2$ and $\prod_{k=1}^{d+p-1} (H-kt-\teq)^2$, which differ by the same factor of
    \begin{equation}
        \left. \prod_{k=d}^{d+p-1} (H - kt - \teq)^2 \right|_{\teq/t = \mu} = \prod_{k=d}^{d+p-1} (H-kt-\mu t)^2 = (H^p - t^{p-1} H)^2
    \end{equation}
    after the specialization $\teq/t = \mu$.

    The Euler class of the incidence constraint bundle (\cref{lem:S1-eq-euler-obs}) is manifestly only dependent on the remainder of $d$ mod $p$.

    Note that the integral for the structure constant in degree $d$
    \begin{equation}
        \left. \int_{\mathcal{P}_{S^1, d}^{eq}} c_{\mathrm{top}}^{eq}(\mathrm{Obs}_{S^1}^{eq}) \cup c_{\mathrm{top}}^{eq}(\mathrm{IC}_{S^1}^{eq}) \right|_{\teq/t = \mu}
    \end{equation}
    is the coefficient of $H^{2d+1}$ in the remainder $R(t, \theta, \teq)$ of $c_{\mathrm{top}}^{eq}(\mathrm{Obs}_{S^1}^{eq}) \cup c_{\mathrm{top}}^{eq}(\mathrm{IC}_{S^1}^{eq})$ divided by the relation $H^2(H-t)^2 \cdots (H-dt)^2$:
    \begin{equation}
    c_{\mathrm{top}}^{eq}(\mathrm{Obs}_{S^1}^{eq}) \cup c_{\mathrm{top}}^{eq}(\mathrm{IC}_{S^1}^{eq}) = \left[H^2(H-t)^2 \cdots (H-dt)^2 \right] Q(t, \theta, \teq) + R(t, \theta, \teq).
    \end{equation}
    (The division is taken in the Euclidean domain $\mathbb{F}_p(\!(t, \theta, \teq)\!) [\![H]\!]$.) The integral for the structure constant in degree $d+p$ is therefore the coefficient of $H^{2d+2p+1}$ in
    \begin{equation}
        (H^p - t^{p-1} H)^2 R(t, \theta, \teq),
    \end{equation}
    which is equal to the coefficient of $H^{2d+1}$ in $R(t, \theta, \teq)$, as desired.
\end{proof}

\begin{proof}[Proof of \cref{prop:QSt-annihilates-soln}]
    For convenience, we introduce the notation
    \begin{equation}
        \Sigma := Q\Sigma_b^{S^1}|_{\teq/ t = \mu}.
    \end{equation}

    Our goal is to show that $\Sigma I_\mu = 0$. By covariant constancy, we know that $\nabla_b^{S^1} \Sigma I_\mu = 0$.  As in \cref{prop:uniqueness-flat-endo}, one can show that any flat section of $\nabla^{S^1}$ is a $\mathbb{F}_p[\![q^p]\!](\!(\teq, t)\!)$ -multiple of $I_\mu$, hence there exists some power series $f(q^p)$ such that $\Sigma I_\mu = f(q^p) I_\mu$. It suffices to show that $f(q^p) = 0$.

    By the $\mathbb{F}_p[q]$-linearity of $\Sigma$, for $I'_\mu$ such that $I_\mu = (-1)^{p-m}(1-q)^{2m} I'_\mu$, i.e.
    \begin{equation}
        I'_\mu (q) = \left( \sum_{d=0}^{p-m} \binom{p-m-1}{d-1} \binom{p-m}{d} q^d , \  \sum_{d=0}^{p-m} \binom{p-m}{d} \binom{p-m-1}{d} q^d \right),
    \end{equation}
    we have $\Sigma I_\mu = (-1)^{p-m} (1-q)^{2m} I_\mu '$. Hence it follows that $\Sigma I_\mu ' = f(q^p) I_\mu '$.
    
    Expand $\Sigma = \Sigma_\mu$ in power series, so that
    \begin{equation}
        \Sigma = \sum \Sigma_{d} q^d =  \Sigma_{0} + \Sigma_{1}  q^1 + \Sigma_{2}  q^2 + \cdots.
    \end{equation}
    By \cref{lem:QSt-structure-const-depends-on-remainder}, we have $\Sigma_{d} = \Sigma_{d+p}$ for all $d > 0$. Therefore we have
    \begin{equation}
        \Sigma = \frac{1}{1-q^p} \left( \Sigma_{0} + \Sigma_{1} q + \cdots + \Sigma_{p-1} q^{p-1} \right) + \frac{q^p}{1-q^p}(\Sigma_0 - \Sigma_p).
    \end{equation}

    By letting $g(q^p) = (1-q^p) f(q^p)$, we therefore have
    \begin{equation}
    \left( \Sigma_{0} + \Sigma_{1} q + \cdots + \Sigma_{p-1} q^{p-1} \right) I_{\mu}' + q^p(\Sigma_0 - \Sigma_p) I_{\mu}' = g(q^p) I_{\mu}'.
    \end{equation}

    Since $I_\mu$ has entries that are polynomials in $q$ bounded in degree $\le (p-1)$, $g(q^p)$ must be of the form $g(q^p) = a+ bq^p$ by examining degrees. It suffices to show that $a = b = 0$. 
    
    Using the localization formula, one obtains the following simplified expression for the matrix $\Sigma = \sum \Sigma_d q^d$ in the stable envelope basis (see \cref{rem:QSt-simplifies}) for $0 \le d \le p-1$:
    \begin{equation}\label{eqn:ann-qst}
        \Sigma_d = \mu \ t^p \   \sum_{\ell \le d} \begin{pmatrix} -  a_{\ell-1, d-1} \ b_{\ell,d }  & -  a_{\ell-1,d-1} \ b_{\ell, d-1} \\  a_{\ell, d} \ b_{\ell, d} & a_{\ell, d} \  b_{\ell, d-1} \end{pmatrix}
    \end{equation}
    for
    \begin{align}
        a_{\ell, d} = \binom{\mu + d-\ell -1}{d} \binom{d}{\ell}, \quad b_{\ell, d} = \binom{\mu + d-\ell }{d} \binom{d}{\ell}.
    \end{align}
    Moreover, one can also compute that
    \begin{equation}\label{eqn:ann-qst-0p}
        \Sigma_0 = \mu t^p \begin{pmatrix} 0 & 0 \\ 1 & 0 \end{pmatrix}, \quad \Sigma_p = \mu t^p \begin{pmatrix} 0 & 1 \\ 1 & 0 \end{pmatrix}.
    \end{equation}

    Finally recall
    \begin{equation}\label{eqn:ann-section}
        I_\mu (q) = \left( \sum_{d=0}^{p-m} \binom{p-m-1}{d-1} \binom{p-m}{d} q^d ,  \ \sum_{d=0}^{p-m} \binom{p-m}{d} \binom{p-m-1}{d} q^d \right).
    \end{equation}
    From \eqref{eqn:ann-qst}, \eqref{eqn:ann-qst-0p}, and \eqref{eqn:ann-section}, it is now straightforward to verify that $a=b=0$ by examining the $q^0$-term and $q^{2p-m}$-term (the highest order term) on both sides of
        \begin{equation}
    \left( \Sigma_{0} + \Sigma_{1} q + \cdots + \Sigma_{p-1} q^{p-1} \right) I_{\mu}' + q^p(\Sigma_0 - \Sigma_p) I_{\mu}' = (a+bq^p) I_{\mu}'.
    \end{equation}
    For $q^0$-term, both sides are given by
    \begin{equation}
        \mu t^p \begin{pmatrix}
            0 & 0 \\ 1 & 0 
        \end{pmatrix} \begin{pmatrix} \binom{p-m-1}{-1} \binom{p-m-1}{0} \\ \binom{p-m}{0} \binom{p-m}{0} \end{pmatrix} = \binom{0}{0} = a \binom{0}{1},
    \end{equation}
    and for $q^{2p-m}$-term both sides are given by
    \begin{equation}
        q^p\mu t^p \begin{pmatrix}
            0 & -1 \\ 0 & 0 \end{pmatrix} \begin{pmatrix}
                \binom{p-m-1}{p-m-1}\binom{p-m}{p-m} q^{p-m} \\ \binom{p-m}{p-m} \binom{p-m-1}{p-m} q^{p-m}
            \end{pmatrix} = \binom{0}{0} = bq^p\binom{q^{p-m}}{0}.
    \end{equation}

    % Since $I_\mu = (O(q), 1 + O(q))$ has entries that are polynomials in $q$ of degree bounded by $p-1$, it suffices to check that all $q^{kp}$-terms for $k \ge 0$ in the second entry of $\lambda(q^p)I_\mu$ are zero to show $\lambda(q^p) = 0$. Moreover, since $\widetilde{\Sigma}_\mu I_\mu$ has entries that are polynomials in $q$ of degree bounded by $2p-1$, it suffices to examine the $q^0$-term in the second entry of $\Sigma_{\mu, 0} I_\mu$ and $q^p$-term in the second entry of $\widetilde{\Sigma}_\mu I_\mu$.

    % % For $\widetilde{\Sigma}_\mu I_\mu$, it is immediate from the fact that $\widetilde{\Sigma}_\mu$ is a polynomial in $q$ of degree $p$ with no $q^0$-term that
    % \begin{equation}
    %     (q^p\mbox{-term of } \widetilde{\Sigma}_\mu I_\mu) = (q^0\mbox{-term of } \Sigma_{\mu, p} I_\mu) + (q^p\mbox{-term of } \left( \sum_{k=0}^{p-1} \Sigma_{\mu , k}\  q^ k \right) I_\mu) 
    % \end{equation}
    % It follows from the integral formula that
    % \begin{equation}
    %     \Sigma_{\mu, p} = \begin{pmatrix} 0 & \mu t^p \\ \mu t^p & 0 \end{pmatrix},
    % \end{equation}
    % so the $q^0$-term in the second entry of $\Sigma_{\mu, p} I_\mu$ is manifestly zero. 
    
\end{proof}

%Appendix
\appendix
\section{Gauge transformation of the quantum connection}\label{sec:appendix-gauge}
In this appendix, we carry out the necessary gauge transformation to identify the equivariant quantum connection of $T^*\mathbb{P}^1$ with the dynamical differential equation treated in \cite{Var-ar}. It is used to study the relationship between the ($S^1$-equivariant) quantum Steenrod operations and the arithmetic flat sections in \cref{sec:flat-sections}. 

\subsection{Dynamical differential equation and quantum differential equation}
Fix $p > 2$. The goal of this appendix is to identify the mod $p$ quantum connection
\begin{equation}\label{eqn:appendix-T*P1-quantum-connection}
    \nabla_b^{S^1} =  t q\partial_q - \begin{pmatrix} 0 & \frac{\teq^2 q}{1-q} \\ 1 & \frac{-2\teq q}{1-q} \end{pmatrix}
\end{equation}
for $X = T^*\mathbb{P}^1$ with the connection of the dynamical differential equation studied in \cite{Var-ar}. The quantum differential equation can be rewritten as
\begin{equation}\label{eqn:appendix-QDE}
    q \partial_q I = t^{-1} \begin{pmatrix}
        0 & \frac{\teq^2 q}{1-q} \\ 1 & \frac{-2\teq q}{1-q}
    \end{pmatrix} I
\end{equation}
for some vector-valued solution $I(q) \in H^*_{S^1} (X; \mathbb{F}_p)[\![q]\!]$.

We reproduce the dynamical differential equation \cite[Eq. (1.2)]{Var-ar} below. It is a rank $2$ connection on a two-dimensional base depending on a choice of a rational parameter $\mu \in \mathbb{Q}$. To obtain reduction mod $p$, we assume that $\mu = r/q$ for $\mathrm{gcd}(r, q) = 1$ satisfies $p \nmid q$. The mod $p$ dynamical differential equation is then written as
\begin{align}\label{eqn:appendix-T*P1-Varchenko-connection1}
    z_1 \partial_{z_1} I  &= \mu \left( \begin{pmatrix}
        0 & - 1 \\ 0 & 0 \end{pmatrix} + \frac{  z_1}{z_1 - z_2} \begin{pmatrix}
            -1 & 1 \\ 1 & -1
        \end{pmatrix} \right) I , \\
        \label{eqn:appendix-T*P1-Varchenko-connection2}
    z_2 \partial_{z_2} I &= \mu \left( \begin{pmatrix}
        0 & 0 \\ -1  & 0 \end{pmatrix} + \frac{  z_2}{z_2 - z_1} \begin{pmatrix}
            -1 & 1 \\ 1 & -1
        \end{pmatrix} \right) I .
\end{align}
The parameters $(z_1, z_2)$ are related to the Novikov parameter $q$ by the relation $z_2 / z_1 = q$. (See \cite[Theorem 4.3, Section 7.4]{TV14}, where $z_1, z_2$ are denoted by $\widetilde{q}_1^{-1}, \widetilde{q}_2^{-1}$. The ratio $q = \widetilde{q}_1/\widetilde{q}_2$ is the Novikov parameter for the $\mathrm{SL}_2$-flag variety). By pushing the equation forward using the relation $z_2 / z_1 = q$, so that we let $2 q \partial_q = z_2 \partial_{z_2} - z_1 \partial_{z_1}$, the equation is reduced to one dimension:
\begin{align}\label{eqn:appendix-DDE-beforegauge}
    q \partial_q I = \frac{\mu}{2} \left( \begin{pmatrix}
        0 & 1 \\ -1  & 0 \end{pmatrix} + \frac{ q+1}{q-1} \begin{pmatrix}
            -1 & 1 \\ 1 & -1
        \end{pmatrix} \right) I .
\end{align}

\subsection{The gauge transformation}\label{ssec:appendix-gauge}
We now provide the necessary gauge transformation that identifies the equations \eqref{eqn:appendix-QDE} and \eqref{eqn:appendix-DDE-beforegauge}. In general, such gauge transformations are given by the theory of cohomological stable envelopes as in \cite{MO19}. Below we treat the simplest case of $X = T^*\mathbb{P}^1$.

In the case of $X = T^*\mathbb{P}^1$, the ``stable envelope map'' that identifies the fiber of the dynamical differential equation $\mathbb{F}_p(\!(\teq)\!)^{\oplus 2}$ with the fiber of the quantum differential equation, $H^*_{S^1}(X) \otimes \mathrm{Frac}(H^*_{S^1}(\mathrm{pt})) \cong \mathbb{F}_p(\!(\teq)\!)[b]/(b^2)$, is given by \cite[Section 5]{TV14}:
\begin{equation}\label{eqn:appendix-stable-envelope}
    W : \mathbb{F}_p(\!(\teq)\!)^{\oplus 2} \to \mathbb{F}_p(\!(\teq)\!) \cdot 1 \oplus \mathbb{F}_p(\!(\teq)\!) \cdot b, \quad W(1,0) = b - \teq, \ W(0,1) = b.
\end{equation}
By changing basis to the stable envelope basis given by $W(1,0) = b- \teq$ and $W(0,1) = b$, the equation \eqref{eqn:appendix-QDE} transforms to
\begin{equation}\label{eqn:appendix-QDE-stable}
    q \partial_q I = - \frac{\teq}{t} \left( \begin{pmatrix} 0 & 0 \\ 1& 0  \end{pmatrix} + \frac{q}{1-q} \begin{pmatrix}
    1 & 1 \\ 1 & 1    
    \end{pmatrix} \right) I .
\end{equation}

The theory of stable envelopes moreover prescribes the gauge transformation for \eqref{eqn:appendix-DDE-beforegauge}.

\begin{lem}[{\cite[Corollary 7.3, 7.4]{TV14}}]\label{lem:appendix-gauge-transform}
    Let $I$ be a solution (flat section) of the dynamical differential equation \eqref{eqn:appendix-T*P1-Varchenko-connection1}, \eqref{eqn:appendix-T*P1-Varchenko-connection2}. Then
    \begin{equation}
        \widetilde{I} = z_1^{-\mu} (z_1 - z_2)^{2\mu} I 
    \end{equation}
    is a solution of the quantum differential equation, after identifying $\mu = \teq / t$ and $2 q \partial_q = z_2 \partial_{z_2} - z_1 \partial_{z_1}$. 
\end{lem}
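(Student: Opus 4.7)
The plan is a direct verification via the product rule, using that $\widetilde I = \phi\, I$ with scalar prefactor $\phi = z_1^{-\mu}(z_1 - z_2)^{2\mu}$. Since the two dynamical equations \eqref{eqn:appendix-T*P1-Varchenko-connection1}, \eqref{eqn:appendix-T*P1-Varchenko-connection2} must first be combined into a single equation in $q$, the initial step will be to take the combination $z_2\partial_{z_2} - z_1\partial_{z_1}$ of the two equations. Under $q = z_2/z_1$ this yields $2q\partial_q$ on the left, and on the right the rational factors collapse via $\frac{z_1}{z_1-z_2} = \frac{1}{1-q}$, $\frac{z_2}{z_2-z_1} = -\frac{q}{1-q}$ to give \eqref{eqn:appendix-DDE-beforegauge}.

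Next I will compute the logarithmic derivative of $\phi$. A direct computation gives $z_1\partial_{z_1}\log\phi = -\mu + \frac{2\mu}{1-q}$ and $z_2\partial_{z_2}\log\phi = -\frac{2\mu q}{1-q}$, hence
\begin{equation*}
2q\partial_q \log\phi \;=\; \mu - \frac{2\mu(1+q)}{1-q}.
\end{equation*}
Combining this scalar contribution via the product rule with the matrix ODE \eqref{eqn:appendix-DDE-beforegauge} satisfied by $I$ will express $q\partial_q \widetilde I$ as an explicit matrix times $\widetilde I$. The remaining step is then an entirely elementary matrix identity: one checks that
\begin{equation*}
\frac{1}{2}\!\left[\!\left(1 - \frac{2(1+q)}{1-q}\right)\!\mathbb I + \begin{pmatrix} 0 & 1 \\ -1 & 0 \end{pmatrix} - \frac{1+q}{1-q}\begin{pmatrix} -1 & 1 \\ 1 & -1 \end{pmatrix}\right] \;=\; -\!\left(\begin{pmatrix} 0 & 0 \\ 1 & 0 \end{pmatrix} + \frac{q}{1-q}\begin{pmatrix} 1 & 1 \\ 1 & 1 \end{pmatrix}\right),
\end{equation*}
after which multiplying by $\mu = \teq/t$ recovers exactly the quantum differential equation \eqref{eqn:appendix-QDE-stable}.

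There is no real obstacle here; the entire content is a routine gauge computation and the matrix identity above. The only subtle point worth highlighting is consistency: the prefactor $\phi$ is written in terms of $z_1, z_2$ rather than $q$ alone, reflecting the fact that the dynamical equations are a two-variable system and the reduction to the one-variable quantum differential equation requires the specific direction $z_2\partial_{z_2} - z_1\partial_{z_1}$ to be singled out. Since $\phi$ is multi-valued in $\mu$, the statement should be read formally at the level of the connection matrices, which is exactly what the above verification establishes.
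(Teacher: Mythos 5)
Your proof is correct and follows essentially the same strategy as the paper's: apply $\tfrac{1}{2}(z_2\partial_{z_2} - z_1\partial_{z_1})$ to $\widetilde{I} = \phi\,I$, use the product rule to separate the scalar logarithmic derivative of $\phi$ from the matrix action of the combined dynamical equation, and finish with an elementary matrix identity. The only difference is presentational — you first state the combined one-variable equation \eqref{eqn:appendix-DDE-beforegauge} and the logarithmic derivative of $\phi$ as separate displays before merging them, whereas the paper carries out the same computation in a single chain of equalities.
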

\begin{proof}
    The ansatz $\widetilde{I}$ is provided by the theory of stable envelopes, see \cite[Corollary 7.3, 7.4]{TV14}. Once the ansatz is given, one can easily verify that if $I$ solves the equations \eqref{eqn:appendix-T*P1-Varchenko-connection1} and \eqref{eqn:appendix-T*P1-Varchenko-connection2}, then
    \begin{align}
        q \partial_q \widetilde{I} &= \frac{1}{2}\left(  z_2 \partial_{z_2} - z_1 \partial_{z_1} \right) [ z_1^{-\mu} (z_1 - z_2)^{2\mu} I ] \\ 
        &= \frac{1}{2} \left[ \left( - \frac{2\mu z_2}{z_1 - z_2} + \mu - \frac{2\mu z_1}{z_1 - z_2} \right)\widetilde{I} + z_1^{-\mu} (z_1 - z_2)^{2\mu} (z_2 \partial_{z_2} - z_1 \partial_{z_1}) I \right]  \\
        &= -\frac{\mu}{2} \left[ \left( -1 -2 \cdot  \frac{q+1}{q-1} \right) \begin{pmatrix} 1 & 0 \\ 0 & 1 \end{pmatrix} - \begin{pmatrix} 0 & 1 \\ -1 & 0 \end{pmatrix} - \frac{q+1}{q-1} \begin{pmatrix} -1 & 1 \\ 1 & -1 \end{pmatrix}  \right] \widetilde{I} \\
        &= - \mu \left( \begin{pmatrix} 0 & 0 \\ 1& 0  \end{pmatrix} + \frac{q}{1-q} \begin{pmatrix}
    1 & 1 \\ 1 & 1    
    \end{pmatrix} \right) \widetilde{I}
    \end{align}
    where we replace each occurrence of $z_2 / z_1 $ with $q$. Note that this agrees with \eqref{eqn:appendix-QDE-stable} after the identification $\mu = \teq / t$.
\end{proof}

\section{Derivation of arithmetic flat sections}\label{sec:appendix-soln}
In this appendix, we review the construction of polynomial solutions to the equivariant quantum differential equation in mod $p$ coefficients (up to gauge transformations) from \cite{Var-ar}. The method to obtain such solutions is an application of an idea credited to Manin \cite{Man61}, and uses the description of the solutions to the same equation in characteristic $0$ in terms of hypergeometric integrals \cite{TV14}.

In our case of $X = T^*\mathbb{P}^1$, the relevant \emph{master function} (the integrand of the hypergeometric integral) has the form (see \cite[Section 1]{Var-ar})
\begin{equation}\label{eqn:master-function}
    \Phi (s; z_1, z_2; \mu) = s^{\mu} (s-z_1)^{-\mu} (s-z_2)^{-\mu} \in \mathbb{Q}[z_1, z_2][s], 
\end{equation}
where $\mu \in \mathbb{Q}$ is some rational parameter. For $\mu$ to admit a well-defined representative in $\mathbb{F}_p$, we assume the following:
\begin{assm}\label{assm:mu-reduce-mod-p}
    Let $\mu = r/q \in \mathbb{Q}$ for $r, q$ coprime. Assume that $p \nmid q$, so that there exists a unique integer $0 \le m \le p-1$ such that $\mu \equiv m$ (mod $p$).
\end{assm}
In the main body of the text \cite{Var-ar}, the case $\mu = 1/2$ is discussed. 

Under \cref{assm:mu-reduce-mod-p}, fix some $0 \le m \le p-1$ such that $\mu \equiv m$ (mod $p$). Let
\begin{equation}\label{eqn:master-function-mod-p}
    \Phi_p (s; z_1, z_2 ; \mu) = s^{m} (s-z_1)^{p-m} (s-z_2)^{p-m} \in \mathbb{F}_p[z_1, z_2][s]
\end{equation}
be a \emph{polynomial} in $s$ with coefficients in $\mathbb{F}_p[z_1, z_2]$ obtained by replacing the occurrence of $\mu$ with $m$, and $-\mu$ with $p-m$. For $i = 1, 2$, denote
\begin{equation}\label{eqn:solution-entries}
    \Psi_{p,i} (z_1, z_2) = \mathrm{Coeff} \left( s^{p-1} : \frac{\Phi_p (s, z_1, z_2)}{s - z_i} \right) \in \mathbb{F}_p [z_1, z_2]
\end{equation}
for the coefficient of $s^{p-1}$ in the polynomials $\Phi_p / (s-z_1)$ and $\Phi_p / (s-z_2)$. The main computation from \cite{Var-ar} can be rephrased as follows.

\begin{lem}[{\cite[Section 3]{Var-ar}}]\label{lem:appendix-arithmetic-flat-section}
    Let
    \begin{equation}
        I (z_1, z_2) = \begin{pmatrix}
            \Psi_{p, 1} \\ \Psi_{p, 2}
        \end{pmatrix} \in \mathbb{F}_p[z_1, z_2]^{\oplus 2}
    \end{equation}
    be a vector-valued function whose entries are given by $\Psi_{p, i}$'s of \eqref{eqn:solution-entries}. Then under the identification $z_2/z_1 = q$, the function
    \begin{equation}
        \widetilde{I}(q) = (1-q)^{2m} \begin{pmatrix} z_1^{-(p-m)} \Psi_{p,1} \\ z_1^{-(p-m)} \Psi_{p,2}  \end{pmatrix} \in \mathbb{F}_p[q]^{\oplus 2}
    \end{equation} 
    is only dependent on $z_2 /z_1 = q$ and solves the equivariant quantum differential equation (after a suitable change of basis, and the specialization $\teq = \mu \cdot t = m t$).
\end{lem}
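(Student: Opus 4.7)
The strategy is to separate the claim into two layers. First, one verifies that the raw vector $I = (\Psi_{p,1}, \Psi_{p,2})^T$ constructed from the truncated master function $\Phi_p$ solves the mod $p$ dynamical differential equations \eqref{eqn:appendix-T*P1-Varchenko-connection1}--\eqref{eqn:appendix-T*P1-Varchenko-connection2} with $\mu$ replaced by its representative $m \in \{0, \dots, p-1\}$; second, one applies the (mod $p$ interpretation of the) gauge transformation from \cref{lem:appendix-gauge-transform} to convert this into a solution of the quantum differential equation \eqref{eqn:QDE-stable-basis} in the stable envelope basis, and then matches with the claimed formula.

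The first step is the heart of the proof and uses \emph{Manin's trick}. The observation is that for any polynomial $g(s) \in \mathbb{F}_p[z_1, z_2][s]$,
\begin{equation*}
    \mathrm{Coeff}\left(s^{p-1} : \partial_s g\right) = p \cdot \mathrm{Coeff}(s^p : g) = 0 \in \mathbb{F}_p,
\end{equation*}
so the linear functional $F \mapsto \mathrm{Coeff}(s^{p-1} : F)$ annihilates every $\partial_s$-exact polynomial. This is the $\mathbb{F}_p$-analogue of contour integration and allows the classical integration-by-parts derivation of the DDE from the master function $\Phi = s^\mu(s-z_1)^{-\mu}(s-z_2)^{-\mu}$ to carry over verbatim to the polynomial setting. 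Concretely, I would expand $z_j \partial_{z_j}[\Phi_p/(s-z_i)]$ directly as a polynomial in $s$, use the mod $p$ identity $-\mu \equiv p-m$ to recognize factors such as $(p-m)(s-z_j)^{p-m-1}$ as the analogue of $-\mu(s-z_j)^{-\mu-1}$, and then rewrite the result as a linear combination of $\Phi_p/(s-z_1)$ and $\Phi_p/(s-z_2)$ plus an $\partial_s$-exact remainder $\partial_s(s^{m+1} \cdot (\text{polynomial}))$. Taking the $s^{p-1}$ coefficient kills the exact piece and yields the matrix identities
\begin{equation*}
    z_1 \partial_{z_1} I = m \left( \begin{pmatrix} 0 & -1 \\ 0 & 0 \end{pmatrix} + \frac{z_1}{z_1 - z_2}\begin{pmatrix} -1 & 1 \\ 1 & -1 \end{pmatrix} \right) I, \qquad z_2 \partial_{z_2} I = m\left(\begin{pmatrix} 0 & 0 \\ -1 & 0 \end{pmatrix} + \frac{z_2}{z_2-z_1}\begin{pmatrix} -1 & 1 \\ 1 & -1 \end{pmatrix} \right) I,
\end{equation*}
which is the desired DDE with $\mu \equiv m \bmod p$.

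For the second step, a direct degree count on $\Phi_p/(s-z_i) = s^m(s-z_1)^{p-m-\delta_{i,1}}(s-z_2)^{p-m-\delta_{i,2}}$ shows that $\Psi_{p,i}(z_1,z_2)$ is homogeneous of degree $p-m$ in $(z_1, z_2)$, so $z_1^{-(p-m)}\Psi_{p,i}$ is a polynomial in $q = z_2/z_1$. Writing the gauge factor of \cref{lem:appendix-gauge-transform} as $z_1^{-\mu}(z_1-z_2)^{2\mu} = z_1^{\mu}(1-q)^{2\mu}$ and noting that its logarithmic derivative (which is the only thing entering the proof of \cref{lem:appendix-gauge-transform}) is $\mathbb{F}_p$-valued under the substitution $\mu \mapsto m$, the same computation that proves \cref{lem:appendix-gauge-transform} applies to convert the DDE-solution $I$ into a QDE-solution. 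The substitution $z_1^{\mu} \leftrightarrow z_1^{m-p} = z_1^{-(p-m)}$ (valid because it differs from $z_1^m$ by a factor $z_1^{-p}$ whose logarithmic derivative vanishes mod $p$, combined with the homogeneity to make the result a polynomial in $q$) produces exactly the normalization $\widetilde{I} = (1-q)^{2m}(z_1^{-(p-m)}\Psi_{p,1}, z_1^{-(p-m)}\Psi_{p,2})^T$ of the statement, and the specialization $\mu = \teq/t \equiv m$ gives $\teq = mt$. Finally, a direct expansion of $\Phi_p/(s-z_i)$ using the binomial theorem identifies the coefficient of $s^{p-1}$ with $\sum_d \binom{p-m-1}{p-m-d}\binom{p-m}{d} q^d$ (and its symmetric partner), matching \eqref{eqn:arithmetic-soln} up to the overall sign $(-1)^{p-m}$.

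The main obstacle will be Step 1: while the char $0$ derivation of the DDE from hypergeometric integrals is classical, the bookkeeping for the polynomial version is delicate because the natural exponents are shifted ($-\mu-1 \to p-m-1$, etc.) and one must verify that exactly the right multiples of $m$ (and not $p-m$ or $-m$) appear in the connection matrices after the coefficient extraction. A secondary subtlety is that the product $\Phi_p/(s-z_i)$ is only defined as a polynomial when $m \le p-1$ (for the $s^m$ factor to remain polynomial), which is guaranteed by \cref{assm:mu-reduce-mod-p} but is worth keeping explicit throughout the computation.
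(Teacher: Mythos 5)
Your proof is correct and follows essentially the same route as the paper: verify that $I$ solves the mod $p$ dynamical differential equation, then apply the gauge transform of \cref{lem:appendix-gauge-transform} and match normalizations. The one substantive difference is that the paper simply cites \cite[Theorem~3.1]{Var-ar} for the DDE solution, whereas you sketch its proof via Manin's trick (the $s^{p-1}$-coefficient functional annihilates $\partial_s$-exact polynomials, so the classical integration-by-parts derivation carries over); your observation that $z_1^{-(p-m)}$ and $z_1^{\mu}$ differ by $z_1^{-p}$, whose logarithmic derivative vanishes mod $p$, usefully makes explicit why the paper's substitution ``$\mu \mapsto m$'' in the gauge factor produces the stated normalization, a point the paper's proof glosses over. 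Your homogeneity-of-degree-$(p-m)$ argument for $q$-dependence is an equivalent packaging of the paper's direct binomial expansion.
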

\begin{proof}
    First, we rewrite the equivariant quantum differential equation
    \begin{equation}\label{eqn:QDE-usual-basis}
        \nabla_b^{S^1} I = 0 \quad \iff \quad 
    q \partial_q I = t^{-1} \begin{pmatrix}
        0 & \frac{\teq^2 q}{1-q} \\ 1 & \frac{-2\teq q}{1-q}
    \end{pmatrix} I
    \end{equation}
    by changing the geometric basis $1, b \in H^*_{S^1}(X) \otimes \mathrm{Frac}(H^*_{S^1}(\mathrm{pt})) \cong \mathbb{F}_p(\!(\teq)\!)[b]/(b^2)$ to the preferred basis $b-\teq, b \in H^*_{S^1}(X) \otimes \mathbb{F}_p (\!(\teq)\!)$ (see \eqref{eqn:appendix-stable-envelope}). In the new basis the differential equation has the form
    \begin{equation}\label{eqn:QDE-stable-basis-appendix}
    q \partial_q I = - t^{-1} \teq \left( \begin{pmatrix} 0 & 0 \\ 1& 0  \end{pmatrix} + \frac{q}{1-q} \begin{pmatrix}
    1 & 1 \\ 1 & 1    
    \end{pmatrix} \right) I.
\end{equation}
Note that the change of basis simplifies the connection matrix, in that the dependence on the equivariant parameters $\teq \in H^2(BS^1), t \in H^2(B\mathbb{Z}/p)$ becomes a single factor in $\teq / t$. Below we specialize to $\mu = \teq / t$, equivalently $\teq = \mu t = m t$.

Next we claim that $\widetilde{I}(q)$ is indeed a vector with entries in $\mathbb{F}_p[q]$, that it is only dependent on $z_2 / z_1 = q$. By binomial expansion we see (cf. \cite[Lemma 3.3]{Var-ar})
\begin{equation}
    \Psi_{p, 1} = \mathrm{Coeff} \left( s^{p-1} : s^{m} (s-z_1)^{p-m-1}(s-z_2)^{p-m} \right) = (-1)^{p-m} \sum_{ j+k = p-m} \binom{p-m-1}{j} \binom{p-m}{k} z_1^j z_2^k,
\end{equation}
hence
\begin{equation}
    z_1^{-(p-m)} \Psi_{p, 1} = (-1)^{p-m} \sum_{\ell=0}^{p-m} \binom{p-m-1}{p-m-k} \binom{p-m}{k} q^k \in \mathbb{F}_p[q]
\end{equation}
is indeed a well-defined polynomial in $q$. Similarly, $z_1^{-(p-m)} \Psi_{p, 2} \in \mathbb{F}_p[q]$.

To see that
\begin{equation}\label{eqn:gauge-transformed-soln}
    \widetilde{I}(q) = (1-q)^{2m} \begin{pmatrix} z_1^{-(p-m)} \Psi_{p,1} \\ z_1^{-(p-m)} \Psi_{p,2}  \end{pmatrix} 
\end{equation}
solves the equivariant quantum differential equation \eqref{eqn:QDE-stable-basis-appendix}, we refer to the fact that
\begin{itemize}
    \item (\cite[Theorem 3.1]{Var-ar}) $I(z_1, z_2) = (\Psi_{p, 1}, \Psi_{p,2})$ solves the mod $p$ dynamical differential equation of \cite{Var-ar};
    \item (\cref{lem:appendix-gauge-transform}) Setting $\widetilde{I}(z_1, z_2) = z_1^{-\mu}(z_1-z_2)^{2\mu}I$ yields a gauge transformation that identifies the dynamical differential equation with the quantum differential equation, so that $\widetilde{I}(z_1, z_2)$ is a solution to the quantum differential equation;
    \item By replacing $\mu$ with $m \equiv \mu$, we obtain the expression \eqref{eqn:gauge-transformed-soln}.
\end{itemize}
We refer to \cref{sec:appendix-gauge} for a more detailed explanation of the alluded gauge transformation.
\end{proof}

\section{Localization formula in low degrees of $\teq$}\label{sec:appendix-low-order-h}
Here we record the combinatorial manipulations to compute the terms in the localization formula (\cref{cor:S1-QSt-computation}) for the structure constants of the $S^1$-equivariant quantum Steenrod operations. Below, $d = \alpha p + \beta$ so that $\alpha$ and $\beta$ are quotient and remainder of $d$ modulo $p$, respectively. First let
    \begin{equation}
        C_{d, \ell} := C_{d, \ell}(x, t, \teq) = (x^{p-1} - t^{p-1})^{1 - 2 \alpha} \ c_{0, \infty}(x + \ell t, t) \ \prod_{k=1}^{d-1} (x  + (\ell -k)t - \teq)^2.
    \end{equation}

We denote the coefficient of $x^k$ in a power series $P(x) \in \mathbb{F}_p(\!(\teq, t, \theta)\!)[\![x]\!]$ by $(x^k: P(x)) \in \mathbb{F}_p(\!(\teq, t, \theta)\!)$. The localization formula is then written as
\begin{align}
    &(Q\Sigma_b^{S^1}(b_0), b_\infty)_{S^1} = (Q \Sigma_b(b_0), b_\infty)_{S^1} \notag \\
     &+  \teq \  q^d  \sum_{d > 0}  \left[ \sum_{\ell=0}^\beta  \left( x^{2\alpha}:   C_{d, \ell} \ { \prod_{j=1}^\ell (x+jt)^{-2} \prod_{j=1}^{\beta-\ell} (x-jt)^{-2}  } \right) \right. \left. + \sum_{\ell = \beta + 1}^{p-1} \left( x^{2\alpha - 2} : C_{d, \ell} \ {  \prod_{j=0}^\beta (x+(\ell-j)t)^{-2} } \right)  \right]  \notag,
\end{align}
which reduces to
    \begin{align}
        &(Q\Sigma_b^{S^1}(b_0), b_\infty)_{S^1} = (\mathrm{St}(b) \cup b_0, b_\infty)_{S^1}  \notag \\
         &+ \teq  \left( \sum_{d > 0, p \nmid d} \left[ -  \frac{t^{p-3}}{\beta^2}  c_{0, \infty}(\beta  t, t) \right] q^d + \sum_{d>0, p \mid d} \left[ -t^{p-1} (x^2:c_{0, \infty}(x, t)) \right] q^{d} \right)   \notag \\
         &+ \teq^2  \left( \sum_{d > 0, p \nmid d} \left[ \sum_{m=1, m \not \equiv \beta}^{d-1} \frac{ 2 t^{p-4}}{(\beta - m) \beta^2 } c_{0, \infty}(\beta t, t) - 2  \alpha t^{p-4} \left(x^1 : \left( \frac{t}{\beta^2} - \frac{2}{\beta^{3}}x \right)c_{0, \infty}(x+\beta t, t) \right) \right] q^d   \right. \notag \\
         &\quad \quad \quad  \left. \sum_{d >0, p | d} \left[ \sum_{m=1, m \not \equiv 0}^{d-1} \left( x^2 : -2 \ \frac{x^{p-1} - t^{p-1}}{x-mt} \ c_{0, \infty} (x, t) \right) -2 \alpha  \left( x^3 : (x^{p-1} - t^{p-1})\ c_{0, \infty} (x, t) \right) \right]  q^d \right) \notag \\
         &+ O(\teq^3). \notag
    \end{align}
for low degrees in $\teq$. Such results are used in the introduction in the paper to highlight the relevance of the higher terms in $\teq$, in contrast to formulas in \cite{BMO11}. We provide the computation of these lower order terms below. 

First, the lowest order ($\teq^0$) term is just the contribution from the non-equivariant quantum Steenrod operations. Since the Gromov--Witten invariants of $T^*\mathbb{P}^1$ are trivial, this reduces to the classical Steenrod operations:
\begin{equation}
    (Q\Sigma_b^{S^1}(b_0), b_\infty)_{S^1} = (Q \Sigma_b(b_0), b_\infty)_{S^1} = (\mathrm{St}(b) \cup b_0, b_\infty)_{S^1}.
\end{equation}

For the first order $\teq^1$ term, note there is a factor of $\teq$ for all non-classical ($q^{d>0}$) terms in the localization formula. Therefore, the first order term is computed by taking the $\teq$-constant term of $C_{d, \ell}$,
\begin{equation}
    C_{d, \ell}^0 := C_{d, \ell}(x, t, 0) =  (x^{p-1} - t^{p-1})^{1 - 2 \alpha} \ c_{0, \infty}(x + \ell t, t) \ \prod_{k=1}^{d-1} (x  + (\ell -k)t)^2,
\end{equation}
and computing
\begin{equation}\label{eqn:loc-formula-h1}
    \sum_{\ell=0}^\beta  \left( x^{2\alpha}:   C_{d, \ell}^0 \ { \prod_{j=1}^\ell (x+jt)^{-2} \prod_{j=1}^{\beta-\ell} (x-jt)^{-2}  } \right) + \sum_{\ell = \beta + 1}^{p-1} \left( x^{2\alpha - 2} : C_{d, \ell}^0 \ {  \prod_{j=0}^\beta (x+(\ell-j)t)^{-2} } \right).
\end{equation}
Observe that $C_{d, \ell}^0$ simplifies, using
\begin{equation}
    \prod_{k=0}^{p-1} ( x+ (\ell -k) t) = \prod_{k=0}^{p-1} (x - k  t) = x^{p} - t^{p-1}x;
\end{equation}
we have
\begin{align}
    C^0_{d, \ell } &= (x^{p-1} - t^{p-1})^{1 - 2 \alpha} \ c_{0, \infty}(x + \ell t, t) \ \prod_{k=1}^{d-1} (x  + (\ell -k)t)^2 \notag \\
    &= {\begin{cases} (x^{p-1} - t^{p-1})^{1 - 2\alpha}  \ c_{0, \infty}(x + \ell t, t) \ (x^p - t^{p-1}x)^{2\alpha} \prod_{j =1}^{\beta - 1} (x + (\ell -j )t)^2 &  \mbox{ if } \beta \neq 0 
    \\ (x^{p-1} - t^{p-1})^{1 - 2\alpha}  \ c_{0, \infty}(x + \ell t, t) \ (x^p - t^{p-1} x)^{2\alpha - 2} \prod_{j=1}^{p-1} (x + (\ell -j) t)^2 & \mbox { if } \beta = 0 \end{cases}} \notag \\
    &= { \begin{cases} x^{2\alpha} (x^{p-1} - t^{p-1}) \ c_{0, \infty}(x + \ell t, t) \ \prod_{j =1}^{\beta - 1} (x + (\ell -j )t)^2  & \mbox { if } \beta \neq 0 \\ x^{2\alpha - 2} (x^{p-1} - t^{p-1})^{-1}  \ c_{0, \infty}(x + \ell t, t) \ \prod_{j=1}^{p-1} (x + (\ell -j) t)^2 & \mbox { if } \beta = 0 \end{cases} }.
\end{align}

For $\beta \neq 0$, note that $C^{0}_{d, \ell}$ is divisible by $x^{2\alpha}$ for $\ell > \beta$ and by $x^{2\alpha +2}$ for $\ell < \beta$ respectively, and therefore only the $\ell = \beta$ term in the localization formula \eqref{eqn:loc-formula-h1} contributes. This term is given by
\begin{align}
    \left( x^{2\alpha}: \frac{C^{0}_{d, \beta}}{\prod_{j=1}^\beta (x+jt)^2} \right) &= \left(x^0 :   \frac{(x^{p-1} - t^{p-1}) \ c_{0, \infty}(x + \ell t, t) \prod_{j=1}^{\beta - 1} (x+ (\beta -j )t)^2}{ \prod_{j=1}^{\beta} (x+ j  t)^2 }   \right)  \\
    &= \frac{ - t^{p-1} \ c_{0, \infty} (\beta t, t) \cdot (\beta -1)!^2 \cdot t^{2\beta -2}}{ \beta!^2 \cdot t^{2\beta}} \notag \\
    &= \frac{ - t^{p-3} \ c_{0, \infty} (\beta t, t)}{\beta^2} \notag.
\end{align}

For $\beta = 0$, note that $C^0_{d, \ell}$ is divisible by $x^{2\alpha + 1}$ for $\ell > \beta$, and again only the $\ell = \beta$ term contributes. This term is given by
\begin{align}
    \left( x^{2 \alpha} :  C^0_{d, \beta} \right) &= \left( x^{2\alpha} :x^{2\alpha -2} \cdot \frac{ c_{0, \infty} (x + \beta t, t) (x^{p-1} -t^{p-1})^2   }{(x^{p-1} - t^{p-1}) } \right) \\
    &= \left( x^2 : -t^{p-1} \ c_{0, \infty} (x, t) \right) \notag.
\end{align}

This completes the computation for the $\teq^1$ term. We continue to compute the second order ($\teq^2$) term. We take the $\teq^1$-linear terms of $C_{d, \ell}$, given by
\begin{equation}
C_{d, \ell}^1 := \partial_h C_{d, \ell}(x, t, h)|_{h=0} = (x^{p-1} - t^{p-1})^{1 - 2 \alpha} \ c_{0, \infty}(x + \ell t, t) \sum_{d=1}^{m-1} \prod_{k=1}^{d-1} \frac{- 2 (x+ (\ell -k) t)^2}{(x + (\ell - m ) t)}.
\end{equation}

Similar simplifications as above show
\begin{align}
    C_{d, \ell}^1 = \begin{cases} -2 x^{2\alpha - 1} \ c_{0, \infty} ( x + \ell t, t) \ \prod_{j=1}^{\beta - 1} ( x+ (\ell -j ) t)^2 \sum_{m=1}^{d-1} \prod_{k=0, k \not \equiv m }^{p-1} (x + (\ell -k ) t) &  \mbox { if } \beta \neq 0 \\ -2 x^{2\alpha - 3} \ \frac{c_{0, \infty} ( x + \ell t, t)} { (x^{p-1} - t^{p-1})^{2}} \prod_{j=1}^{p - 1} ( x+ (\ell -j ) t)^2  \sum_{m=1}^{d-1}  \prod_{k=0, k \not \equiv m }^{p-1} (x + (\ell -k ) t)  & \mbox { if } \beta = 0 \end{cases}.
\end{align}

As before, only the $\ell = \beta$ terms contribute. For $\beta \neq 0$, this term is given by
\begin{align}
    & \left( x^{2\alpha} : \frac{C^1_{d, \beta}}{\prod_{j=1}^\beta ( x+ jt)^2} \right) \\
    &= \left( x^1 : \frac{-2 \  c_{0, \infty} ( x+ \beta t, t) \ \sum_{m=1}^{d-1} \prod_{k=0, k \not \equiv m}^{p-1} (x + (\beta - k) t)  }{(x+\beta t)^2} \right) \notag \\
    &=  \sum_{m=1, m \not \equiv \beta}^{d-1} \left( x^1 : \frac{- 2 (x^p - t^{p-1}x) \ c_{0, \infty}(x + \beta t, t) }{(x + (\beta - m)t) (x+ \beta t)^2 } \right)  \notag \\
    & \quad \quad + \sum_{m=1, m \equiv \beta}^{d-1}  \left(x^1 : \frac{-2 (x^{p-1} - t^{p-1}) \ c_{0, \infty}(x+\beta t, t) } {(x+\beta t)^2} \right) \notag \\
    &=  \sum_{m=1, m \not \equiv \beta}^{d-1} \frac{ 2 t^{p-4}}{(\beta - m) \beta^2 } c_{0, \infty}(\beta t, t) - 2  \alpha t^{p-4} \left(x^1 : \left( \frac{t}{\beta^2} - \frac{2}{\beta^{3}}x \right)c_{0, \infty}(x+\beta t, t) \right)  \notag .
\end{align}
For $\beta = 0$, this term is given by
\begin{align}
    &\left( x^{2\alpha} : C^1_{d, \beta}  \right) \\
    &= \left( x^3 :  -2 \ c_{0, \infty} (x + \beta t, t) \sum_{m=1}^{d-1} \prod_{k=0, k \not\equiv m}^{p-1} (x + (\beta - k) t)  \right) \notag \\
    &= \sum_{m=1, m \not \equiv 0}^{d-1} \left( x^2 : -2 \ \frac{x^{p-1} - t^{p-1}}{x-mt} \ c_{0, \infty} (x, t) \right) -2 \alpha  \left( x^3 : (x^{p-1} - t^{p-1})\ c_{0, \infty} (x, t) \right) \notag.
\end{align}

% Bibliography
\bibliographystyle{amsalpha}
\bibliography{cov}

\end{document}